\numberwithin{equation}{section}
\newtheorem{theorem}{Theorem}[section]
\newtheorem{lemma}[theorem]{Lemma}
\newtheorem{proposition}[theorem]{Proposition}
\newtheorem{corollary}[theorem]{Corollary}
\theoremstyle{definition}
\newtheorem{definition}[theorem]{Definition}
\theoremstyle{remark}
\newtheorem{remark}[theorem]{Remark}
\title[$L^q$ Estimates on the Restriction of Schr\"odinger Eigenfunctions]{$L^q$ Estimates on the Restriction of Schr\"odinger Eigenfunctions with singular potentials}
\author[M. D. Blair]{Matthew D. Blair}
\author[C. Park]{Chamsol Park}
\address{Department of Mathematics and Statistics, University of New Mexico, Albuquerque, NM 87131, USA}
\email{blair@math.unm.edu}
\address{Department of Mathematics, Johns Hopkins University, Baltimore, MD 21218, USA}
\email{cspark@jhu.edu}
\date{}
\keywords{Eigenfunction, Schr\"ordinger operator, Real-valued singular potentials}
\subjclass[2020]{Primary 58J40; Secondary 35S30, 42B37}
\pgfplotsset{compat=1.16}
\begin{document}

\begin{abstract}
We consider eigenfunction estimates in $L^p$ for Schr\"odinger operators, $H_V=-\Delta_g+V(x)$, on compact Riemannian manifolds $(M, g)$. Eigenfunction estimates over the full manifolds were already obtained by Sogge \cite{Sogge1988concerning} for $V\equiv 0$ and the first author, Sire, and Sogge \cite{BlairSireSogge2021Quasimode}, and the first author, Huang, Sire, and Sogge \cite{BlairHuangSireSogge2022UniformSobolev} for critically singular potentials $V$. For the corresponding restriction estimates for submanifolds, the case $V\equiv 0$ was considered in Burq, G\'erard, and Tzvetkov \cite{BurqGerardTzvetkov2007restrictions}, and Hu \cite{Hu2009lp}. In this article, we will handle eigenfunction restriction estimates for some submanifolds $\Sigma$ on compact Riemannian manifolds $(M, g)$ with $n:=\dim M\geq 2$, where $V$ is a singular potential.
\end{abstract}

\maketitle

\section{Introduction}
Let $(M, g)$ be an $n$-dimensional compact Riemannian manifold without boundary. The main purpose of this paper is to find estimates on restrictions of eigenfunctions to submanifolds, associated with the Schr\"odinger operator $H_V$
\begin{align*}
    H_V=-\Delta_g +V(x),
\end{align*}
where $\Delta_g$ is the Laplace-Beltrami operator associated with the metric $g$, and $V$ is a real-valued potential.

We shall focus mostly on critically singular potentials $V(x)$, and so, mostly we shall assume that
\begin{align*}
    V\in L^{\frac{n}{2}} (M).
\end{align*}
We know from \cite[Appendix]{BlairHuangSireSogge2022UniformSobolev} that if $V\in L^{\frac{n}{2}} (M)$, the Schr\"odinger operator $H_V$ is bounded from below and a semi-bounded self-adjoint operator on $L^2$ (see \cite[Proposition A.1]{BlairHuangSireSogge2022UniformSobolev}), and thus, adding a positive constant if needed, we may assume that the spectrum of $H_V$ is positive and its eigenfunctions $e_\lambda^V$'s are satisfying
\begin{align}\label{Hv eigfcn decomp}
    H_V e_\lambda^V=\lambda^2 e_\lambda^V \text{ for some } \lambda>0,\quad \text{i.e.,} \quad \sqrt{H_V} e_\lambda^V=\lambda e_\lambda^V.
\end{align}

The Kato class potential is also known as a critically singular potential with the same scaling properties as the $L^{\frac{n}{2}} (M)$ potentials. We recall that a potential $V$ is said to be in the Kato class, denoted by $V\in \mathcal{K}(M)$, if
\begin{align*}
    \lim_{r \searrow 0} \sup_x \int_{B_r(x)} h_n (d_g (x, y)) |V(y)|\:dy=0,
\end{align*}
where $d_g (\cdot, \cdot)$ denotes geodesic distance, $B_r (x)$ denotes the geodesic ball of radius $r$ centered with $x$, $dy$ denotes the volume element on $(M, g)$, and, for $r>0$,
\begin{align}\label{hn set up}
    h_n (r)=\begin{cases}
    |\log r|, & \text{if } n=2, \\
    r^{2-n}, & \text{if } n\geq 3.
    \end{cases}
\end{align}
By definition, we note that
\begin{align*}
    \mathcal{K}(M)\subset L^1 (M),
\end{align*}
and thus, we will make use of this whenever we need in the calculation with the Kato class potentials.

As explained in the work \cite{BlairSireSogge2021Quasimode} of the first author, Sire, and Sogge, there are some advantages of assuming the Kato condition on the potential $V$. First, it helps us to make use of a heat kernel bound and to obtain quasimode estimates for $p=\infty$. Second, if $V\in \mathcal{K}(M)$, then the $e_\lambda^V$ are continuous on $M$. In addition, the Schr\"odinger operator $H_V$ is self-adjoint when $V$ is a Kato potential or $V\in L^{\frac{n}{2}} (M)$, i.e., we may assume that the spectrum of $H_V$ is positive and its eigenfunctions $e_\lambda^V$'s are satisfying \eqref{Hv eigfcn decomp}.

Thoughout this work, let $\lambda\geq 1$. There are substantial results when $V\equiv 0$. We denote by $\mathds{1}_{[\lambda, \lambda+h(\lambda)]} (\sqrt{H_V})$ the spectral projection operator associated with the operator $\sqrt{H_V}$ where the eigenvalues lie in the interval $[\lambda, \lambda+h(\lambda)]$, for some $h(\lambda)>0$. If $V\equiv 0$, we write $H_V$ as
\begin{align*}
    H_0=-\Delta_g.
\end{align*}
For $h(\lambda)\equiv 1$, Sogge \cite{Sogge1988concerning} showed that, for all $n\geq 2$, there exists a uniform constant $C>0$ such that
\begin{align}\label{Sogge's estimates}
    \|\mathds{1}_{[\lambda, \lambda+1]} (\sqrt{H_0})\|_{L^2(M)\to L^q (M)}\leq C \lambda^{\sigma(q)},\quad \text{when } V\equiv 0,
\end{align}
where
\begin{align}\label{Sogge's exponents}
    \begin{split}
        \sigma(q)=\begin{cases}
        n\left(\frac{1}{2}-\frac{1}{q} \right)-\frac{1}{2}, & \frac{2(n+1)}{n-1}\leq q\leq \infty, \\
        \frac{n-1}{2}\left( \frac{1}{2}-\frac{1}{q} \right), & 2\leq q\leq \frac{2(n+1)}{n-1}.
        \end{cases}
    \end{split}
\end{align}
Consequently, if $e_\lambda^0$ is the eigenfunction of $\sqrt{H_0}$ in that $\sqrt{H_0} e_\lambda^0=\lambda e_\lambda^0$, we have
\begin{align}\label{Sogge's eigfcn estimates}
    \|e_\lambda^0 \|_{L^q (M)}\leq C \lambda^{\sigma(p, n)} \|e_\lambda^0\|_{L^2 (M)}.
\end{align}
If $(M, g)$ has nonpositive sectional curvatures, there are logarithmically improved estimates of the case $h(\lambda)=(\log \lambda)^{-1}$ of the form
\begin{align}\label{Log imp over full manifolds}
    \| \mathds{1}_{[\lambda,\lambda+(\log \lambda)^{-1}]} (\sqrt{H_0}) \|_{L^2(M)\to L^q (M)} \leq C_q \frac{\lambda^{\sigma(q)}}{(\log \lambda)^{\alpha(q, n)}},\quad \text{when } q>2,
\end{align}
for some constant $\alpha(q, n)>0$. It then follows that
\begin{align*}
    \|e_\lambda^0\|_{L^q (M)}\leq C_p \frac{\lambda^{\sigma(q)}}{(\log \lambda)^{\alpha (q, n)} } \|e_\lambda^0\|_{L^2 (M)},\quad \text{when } q>2.
\end{align*}
If $q\geq q_c=\frac{2(n+1)}{n-1}$, then it has been known from \cite{HassellTacy2015improvement} and \cite{BlairHuangSogge2022Improved} that
\begin{align}\label{log lambda exponent from L2(M) to Lq(M)}
    \begin{split}
        \alpha(q, n)=\begin{cases}
            \frac{2}{(n+1)q_c}, & \text{if } n\geq 2 \text{ and } q=q_c=\frac{2(n+1)}{n-1}, \\
            \frac{1}{2}, & \text{if } n\geq 2 \text{ and } q>\frac{2(n+1)}{n-1}.
        \end{cases}
    \end{split}
\end{align}
There are many prior results on these estimates. We refer the reader to B\'erard \cite{Berard1977onthewaveequation}, Hassell and Tacy \cite{HassellTacy2015improvement}, Canzani and Galkowski \cite{CanzaniGalkowski2020Growth}, the first author and Sogge \cite{BlairSogge2017refined, BlairSogge2018concerning, BlairSogge2019logarithmic}, Sogge \cite{Sogge2011KakeyaNikodym}, Sogge and Zelditch \cite{SoggeZelditch2014eigenfunction}, and the first author, Huang, and Sogge \cite{BlairHuangSogge2022Improved}.

On the other hand, Burq, G\'erard, and Tzvetkov \cite{BurqGerardTzvetkov2007restrictions}, and Hu \cite{Hu2009lp} proved the following restriction versions of \eqref{Sogge's estimates}
\begin{align}\label{BGT and Hu's estimates}
    \|\mathds{1}_{[\lambda, \lambda+1]} (\sqrt{H_0})\|_{L^2(M) \to L^q (\Sigma)} \leq C\lambda^{\delta(q, k)} (\log \lambda)^{\nu(q, k)},
\end{align}
where $\Sigma$ is a $k$-dimensional embedded submanifold of $M$, and
\begin{align}\label{BGT and Hu lambda exponent}
    \delta(q, k)=\begin{cases}
    \frac{n-1}{2}-\frac{n-1}{q}, & \text{if } \frac{2n}{n-1}<q\leq \infty, \; k=n-1, \\
    \frac{n-1}{4}-\frac{n-2}{2q}, & \text{if } 2\leq q<\frac{2n}{n-1},\; k=n-1, \\
    \frac{n-1}{2}-\frac{k}{q}, & \text{if } 1\leq k\leq n-2,
    \end{cases}
\end{align}
and
\begin{align}\label{BGT and Hu log lambda exponent}
    \nu(q, k)=\begin{cases}
    \frac{1}{2}, & \text{if } (q, k)=(2, n-2), \\
    0, & \text{otherwise.}
    \end{cases}
\end{align}
This gives us automatically that
\begin{align*}
    \|e_\lambda^0\|_{L^q (\Sigma)}\leq C \lambda^{\delta(q, k)} (\log \lambda)^{\nu (q, k)} \|e_\lambda^0\|_{L^2 (M)},\quad \text{when } V\equiv 0.
\end{align*}
We note that, if $\dim M=2$ and $\gamma$ is any curve, then this estimate is translated into
\begin{align}\label{BGT and Hu eig est for any curve}
    \|e_\lambda^0\|_{L^q (\gamma)}\leq C\lambda^{\delta(q, 1)} \|e_\lambda^0\|_{L^2 (M)}.
\end{align}

If the potential $V$ is non-zero, we may need different arguments to find eigenfunction estimates. Finding analogues of ``uniform Sobolev estimates'' of Kenig, Ruiz, and Sogge \cite{KenigRuizSogge1987UniformSobolev} and Dos Santos Ferreira, Kenig and Salo \cite{DSFKenigSalo2014Forum}, the first author, Huang, Sire, and Sogge \cite[Theorem 1.1]{BlairHuangSireSogge2022UniformSobolev} proved that if $V\in L^{n/2}(M)$, $u\in \mathrm{Dom}(H_V)$, and
\begin{align*}
    2<q\leq \frac{2n}{n-4} \text{ for } n\geq 5, \quad \text{or} \quad 2<q<\infty \text{ for } n=3, 4,
\end{align*}
then
\begin{align}\label{BHSS Thm 1.1 estimate}
    \|u\|_{L^q (M)}\leq C_V \lambda^{\sigma(q)-1}\|(H_V-\lambda^2+i\lambda)u\|_{L^2 (M)},\quad \text{when } \lambda\geq 1.
\end{align}
See also the work of \cite[Theorem 1.3]{BlairSireSogge2021Quasimode} for similar estimates with $V\in L^{\frac{n}{2}} (M) \cap \mathcal{K}(M)$. Our first result is a restriction analogue of the estimate \eqref{BHSS Thm 1.1 estimate} with $n=2$ and $V\in \mathcal{K}(M)$ for curve segments.

\begin{theorem}\label{Thm for univ est for any curves}
Let $\dim M=2$, $V\in \mathcal{K}(M)$, and $\lambda\geq 1$. Suppose the curve $\gamma$ is any curve segment. If $\delta (q, 1)$ is as in \eqref{BGT and Hu lambda exponent}, then
\begin{align}\label{Est for any curves}
    \|u\|_{L^q (\gamma)}\leq C_V \lambda^{\delta(q, 1)-1}\|(H_V-(\lambda+i)^2)u \|_{L^2(M)}, \quad \begin{cases}
        \text{if } 2\leq q<\infty \text{ and } u\in \mathrm{Dom}(H_V), \\
        \text{if } q=\infty \text{ and } u\in \mathrm{Dom}(H_V)\cap C(M),
    \end{cases}
\end{align}
where $C(M)$ denotes the space of continuous functions on $M$.
\end{theorem}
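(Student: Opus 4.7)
My plan is to follow the three-step perturbation strategy of \cite{BlairSireSogge2021Quasimode, BlairHuangSireSogge2022UniformSobolev}, replacing Sogge's spectral cluster bound \eqref{Sogge's estimates} by the Burq--G\'erard--Tzvetkov/Hu restriction bound \eqref{BGT and Hu's estimates}.

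First, I would prove the free case,
\begin{equation*}
\|w\|_{L^q(\gamma)}\le C\,\lambda^{\delta(q,1)-1}\|(H_0-(\lambda+i)^2)w\|_{L^2(M)},\qquad w\in\mathrm{Dom}(H_0).
\end{equation*}
Expand $w$ in an orthonormal eigenbasis $\{\phi_k\}$ of $\sqrt{H_0}$ with eigenvalues $\mu_k\ge 0$, set $f=(H_0-(\lambda+i)^2)w$, and group the spectrum into unit bands $I_m=[\lambda+m,\lambda+m+1]$ with $P_m=\mathds{1}_{I_m}(\sqrt{H_0})$. The elementary bound $|\mu^2-(\lambda+i)^2|\ge c\,\lambda\max(1,|\mu-\lambda|)$ for $\mu\ge 0$ yields $\|P_m w\|_{L^2(M)}\le C(\lambda\max(1,|m|))^{-1}\|P_m f\|_{L^2(M)}$. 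Combined with \eqref{BGT and Hu's estimates}, which gives $\|P_m\|_{L^2(M)\to L^q(\gamma)}\le C\lambda^{\delta(q,1)}$ uniformly in $|m|\lesssim\lambda$ (the tail $m\gg\lambda$ is absorbed via the sharper bound $|\mu^2-(\lambda+i)^2|\gtrsim\mu^2$ and the fact that $\delta(q,1)<1$), the triangle inequality in $m$, Cauchy--Schwarz with $\sum_m\max(1,|m|)^{-2}<\infty$, and Plancherel $\sum_m\|P_m f\|_{L^2}^2\le\|f\|_{L^2}^2$ give the free estimate.

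Second, I would introduce the potential via the resolvent identity $u=(H_0-z)^{-1}f-(H_0-z)^{-1}(Vu)$ with $z=(\lambda+i)^2$ and $u=(H_V-z)^{-1}f$; applying the free estimate to each term gives
\begin{equation*}
\|u\|_{L^q(\gamma)}\le C\,\lambda^{\delta(q,1)-1}\bigl(\|f\|_{L^2(M)}+\|Vu\|_{L^2(M)}\bigr),
\end{equation*}
reducing the theorem to the $\lambda$-uniform bound $\|Vu\|_{L^2(M)}\le C_V\|f\|_{L^2(M)}$.

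This last bound is the main obstacle. In dimension $n=2$ the Kato hypothesis does not guarantee $V\in L^p(M)$ for any $p>1$, so a direct H\"older estimate of the form $\|Vu\|_{L^2}\le\|V\|_{L^p}\|u\|_{L^{p'}}$ is unavailable. To handle it I would invoke the Kato-class machinery of \cite{BlairSireSogge2021Quasimode} and the Gaussian heat-kernel bounds for $e^{-tH_V}$ in the appendix of \cite{BlairHuangSireSogge2022UniformSobolev}: split $V=V_N+W_N$ with $V_N=V\mathbf{1}_{\{|V|\le N\}}\in L^\infty(M)$ and $\|W_N\|_{\mathcal{K}(M)}$ arbitrarily small for large $N$ (a dominated-convergence consequence of the defining limit in $\mathcal{K}(M)$), handle $V_N$ via the trivial bound $\|V_N u\|_{L^2}\le N\lambda^{-1}\|f\|_{L^2}$, and control $W_N$ by combining the Kato form-bound for $|W_N|^{1/2}$ with the uniform Sobolev estimate $\|u\|_{L^r(M)}\le C_V\lambda^{\sigma(r)-1}\|f\|_{L^2}$ for some $r\in(2,\infty)$ supplied by \cite[Thm.~1.3]{BlairSireSogge2021Quasimode}. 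Since $\sigma(r)-1<0$ in two dimensions and the Kato norm of $W_N$ can be driven to zero, the $W_N$-contribution is absorbed, yielding the required inequality. The $q=\infty$ case follows from the same argument by substituting \eqref{Sogge's estimates} for \eqref{BGT and Hu's estimates} at the endpoint and using the assumed continuity of $u$ on $M$ to make sense of pointwise restriction to $\gamma$.
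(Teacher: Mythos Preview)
Your first two steps are fine and mirror the paper's Lemma~3.1 and the identity \eqref{S(Vu) appears}. The gap is in the third step: the reduction to $\|Vu\|_{L^2(M)}\le C_V\|f\|_{L^2(M)}$ cannot be closed under the sole hypothesis $V\in\mathcal{K}(M)$ in dimension two. The Kato class only forces $V\in L^1(M)$, and an explicit example such as $V(y)\sim |y|^{-1}$ near a point (which satisfies $\int_{|y|<\epsilon}|\log|y||\,|y|^{-1}\,dy\to 0$) lies in $\mathcal{K}(M)$ but not in $L^2(M)$. For such $V$ the quantity $\|Vu\|_{L^2}$ is infinite even when $u\in L^\infty$, so the inequality you are trying to prove is simply false. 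Your proposed fix does not help: the Kato form bound controls $\bigl\||W_N|^{1/2}u\bigr\|_{L^2}^2=\int|W_N|\,|u|^2$, whereas you need $\|W_N u\|_{L^2}^2=\int|W_N|^2|u|^2$, and no amount of smallness of the Kato norm converts the former into the latter. The uniform Sobolev input $\|u\|_{L^r}\lesssim\lambda^{\sigma(r)-1}\|f\|_{L^2}$ likewise cannot rescue this, since pairing $|W_N|^2$ against $|u|^2$ by H\"older still demands $W_N\in L^p$ for some $p\ge 2$.

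The paper avoids this obstruction by never putting $Vu$ back into $L^2$. It splits the free resolvent as $S_\lambda+W_\lambda$ via the cosine formula, where $W_\lambda$ is smoothing enough that $W_\lambda\circ(-\Delta_g-(\lambda+i)^2)$ is bounded $L^2(M)\to L^q(\gamma)$ directly (no $V$ appears), and $S_\lambda$ has the explicit pointwise kernel bound $|S_\lambda(x,y)|\lesssim |\log(\lambda d_g(x,y))|$ for $d_g\le\lambda^{-1}$ and $\lesssim\lambda^{-1/2}d_g(x,y)^{-1/2}$ for $\lambda^{-1}\le d_g\lesssim 1$. The term $S_\lambda(Vu)$ is then estimated by Minkowski's inequality as
\[
\|S_\lambda(Vu)\|_{L^q(\gamma)}\le \sup_y\Bigl(\int|S_\lambda(\gamma(r),y)|^q\,dr\Bigr)^{1/q}\,\|V\|_{L^1(M)}\,\|u\|_{L^\infty(M)},
\]
using only $V\in L^1$ (implied by $\mathcal{K}(M)$) and $\|u\|_{L^\infty(M)}\lesssim\lambda^{-1/2}\|f\|_{L^2}$ from \cite[Theorem~1.3]{BlairSireSogge2021Quasimode}. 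For $q=\infty$ the Kato condition is used directly on $\int|S_\lambda(\gamma(r),y)||V(y)|\,dy$. The essential point you are missing is that the local piece $S_\lambda$ must be treated as an integral operator with an explicit kernel acting on the $L^1$ function $Vu$, not as a spectrally defined $L^2\to L^q$ map.
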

As a consequence, for any eigenfunction as in \eqref{Hv eigfcn decomp} we have
\begin{align}\label{Eigfcn restriction for general curves}
    \begin{split}
        \|e_\lambda^V\|_{L^q (\gamma)}\leq C_V \lambda^{\delta(q, 1)}\|e_\lambda^V\|_{L^2 (M)},\quad \text{when } 2\leq q\leq \infty.
    \end{split}
\end{align}
In what follows, we focus mostly on the estimates of the form \eqref{Est for any curves}, since estimates of the form \eqref{Est for any curves} automatically imply the estimates of the form \eqref{Eigfcn restriction for general curves} and the estimates for quasimodes as well.

Burq, G\'erard, and Tzvetkov \cite{BurqGerardTzvetkov2007restrictions}, and Hu \cite{Hu2009lp} also showed that, if $\dim M=2$, $\gamma$ is a curve with nonvanishing geodesic curvature, and
\begin{align}\label{Exponent for curved curves}
    \Tilde{\delta}(q)=\frac{1}{3}-\frac{1}{3q},\quad 2\leq q\leq 4,
\end{align}
then
\begin{align}\label{BGT and Hu curved curve estimates}
    \|\mathds{1}_{[\lambda, \lambda+1]} (\sqrt{H_V}) \|_{L^2 (M) \to L^q (\gamma)} \leq C \lambda^{\Tilde{\delta}(p)},\quad \text{when } V\equiv 0 \text{ and } 2\leq q\leq 4,
\end{align}
and thus,
\begin{align}\label{BGT and Hu eig est for curved curves}
    \|e_\lambda^0 \|_{L^q (\gamma)}\leq C \lambda^{\Tilde{\delta}(q)}\|e_\lambda^0 \|_{L^2 (M)},\quad \text{when } 2\leq q\leq 4.
\end{align}
Our next result is a restriction analogue of \eqref{BHSS Thm 1.1 estimate} with $n=2$ and $V\in \mathcal{K}(M)$ for curve segments with nonvanishing geodesic curvature.

\begin{theorem}\label{Thm for univ est for curved curves}
Let $\dim M=2$, $V\in \mathcal{K}(M)$, and $\lambda\geq 1$. Suppose $\gamma$ is a curve segment with nonvanishing geodesic curvatures, i.e.,
\begin{align*}
    g(D_t \dot{\gamma}, D_t \dot{\gamma})\not=0.
\end{align*}
If $\Tilde{\delta}(q)$ is as in \eqref{Exponent for curved curves}, then
\begin{align}\label{Est for curved curves}
    \|u\|_{L^q (\gamma)}\leq C_V \lambda^{\Tilde{\delta}(q)-1}\|(H_V-(\lambda+i)^2)u \|_{L^2 (M)},\quad \text{if } u\in \mathrm{Dom}(H_V).
\end{align}
\end{theorem}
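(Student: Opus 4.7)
The proof parallels that of Theorem~\ref{Thm for univ est for any curves}, the only essentially new input being that the improved Burq-G\'erard-Tzvetkov/Hu restriction bound \eqref{BGT and Hu curved curve estimates} for curves with nonvanishing geodesic curvature replaces the weaker estimate \eqref{BGT and Hu's estimates} (specialized to $n=2$, $k=1$) used there. The strategy is to combine the resolvent identity between $H_V$ and $H_0$ with (i) a free restriction bound for the resolvent $(H_0-(\lambda+i)^2)^{-1}$ on $\gamma$ and (ii) a Kato-class perturbation estimate absorbing the contribution of $V$. Concretely, given $u\in\mathrm{Dom}(H_V)$, set $f:=(H_V-(\lambda+i)^2)u$. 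From $(H_0-(\lambda+i)^2)u=f-Vu$ one obtains
\begin{equation*}
    u=(H_0-(\lambda+i)^2)^{-1}f\,-\,(H_0-(\lambda+i)^2)^{-1}(Vu),
\end{equation*}
and the two terms are estimated separately on $\gamma$.

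The key new step is the improved free restriction bound
\begin{equation*}
    \|(H_0-(\lambda+i)^2)^{-1}g\|_{L^q(\gamma)} \lesssim \lambda^{\tilde{\delta}(q)-1}\|g\|_{L^2(M)}, \quad 2\leq q\leq 4.
\end{equation*}
I would prove this by decomposing $g=\sum_{\mu\in\mathbb{N}}g_\mu$ with $g_\mu:=\mathds{1}_{[\mu,\mu+1]}(\sqrt{H_0})g$, applying \eqref{BGT and Hu curved curve estimates} to each frequency band to get $\|g_\mu\|_{L^q(\gamma)}\lesssim \mu^{\tilde{\delta}(q)}\|g_\mu\|_{L^2(M)}$, and summing using the elementary bound $|\mu^2-(\lambda+i)^2|^{-1}\lesssim \lambda^{-1}(1+|\mu-\lambda|)^{-1}$ together with the triangle inequality, Cauchy-Schwarz, and the $L^2$-orthogonality of the $g_\mu$. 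The relevant tail sum $\sum_\mu \mu^{2\tilde{\delta}(q)}(1+|\mu-\lambda|)^{-2}\lesssim \lambda^{2\tilde{\delta}(q)}$ converges since $\tilde{\delta}(q)\leq 1/4<1/2$ on the stated range. Applied to $f$ and $Vu$, this yields
\begin{equation*}
    \|u\|_{L^q(\gamma)} \lesssim \lambda^{\tilde{\delta}(q)-1}\bigl(\|f\|_{L^2(M)}+\|Vu\|_{L^2(M)}\bigr),
\end{equation*}
so the theorem reduces to the uniform bound $\|Vu\|_{L^2(M)}\lesssim \|f\|_{L^2(M)}$.

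This last $L^2$ bound is the main obstacle and is exactly where the Kato hypothesis on $V$ enters. I would handle it with the same argument used in Theorem~\ref{Thm for univ est for any curves} (originating in \cite{BlairSireSogge2021Quasimode, BlairHuangSireSogge2022UniformSobolev}): split $V=V_b+V_s$ with $V_b\in L^\infty(M)$ and $\|V_s\|_{\mathcal{K}(M)}$ arbitrarily small, so that the bounded part contributes $\|V_b(H_V-(\lambda+i)^2)^{-1}\|_{L^2\to L^2}\lesssim \|V_b\|_{\infty}/\lambda$ directly from the spectral theorem, while the small-Kato part is absorbed via a Neumann-series expansion based on the Kato heat-kernel bound for $V_s(H_0-(\lambda+i)^2)^{-1}$, whose $L^2\to L^2$ norm is controlled by $\|V_s\|_{\mathcal{K}(M)}$ uniformly in $\lambda\geq 1$. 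The delicate point is precisely this uniformity in $\lambda$ at energies arbitrarily close to $\mathrm{spec}(H_0)$, which is the reason Kato class membership (rather than merely $L^{n/2}$ integrability) is the natural hypothesis here.
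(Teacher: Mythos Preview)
Your reduction to $\|Vu\|_{L^2(M)}\lesssim\|f\|_{L^2(M)}$ is a genuine gap: for a general Kato potential in dimension $2$, the product $Vu$ need not lie in $L^2(M)$ at all. Membership in $\mathcal{K}(M)$ for $n=2$ only guarantees $V\in L^1(M)$; for instance $V(y)\sim|y|^{-3/2}$ near a point satisfies the Kato condition but is not in $L^2_{\mathrm{loc}}$, and since $u\in\mathrm{Dom}(H_V)$ is continuous under the Kato hypothesis, $Vu$ inherits this non-square-integrability. The Neumann-series absorption you sketch cannot repair this: the Kato condition is designed to make quantities like $\sup_x\int h_2(d_g(x,y))|V_s(y)|\,dy$ small, which via Schur gives $L^\infty\to L^\infty$ (and dually $L^1\to L^1$) control of the relevant operators, but it provides no uniform $L^2\to L^2$ smallness for $V_s(H_0-(\lambda+i)^2)^{-1}$. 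In particular, this is not the argument used for Theorem~\ref{Thm for univ est for any curves}; that proof never establishes (or needs) an $L^2$ bound on $Vu$.

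The paper instead splits $(-\Delta_g-(\lambda+i)^2)^{-1}=S_\lambda+W_\lambda$ as in \eqref{S lambda W lambda set up}, where $W_\lambda$ has rapidly decaying symbol (so $W_\lambda\circ(-\Delta_g-(\lambda+i)^2)$ is a bounded multiplier on $L^2$ directly) while $S_\lambda$ carries the explicit pointwise kernel bound \eqref{S lambda kernel est for n=2}. The perturbation term is handled via Minkowski's inequality,
\[
\|S_\lambda(Vu)\|_{L^q(\gamma)}\le\sup_y\Bigl(\int|S_\lambda(\gamma(r),y)|^q\,dr\Bigr)^{1/q}\,\|V\|_{L^1(M)}\,\|u\|_{L^\infty(M)},
\]
combined with the $L^\infty$ quasimode bound $\|u\|_{L^\infty(M)}\le C_V\lambda^{-1/2}\|(H_V-(\lambda+i)^2)u\|_{L^2(M)}$ from \cite[Theorem~1.3]{BlairSireSogge2021Quasimode}. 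This requires only $V\in L^1(M)$ and $u\in L^\infty(M)$, exactly what the Kato hypothesis supplies, and never asks for $Vu\in L^2$. Since the resulting bound on $\|S_\lambda(Vu)\|_{L^q(\gamma)}$ is already $\lesssim\lambda^{-1/2-1/q}\le\lambda^{\tilde\delta(q)-1}$ for $2\le q\le 4$, the same Proposition~\ref{Prop Reduction for curves Kato case} that proves Theorem~\ref{Thm for univ est for any curves} immediately yields Theorem~\ref{Thm for univ est for curved curves} once \eqref{BGT and Hu curved curve estimates} is fed into the free-resolvent part of the argument.
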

As a consequence, for any eigenfunction as in \eqref{Hv eigfcn decomp} we have
\begin{align}\label{Eigfcn restriction for curved curves}
    \|e_\lambda^V \|_{L^q (\gamma)} \leq C_V \lambda^{\Tilde{\delta}(q)} \|e_\lambda^V\|_{L^2 (M)},\quad \text{when } 2\leq q\leq 4.
\end{align}
We note that \eqref{Eigfcn restriction for general curves} and \eqref{Eigfcn restriction for curved curves} are analogous to \eqref{BGT and Hu eig est for any curve} and \eqref{BGT and Hu eig est for curved curves}, respectively. The work of \cite[\S 5]{BurqGerardTzvetkov2007restrictions} showed that the estimates \eqref{Eigfcn restriction for general curves} are sharp when $V\equiv 0$ and $M=\mathbb{S}^2$, in that, there exists a set of eigenfunctions on $\mathbb{S}^2$ such that
\begin{align*}
    \|e_\lambda^0 \|_{L^q (\gamma)}\geq c \lambda^{\delta(q, 1)} \|e_\lambda^0 \|_{L^2 (\mathbb{S}^2)},\quad \text{for some uniform } c>0,\text{ and } 2\leq q\leq \infty.
\end{align*}
The work of \cite[\S 5]{BurqGerardTzvetkov2007restrictions} also showed that the estimates \eqref{Eigfcn restriction for curved curves} are also sharp when $V\equiv 0$, $M=\mathbb{S}^2$, and $\gamma$ is any curve with nonvanishing geodesic curvatures. For constructing examples of exact eigenfunctions and quasimodes, we also refer the reader to Tacy \cite{Tacy2018Constructing}.

The next two results are the restriction analogues of \eqref{BHSS Thm 1.1 estimate} to obtain the analogue of \eqref{BGT and Hu's estimates} when $V\in L^{\frac{n}{2}} (M)$ for hypersurfaces and $(n-2)$-dimensional submanifolds.

\begin{theorem}\label{Thm for hypersurfaces}
    Let $M$ be a compact Riemannian manifold of dimension $n\geq 3$ and $\Sigma$ be a hypersurface of $M$. Suppose $\lambda\geq 1$, $V\in L^{\frac{n}{2}} (M)$, and $u\in \mathrm{Dom}(H_V)$. Let $\Sigma$ be a hypersurface.
    \begin{enumerate}
        \item We have
        \begin{align}\label{Hypersurface estimates}
            \|u\|_{L^q (\Sigma)}\leq C_V \lambda^{\delta(q, n-1)-1}\|(H_V-\lambda^2+i\lambda)u\|_{L^2 (M)},
        \end{align}
        provided that
        \begin{enumerate}
            \item $n\in \{3, 4, 5\}$ and $\frac{2(n-1)^2}{n^2-3n+4}\leq q<\frac{2(n-1)}{n-3}$, or
            \item $n\in \{6, 7\}$ and $\frac{2n^2-5n+4}{n^2-4n+8}\leq q<\frac{2(n-1)}{n-3}$, or
            \item $n\geq 8$ and $\frac{2n^2-5n+4}{n^2-4n+8}<q<\frac{2(n-1)}{n-3}$.
        \end{enumerate}
        \item If either $q=\frac{2(n-1)}{n-3}$ and $n\geq 4$, or $q=\frac{2n^2-5n+4}{n^2-4n+8}$ and $n\geq 8$, then
        \begin{align}\label{Hypersurface log loss estimates}
            \|u\|_{L^q (\Sigma)}\leq C_V \lambda^{\delta(q, n-1)-1} (\log \lambda) \|(H_V-\lambda^2+i\lambda)u\|_{L^2 (M)}.
        \end{align}
    \end{enumerate}
\end{theorem}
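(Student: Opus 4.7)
The approach is to adapt the strategy behind the uniform Sobolev estimate \eqref{BHSS Thm 1.1 estimate}: reduce the perturbed estimate to a free restriction-resolvent bound for $H_0$ via the resolvent identity, control the potential $V$ by H\"older's inequality and \eqref{BHSS Thm 1.1 estimate}, and obtain the free bound from the spectral cluster estimates \eqref{BGT and Hu's estimates} of Burq--G\'erard--Tzvetkov and Hu.

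Set $z=\lambda^2-i\lambda$ and $w=(H_V-z)u$. The resolvent identity $u=(H_0-z)^{-1}w-(H_0-z)^{-1}(Vu)$ gives
\begin{align*}
\|u\|_{L^q(\Sigma)}\le\|(H_0-z)^{-1}\|_{L^2(M)\to L^q(\Sigma)}\|w\|_{L^2(M)}+\|(H_0-z)^{-1}(Vu)\|_{L^q(\Sigma)}.
\end{align*}
For the second term, H\"older's inequality gives $\|Vu\|_{L^r}\le\|V\|_{L^{n/2}}\|u\|_{L^s}$ whenever $1/r=2/n+1/s$, and choosing $s$ in the allowed range of \eqref{BHSS Thm 1.1 estimate} furnishes $\|u\|_{L^s}\le C_V\lambda^{\sigma(s)-1}\|w\|_{L^2}$. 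Combined with a free restriction-resolvent bound from $L^r$ to $L^q(\Sigma)$ with a sufficient $\lambda$-margin, this allows the $Vu$ contribution to be absorbed for $\lambda$ large.

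The problem thus reduces to proving the free bound
\begin{align*}
\|(H_0-z)^{-1}\|_{L^2(M)\to L^q(\Sigma)}\lesssim\lambda^{\delta(q,n-1)-1}(\log\lambda)^\nu,
\end{align*}
with $\nu=0$ in case (1) and $\nu=1$ in case (2), along with analogous $L^r\to L^q(\Sigma)$ bounds. I would establish these by dyadic spectral decomposition: writing $(H_0-z)^{-1}=\sum_{\ell\ge 0}m_\ell(\sqrt{H_0})$ with $m_0$ localized where $|\mu-\lambda|\le 1$ and $m_\ell$ for $\ell\ge 1$ localized where $|\mu-\lambda|\sim 2^\ell$, one has on $\mathrm{supp}(m_\ell)$
\begin{align*}
|m_\ell(\mu)|\lesssim\begin{cases}(\lambda\,2^\ell)^{-1},&\text{if }2^\ell\le\lambda,\\ 2^{-2\ell},&\text{if }2^\ell\ge\lambda.\end{cases}
\end{align*}
Applying \eqref{BGT and Hu's estimates} to each unit subinterval and using the $L^2$-orthogonality of disjoint spectral projectors yields
\begin{align*}
\|m_\ell(\sqrt{H_0})\|_{L^2(M)\to L^q(\Sigma)}\lesssim(\lambda+2^\ell)^{\delta(q,n-1)}\cdot 2^{\ell/2}\cdot\sup_\mu|m_\ell(\mu)|,
\end{align*}
and summing over $\ell$ gives the claimed bound, with marginal convergence yielding the logarithmic loss at the endpoint $q=2(n-1)/(n-3)$ where $\delta(q,n-1)=1$.

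The main obstacle is the combined interaction between the free estimate and the $V$-absorption at the endpoints of the stated range. At $q=2(n-1)/(n-3)$, the free bound already carries a marginal sum, and the auxiliary exponent $s$ in the absorption step simultaneously approaches the endpoint of \eqref{BHSS Thm 1.1 estimate}; ensuring that the combined loss is just a single $\log\lambda$ rather than two requires a careful Littlewood--Paley argument decoupling the spectral scales of the two steps, as in \cite{BlairHuangSireSogge2022UniformSobolev}. The lower endpoint $q=(2n^2-5n+4)/(n^2-4n+8)$ emerges from the balance between the dyadic-spectral exponent $\delta(q,n-1)$ in the free estimate and the $\lambda^{\sigma(s)-1}$ cost in the absorption step; below this threshold the two estimates no longer compose to give the desired $\delta(q,n-1)-1$ exponent. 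A standard density argument extends the final inequality from smooth $u$ to all $u\in\mathrm{Dom}(H_V)$.
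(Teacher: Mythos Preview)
Your overall perturbation strategy and the $L^2(M)\to L^q(\Sigma)$ free resolvent bound via dyadic spectral decomposition are both correct (the latter is essentially the paper's Lemma~3.1). The genuine gap is the $L^r(M)\to L^q(\Sigma)$ bound you invoke for the $Vu$ term. Your spectral-side decomposition only delivers $L^2\to L^q(\Sigma)$, because the Burq--G\'erard--Tzvetkov/Hu projector estimates \eqref{BGT and Hu's estimates} are $L^2$-based; composing with dual Sogge bounds $L^r\to L^2$ for unit spectral clusters does not land on the line $\frac{n}{r}-\frac{n-1}{q}=2$ with a $\lambda$-independent constant. What is actually needed is a bound $\|S_\lambda\|_{L^p(M)\to L^q(\Sigma)}\lesssim 1$ (uniformly in $\lambda$) on that Sobolev-trace line, and this is the technical heart of the argument. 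The paper obtains it (Proposition~\ref{Prop Reduction for hypersurface thm}) by a \emph{different} dyadic decomposition, in physical space/time rather than frequency: one writes $S_\lambda=S_0+\sum_{j\ge 1}S_j$ with $S_j$ localized where $d_g(x,y)\approx \lambda^{-1}2^j$, handles $S_0$ by a Sobolev-trace inequality (Proposition~\ref{Prop Sobolev trace for hypsurf}), and for $S_j$ applies the Greenleaf--Seeger fold-singularity estimates together with Hu's verification that the restricted distance phase satisfies the relevant hypotheses (Theorems~\ref{Thm 2.1 in GS}--\ref{Thm Section 4 in Hu}). Several interpolations among these yield $\|S_j\|_{L^p\to L^q(\Sigma)}\lesssim (2^j)^{-\alpha(p,q)}$ with $\alpha>0$ in the open range and $\alpha=0$ at the endpoints; the logarithmic loss therefore comes from summing $O(\log\lambda)$ many uniform terms, not from a marginal spectral sum at $\delta(q,n-1)=1$ as you suggest.

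A smaller point: the $Vu$ contribution is not ``absorbed for $\lambda$ large.'' Once the uniform $L^p\to L^q(\Sigma)$ bound for $S_\lambda$ is in hand, H\"older and \eqref{BHSS Thm 1.1 estimate} give directly $\|S_\lambda(Vu)\|_{L^q(\Sigma)}\le C_V\lambda^{\delta(q,n-1)-1}\|(H_V-\lambda^2+i\lambda)u\|_{L^2}$, with no iteration or smallness needed; the lower endpoints of the $q$-range arise from the range in which the $S_j$ interpolation produces $\alpha(p,q)\ge 0$, not from a balance against the absorption step.
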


We note that even if the Sobolev trace formula holds for $V\in L^{\frac{n}{2}} (M)$, i.e.,
\begin{align*}
    \|f_\lambda^V \|_{L^q (\Sigma)}\leq C_V'\lambda^{\frac{1}{q}} \|f_\lambda^V\|_{L^q (M)},\quad \text{for } f_\lambda^V=\mathds{1}_{[\lambda, \lambda+1]} (\sqrt{H_V}) f_\lambda^V,
\end{align*}
then by this and \cite[Theorem 1.1]{BlairHuangSireSogge2022UniformSobolev}, we have that, for $q\geq \frac{2(n+1)}{n-1}$,
\begin{align}\label{Quasimode computation for hypersurfaces}
    \begin{split}
        \|f_\lambda^V \|_{L^q (\Sigma)}&\leq C_V' \lambda^{\frac{1}{q}} \|f_\lambda^V \|_{L^q (M)} \\
        &\leq C_V \lambda^{\frac{1}{q}} \left(\lambda^{\frac{n-1}{2}-\frac{n}{q}} \|f_\lambda^V\|_{L^2 (M)} \right) \\
        &\leq C_V \lambda^{\frac{n-1}{2}-\frac{n-1}{q}} \|f_\lambda^V\|_{L^2 (M)} \\
        &=C_V \lambda^{\delta(q, n-1)} \|f_\lambda^V \|_{L^2 (M)},\quad q\geq \frac{2(n+1)}{n-1}.
    \end{split}
\end{align}
We note that the condition $q\geq \frac{2(n+1)}{n-1}$ is essential in this computation, since $\sigma(q)=\frac{n-1}{2}-\frac{n}{q}$ when $q\geq \frac{2(n+1)}{n-1}$. This means that even if the Sobolev trace formula holds, we have the quasimode estimates only for $q\geq \frac{2(n+1)}{n-1}$, and thus, the advantage of Theorem \ref{Thm for hypersurfaces} is to consider the quasimode estimates for the $q$'s less than $\frac{2(n+1)}{n-1}$.

\begin{theorem}\label{Thm for codim 2}
    Let $M$ be a compact Riemannian manifold of dimension $n\geq 3$ and $\Sigma$ be a submanifold of dimension $k=n-2$. Suppose $\lambda \geq 1$, $V\in L^{\frac{n}{2}} (M)$, and $u\in \mathrm{Dom}(H_V)$.
    \begin{enumerate}
        \item If $n\geq 3$ and $\max\left(2, \frac{2(n-2)^2}{n^2-5n+8}\right)<q<\frac{2(n-2)}{n-3}$, then
        \begin{align}\label{Codimension 2 estimates}
            \|u\|_{L^q (\Sigma)}\leq C_V \lambda^{\delta(q, n-2)-1} \|(H_V-\lambda^2+i\lambda)u\|_{L^2 (M)}.
        \end{align}
        \item If $(n, q)=(3, 2)$, then
        \begin{align*}
            \|u\|_{L^2 (\Sigma)}\leq C_V \lambda^{\delta(2, 1)-1} (\log \lambda)^{\frac{1}{2}} \|(H_V-\lambda^2+i\lambda)u\|_{L^2 (M)}.
        \end{align*}
        \item If $n\geq 4$ and $q\in\{\frac{2(n-2)^2}{n^2-5n+8}, \frac{2(n-2)}{n-3}\}$, then
        \begin{align*}
            \|u\|_{L^q (\Sigma)}\leq C_V \lambda^{\delta(q, n-2)-1} (\log \lambda)^{\frac{3n-7}{2(n-2)}} \|(H_V-\lambda^2+i\lambda)u\|_{L^2 (M)}.
        \end{align*}
        \item If $n=3$, suppose $\Sigma$ is either a geodesic or a curve with nonvanishing geodesic curvatures. Then
        \begin{align}\label{Geodesic or curved no log loss codim 2 estimates}
            \|u\|_{L^2 (\Sigma)}\leq C_V \lambda^{\delta(2, 1)-1} \|(H_V-\lambda^2+i\lambda)u\|_{L^2 (M)}.
        \end{align}
    \end{enumerate}
\end{theorem}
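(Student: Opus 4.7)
The plan is to follow the perturbation strategy pioneered in \cite{BlairHuangSireSogge2022UniformSobolev}, reducing each part of the theorem to the free case $V\equiv 0$ together with the uniform Sobolev estimate \eqref{BHSS Thm 1.1 estimate}. Writing $\zeta=\lambda^2-i\lambda$ and $f=(H_V-\zeta)u$, the starting point is the resolvent identity
\begin{align*}
    u=(H_0-\zeta)^{-1}f-(H_0-\zeta)^{-1}(Vu),
\end{align*}
which splits the problem into a ``free'' term and a potential term to be absorbed.

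First I would establish the $V\equiv 0$ counterpart of each desired bound, namely
\begin{align*}
    \|(H_0-\zeta)^{-1}f\|_{L^q(\Sigma)}\leq C\lambda^{\delta(q,n-2)-1}(\log\lambda)^{\nu(q,n-2)}\|f\|_{L^2(M)}.
\end{align*}
This is obtained from the Burq--G\'erard--Tzvetkov/Hu restriction bounds \eqref{BGT and Hu's estimates} by decomposing $(H_0-\zeta)^{-1}$ dyadically in the spectrum into unit-width projectors $\mathds{1}_{[\lambda+m,\lambda+m+1]}(\sqrt{H_0})$, gaining an essentially $\lambda^{-1}(1+|m|)^{-1}$ factor from the imaginary shift, and summing over $m\in\mathbb{Z}$. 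The endpoint log loss in parts (2)--(3) already appears at this stage through $\nu(2,n-2)=\tfrac{1}{2}$, while part (4) exploits the known removal of this log loss for geodesics and for curves with nonvanishing geodesic curvature in three dimensions.

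Next I would handle the perturbation term $(H_0-\zeta)^{-1}(Vu)$. Here the baseline $L^2(M)\to L^q(\Sigma)$ bound applied directly is too crude, since H\"older and $V\in L^{n/2}$ force $\|Vu\|_{L^2(M)}\lesssim \|u\|_{L^{2n/(n-4)}(M)}$, and the embedding of $u$ into $L^{2n/(n-4)}(M)$ costs extra powers of $\lambda$. Instead, I would upgrade the baseline by interpolating it with the Kenig--Ruiz--Sogge/Dos Santos Ferreira--Kenig--Salo uniform Sobolev bound $\|(H_0-\zeta)^{-1}\|_{L^{p'}(M)\to L^p(M)}\lesssim 1$, producing an $L^r(M)\to L^q(\Sigma)$ restriction inequality whose exponent absorbs the Sobolev loss. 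Writing $\tfrac{1}{r}=\tfrac{2}{n}+\tfrac{1}{s}$, one then applies H\"older to get $\|Vu\|_{L^r(M)}\leq \|V\|_{L^{n/2}(M)}\|u\|_{L^s(M)}$ and controls $\|u\|_{L^s(M)}$ by \eqref{BHSS Thm 1.1 estimate} applied to $H_V$; for the correct choice of $s$, the combined power of $\lambda$ matches $\delta(q,n-2)-1$.

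The main obstacle will be this last step: verifying that a compatible choice of $s$ exists precisely in the open range $\max(2,\tfrac{2(n-2)^2}{n^2-5n+8})<q<\tfrac{2(n-2)}{n-3}$ claimed in part (1), subject to the admissibility condition $2<s\leq \tfrac{2n}{n-4}$ (or its low-dimensional analogue) required by \eqref{BHSS Thm 1.1 estimate}, and tracking how the single $\nu(2,n-2)=\tfrac{1}{2}$ log loss, together with an interpolation across an exponent where the free estimate itself is borderline, compounds to produce the $(\log\lambda)^{(3n-7)/(2(n-2))}$ loss at the two endpoints in part (3). Part (4) then closes immediately by inserting the no-log-loss free estimate for geodesics or curves with nonvanishing geodesic curvature and observing that the H\"older/Sobolev step introduces no further log factors.
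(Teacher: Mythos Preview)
Your overall perturbation framework matches the paper's, but there is a genuine gap at the step you flag as the ``main obstacle.'' The proposed interpolation between the free restriction bound $(H_0-\zeta)^{-1}:L^2(M)\to L^q(\Sigma)$ and the uniform Sobolev bound $(H_0-\zeta)^{-1}:L^{p'}(M)\to L^p(M)$ is not a legitimate interpolation: the target spaces $L^q(\Sigma)$ and $L^p(M)$ sit over different measure spaces, so Riesz--Thorin (or complex interpolation) does not apply, and no $L^r(M)\to L^q(\Sigma)$ bound falls out this way. This is precisely the point where a new ingredient is needed, and your proposal does not supply one.

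What the paper actually does here is quite different and constitutes the heart of the proof. First it splits $(H_0-(\lambda+i)^2)^{-1}=S_\lambda+W_\lambda$ into a local piece $S_\lambda$ (time cutoff near $0$) and a remainder $W_\lambda$ with rapidly decaying symbol; only $S_\lambda(Vu)$ needs work. The required bound $\|S_\lambda\|_{L^p(M)\to L^q(\Sigma)}\lesssim 1$ on the line $\tfrac{n}{p}-\tfrac{n-2}{q}=2$ (Proposition~\ref{Prop Reduction for codim 2 thm}) is then proved directly: a dyadic decomposition $S_\lambda=\sum_j S_j$ in $d_g(x,y)\approx \lambda^{-1}2^j$, a rescaling $x=\lambda^{-1}2^jX$ that converts each $S_j$ into an oscillatory integral at frequency $2^j$ with phase $d_j(X,Y)$, and the Greenleaf--Seeger fold-singularity estimates (Theorems~\ref{Thm 2.1 in GS}--\ref{Thm Section 4 in Hu}) applied to the rescaled operators $\tilde S_j$. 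Interpolating several such $\tilde S_j$ bounds and summing in $j$ is what produces the uniform $L^p\to L^q$ estimate; at the endpoints $q\in\{\tfrac{2(n-2)^2}{n^2-5n+8},\tfrac{2(n-2)}{n-3}\}$ the dyadic pieces carry a borderline $j^{\frac{n-3}{2(n-2)}}$ factor, whose sum over $j\lesssim\log_2\lambda$ gives $(\log\lambda)^{1+\frac{n-3}{2(n-2)}}=(\log\lambda)^{\frac{3n-7}{2(n-2)}}$. So the endpoint log loss in part~(3) does \emph{not} come from compounding the $\nu(2,n-2)=\tfrac12$ of the free restriction estimate as you suggest --- it arises entirely from the $S_\lambda$ analysis. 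Once Proposition~\ref{Prop Reduction for codim 2 thm} is in hand, H\"older plus \eqref{BHSS Thm 1.1 estimate} finish exactly as you describe, and part~(4) follows by quoting the Chen--Sogge and Wang--Zhang no-log-loss bounds in place of \eqref{BGT and Hu's estimates}.
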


Since we know that
\begin{align*}
    \begin{cases}
        \frac{2(n-2)^2}{n^2-5n+8}=1, & \text{when } n=3, \\
        \frac{2(n-2)^2}{n^2-5n+8}=2, & \text{when } n=4, \\
        2<\frac{2(n-2)^2}{n^2-5n+8}<\frac{2(n-2)}{n-3}, & \text{when } n\geq 5,
    \end{cases}
\end{align*}
the estimate \eqref{Codimension 2 estimates} holds for $2<q<\frac{2(n-2)}{n-3}$ when $n=3, 4$. If $n=3$, we interpret $\frac{2(n-2)}{n-3}$ as $\infty$.

We also note that as above, even if the Sobolev trace formula holds for $V\in L^{\frac{n}{2}} (M)$, then as in the computation in \eqref{Quasimode computation for hypersurfaces}, we would have that, for the quasimode $f_\lambda^V$,
\begin{align*}
    \|f_\lambda^V\|_{L^q (\Sigma)}\leq C_V\lambda^{\delta(q, n-2)}\|f_\lambda^V\|_{L^2 (M)},\quad \dim \Sigma=n-2,\quad q\geq \frac{2(n+1)}{n-1},
\end{align*}
and again, the advantage of Theorem \ref{Thm for codim 2} is to consider the exponents $q$'s less than $\frac{2(n+1)}{n-1}$ for quasimode estimates.

If we further assume $V\in L^{\frac{n}{2}} (M) \cap \mathcal{K}(M)$, by using \cite[Corollary 1.4]{BlairSireSogge2021Quasimode}, we have eigenfunction estimates in Theorem \ref{Thm for hypersurfaces} and Theorem \ref{Thm for codim 2} with larger ranges of exponents $q$'s.

\begin{corollary}\label{Cor for V in Ln/2 and K(M)}
Let $M$ be a compact Riemannian manifold of dimension $n\geq 3$ and $\Sigma$ be a submanifold of dimension $k\in \{n-1, n-2\}$. If $V\in L^{\frac{n}{2}} (M) \cap \mathcal{K}(M)$ and the $e_\lambda$ are eigenfunctions as in \eqref{Hv eigfcn decomp}, then
\begin{align}\label{Eigfcn restriction est with both potential classes}
    \|e_\lambda^V \|_{L^q (\Sigma)}\leq C_V \lambda^{\delta(q, k)} \|e_\lambda^V \|_{L^2 (M)},
\end{align}
when one of the following holds.
\begin{enumerate}
    \item $k=n-1$, $n\in \{3, 4, 5\}$, and $\frac{2(n-1)^2}{n^2-3n+4}\leq q\leq \infty$.
    \item $k=n-1$, $n\in \{6, 7\}$, and $\frac{2n^2-5n+4}{n^2-4n+8}\leq q\leq \infty$.
    \item $k=n-1$, $n\geq 8$, and $\frac{2n^2-5n+4}{n^2-4n+8}<q\leq \infty$.
    \item $(n, k)=(3, 1)$, and $2<q\leq \infty$.
    \item $k=n-2$, $n\geq 4$, and $\frac{2(n-2)^2}{n^2-5n+8}<q\leq \infty$.
\end{enumerate}
\end{corollary}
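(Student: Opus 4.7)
The plan is to combine Theorems~\ref{Thm for hypersurfaces} and~\ref{Thm for codim 2} (applied to $u=e_\lambda^V$) with the full-manifold $L^\infty$ eigenfunction bound of \cite[Corollary~1.4]{BlairSireSogge2021Quasimode}, and to fill in any remaining gap in $q$ by a H\"older interpolation on $\Sigma$. The Kato hypothesis enters twice: it makes $e_\lambda^V$ continuous on $M$ (so that $\|e_\lambda^V\|_{L^\infty(\Sigma)}\leq \|e_\lambda^V\|_{L^\infty(M)}$), and it is precisely what lets \cite[Corollary~1.4]{BlairSireSogge2021Quasimode} reach the endpoint $q=\infty$ on the full manifold.

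Inserting $u=e_\lambda^V$ into \eqref{Hypersurface estimates} and \eqref{Codimension 2 estimates} and using $(H_V-\lambda^2+i\lambda)e_\lambda^V=i\lambda e_\lambda^V$ immediately yields \eqref{Eigfcn restriction est with both potential classes} on every $q$ in the no-log-loss ranges of Theorem~\ref{Thm for hypersurfaces}(1) and Theorem~\ref{Thm for codim 2}(1). The endpoint $q=\infty$ is handled separately: $V\in\mathcal{K}(M)$ forces $e_\lambda^V\in C(M)$, so $\|e_\lambda^V\|_{L^\infty(\Sigma)}\leq \|e_\lambda^V\|_{L^\infty(M)}$, and \cite[Corollary~1.4]{BlairSireSogge2021Quasimode} gives $\|e_\lambda^V\|_{L^\infty(M)}\leq C_V\lambda^{(n-1)/2}\|e_\lambda^V\|_{L^2(M)}$; since $\delta(\infty,k)=\tfrac{n-1}{2}$ for both $k\in\{n-1,n-2\}$, this is the correct exponent at $q=\infty$.

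For any $q$ strictly between these two endpoints I would interpolate. Choosing $q_0$ in the Theorems' no-log-loss range, H\"older's inequality on the compact submanifold $\Sigma$ gives
\begin{equation*}
    \|e_\lambda^V\|_{L^q(\Sigma)}\leq \|e_\lambda^V\|_{L^{q_0}(\Sigma)}^{q_0/q}\|e_\lambda^V\|_{L^\infty(\Sigma)}^{1-q_0/q},\qquad q_0\leq q\leq\infty,
\end{equation*}
which, combined with the bounds of the previous step, produces the $\lambda$-exponent $\tfrac{q_0}{q}\delta(q_0,k)+(1-\tfrac{q_0}{q})\tfrac{n-1}{2}$. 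This equals $\delta(q,k)$ precisely when $q_0$ and $q$ lie in a single affine piece of $1/q\mapsto\delta(q,k)$. For $k=n-2$ the map $\delta(q,n-2)=\tfrac{n-1}{2}-\tfrac{n-2}{q}$ is affine on $[2,\infty]$, so any $q_0$ taken from Theorem~\ref{Thm for codim 2}(1) is admissible. For $k=n-1$ there is a corner at $q=\tfrac{2n}{n-1}$, and the main obstacle is to verify that each no-log-loss range in Theorem~\ref{Thm for hypersurfaces}(1) meets the upper linear piece, i.e.\ contains a $q_0>\tfrac{2n}{n-1}$; once such a $q_0$ is fixed, interpolation stays on that piece and reproduces $\delta(q,n-1)=\tfrac{n-1}{2}-\tfrac{n-1}{q}$ up to $q=\infty$. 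The remaining work is then the routine dimension-by-dimension comparison of the exponents $\tfrac{2(n-1)^2}{n^2-3n+4}$, $\tfrac{2n^2-5n+4}{n^2-4n+8}$, $\tfrac{2(n-1)}{n-3}$, $\tfrac{2(n-2)^2}{n^2-5n+8}$, $\tfrac{2(n-2)}{n-3}$ and $\tfrac{2n}{n-1}$ to confirm that the union of the Theorem's range and the H\"older range covers the full $q$-window of each of the five cases.
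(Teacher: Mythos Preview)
Your proposal is correct and follows essentially the same route as the paper's proof: both apply Theorems~\ref{Thm for hypersurfaces} and~\ref{Thm for codim 2} to $e_\lambda^V$ to obtain \eqref{Eigfcn restriction est with both potential classes} in the no-log-loss ranges, invoke \cite[Corollary~1.4]{BlairSireSogge2021Quasimode} for the $L^\infty$ endpoint (using continuity of $e_\lambda^V$ from the Kato hypothesis), and then H\"older-interpolate on $\Sigma$ between a suitable $q_0$ and $q=\infty$, exploiting the affine structure of $1/q\mapsto\delta(q,k)$ on the relevant piece. The paper carries out the dimension-by-dimension verification you leave as routine, in particular confirming that $\tfrac{2n}{n-1}$ lies in the no-log-loss range of Theorem~\ref{Thm for hypersurfaces} for $3\leq n\leq 7$ and that the lower endpoint already exceeds $\tfrac{2n}{n-1}$ for $n\geq 8$, so your interpolation stays on the upper affine piece as required.
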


\begin{proof}
    In the proof, the constants $C_V$ may be different on different lines, but they are independent on $\lambda$. We recall a basic $L^p$ norm property:
    \begin{align}\label{A basic Lp interpolation property}
        \text{If $0<a<b<c\leq \infty$, $\theta=\frac{b^{-1}-c^{-1}}{a^{-1}-c^{-1}}$, and $f\in L^a \cap L^c$, then $f\in L^b$ and $\|f\|_b \leq \|f\|_a^\theta \|f\|_c^{1-\theta}$.}
    \end{align}
    We note that since $V\in \mathcal{K}(M)$, the $e_\lambda^V$ are continuous on $M$ (see e.g., \cite[Theorem 2.21]{Guneysu2012OnGeneralizedSchrodingerSemigroups} and \cite{Sturm1993SchrodingerSemigroups}), and so, by compactness of $M$, we can freely apply \eqref{A basic Lp interpolation property} to the $e_\lambda^V$. We first show that
    \begin{align}\label{First reduction to Cor.1.5}
        \begin{split}
            & \text{If we have } \|e_\lambda^V\|_{L^{q_0}(\Sigma)}\leq C_V \lambda^{\delta(q_0, n-1)}\|e_\lambda^V \|_{L^2 (M)}\text{ for some } q_0\geq \frac{2n}{n-1},\text{ then} \\
            & \|e_\lambda^V\|_{L^q(\Sigma)}\leq C_V \lambda^{\delta(q, n-1)}\|e_\lambda^V \|_{L^2 (M)} \text{ for all } q_0\leq q\leq \infty.
        \end{split}
    \end{align}
    The assumption in \eqref{First reduction to Cor.1.5} is
    \begin{align}\label{Assumption in the first reduction to Cor.1.5}
        \|e_\lambda^V\|_{L^{q_0}(\Sigma)}\leq C_V \lambda^{\delta(q_0, n-1)}\|e_\lambda^V \|_{L^2 (M)}.
    \end{align}
    We recall that by \cite[Corollary 1.4]{BlairSireSogge2021Quasimode}
    \begin{align}\label{L2 to L infty eigfcn V estimate}
        \|e_\lambda^V\|_{L^\infty (\Sigma)}\leq\|e_\lambda^V \|_{L^\infty (M)}\leq C_V \lambda^{\frac{n-1}{2}} \|e_\lambda^V \|_{L^2 (M)},\quad V\in L^{\frac{n}{2}} (M) \cap \mathcal{K}(M),\quad n\geq 3.
    \end{align}
    Since $\frac{2n}{n-1}\leq q_0\leq q\leq \infty$, if we set $\theta=\frac{q^{-1}-0}{q_0^{-1}-0}=\frac{q_0}{q}$, by \eqref{A basic Lp interpolation property}, \eqref{Assumption in the first reduction to Cor.1.5}, and \eqref{L2 to L infty eigfcn V estimate}, we have
    \begin{align*}
        \|e_\lambda^V\|_{L^q (\Sigma)}&\leq \|e_\lambda^V\|_{L^{q_0}}^\theta \|e_\lambda^V\|_{L^\infty (\Sigma)}^{1-\theta} \\
        &\leq (C_V \lambda^{\delta(q_0, n-1)} \|e_\lambda^V\|_{L^2 (M)})^\theta (C_V \lambda^{\frac{n-1}{2}} \|e_\lambda^V\|_{L^2 (M)})^{1-\theta} \\
        &=C_V \lambda^{\delta(q_0, n-1)\theta+\frac{n-1}{2}(1-\theta)} \|e_\lambda^V\|_{L^2 (M)},
    \end{align*}
    which proves \eqref{First reduction to Cor.1.5}.
    
    We first consider the case where $k=n-1$, $n\geq 3$, and $\frac{2n}{n-1}\leq q\leq \infty$. It follows from Theorem \ref{Thm for hypersurfaces} that
    \begin{align*}
        \|e_\lambda^V \|_{L^{\frac{2n}{n-1}}(\Sigma)}\leq C_V \lambda^{\frac{n-1}{2n}} \|e_\lambda^V \|_{L^2 (M)},\quad V\in L^{\frac{n}{2}} (M), \quad n\in \{3, 4, 5, 6, 7\}.
    \end{align*}
    By this and \eqref{First reduction to Cor.1.5}, we have \eqref{Eigfcn restriction est with both potential classes} where $k=n-1$, $n\in \{3, 4, 5, 6, 7\}$, and $\frac{2n}{n-1}\leq q\leq \infty$, i.e.,
    \begin{align}\label{Eigfcn est for hypsurf with low dim with both potential classes}
        \|e_\lambda^V\|_{L^q (\Sigma)}\leq C_V \lambda^{\delta(q, n-1)} \|e_\lambda^V \|_{L^2 (M)},\quad \text{where } n\in \{3, 4, 5, 6, 7\} \text{ and } \frac{2n}{n-1}\leq q\leq \infty.
    \end{align}

    If $k=n-1, n\in \{3, 4, 5\}$ and $\frac{2(n-1)^2}{n^2-3n+4}\leq q\leq \frac{2n}{n-1}$, then by Theorem \ref{Thm for hypersurfaces}, we have that
    \begin{align*}
        \|e_\lambda^V\|_{L^q (\Sigma)}\leq C_V \lambda^{\delta(q, n-1)}\|e_\lambda^V\|_{L^2 (M)},\quad \frac{2(n-1)^2}{n^2-3n+4}\leq q\leq \frac{2n}{n-1}, \quad V\in L^{\frac{n}{2}}(M), \quad n\in \{3, 4, 5\}.
    \end{align*}
    Combining this with \eqref{Eigfcn est for hypsurf with low dim with both potential classes}, we have \eqref{Eigfcn restriction est with both potential classes} when the first condition holds, i.e., $k=n-1, n\in \{3, 4, 5\}$, and $\frac{2(n-1)^2}{b^2-3n+4}\leq q\leq \infty$. The second case, i.e., $n\in \{6, 7\}$ and $\frac{2n^2-5n+4}{n^2-4n+8}\leq q\leq \frac{2n}{n-1}$, follows similarly. By Theorem \ref{Thm for hypersurfaces},
    \begin{align*}
        \|e_\lambda^V \|_{L^q (\Sigma)}\lesssim \lambda^{\delta(q, n-1)} \|e_\lambda^V \|_{L^2 (M)},\quad V\in L^{\frac{n}{2}} (M),\quad \frac{2n^2-5n+4}{n^2-4n+8}\leq q\leq \frac{2n}{n-1},\quad n=6, 7.
    \end{align*}
    By this and \eqref{Eigfcn est for hypsurf with low dim with both potential classes}, \eqref{Eigfcn restriction est with both potential classes} holds for the case where $k=n-1$, $n\in \{6, 7\}$, and $\frac{2n^2-5n+4}{n^2-4n+8}\leq q\leq \frac{2n}{n-1}$. When $n\geq 8$ for $k=n-1$, the results follow similarly from \eqref{Eigfcn est for hypsurf with low dim with both potential classes} and \eqref{Hypersurface estimates} for $n\geq 8$.

    The codimension $2$ cases follow if we apply Theorem \ref{Thm for codim 2} instead of applying Theorem \ref{Thm for hypersurfaces}. Indeed, if we set
    \begin{align*}
        0<2<q_1<q<\infty,
    \end{align*}
    then the above argument gives us that
    \begin{align*}
        \|e_\lambda^V\|_{L^q (\Sigma)}&\leq \|e_\lambda^V\|_{L^{q_1}(\Sigma)}^{\frac{q_1}{q}} \|e_\lambda^V\|_{L^\infty (\Sigma)}^{1-\frac{q_1}{q}} \\
        &\lesssim \left(\lambda^{\delta(q_1, n-2)} \|e_\lambda^V\|_{L^2 (M)} \right)^{\frac{q_1}{q}} \left(\lambda^{\frac{n-1}{2}} \|e_\lambda^V\|_{L^2 (M)} \right)^{1-\frac{q_1}{q}} \\
        &=\lambda^{\frac{n-1}{2}-\frac{n-2}{q}}\|e_\lambda^V\|_{L^2 (M)}.
    \end{align*}
    If $(n, k)=(3, 1)$, then we choose $2<q_1<q$. If $k=n-2$ and $n\geq 4$, then we choose $\frac{2(n-2)^2}{n^2-5n+8}<q_1<q$. This completes the proof.
\end{proof}

As in \eqref{Log imp over full manifolds}, if we assume nonpositive sectional curves in $(M, g)$, we have logarithmic improved estimates of the form
\begin{align}\label{Log imp restriction estimates}
    \|\mathds{1}_{[\lambda, \lambda+(\log \lambda)^{-1}]} (\sqrt{H_0}) \|_{L^2 (M)\to L^q (\Sigma)}\leq C \frac{\lambda^{\delta(q, k)}}{(\log \lambda)^{\kappa(q, k)} },
\end{align}
for some constant $\kappa(q, k)>0$ with the same $\delta(q, k)$ as in \eqref{BGT and Hu lambda exponent}. For details, we refer the reader to Chen \cite{Chen2015improvement}, the first author and Sogge \cite{BlairSogge2018concerning}, the first author \cite{Blair2018logarithmic}, Xi and Zhang \cite{XiZhang2017improved}, Zhang \cite{Zhang2017improved}, and so on.

Assuming nonpositive curvatures on $M$, the work of \cite[Theorem 1.3]{BlairHuangSireSogge2022UniformSobolev} also proved estimates analogous to \eqref{Log imp over full manifolds} with $V\in L^{n/2} (M)$ (see also \cite[Theorem 5.1]{BlairHuangSireSogge2022UniformSobolev} for $V\in \mathcal{K}(M)$) in that
\begin{align*}
    \|u\|_{L^q (M)}\leq C \lambda^{\sigma(q)-1} (\epsilon (\lambda))^{-1+\alpha (q, n)} \|(H_V-(\lambda+i\epsilon (\lambda))^2)u \|_{L^2 (M)},
\end{align*}
for $\alpha (q, n) >0$ as in \eqref{log lambda exponent from L2(M) to Lq(M)}, where $\epsilon(\lambda)=(\log (2+\lambda))^{-1}$, and
\begin{align*}
    \frac{2(n+1)}{n-1}\leq q\leq \frac{2n}{n-4} \text{ if } n\geq 5,\quad \text{or} \quad \frac{2(n+1)}{n-1}\leq q<\infty \text{ if } n=3, 4.
\end{align*}
As a consequence, this gives an analogue of \eqref{Log imp over full manifolds} for $V\in L^{n/2}(M)$.

Motivated by \cite{BlairHuangSireSogge2022UniformSobolev}, our next result is analogous to \eqref{Log imp restriction estimates} when $2\leq n\leq 4$, and $V\in \mathcal{K}(M)$ or $V\in L^{\frac{n}{2}} (M)$. Suppose the curves $\gamma_i$ denote that, for $i=1, 2, 3$,
\begin{align}\label{gamma i classifications}
    \begin{split}
        & \gamma_1 \text{ is any curve segment,} \\
        & \gamma_2 \text{ is a geodesic segment,} \\
        & \gamma_3 \text{ is a curve segment with nonvanishing geodesic curvatures.}
    \end{split}
\end{align}
Let
\begin{align}\label{n=2 lambda exponent classifications}
    \begin{split}
        \delta(q, \gamma_i)=\begin{cases}
        \delta(q, 1)=\frac{1}{2}-\frac{1}{q}, & \text{if } \gamma_i=\gamma_1 \text{ and } q>4, \\
        \delta(q, 1)=\frac{1}{4}, & \text{if } \gamma_i=\gamma_2 \text{ and } 2\leq q\leq 4, \\
        \Tilde{\delta}(q)=\frac{1}{3}-\frac{1}{3q}, & \text{if } \gamma_i=\gamma_3 \text{ and } 2\leq q\leq 4,
    \end{cases}
    \end{split}
\end{align}
and
\begin{align}\label{n=2 log lambda exponent classifications}
    \begin{split}
        \kappa (q, \gamma_i)=\begin{cases}
        \frac{1}{2}, & \text{if } \gamma_i=\gamma_1 \text{ and } q>4, \\
        \frac{1}{4}, & \text{if } \gamma_i=\gamma_2 \text{ and } 2\leq q\leq 4, \\
        \frac{1}{2}, & \text{if } \gamma_i=\gamma_3 \text{ and } 2\leq q<4, \\
        \frac{1}{8}, & \text{if } \gamma_i=\gamma_3 \text{ and } q=4.
        \end{cases}
    \end{split}
\end{align}

By the work of \cite{Chen2015improvement}, \cite{BlairSogge2018concerning}, \cite{Blair2018logarithmic}, \cite{XiZhang2017improved}, and \cite{Park2023Restriction}, if $n=2$, then \eqref{Log imp restriction estimates} is translated into
\begin{align*}
    \|\mathds{1}_{[\lambda, \lambda+(\log \lambda)^{-1}]} (\sqrt{H_0})\|_{L^2 (M)\to L^q (\gamma_i)} \leq C \frac{\lambda^{\delta (q, \gamma_i)}}{(\log \lambda)^{\kappa(q, \gamma_i)} },\quad \text{when } \dim M=2.
\end{align*}
We also recall that the first author \cite{Blair2018logarithmic} and Zhang \cite{Zhang2017improved} showed $\kappa(q, k)=\frac{1}{2}$ in \eqref{Log imp restriction estimates} when $(n, k)=(3, 1)$, where $M$ has constant negative sectional curvatures and the submanifold is a geodesic segment. For higher dimensional cases, Chen \cite{Chen2015improvement} showed $\kappa (q, k)=\frac{1}{2}$ when $k=n-1$ with $q>\frac{2n}{n-1}$, and $1\leq k\leq n-2$ with $q>2$. We have the following analogues of $n=2$ with $V\in \mathcal{K}(M)$, or $n\in \{3, 4\}$ with $V\in L^{\frac{n}{2}} (M)$.

\begin{theorem}\label{Thm for log improved}
    Suppose $(M, g)$ is an $n$-dimensional compact Riemannian manifold with nonpositive sectional curvatures, $u\in \mathrm{Dom}(H_V)$, $\lambda\geq 1$, and
    \begin{align*}
        \epsilon(\lambda)=(\log (2+\lambda))^{-1}.
    \end{align*}
    \begin{enumerate}
        \item Let $n=2$ and $V\in \mathcal{K}(M)$. For the curves $\gamma_i$ as in \eqref{gamma i classifications}, if the exponents $\delta(q, \gamma_i)$ and $\kappa(p, i)$ are as in \eqref{n=2 lambda exponent classifications} and \eqref{n=2 log lambda exponent classifications}, we have
        \begin{align}\label{Estimate for log imp}
            \begin{split}
                \|u\|_{L^q (\gamma_i)}\leq C_{V, \gamma_i}\lambda^{\delta(q, \gamma_i)-1} (\epsilon(\lambda))^{-1+\kappa(q, \gamma_i)} \|(H_V-(\lambda+i\epsilon(\lambda))^2)u\|_{L^2 (M)},
            \end{split}
        \end{align}
        where if $q=\infty$ in \eqref{n=2 lambda exponent classifications} and \eqref{n=2 log lambda exponent classifications}, we further assume $u\in \mathrm{Dom}(H_V)\cap C(M)$, where as usual, $C(M)$ is the space of continuous functions on $M$.
        \item Let $n=3$ or $n=4$. We assume that $\Sigma$ is a $k$-dimensional submanifold.
        \begin{enumerate}
            \item Suppose
            \begin{align}\label{(n, k, q, V) for n=3 with 4/3}
                V\in L^{\frac{n}{2}}(M),\quad \text{and} \quad \begin{cases}
                    (n, k)=(3, 2),\;\text{ and }\; 3<q<\infty, \text{ or} \\
                    (n, k)=(3, 1),\;\text{ and }\; 2<q<\infty.
                \end{cases}
            \end{align}
            Then we have
            \begin{align}\label{Estimate for log imp submflds for 4/3}
                \|u\|_{L^q (\Sigma)}\leq C_V \lambda^{\delta(q, k)-1} (\epsilon(\lambda))^{-\frac{1}{2}} \|(H_V-(\lambda+i\epsilon(\lambda))^2)u\|_{L^2 (M)}.
            \end{align}
            \item Suppose
            \begin{align}\label{(n, k, q, V) except 4/3}
                V\in L^{\frac{n}{2}}(M),\quad \text{and} \quad\begin{cases}
                    (n, k)=(3, 2),\;\text{ and } 4<q<\infty, \text{ or} \\
                    (n, k)=(3, 1),\;\text{ and } 4<q<\infty, \text{ or} \\
                    (n, k)=(4, 3),\;\text{ and } 3<q<6, \text{ or} \\
                    (n, k)=(4, 2),\;\text{ and } 2<q<4.
                \end{cases}
            \end{align}
            Then we have
            \begin{align}\label{Estimate for log imp submflds}
                \|u\|_{L^q (\Sigma)}\leq C_V \lambda^{\delta(q, k)-1} (\epsilon(\lambda))^{-\frac{1}{2}}\|(H_V-(\lambda+i\epsilon(\lambda))^2)\|_{L^2 (M)}.
            \end{align}
        \end{enumerate}
    \end{enumerate}
\end{theorem}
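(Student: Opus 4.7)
The proof follows the resolvent perturbation framework used in \cite{BlairHuangSireSogge2022UniformSobolev} for the full-manifold estimates, adapted to the restriction setting. The basic idea is to reduce to the corresponding $H_0$ bound (which is available in the literature through the references cited just before the theorem, namely \cite{Chen2015improvement, BlairSogge2018concerning, Blair2018logarithmic, XiZhang2017improved, Zhang2017improved, Park2023Restriction}) and absorb the potential term using the log-improved full-manifold uniform Sobolev estimates of \cite[Theorem 1.3 and Theorem 5.1]{BlairHuangSireSogge2022UniformSobolev}.

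\textbf{Step 1: $H_0$ resolvent restriction bound.} Set $z_\lambda = (\lambda+i\epsilon(\lambda))^2$. The first task is to upgrade the spectral cluster bound
\begin{equation*}
  \|\mathds{1}_{[\mu,\mu+\epsilon(\lambda)]}(\sqrt{H_0})\|_{L^2(M)\to L^q(\Sigma)} \leq C\lambda^{\delta(q,k)}(\log\lambda)^{-\kappa(q,k)}, \qquad \mu \sim \lambda,
\end{equation*}
from \eqref{Log imp restriction estimates} (and its $n=2$ variants) to a resolvent estimate
\begin{equation*}
  \|(H_0-z_\lambda)^{-1}\|_{L^2(M)\to L^q(\Sigma)} \leq C\lambda^{\delta(q,k)-1}(\epsilon(\lambda))^{-1+\kappa(q,k)}.
\end{equation*}
This follows from a standard partition of the $\sqrt{H_0}$-spectrum into intervals of length $\epsilon(\lambda)$ together with a square-function / Cotlar argument, as in the proof of \cite[Theorem 1.3]{BlairHuangSireSogge2022UniformSobolev}.

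\textbf{Step 2: Resolvent identity and perturbation for $n=3,4$.} Given $u\in\mathrm{Dom}(H_V)$ set $f=(H_V-z_\lambda)u$ and write
\begin{equation*}
  u = (H_0-z_\lambda)^{-1}f - (H_0-z_\lambda)^{-1}(Vu).
\end{equation*}
The first term is bounded directly by Step 1. For the second, Step 1 plus Hölder gives
\begin{equation*}
  \|(H_0-z_\lambda)^{-1}(Vu)\|_{L^q(\Sigma)} \leq C\lambda^{\delta-1}\epsilon^{-1+\kappa}\|V\|_{L^{n/2}(M)}\|u\|_{L^{2n/(n-2)}(M)}.
\end{equation*}
To close the estimate, invoke \cite[Theorem 1.3]{BlairHuangSireSogge2022UniformSobolev} at the Sobolev exponent $q^*=2n/(n-2)$, which gives $\|u\|_{L^{q^*}(M)}\leq C\lambda^{\sigma(q^*)-1}(\epsilon(\lambda))^{-1/2}\|f\|_{L^2(M)}$. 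Since $\sigma(q^*)-1 = -1/3$ when $n=3$ and $-1/2$ when $n=4$, the product with $\|V\|_{L^{n/2}}$ is bounded by any positive power of $\lambda^{-1}$ up to log factors, and the perturbation term becomes lower order; a Neumann iteration (or a direct absorption, taking $\lambda$ large if needed) then yields \eqref{Estimate for log imp submflds for 4/3} and \eqref{Estimate for log imp submflds}.

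\textbf{Step 3: The $n=2$, Kato case.} The same perturbation scheme applies, but Hölder with $L^{n/2}=L^1$ is too weak. Instead one splits $V=V_1+V_2$ where $V_2\in L^\infty(M)$ and $V_1$ has Kato seminorm as small as desired, and uses the heat-kernel characterization of $\mathcal{K}(M)$ to control $\|V\cdot\|$ as an operator into suitable spaces, exactly as in the appendix of \cite{BlairHuangSireSogge2022UniformSobolev}. The role of the full-manifold absorption step is played by the log-improved bound of \cite[Theorem 5.1]{BlairHuangSireSogge2022UniformSobolev} for $V\in\mathcal{K}(M)$, $n=2$, paired with Theorem \ref{Thm for univ est for any curves} and Theorem \ref{Thm for univ est for curved curves} for the non-log baseline when needed near endpoint exponents.

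\textbf{Main obstacle.} The principal technical challenge is ensuring that the logarithmic improvement $(\log\lambda)^{-\kappa}$ survives the Neumann iteration. Two delicate points arise: (i) at the endpoint exponents $q=4$ for $\gamma_3$ where $\kappa=1/8$ is small, the perturbation term must be shown to contribute at most $(\log\lambda)^{1/8}$ extra loss and no more; (ii) in the $n=2$ Kato case, absorbing the $V$-term without standard Hölder requires a careful replacement of the full-manifold estimate by one calibrated to the Kato seminorm. A secondary concern is the $n=4$, $k=2$ case of \eqref{(n, k, q, V) except 4/3}, where both the codimension of $\Sigma$ and the critical Sobolev exponent are sharp, so verifying the precise decay of the absorption term requires balancing the restriction and global exponents carefully.
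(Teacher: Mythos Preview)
Your overall strategy---reduce to the $H_0$ resolvent restriction bound and absorb the potential term via the log-improved full-manifold estimates of \cite{BlairHuangSireSogge2022UniformSobolev}---matches the paper's, and Step~1 is exactly Lemma~\ref{Lemma Resolvent opr norm from L2 to Lq for univ est}. The problem is Step~2: the inequality you write there is not valid. Step~1 gives only an $L^2(M)\to L^q(\Sigma)$ bound for $(H_0-z_\lambda)^{-1}$, so applying it to $Vu$ yields a factor $\|Vu\|_{L^2(M)}$. H\"older then gives $\|Vu\|_{L^2}\le \|V\|_{L^{n/2}}\|u\|_{L^{2n/(n-4)}}$, not $\|u\|_{L^{2n/(n-2)}}$; for $n=3$ the exponent $2n/(n-4)$ is negative and for $n=4$ it is $\infty$, so the argument cannot close. (Your computation $\sigma(q^*)-1=-1/3$ for $n=3$ is also off: $\sigma(6)=1/2$.)

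What the paper actually does for $n=3,4$ is to \emph{not} rely on the $L^2\to L^q(\Sigma)$ bound for the perturbation term. Instead it decomposes $(H_0-z_\lambda)^{-1}=T_\lambda^0+T_\lambda^1+R_\lambda$ as in \eqref{T lambda R lambda set up}. The $R_\lambda$ piece is handled by the symbol estimate \eqref{Size est for m lambda log} and the spectral theorem. For the genuinely local piece $T_\lambda^0$ one invokes the $L^p(M)\to L^q(\Sigma)$ uniform-Sobolev-type bounds of Propositions~\ref{Prop Reduction for hypersurface thm} and~\ref{Prop Reduction for codim 2 thm} along the line $\tfrac{n}{p}-\tfrac{k}{q}=2$; \emph{this} is what makes the H\"older split $\|Vu\|_{L^p}\le \|V\|_{L^{n/2}}\|u\|_{L^{np/(n-2p)}}$ legitimate and lets \cite[Theorem 1.3]{BlairHuangSireSogge2022UniformSobolev} close the loop. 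Those propositions in turn rest on the Greenleaf--Seeger / Hu fold-singularity machinery of \S\ref{S: Review for fold singularities}--\S\ref{S: proof of codim 2 thm}, which your proposal omits entirely. The remaining piece $T_\lambda^1$ has a pointwise kernel bound $O(\lambda^{(n-3)/2+Cc_0})$ and is handled by an $L^1\to L^\infty$ argument with a different H\"older pairing depending on the case. For $n=2$ the paper likewise uses the explicit kernel bounds \eqref{T lambda 0 kernel est} for $T_\lambda^0$ integrated along the curve together with $\|u\|_{L^\infty(M)}$ from \cite[Theorem 5.1]{BlairHuangSireSogge2022UniformSobolev} and $V\in L^1(M)$, rather than a $V=V_1+V_2$ splitting.
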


The estimates so far may be improved when $M$ is a torus.

\begin{theorem}\label{Thm for tori}
    Let $\mathbb{T}^n$ be an $n$-dimensional torus, and $u\in \mathrm{Dom}(H_V)$.
    \begin{enumerate}
        \item If $n=2$, $V\in \mathcal{K}(\mathbb{T}^2)$, $q>4$, $\gamma$ is a segment of any curve, and
        \begin{align}\label{Epsilon setup for curves on 2D tori}
            \epsilon (\lambda)=\lambda^{\frac{4-q}{3q}},\quad q>4,
        \end{align}
        then
        \begin{align}\label{2D torus est for general curve}
            \begin{split}
                \|u\|_{L^q (\Sigma)}\leq \begin{cases}
                    C_V \lambda^{\delta(q, 1)-1}\bigg[(\epsilon (\lambda))^{-\frac{1}{2}} \|(H_V-(\lambda+i\epsilon (\lambda))^2)u\|_{L^2 (\mathbb{T}^2)} \\
                    \hspace{70pt}+\lambda^{\frac{1}{6}}\|(H_V-(\lambda+i\lambda^{-\frac{1}{3}})^2)u\|_{L^2 (\mathbb{T}^2)}\bigg],& \text{if } 4<q<\infty, \\
                    C_V \lambda^{-\frac{1}{3}}\|(H_V-(\lambda+i\lambda^{-\frac{1}{3}})^2)u\|_{L^2 (\mathbb{T}^2)}, & \text{if } q=\infty \text{ and } u\in \mathrm{Dom}(H_V)\cap C(M),
                \end{cases}
            \end{split}
        \end{align}
        where $C(M)$ is the space of continuous functions on $M$.
        \item If $n=2$, $V\in \mathcal{K}(\mathbb{T}^2)$, $\gamma$ is a geodesic segment, and
        \begin{align}\label{Window for tori geodesic cases}
            \epsilon (\lambda)=\lambda^{-\frac{5}{21}}, \quad \text{if }\; 2\leq q<\frac{8}{3},
        \end{align}
        then
        \begin{align}\label{2D torus est for geodesic}
            \begin{split}
                \|u\|_{L^q (\gamma)}&\leq C_V \bigg[\lambda^{-\frac{3}{4}} (\epsilon (\lambda))^{-\frac{3}{4}} \|(H_V-(\lambda+i\epsilon(\lambda))^2)u\|_{L^2 (\mathbb{T}^2)} \\
                &\hspace{100pt}+\lambda^{-\frac{5}{6}} (\epsilon(\lambda))^{-\frac{3}{2}} \|(H_V-(\lambda+i\lambda^{-\frac{1}{3}})^2)u\|_{L^2 (\mathbb{T}^2)} \bigg].
            \end{split}
        \end{align}
        \item Let $n=3$ or $n=4$, and $V\in L^{\frac{n}{2}} (M)$. We assume that $\Sigma$ is a $k$-dimensional submanifold, and the following.
        \begin{align}\label{Window set up for nD tori for submfld}
            \begin{split}
                \epsilon_1 (\lambda)=\lambda^{-\frac{2}{n+1}\left(\frac{n-1}{2}-\frac{2k}{q} \right)}, \quad \epsilon_2 (\lambda)=\begin{cases}
                    \lambda^{-\frac{3}{16}+c_0}, & \text{if } n=3, \\
                    \lambda^{-\frac{1}{5}+c_0}, & \text{if } n=4,
                \end{cases}
            \end{split}
        \end{align}
        where $c_0>0$ is arbitrary.
        \begin{enumerate}
            \item If
            \begin{align}\label{n=3 k q for tori 4/3 potential}
                \begin{cases}
                    (n, k)=(3, 2)\text{ and } 4<q<8, \text{ or} \\
                    (n, k)=(3, 1)\text{ and } 2<q<4,
                \end{cases}
            \end{align}
            then
            \begin{align}\label{Estimate n=3 k q for tori 4/3 potential}
                \begin{split}
                    \|u\|_{L^q (\Sigma)}&\leq C_V \lambda^{\delta(q, k)-1}\bigg((\epsilon_1 (\lambda))^{-\frac{1}{2}} \|(H_V-(\lambda+i\epsilon_1 (\lambda))^2)u\|_{L^2 (\mathbb{T}^3)} \\
                    &\hspace{100pt}+(\epsilon_2 (\lambda))^{-\frac{3}{4}} \|(H_V-(\lambda+i\epsilon_2 (\lambda))^2)u\|_{L^2 (\mathbb{T}^3)} \bigg).
                \end{split}
            \end{align}
            \item If
            \begin{align}\label{n=4 k q for tori L2 potential}
                \begin{cases}
                    (n, k)=(4, 3)\text{ and } 4<q<6, \text{ or} \\
                    (n, k)=(4, 2)\text{ and } \frac{8}{3}<q<4,
                \end{cases}
            \end{align}
            then
            \begin{align}\label{Estimate n=4 k q for tori L2 potential}
                \|u\|_{L^q (\Sigma)}&\leq C_V \bigg(\lambda^{\delta(q, k)-1}(\epsilon_1 (\lambda))^{-\frac{1}{2}}\|(H_V-(\lambda+i\epsilon_1 (\lambda))^2)u\|_{L^2 (\mathbb{T}^4)}+\lambda^{2-\frac{2k}{q}} \|u\|_{L^2 (\mathbb{T}^4)} \bigg).
            \end{align}
        \end{enumerate}
    \end{enumerate}
\end{theorem}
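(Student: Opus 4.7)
The plan is to mirror the proof strategy of Theorem~\ref{Thm for log improved}, but with the logarithmically improved spectral cluster estimates (which hold under nonpositive curvature) replaced by the polynomially improved spectral window estimates that are available on flat tori via Bourgain-type discrete restriction and decoupling results. Structurally, I would reduce each of the three parts of Theorem~\ref{Thm for tori} to a pair of spectral projector bounds for $\sqrt{H_V}$: one associated with the ``thin'' torus window $\epsilon(\lambda)$ or $\epsilon_1(\lambda)$, and one associated with the ``thick'' window $\lambda^{-1/3}$ (in the $n=4$ case, a plain $L^2$ tail replaces the thick-window piece). The inhomogeneous statements then follow from these cluster bounds by the standard spectral theorem argument used in \cite{BlairSireSogge2021Quasimode,BlairHuangSireSogge2022UniformSobolev}.

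The new ingredient is the torus-specific spectral projector bound for $\sqrt{H_0}=\sqrt{-\Delta}$. For Part~(1), with $\gamma\subset\mathbb{T}^2$ and $q>4$, I would invoke Bourgain's discrete restriction estimate in the form
\begin{align*}
\|\mathds{1}_{[\lambda,\lambda+\epsilon(\lambda)]}(\sqrt{H_0})\|_{L^2(\mathbb{T}^2)\to L^q(\gamma)} \leq C\lambda^{\delta(q,1)}(\epsilon(\lambda))^{1/2}, \qquad \epsilon(\lambda)=\lambda^{(4-q)/(3q)};
\end{align*}
for Part~(2), a Bourgain-Burq-G\'erard-type bound on geodesic restrictions on $\mathbb{T}^2$ with window $\lambda^{-5/21}$; and for Part~(3), the corresponding discrete decoupling bounds for $k$-dimensional submanifolds of $\mathbb{T}^n$ with $n\in\{3,4\}$ and window $\epsilon_1(\lambda)$ as in \eqref{Window set up for nD tori for submfld}. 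To transfer these flat-Laplacian bounds to $H_V$, I would apply the resolvent identity
\begin{align*}
(H_V-\zeta^2)^{-1} = (H_0-\zeta^2)^{-1} - (H_0-\zeta^2)^{-1}\, V\,(H_V-\zeta^2)^{-1}, \qquad \zeta=\lambda+i\epsilon,
\end{align*}
and control the perturbation term by combining $V\in L^{n/2}$ or $V\in\mathcal{K}(M)$ with the already-established Theorems~\ref{Thm for univ est for any curves}, \ref{Thm for univ est for curved curves}, \ref{Thm for hypersurfaces}, and \ref{Thm for codim 2}.

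The main obstacle is the bookkeeping required to accommodate the perturbation: composition with $V$ forces a loss in the admissible spectral window, which is precisely why the second resolvent term (with the thicker window $\lambda^{-1/3}$, or the $L^2$-tail in the $n=4$ case) must appear alongside the first in every displayed bound. The torus discrete restriction is sharp only inside the thin window $\epsilon(\lambda)$, while the mass of $Vu$ lying outside that window must be absorbed using a coarser, non-torus estimate compatible with the limited smoothness of $V$. Verifying that these two contributions combine cleanly, with no extra polynomial or logarithmic losses, and checking that the exponents in \eqref{Epsilon setup for curves on 2D tori}, \eqref{Window for tori geodesic cases}, and \eqref{Window set up for nD tori for submfld} exactly match the thresholds dictated by the discrete restriction and decoupling theorems, is the principal technical step; however, it should closely parallel the corresponding computation in the proof of Theorem~\ref{Thm for log improved} with logarithmic factors replaced by the polynomial ones dictated by the arithmetic structure of $\mathbb{T}^n$.
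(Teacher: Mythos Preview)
Your overall architecture---prove a thin-window spectral projector bound on the torus, feed it through Lemma~\ref{Lemma Resolvent opr norm from L2 to Lq for univ est}, decompose the resolvent, and control the perturbation $T_\lambda(Vu)$---is the right one and matches the paper. But two of your black boxes are not actually available, and this is where the real work lies.

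First, the thin-window restriction bounds you need (for curves in $\mathbb{T}^2$ and for $k$-dimensional submanifolds in $\mathbb{T}^n$) do \emph{not} follow from Bourgain's discrete restriction or $\ell^2$-decoupling. Those results concern $L^p$ norms on the full torus, not restrictions to submanifolds; there is no off-the-shelf statement of the form $\|\mathds{1}_{[\lambda,\lambda+\epsilon]}(P)\|_{L^2(\mathbb{T}^n)\to L^q(\Sigma)}\lesssim \lambda^{\delta(q,k)}\epsilon^{1/2}$ in that literature. The paper proves these bounds from scratch (Lemmas~\ref{Lemma 2D torus projection for general curve}, \ref{Lemma 2D tori geodesic spectral proj bound}, \ref{Lemma nD torus projection for submfld}) by a $TT^*$ argument: split into a local piece handled by the Burq--G\'erard--Tzvetkov/Hu machinery, and a global piece where one lifts $\cos(tP)$ to the universal cover $\mathbb{R}^n$ via Poisson summation and sums the resulting stationary-phase kernel bounds over lattice translates. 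For the geodesic case (Part~(2)) this is genuinely delicate and requires the pseudodifferential cutoffs $Q_{\theta,\lambda}$ from \cite{BlairSogge2018concerning} together with the kernel bounds of \cite{BlairHuangSogge2022Improved}. Without these lemmas you have no input to feed into the perturbation scheme.

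Second, your plan to control the perturbation term using Theorems~\ref{Thm for univ est for any curves}--\ref{Thm for codim 2} is misdirected. Those are restriction estimates; what you actually need after applying H\"older to $T_\lambda(Vu)$ is a bound on $\|u\|_{L^r(\mathbb{T}^n)}$ over the \emph{full} torus (with $r=\infty$ when $n=2$, and $r=\frac{np}{n-2p}$ or $r=\frac{2(n+1)}{n-1}$ when $n=3,4$). These come from the torus results \cite[Theorems~1.4 and~5.2]{BlairHuangSireSogge2022UniformSobolev}, and it is precisely their windows ($\lambda^{-1/3}$ and $\epsilon_2(\lambda)$) that force the second terms in \eqref{2D torus est for general curve}, \eqref{2D torus est for geodesic}, \eqref{Estimate n=3 k q for tori 4/3 potential}. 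You also omit the further dyadic decomposition $T_\lambda^1=T_\lambda^{1,0}+\sum_j T_\lambda^{1,j}$, needed because $T_\lambda^1$ now involves times up to a power of $\lambda$; the $T_\lambda^{1,j}$ pieces are controlled by direct $L^1\to L^\infty$ kernel bounds obtained via the same lifting-and-lattice-sum argument.
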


Note that \eqref{2D torus est for general curve} is not sharp by many existing results. For example, Burq, G\'erard, and Tzvetkov \cite[Introduction]{BurqGerardTzvetkov2007restrictions} showed that, for any $0<\epsilon\ll 1$ and a curve segment $\gamma\in \mathbb{T}^2$, if $V\equiv 0$, then
\begin{align*}
    & \|e_\lambda^0 \|_{L^2(\gamma)}\leq C\lambda^\epsilon \|e_\lambda^0\|_{L^2 (\mathbb{T}^2)}, \\
    & \|e_\lambda^0\|_{L^\infty (\gamma)}\leq C\lambda^{\epsilon} \|e_\lambda^0\|_{L^2 (\mathbb{T}^2)}.
\end{align*}
By interpolation, we then have
\begin{align*}
    \|e_\lambda^0 \|_{L^q (\gamma)}\leq C\lambda^\epsilon \|e_\lambda^0\|_{L^2 (\mathbb{T}^2)},\quad 2\leq q\leq \infty,
\end{align*}
and this is much better than the following result from \eqref{2D torus est for general curve}.
\begin{align*}
    \|e_\lambda^V \|_{L^q (\gamma)}\leq C_V \lambda^{\frac{1}{3}-\frac{1}{3q}} \|e_\lambda^V \|_{L^2 (\mathbb{T}^2)},\quad \text{if } q>4.
\end{align*}
We note that \eqref{2D torus est for geodesic} is also far from being sharp by many existing results. For the spectral projection estimates, estimates in Lemma \ref{Lemma 2D tori geodesic spectral proj bound} are better than \eqref{2D torus est for geodesic}. For the estimates for exact eigenfunctions, Huang and Zhang \cite{HuangZhang2021RestrictionOfToral} showed that there exists an eigenfunction $e_\lambda^0$ for $V\equiv 0$ such that
\begin{align*}
    c\sqrt{N_{\lambda, 1}} \|e_\lambda^0\|_{L^2 (\mathbb{T}^2)}\leq\|e_\lambda^0 \|_{L^2 (\gamma)}\leq C \sqrt{N_{\lambda, 1}} \|e_\lambda^0\|_{L^2 (\mathbb{T}^2)},\quad C, c>0,
\end{align*}
for a geodesic segment $\gamma\subset \mathbb{T}^2$, where $N_{\lambda, 1}$ is a number-theoretic constant, which is known to be
\begin{align*}
    0\leq N_{\lambda, 1}\leq C\log \lambda.
\end{align*}
To the best of our knowledge, it is conjectured that the desired bound is $N_{\lambda, 1}\leq C$ (cf. \cite[Introduction]{HuangZhang2021RestrictionOfToral}, etc.). This is better than the following result from \eqref{2D torus est for geodesic}
\begin{align*}
    \|e_\lambda^V \|_{L^2 (\gamma)}\leq C_V \lambda^{\frac{4}{21}} \|e_\lambda^V\|_{L^2 (\mathbb{T}^2)},\quad \text{if }\; 2\leq q<\frac{8}{3}.
\end{align*}

\subsection*{Outline of the work} In \S \ref{S: Review for fold singularities}, we review the notion of submersions with folds, or fold singularities, in Greenleaf and Seeger \cite{GreenleafSeeger1994fourier} and Hu \cite{Hu2009lp}, since the oscillatory integral estimates related with fold singularities are used throughout this paper. In \S \ref{S: Preliminaries}, we will reduce Theorem \ref{Thm for univ est for any curves}-\ref{Thm for codim 2} to Proposition \ref{Prop Reduction for curves Kato case}-\ref{Prop Reduction for codim 2 thm} by using the perturbation arguments in \cite{BlairSireSogge2021Quasimode} and \cite{BlairHuangSireSogge2022UniformSobolev}. A resolvent formula in Bourgain-Shao-Sogge-Yao \cite{BourgainShaoSoggeYao2015Resolvent} will play an important role in the computation. We will prove Proposition \ref{Prop Reduction for curves Kato case}-\ref{Prop Reduction for codim 2 thm} in \S \ref{S: Proof for curves for universal estimates}-\S \ref{S: proof of codim 2 thm}, completing the proofs of Theorem \ref{Thm for univ est for any curves}-\ref{Thm for codim 2}. For Theorem \ref{Thm for univ est for any curves}-\ref{Thm for univ est for curved curves}, we shall use the perturbation arguments of \cite{BlairSireSogge2021Quasimode} and \cite{BlairHuangSireSogge2022UniformSobolev}. We will also make some scaling argument, which is a reminiscent of the work of Sogge \cite{Sogge1988concerning}, Huang-Sogge \cite{HuangSogge2014Concerning}, Bourgain-Shao-Sogge-Yao \cite{BourgainShaoSoggeYao2015Resolvent}, and so on. We need interpolation computation at the end to finish each proof of Theorem \ref{Thm for hypersurfaces}-\ref{Thm for codim 2}.

In \S \ref{S: proof for log imp} and \S \ref{S: Thm for tori}, we will prove Theorem \ref{Thm for log improved} and Theorem \ref{Thm for tori}, respectively. As in the other theorems, the main idea here is to consider a resolvent operator as in \cite{BourgainShaoSoggeYao2015Resolvent} first, and the perturbation arguments as in \cite{BlairHuangSireSogge2022UniformSobolev} next.

In \S \ref{S: Partial result and future work}, we shall briefly talk about partial results and related future work.

\subsection*{Notation}
If we consider an integral operator $K$, then we denote it as its kernel $K(x, y)$ in that
\begin{align*}
    Kf(x)=\int K(x, y)f(y)\:dy.
\end{align*}
The constants $C$ are uniform constants with respect to $\lambda$, may depend on manifolds $M$, curve $\gamma$, and exponent $p$, and may be different at different lines, but each of the constants are different up to some uniform constant. We write $A\lesssim B$ when there is a uniform constant $C>0$ such that $A\leq CB$. We write $A\approx B$, if $A\lesssim B$ and $B\lesssim A$. We also write $A\ll B$ or $B\gg A$, if $CA\leq B$ for some sufficiently large $C>0$.

\subsection*{Acknowledgements}
We are grateful to Christopher Sogge for helpful and numerous comments and suggestions throughout the course of this work, which greatly improved the early version of this work. The second author is also grateful to Xiaoqi Huang, Andreas Seeger, Yannick Sire, and Cheng Zhang for helpful discussions for this work. The second author is also grateful to Suresh Eswarathasan and Blake Keeler for helpful comments and suggestions and for their hospitality during his visit to the Dalhousie University.

\section{Review of Submersions with Folds}\label{S: Review for fold singularities}
In this section, we briefly review the parts of the work of \cite[\S 2]{GreenleafSeeger1994fourier} and \cite[\S 4]{Hu2009lp}, since we will make use of the arguments in the papers frequently in the rest of this paper.

Let $M$ and $N$ be smooth manifolds and $F: M\to N$ be a smooth map. If $\phi:\mathbb{R}\to N$ is a $C^\infty$ map with
\begin{align*}
    \phi(0)=y\in N,\quad \phi'(0)=\eta\in \ker F'(y).
\end{align*}
As in the computation in \cite[Appendix C.4]{Hormander2007LPDE3}, one can consider an invariantly defined quadratic form
\begin{align*}
    \ker F'(y)\ni \eta \mapsto \langle F''(y)\eta, \eta \rangle\in \mathrm{coker}F'(y).
\end{align*}
This is called the Hessian of $F$. With this in mind, we first recall the definition of a submersion with folds (for details, see \cite[Chapter 3]{Golubitsky-Guillemin1973StableMappings} \cite[p.36]{GreenleafSeeger1994fourier}, \cite[Appendix C.4]{Hormander2007LPDE3}, and so on).

\begin{definition}
    Let $M$ and $N$ be smooth manifolds of dimensions $m$ and $n$, respectively, with $m\geq n$. Then a $C^\infty$ map $F: M\to N$ is a submersion with folds at $x_0\in M$ if the following hold.
    \begin{enumerate}
        \item $\mathrm{rank} F'(x_0)=n-1$ (and thus, $\dim \ker F'(x_0)=m-n+1$ and $\dim \mathrm{coker} F'(x_0)=1$), and
        \item The Hessian of $F$ at $x_0$ is nondegenerate.
    \end{enumerate}
    If $m=n$, a submersion with folds is a Whitney fold.
\end{definition}
As noted in \cite[p.36]{GreenleafSeeger1994fourier}, the variety $\mathcal{L}$ where $F'$ is degenerate is a smooth submanifold in $M$ of codimension $m-n+1$.

We now suppose $U$ and $V$ are open sets in $\mathbb{R}^d$ and $\mathbb{R}^{d+r}$, respectively. We define the oscillatory integral operators $T_\lambda$ by
\begin{align*}
    T_\lambda f (x)=\int e^{i\lambda \Phi(x, y)} a(x, y) f(y)\:dy,
\end{align*}
where $\Phi\in C^\infty (U\times V)$ and $a\in C_0^\infty (U\times V)$. If we consider the canonical relation $\mathcal{C}_\Phi$ associated with the phase function $\Phi$ of the form
\begin{align*}
    \mathcal{C}_\Phi=\{(x, \Phi_x' (x, y); y, -\Phi_y' (x, y))\},
\end{align*}
we define the left projection $\pi_L$ and right projection $\pi_R$ as follows.
\begin{align*}
    & \pi_L: \mathcal{C}_\Phi \to T^* (U),\quad \pi_L (x, y)=(x, \Phi_x'(x, y)), \\
    & \pi_R: \mathcal{C}_\Phi \to T^* (V),\quad \pi_R (x, y)=(y, -\Phi_y'(x, y)).
\end{align*}

Moreover, if $\dim U=\dim V$, the variety $\mathcal{L}$ for the left projection $\pi_L$ is the submanifold of codimension $1$, i.e., the hypersurface. If $\pi^U:\mathcal{L}\to X$ is a submersion, then for each $x$ the projection of $\mathcal{L}$ onto the fiber, denoted by
\begin{align}\label{Sigma at x def in GS}
    H_x=\pi^{T_x^* U} (\mathcal{L}),
\end{align}
is a hypersurface in $T_x^* U$. In \cite{GreenleafSeeger1994fourier}, Greenleaf and Seeger showed the following.

\begin{theorem}[Theorem 2.1 in \cite{GreenleafSeeger1994fourier}]\label{Thm 2.1 in GS}
    Suppose $\dim U=d$, $\dim V=d+r$, and that the left projection $\pi_L$ is a submersion with folds. Let $\lambda\geq 2$.
    \begin{enumerate}
        \item If $r=0$, then
        \begin{align*}
            \|T_\lambda f\|_{L^q (U)} \lesssim \begin{cases}
                \lambda^{-\frac{d-1}{q}-\frac{1}{4}} \|f\|_{L^2 (V)}, & \text{if } 2\leq q\leq 4, \\
                \lambda^{-\frac{d}{q}} \|f\|_{L^2 (V)}, & \text{if } 4\leq q\leq \infty.
            \end{cases}
        \end{align*}
        \item If $r=1$, then
        \begin{align*}
            \|T_\lambda f\|_{L^q (U)}\lesssim \begin{cases}
                \lambda^{-\frac{d}{2}} (\log \lambda)^{\frac{1}{2}} \|f\|_{L^2 (V)}, & \text{if } q=2, \\
                \lambda^{-\frac{d}{q}}\|f\|_{L^2 (V)}, & \text{if } 2<q\leq \infty.
            \end{cases}
        \end{align*}
        \item If $r\geq 2$, then
        \begin{align*}
            \|T_\lambda f\|_{L^q (U)}\lesssim \lambda^{-\frac{d}{q}} \|f\|_{L^2 (V)},\quad \text{if } 2\leq q\leq \infty.
        \end{align*}
    \end{enumerate}
\end{theorem}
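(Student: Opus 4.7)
The plan is to establish endpoint estimates at $q=2$ and $q=\infty$ (and at $q=4$ when $r=0$) and fill in the intermediate exponents by Riesz--Thorin interpolation. The $L^2\to L^\infty$ endpoint is immediate: Cauchy--Schwarz in the $y$-integral defining $T_\lambda f(x)$ gives $|T_\lambda f(x)|\leq \|a(x,\cdot)\|_{L^2(V)}\|f\|_{L^2(V)}$ uniformly in $\lambda$, matching $\lambda^{-d/q}$ at $q=\infty$.

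For the $L^2\to L^2$ endpoint, I would apply the identity $\|T_\lambda\|_{L^2\to L^2}^2=\|T_\lambda T_\lambda^*\|_{L^2\to L^2}$ and analyze the kernel
\begin{align*}
    L(x,x')=\int_V e^{i\lambda(\Phi(x,y)-\Phi(x',y))}\,a(x,y)\overline{a(x',y)}\,dy
\end{align*}
by stationary phase in $y$. When $\pi_L$ is an immersion, the classical H\"ormander analysis confines $L$ essentially to the diagonal tube $|x-x'|\lesssim \lambda^{-1}$ with $|L|=O(1)$; the fold hypothesis broadens this tube to width $\lambda^{-1/2}$ in the single direction normal to the fold hypersurface $H_x$ from \eqref{Sigma at x def in GS}. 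Working in the normal-form coordinates from Section \ref{S: Review for fold singularities} and introducing a parameter $\rho$ measuring distance to the fold, stationary phase in the degenerate direction produces a factor $(1+\lambda|\rho(x,x')|)^{-1/2}$. Applying Schur's test $\|T_\lambda T_\lambda^*\|_{L^2\to L^2}\leq \sup_x\int|L(x,x')|\,dx'$ and integrating the weight in the fold direction yields the three regimes of the theorem: a polynomial gain $\lambda^{-1/2}$ when $r=0$ (so $\|T_\lambda\|^2\lesssim \lambda^{-d+1/2}$, i.e., $\|T_\lambda\|_{L^2\to L^2}\lesssim \lambda^{-(d-1)/2-1/4}$), a logarithmic divergence when $r=1$ (precisely the codimension-matching threshold where the $(1+\lambda\rho)^{-1/2}$ weight sits at the borderline of integrability in the fibre), and a bounded contribution when $r\geq 2$.

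The $L^2\to L^4$ endpoint needed when $r=0$ is the principal technical obstacle. I would establish it by Stein's complex interpolation applied to the analytic family
\begin{align*}
    T_\lambda^z f(x)=\int e^{i\lambda\Phi(x,y)}\,a(x,y)|\rho(x,y)|^z f(y)\,dy,
\end{align*}
where $|\rho|^z$ is supported near the fold. For $\operatorname{Re}(z)$ sufficiently negative the weight pushes the operator away from the fold, so that H\"ormander's theorem yields the sharp non-degenerate $L^2\to L^2$ bound with decay $\lambda^{-d/2}$, while for $\operatorname{Re}(z)$ sufficiently positive a trivial $L^2\to L^\infty$ bound holds with controlled growth in $\operatorname{Im}(z)$; interpolating at $z=0$ delivers $\|T_\lambda\|_{L^2\to L^4}\lesssim \lambda^{-d/4}$. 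A $(TT^*)^2$ kernel computation is an alternative. Finally, Riesz--Thorin between these endpoint bounds fills in the full range: for $r=0$, between $L^2\to L^2$ and $L^2\to L^4$ covers $2\leq q\leq 4$ with decay $\lambda^{-(d-1)/q-1/4}$, and between $L^2\to L^4$ and $L^2\to L^\infty$ covers $4\leq q\leq \infty$ with decay $\lambda^{-d/q}$; for $r\geq 1$, interpolation between $L^2\to L^2$ and $L^2\to L^\infty$ covers the whole range with decay $\lambda^{-d/q}$. A direct arithmetic check confirms these interpolants match the announced exponents. The main difficulty throughout is the careful degenerate stationary phase analysis near the fold and, in particular, the Stein interpolation for the $r=0$, $q=4$ endpoint.
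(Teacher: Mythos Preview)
The paper does not prove this theorem: it is stated as a quoted result from Greenleaf--Seeger \cite{GreenleafSeeger1994fourier} (the label ``Theorem 2.1 in \cite{GreenleafSeeger1994fourier}'' makes this explicit), and Section~\ref{S: Review for fold singularities} is purely a review of that work. There is therefore no proof in the paper to compare against.

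As for your sketch relative to the original Greenleaf--Seeger argument: your endpoint-plus-interpolation architecture is correct in spirit, but the mechanism you describe for the $L^2$ bound and the $q=4$ endpoint differs from what Greenleaf--Seeger actually do. Their proof proceeds by a dyadic decomposition in the distance to the fold variety $\mathcal{L}$; on each dyadic shell the rescaled phase is non-degenerate, so H\"ormander's $L^2$ oscillatory integral theorem applies piecewise, and the pieces are summed via Cotlar--Stein almost orthogonality. The $q=4$ bound in the $r=0$ case then drops out of this same dyadic decomposition combined with interpolation against the trivial $L^\infty$ endpoint on each piece, rather than from an analytic family with damping weights $|\rho|^z$. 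Your $TT^*$/Schur description is heuristically right for the $r=0$ $L^2$ endpoint, but the kernel bound you quote, $(1+\lambda|\rho(x,x')|)^{-1/2}$, is not what a direct stationary-phase computation yields when $r\geq 1$: the extra $r$ dimensions in $V$ mean the $y$-integral in $L(x,x')$ has $r$ non-oscillatory directions, and the bookkeeping that produces the $\log\lambda$ at $r=1$ and the clean $\lambda^{-d/2}$ for $r\geq 2$ really does require the dyadic localisation rather than a single Schur estimate. Your Stein-interpolation alternative for $q=4$ is a viable route in principle (and closer to the damping methods of Phong--Stein), but making it rigorous would require controlling the $L^2\to L^2$ norm of $T_\lambda^z$ uniformly on the line $\operatorname{Re}z=-\tfrac12$, which is essentially the same difficulty repackaged.
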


\begin{theorem}[Theorem 2.2 in \cite{GreenleafSeeger1994fourier}]\label{Thm for hypersurfaces in Greenleaf-Seeger}
    Suppose $\dim U=\dim V=d$ and that the left projection $\pi_L$ is either nondegenerate or a Whitney fold. Suppose in addition that for each $x\in U$, for each $\zeta\in H_x$ at least $l$ principal curvatures do not vanish, where $H_x$ is as in \eqref{Sigma at x def in GS}. Then for $\lambda \geq 1$
    \begin{align*}
        \|T_\lambda f\|_{L^q (U)}\lesssim \begin{cases}
            \lambda^{-\frac{d-1}{q}-\frac{l+1}{4}+\frac{l}{2q}} \|f\|_{L^2 (V)}, & \text{if } 2\leq q\leq \frac{2l+4}{l+1}, \\
            \lambda^{-\frac{d}{q}} \|f\|_{L^2 (V)}, & \text{if } \frac{2l+4}{l+1}\leq q\leq \infty.
        \end{cases}
    \end{align*}
\end{theorem}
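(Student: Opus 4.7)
The plan is to use a $TT^*$ argument combined with Stein's oscillatory integral theorem for phases whose associated hypersurfaces have $l$ non-vanishing principal curvatures. This reduces the curvature assumption on the sets $H_x$ to a statement about the kernel of $T_\lambda T_\lambda^*$, after which one interpolates with the trivial $L^2 \to L^\infty$ Cauchy--Schwarz bound to recover the full range.

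First, I would form the composition $T_\lambda T_\lambda^*$, whose kernel is
\begin{equation*}
K_\lambda(x,z) = \int e^{i\lambda(\Phi(x,y)-\Phi(z,y))}\, a(x,y)\,\overline{a(z,y)}\, dy.
\end{equation*}
Since $\dim U = \dim V = d$, stationary phase in $y$ is the natural tool. Writing $\Phi(x,y)-\Phi(z,y) = (x-z)\cdot \Psi(x,z,y)$, the critical points in $y$ correspond to places where $\pi_R$ sees both $x$ and $z$ over a common covector. On the open set where $\pi_L$ is a local diffeomorphism (hence $\Phi_{xy}''$ is invertible), a standard non-degenerate stationary phase expansion gives
\begin{equation*}
K_\lambda(x,z) = \lambda^{-d/2}\, e^{i\lambda \tilde\Phi(x,z)}\, \tilde a(x,z;\lambda) + O(\lambda^{-N}),
\end{equation*}
so $T_\lambda T_\lambda^*$ is itself an oscillatory integral operator with phase $\tilde\Phi$ acting on half the original frequency weight. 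The curvature of $H_x$ translates, via the usual Egorov/canonical-relation calculus, into a curvature condition on the level sets $\{z : \tilde\Phi_x(x,z) = \xi\}$: at least $l$ of its principal curvatures are non-zero. Then Stein's oscillatory integral theorem (the Carleson--Sj\"olin type bound for hypersurfaces with $l$ non-vanishing curvatures) yields
\begin{equation*}
\|T_\lambda T_\lambda^* g\|_{L^q(U)} \lesssim \lambda^{-d/2}\,\lambda^{-(d-1)(1-\frac{2}{q})}\cdot \lambda^{-l(\frac{1}{2}-\frac{1}{q})}\,\|g\|_{L^{q'}(U)}
\end{equation*}
in the appropriate range, which by duality and $\|T_\lambda\|_{L^2\to L^q}^2 = \|T_\lambda T_\lambda^*\|_{L^{q'}\to L^q}$ becomes exactly the claimed bound at the endpoint $q = \frac{2l+4}{l+1}$.

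The main obstacle is handling the Whitney fold: the Hessian $\Phi_{yy}''$ becomes degenerate precisely on the fold variety $\mathcal{L}$, so the non-degenerate stationary phase above fails in a neighborhood of $\mathcal{L}$. To deal with this, I would normalize to the Whitney fold model (using the Morin normal form near the fold, $\Phi(x,y) \sim \langle x',y'\rangle + y_d^3/3 + x_d y_d$ after smooth changes of variables), and evaluate the oscillatory integral in $y_d$ explicitly via Airy-type asymptotics. The Airy decay of order $\lambda^{-1/3}$ near the fold, combined with a Littlewood--Paley decomposition in distance to $\mathcal{L}$, compensates for the loss from degeneracy and produces a kernel $K_\lambda$ whose $L^q$ estimates match the non-degenerate computation on each dyadic piece. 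Summing the dyadic contributions, one obtains the desired $TT^*$ bound at the Carleson--Sj\"olin endpoint $q_0 = \frac{2l+4}{l+1}$.

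Finally, I would obtain the two endpoint bounds needed for interpolation: $\|T_\lambda f\|_{L^\infty} \leq C\|f\|_{L^2}$ is immediate from Cauchy--Schwarz in $y$ applied to the amplitude $a(x,\cdot)$; and the $L^\infty \to L^\infty$ bound $\|T_\lambda f\|_{L^\infty} \lesssim \|f\|_{L^\infty}$ with the appropriate $\lambda$-power follows by integrating the amplitude, giving the $\lambda^{-d/q}$ scaling for $q \geq q_0$. Riesz--Thorin interpolation of the endpoint $L^2 \to L^{q_0}$ bound with $L^2 \to L^\infty$ then yields the $2 \leq q \leq q_0$ range, and the $q_0 \leq q \leq \infty$ range is immediate from nesting (or from direct interpolation between the endpoint and the trivial $L^\infty$ bound). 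The hardest single step will be the reduction near the fold; the curvature hypothesis on $H_x$ enters only through the second, purely Carleson--Sj\"olin half of the argument, so once the fold is normalized away, the rest is a routine oscillatory integral calculation.
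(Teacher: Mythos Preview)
This theorem is not proved in the present paper; it is quoted verbatim from Greenleaf--Seeger \cite{GreenleafSeeger1994fourier} as background material in \S\ref{S: Review for fold singularities}, with no proof given or sketched. There is therefore no in-paper argument to compare your proposal against.

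On the merits of your sketch itself, one step does not go through as written. You claim that stationary phase in $y$ turns the $T_\lambda T_\lambda^*$ kernel
\[
K_\lambda(x,z)=\int e^{i\lambda(\Phi(x,y)-\Phi(z,y))}a(x,y)\overline{a(z,y)}\,dy
\]
into $\lambda^{-d/2}e^{i\lambda\tilde\Phi(x,z)}\tilde a(x,z)$, after which $T_\lambda T_\lambda^*$ is again an oscillatory integral operator to which a Carleson--Sj\"olin type theorem applies. But the $y$-gradient of the phase is $\Phi_y'(x,y)-\Phi_y'(z,y)$, and when $\Phi_{xy}''$ is non-degenerate this has \emph{no} critical point for $x\neq z$: it is bounded below by a multiple of $|x-z|$. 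There is no stationary phase to perform; one instead integrates by parts to obtain $|K_\lambda(x,z)|\lesssim (1+\lambda|x-z|)^{-N}$, which gives the H\"ormander $L^2$ bound but not an oscillatory kernel in $(x,z)$. The curvature hypothesis on $H_x$ does not enter through a second application of Carleson--Sj\"olin; rather, in the Greenleaf--Seeger argument it controls the $L^1\to L^\infty$ norms of pieces localized dyadically in distance to the fold variety $\mathcal L$ (via stationary phase in $y$ using the fold normal form, producing Airy-type decay), and these are interpolated against rescaled $L^2$ bounds and summed. Your second paragraph, with the Morin normal form and dyadic decomposition near $\mathcal L$, is much closer to the actual mechanism; it is the preceding ``$T_\lambda T_\lambda^*$ becomes an OIO, then apply Stein'' reduction that should be discarded.
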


We now let $(M, g)$ be an $n$-dimensional compact Riemannian manifold without boundary. Let $\Sigma$ be a $k$-dimensional submanifold of $M$. We also let $d_g (x, y)$ denote the Riemannian distance between $x$ and $y$. In the geodesic normal coordinates centered at $x_0\in M$, $\Sigma$ can be parametrized by
\begin{align*}
    x(u_1, u_2, \cdots, u_k),\quad \text{and } x(0)=0.
\end{align*}
Using a partition of unity, we may assume that $\Sigma$ is contained in a coordinate patch $U$ so that $|x(u)|\leq c\epsilon$ and $x(0)=0$, and $c_1 \epsilon\leq |y|\leq c_2 \epsilon$. If we use the polar coordinates for $y$, we can write
\begin{align*}
    y=r\omega,\quad c_1\epsilon\leq r\leq c_2\epsilon,\quad \omega\in \mathbb{S}^{n-1}.
\end{align*}
In this setting, if we set
\begin{align}\label{Phase fcn in Hu}
    \Psi (x, \omega)=-d_g (x, r\omega),
\end{align}
then Hu showed the following.

\begin{theorem}[\S 4 in \cite{Hu2009lp}]\label{Thm Section 4 in Hu}
    Let $\Psi$ be the phase function defined as in \eqref{Phase fcn in Hu}.
    \begin{enumerate}
        \item If $\dim \Sigma=k\leq n-2$, then the left projection $\pi_L$ associated with the phase function $\Psi$ has at most fold singularities, i.e., $\Psi$ is a submersion with folds at most, satisfying $\dim \Sigma=d$, where $d$ is as in the hypothesis of Theorem \ref{Thm 2.1 in GS}.
        \item If $\dim \Sigma=k=n-1$, then the left projection $\pi_L$ associated with the phase function $\Psi$ is either nondegerate or a Whitney fold. Also, for each $x$ in a coordinate patch $U$ containing $\Sigma$, for each $\zeta\in H_x$, at least $n-2$ principal curvatures do not vanish.
    \end{enumerate}
\end{theorem}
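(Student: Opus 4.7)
The plan is to reduce both parts to an explicit linear-algebraic computation in geodesic normal coordinates, using the Gauss lemma to control the distance function. Fix $x_0 \in \Sigma$, take normal coordinates on $M$ centered at $x_0$ with $x_0 = 0$, and parametrize $\Sigma$ locally by $u \in \mathbb{R}^k$ so that $x(0) = 0$ and $\{\partial_{u_j}x(0)\}_{j=1}^{k}$ is an orthonormal basis of $T_0 \Sigma \subset T_0 M = \mathbb{R}^n$. Then $\mathcal{C}_\Psi$ has dimension $n + k - 1$ and $\pi_L$ maps it into $T^*U$ of dimension $2k$.

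First I would compute the mixed Hessian $\Psi''_{u\omega}$ at $u = 0$. By the Gauss lemma, $d_g(x, r\omega) = |r\omega - x| + O(r\epsilon^2)$, with remainder controlled by sectional curvature. This gives $\partial_u \Psi(0, \omega) = P\omega$, where $P : \mathbb{R}^n \to T_0 \Sigma$ is orthogonal projection, and $\Psi''_{u\omega}(0, \omega) : T_\omega S^{n-1} \to T_0 \Sigma$ coincides with the restriction of $P$ to $\omega^\perp$ modulo $O(\epsilon)$ corrections. The Jacobian of $\pi_L : (u, \omega) \mapsto (u, \Psi_u'(u, \omega))$ therefore has block-triangular form with lower-right block $\Psi''_{u\omega}$. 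Since $\ker\bigl(P|_{\omega^\perp}\bigr) = \omega^\perp \cap T_0 \Sigma^\perp$ has dimension $n - k - 1$ when $\omega \notin T_0\Sigma$ and jumps to $n - k$ exactly when $\omega \in T_0\Sigma$, the rank of $d\pi_L$ equals $2k$ generically and drops to $2k - 1$ on the critical locus $\mathcal{L} = \{(u, \omega) : \omega \in T_{x(u)}\Sigma\}$. This matches the rank hypothesis of ``submersion with folds'' when $k \le n - 2$ and ``Whitney fold or nondegenerate'' when $k = n - 1$.

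The second step is to verify non-degeneracy of the Hessian of $\pi_L$ along its kernel, which at a critical point $(0, \omega_0)$ with $\omega_0 \in T_0\Sigma \cap S^{n-1}$ is the $(n-k)$-dimensional subspace $T_0\Sigma^\perp$ paired against the one-dimensional cokernel $\mathbb{R}\omega_0$. Parametrizing $\omega = (\omega_0 + \omega')/|\omega_0 + \omega'|$ with $\omega' \in T_0\Sigma^\perp$, one expands $P\omega = \omega_0 - \tfrac12 |\omega'|^2 \omega_0 + O(|\omega'|^3)$, so the leading-order Hessian of $\pi_L$ is the form $\omega' \mapsto -|\omega'|^2 \omega_0$, i.e., the negative Euclidean inner product on $T_0\Sigma^\perp$ after identifying the cokernel with $\mathbb{R}$ via $\omega_0$. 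This is definite and hence non-degenerate; the full metric Hessian differs by an $O(\epsilon)$ curvature perturbation that does not destroy definiteness on a sufficiently small coordinate patch. This verifies the fold condition in part (1) and the Whitney fold condition in part (2).

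Finally, for the principal-curvature statement in part (2), the hypersurface $H_{x_0} \subset T_0^* \Sigma$ is the image under $\omega \mapsto P\omega$ of $\{\omega \in S^{n-1} : \omega \in T_0\Sigma\}$; since this set is the unit $(n-2)$-sphere of $T_0\Sigma$ and $P$ is the identity on it, $H_{x_0}$ is, at leading order, the round unit sphere in $T_0^* \Sigma$, whose $n-2$ principal curvatures are all equal to $1$. Smooth $O(\epsilon)$ perturbations preserve the non-vanishing of each principal curvature, giving the required $l \ge n-2$ for Theorem \ref{Thm for hypersurfaces in Greenleaf-Seeger}. The main technical obstacle is to show that the curvature-induced corrections to the leading-order model—both in the computation of $\Psi''_{u\omega}$ and in the shape of $H_x$—are uniformly $O(\epsilon)$ in $(u, \omega)$ near $\mathcal{L}$, so that the leading-order fold and principal-curvature conclusions survive intact; this requires a careful application of the Gauss lemma together with the smoothness of $\Sigma$ and $g$.
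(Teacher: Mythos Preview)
The paper does not supply its own proof of this theorem: it is quoted as a result from \cite[\S 4]{Hu2009lp}, and the paper simply remarks afterward that ``Hu showed that the phase function $\Psi$ in \eqref{Phase fcn in Hu} satisfies the hypotheses in \cite[Theorem 2.1-2.2]{GreenleafSeeger1994fourier}.'' So there is no in-paper argument to compare against.

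That said, your sketch is the right one and tracks the computation in Hu's paper. The identification $\partial_u\Psi(0,\omega)=P\omega$ via the Gauss lemma, the block-triangular form of $d\pi_L$ with lower-right block $P|_{\omega^\perp}$, the location of the rank drop at $\mathcal{L}=\{\omega\in T_{x(u)}\Sigma\}$, and the Hessian computation $\omega'\mapsto -\tfrac12|\omega'|^2\,\omega_0$ on $T_0\Sigma^\perp$ are all correct. Your reading of $H_{x_0}$ as (to leading order) the unit $(n-2)$-sphere in $T_0^*\Sigma$ is likewise exactly how the principal-curvature count comes out. The only place to be a bit more careful is the one you flag yourself: the $O(\epsilon)$ remainders need to be controlled uniformly in $(u,\omega)$ near $\mathcal{L}$, not just at the basepoint, so that the definiteness of the fold Hessian and the nonvanishing curvatures persist throughout the coordinate patch. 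This is routine once one writes the remainder in the Gauss-lemma expansion as a smooth function of $(x,r\omega)$ vanishing to the appropriate order, but it should be stated rather than assumed.
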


In other words, Hu showed that the phase function $\Psi$ in \eqref{Phase fcn in Hu} satisfies the hypotheses in \cite[Theorem 2.1-2.2]{GreenleafSeeger1994fourier}, and this is how Hu showed \eqref{BGT and Hu's estimates}.

\section{Preliminary Reductions for Theorem \ref{Thm for univ est for any curves}-\ref{Thm for codim 2}}\label{S: Preliminaries}

Let $P=\sqrt{-\Delta_g}$. By \cite[(2.3)]{BourgainShaoSoggeYao2015Resolvent} and \cite[\S 3-5]{BlairHuangSireSogge2022UniformSobolev}, we can write
\begin{align*}
    (-\Delta_g-(\lambda+i))^{-1} =\frac{i}{\lambda+i}\int_0^\infty e^{i\lambda t} e^{-t} (\cos tP)\:dt.
\end{align*}
Let $\mu_0\in C_0^\infty (\mathbb{R})$ be such that, for $0<\epsilon_0 \ll 1$,
\begin{align*}
    \mu_0 (t)=\begin{cases}
        1, & \text{if } |t|\leq \frac{\epsilon_0}{2}, \\
        0, & \text{if } |t|\geq \epsilon_0.
    \end{cases}
\end{align*}
We then write
\begin{align*}
    (-\Delta_g-(\lambda+i))^{-1}=S_\lambda+W_\lambda,
\end{align*}
where
\begin{align}\label{S lambda W lambda set up}
    \begin{split}
        & S_\lambda=\frac{i}{\lambda+i}\int_0^\infty \mu_0 (t) e^{i\lambda t} e^{-t} (\cos tP)\:dt, \\
        & W_\lambda=\frac{i}{\lambda+i}\int_0^\infty (1-\mu_0 (t)) e^{i\lambda t} e^{-t} (\cos tP)\:dt.
    \end{split}
\end{align}
We first note that we can obtain the estimates of $(-\Delta_g-(\lambda+i)^2)^{-1}$ from the estimates \eqref{BGT and Hu's estimates}.

\begin{lemma}\label{Lemma Resolvent opr norm from L2 to Lq for univ est}
    Let $\Sigma$ be a $k$-dimensional submanifold of $M$. Suppose that
    \begin{align}\label{Epsilon window conditions}
        \begin{split}
            & \lambda^{-1}\lesssim \epsilon(\lambda)\lesssim 1, \\
            & \epsilon(\lambda) \text{ is nonincreasing}, \quad \text{and}\\
            & \epsilon(4\lambda)\lesssim \epsilon(\lambda).
        \end{split}
    \end{align}
    We also assume that, for $\lambda \geq 1$
    \begin{align}\label{Spectral projection window est}
        \|\mathds{1}_{[\lambda, \lambda+\epsilon(\lambda)]} (P)\|_{L^2 (M)\to L^q (\Sigma)}\lesssim \lambda^{\delta(q, k)}(\log \lambda)^{\nu(q, k)} (\epsilon(\lambda))^{\rho(q, k)},\quad \text{for some } 0<\rho(q, k)\leq 1.
    \end{align}
    If $\delta(q, k)<\frac{3}{2}$, then for $\lambda \gg 1$
    \begin{align}\label{Resolvent opr norm from L2 to Lq for univ est}
        \|(-\Delta_g-(\lambda+i\epsilon(\lambda))^2)^{-1}\|_{L^2 (M)\to L^q (\Sigma)}\lesssim \lambda^{\delta(q, k)-1} (\log \lambda)^{\nu (q, k)} (\epsilon(\lambda))^{\rho(q, k)-1},
    \end{align}
    where $\delta(q, k)$ are $\nu (q, k)$ are as in \eqref{BGT and Hu lambda exponent} and \eqref{BGT and Hu log lambda exponent}, respectively.
\end{lemma}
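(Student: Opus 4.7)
My plan is to use the spectral multiplier method applied to $m(P)=(-\Delta_g-(\lambda+i\epsilon(\lambda))^2)^{-1}$ with $P=\sqrt{-\Delta_g}$ and $m(\tau)=(\tau^2-(\lambda+i\epsilon(\lambda))^2)^{-1}$. I bound $m(P)\colon L^2(M)\to L^q(\Sigma)$ via a dyadic decomposition in $|\tau-\lambda|$, invoking the hypothesis \eqref{Spectral projection window est} on each spectral shell. Factoring the denominator as $(\tau-(\lambda+i\epsilon(\lambda)))(\tau+(\lambda+i\epsilon(\lambda)))$ and observing that $\tau+\lambda\approx\lambda$ for $\tau\in[\lambda/2,2\lambda]$ yields the pointwise bound $|m(\tau)|\lesssim \lambda^{-1}(|\tau-\lambda|+\epsilon(\lambda))^{-1}$, with an analogous bound $|m(\tau)|\lesssim (\tau+\lambda)^{-2}$ away from $\tau\approx\lambda$.

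The central shell $|\tau-\lambda|\leq\epsilon(\lambda)$ produces the main term: the pointwise bound $|m|\lesssim (\lambda\epsilon(\lambda))^{-1}$ combined with the hypothesis \eqref{Spectral projection window est} yields a contribution of exactly $\lambda^{\delta(q,k)-1}(\log\lambda)^{\nu(q,k)}\epsilon(\lambda)^{\rho(q,k)-1}$. For each intermediate shell $|\tau-\lambda|\approx 2^{j}\epsilon(\lambda)$ with $j\geq 1$ and $2^{j}\epsilon(\lambda)\leq \lambda/2$, I partition the shell into $\sim 2^{j}$ disjoint sub-windows of width $\epsilon(\lambda)$, apply the hypothesis to each, and combine via the $L^q(\Sigma)$ triangle inequality together with Cauchy--Schwarz and the $L^2$-orthogonality of the windowed spectral projectors $\mathds{1}_{J_{i}}(P)$. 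This produces a shell projection bound $2^{j/2}\lambda^{\delta}(\log\lambda)^{\nu}\epsilon(\lambda)^{\rho}$, and multiplying by the multiplier bound $(\lambda\cdot 2^{j}\epsilon(\lambda))^{-1}$ gives a shell contribution of $2^{-j/2}$ times the main term, which sums geometrically over $j\geq 1$.

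The far high-frequency shells $[2^{m}\lambda,2^{m+1}\lambda]$ with $m\geq 1$ are handled by the same partitioning strategy, now using sub-windows of width $\epsilon(2^{m}\lambda)$ at frequency $2^{m}\lambda$. The same Cauchy--Schwarz argument yields a shell projection bound $(2^{m}\lambda)^{\delta+1/2}(\log(2^{m}\lambda))^{\nu}\epsilon(2^{m}\lambda)^{\rho-1/2}$, and hence a shell contribution $(2^{m}\lambda)^{\delta-3/2}(\log(2^{m}\lambda))^{\nu}\epsilon(2^{m}\lambda)^{\rho-1/2}$. The hypothesis $\delta(q,k)<3/2$ makes the series convergent; using $\epsilon(2^{m}\lambda)\leq\epsilon(\lambda)\leq 1$ from \eqref{Epsilon window conditions}, each term is dominated by $\lambda^{-1/2}\epsilon(\lambda)^{1/2}$ times the main term, hence is subdominant for $\lambda\geq 1$. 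The low-frequency shells $[2^{-m}\lambda,2^{-m+1}\lambda]$ with $2^{-m}\lambda\geq 1$ are treated identically with multiplier bound $\lesssim\lambda^{-2}$, while the finitely many frequencies below $1$ contribute a bounded kernel which is absorbed into the constant.

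The main obstacle will be the careful book-keeping on the far shells, where the applicable window size $\epsilon(2^{m}\lambda)$ differs from $\epsilon(\lambda)$; the monotonicity and dilation assumption $\epsilon(4\lambda)\lesssim \epsilon(\lambda)$ from \eqref{Epsilon window conditions}, together with the condition $\delta(q,k)<3/2$, are precisely what ensure that the polynomial decay $(2^{m}\lambda)^{\delta-3/2}$ overwhelms any amplification coming from the variation of $\epsilon$ across dyadic scales, so that the tail contributions remain strictly subdominant to the central main term.
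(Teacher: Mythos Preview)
Your treatment of the intermediate region $\tau\in[\lambda/2,2\lambda]$ is essentially the paper's: both partition into $\epsilon(\lambda)$-windows, apply the hypothesis \eqref{Spectral projection window est}, use that $|m(\tau)|\lesssim\lambda^{-1}(\epsilon(\lambda)+|\tau-\lambda|)^{-1}$, and close with Cauchy--Schwarz and $L^2$-orthogonality. This is the heart of the lemma and you have it right.

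The difference is in how the tails are handled. The paper does \emph{not} rely on \eqref{Spectral projection window est} away from $\tau\approx\lambda$: for $\tau\geq 2\lambda$ it invokes the Sobolev trace inequality $\|g\|_{L^q(\Sigma)}\lesssim\|(-\Delta_g)^{s/2}g\|_{L^2(M)}$ with $s=\tfrac{n}{2}-\tfrac{k}{q}$, reducing to the elementary spectral bound $\sup_{\tau\geq 2\lambda}\tau^{s-2}$, and this is precisely where the condition $\delta(q,k)<\tfrac32$ enters (since $s-2\leq\delta(q,k)-\tfrac32$). For $\tau\leq\lambda/2$ it uses the unit-window BGT/Hu bound \eqref{BGT and Hu's estimates} rather than the hypothesis.

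Your high-frequency argument, by contrast, partitions $[2^m\lambda,2^{m+1}\lambda]$ into $\epsilon(2^m\lambda)$-windows and appeals only to \eqref{Spectral projection window est}. This produces a shell contribution $(2^m\lambda)^{\delta-3/2}(\log(2^m\lambda))^\nu\,\epsilon(2^m\lambda)^{\rho-1/2}$, and here there is a gap: when $\rho<\tfrac12$ the factor $\epsilon(2^m\lambda)^{\rho-1/2}$ can \emph{grow} with $m$ (in the extremal case $\epsilon(\mu)\approx\mu^{-1}$ it behaves like $(2^m\lambda)^{1/2-\rho}$), so the effective exponent becomes $\delta-1-\rho$ rather than $\delta-\tfrac32$. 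Your claim that ``$\delta<\tfrac32$ makes the series convergent'' and that monotonicity $\epsilon(2^m\lambda)\leq\epsilon(\lambda)$ controls the terms is therefore only valid for $\rho\geq\tfrac12$; for $\rho<\tfrac12$ the inequality $\epsilon(2^m\lambda)\leq\epsilon(\lambda)$ goes the wrong way after raising to the negative power $\rho-\tfrac12$. The lemma as stated allows any $0<\rho\leq 1$, so in full generality you need an external input for $\tau\gg\lambda$ (Sobolev trace, or the unit-window bound \eqref{BGT and Hu's estimates}), exactly as the paper does.
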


It is natural to assume \eqref{Epsilon window conditions}, since in this paper we want to consider the $\epsilon(\lambda)$ satisfying either $\epsilon(\lambda)=(\log (2+\lambda))^{-1}$, or $\epsilon(\lambda)=\lambda^{-\alpha}$ for some $0\leq \alpha\leq 1$ (in particular, $\epsilon(\lambda)=1$ when $\alpha=0$). See also \cite[\S 2]{BlairHuangSireSogge2022UniformSobolev} or \cite[Introduction]{HuangSoggeTaylor2023ProductManifolds} for details explaining that assuming \eqref{Epsilon window conditions} is reasonable.

\begin{proof}[Proof of Lemma \ref{Lemma Resolvent opr norm from L2 to Lq for univ est}]
    To prove this lemma, we split the operator norm into three pieces:
    \begin{align}\label{High freq opr norm}
        \begin{split}
            \|(-\Delta_g-(\lambda+i\epsilon(\lambda))^2)^{-1} \mathds{1}_{[2\lambda, \infty)} (P)\|_{L^2 (M)\to L^q (\Sigma)}\lesssim \lambda^{\delta(q, k)-1}(\log \lambda)^{\nu (q, k)}(\epsilon(\lambda))^{\rho(q, k)-1},\\
            \text{if } 2\leq q\leq \infty \text{ and } \delta(q, k)<\frac{3}{2},
        \end{split}
    \end{align}
    \begin{align}\label{Low freq opr norm}
        \|(-\Delta_g-(\lambda+i\epsilon(\lambda))^2)^{-1} \mathds{1}_{\left[0, \frac{\lambda}{2}\right]}(P)\|_{L^2 (M)\to L^q (\Sigma))}\lesssim \lambda^{\delta(q, 1)-1}(\log \lambda)^{\nu (q, k)}(\epsilon(\lambda))^{\rho(q, k)-1},\quad \text{if } 2\leq q\leq \infty.
    \end{align}
    and
    \begin{align}\label{Intermediate freq opr norm}
        \begin{split}
            \|(-\Delta_g-(\lambda+i\epsilon(\lambda))^2)^{-1} \mathds{1}_{\left[\frac{\lambda}{2}, 2\lambda\right]}(P)\|_{L^2 (M)\to L^q (\Sigma))}\lesssim \lambda^{\delta(q, 1)-1}(\log \lambda)^{\nu (q, k)}(\epsilon(\lambda))^{\rho(q, k)-1},\quad \text{if } 2\leq q\leq \infty.
        \end{split}
    \end{align}
    
    We first prove \eqref{High freq opr norm}. By the Sobolev trace formula, if we set $s=\frac{n}{2}-\frac{k}{q}$, then
    \begin{align*}
        \|(-\Delta_g-(\lambda+i\epsilon(\lambda))^2)^{-1} \mathds{1}_{[2\lambda, \infty)} (P) f\|_{L^q (\Sigma)}&\lesssim \|(-\Delta_g)^{\frac{s}{2}} (-\Delta_g-(\lambda+i\epsilon(\lambda))^2)^{-1} \mathds{1}_{[2\lambda, \infty)}(P) f\|_{L^2 (M)} \\
        &\lesssim \left(\sup_{\tau\geq 2\lambda} |\tau^s(\tau^2-(\lambda+i\epsilon(\lambda))^2)^{-1}| \right)\|f\|_{L^2 (M)} \\
        &\lesssim \left(\sup_{\tau\geq 2\lambda} \tau^{s-2} \right)\|f\|_{L^2 (M)} \\
        &\lesssim \lambda^{s-2}\|f\|_{L^2 (M)}=\lambda^{\frac{n}{2}-\frac{k}{q}-2}\|f\|_{L^2 (M)},
    \end{align*}
    provided that $s-2<0$ if $2\leq q\leq \infty$. We used the assumption that $\epsilon(\lambda)\lesssim 1$ in the third inequality. Since $s-2\leq \delta(q, k)-\frac{3}{2}$, we have $s-2<0$ when $\delta(q, k)<\frac{3}{2}$, and this is where we need the assumption $\delta(q, k)<\frac{3}{2}$. The estimate \eqref{High freq opr norm} then follows, since $\lambda^{\frac{n}{2}-\frac{k}{q}-2}\leq \lambda^{\delta(q, 1)-1}(\epsilon(\lambda))^{-\frac{1}{2}}$ when $2\leq q\leq \infty$ and $\delta(q, k)<\frac{3}{2}$.

    We next show \eqref{Low freq opr norm}. As above, it follows from \eqref{BGT and Hu's estimates} that
    \begin{align*}
        \|(-\Delta_g-(\lambda+i\epsilon(\lambda))^2)^{-1}\mathds{1}_{\left[0, \frac{\lambda}{2} \right]} (P) f\|_{L^q (\Sigma)}&\leq \sum_{1\leq j\leq \frac{\lambda}{2}} \|(-\Delta_g-(\lambda+i\epsilon(\lambda))^2)^{-1} \mathds{1}_{[j-1, j)} (P) f\|_{L^q (\Sigma)} \\
        &\lesssim \sum_{1\leq j\leq \frac{\lambda}{2}} \sup_{\tau\in [j-1, j)}|\tau^2-(\lambda+i\epsilon(\lambda))^2|^{-1} j^{\delta(q, k)} (\log j)^{\nu (q, k)} \\
        &\lesssim \lambda^{-2} (\log \lambda)^{\nu (q, k)}\sum_{1\leq j\leq \frac{\lambda}{2}} j^{\delta(q, k)} \\
        &\lesssim \lambda^{-2} (\log \lambda)^{\nu (q, k)}\lambda^{\delta(q, k)+1} \\
        &=\lambda^{\delta(q, k)-1} (\log \lambda)^{\nu (q, k)} \\
        &\lesssim \lambda^{\delta(q, k)-1} (\log \lambda)^{\nu (q, k)} (\epsilon(\lambda))^{\rho(q, k)-1},
    \end{align*}
    which proves \eqref{Low freq opr norm}. In the last inequality, we used the fact that $\lambda^{-1}\leq \epsilon(\lambda)\lesssim 1$ and $0<\rho(q, k)\leq 1$.
        
    We are left to prove \eqref{Intermediate freq opr norm}. To see this, note that if
    \begin{align*}
        \frac{\lambda}{4}\leq \epsilon(\lambda) j\leq 4\lambda \quad\text{and}\quad \tau\in [\epsilon(\lambda)j, \epsilon(\lambda)(j+1)],\quad \text{for } j\in \mathbb{N},
    \end{align*}
    then
    \begin{align*}
        |\tau^2-(\lambda+i\epsilon(\lambda))^2|^{-1}&=|\tau-\lambda+i\epsilon(\lambda)|^{-1} |\tau+\lambda+i\epsilon(\lambda)|^{-1} \\
        &\lesssim \lambda^{-1} (\epsilon(\lambda)+|\epsilon(\lambda)j-\lambda|)^{-1}.
    \end{align*}
    Moreover, we know $\epsilon(4\lambda)\lesssim \epsilon(\lambda)$ by \eqref{Epsilon window conditions}, and thus, it follows from \eqref{Spectral projection window est} that
    \begin{align*}
        & \|(-\Delta_g-(\lambda+i\epsilon(\lambda))^2)^{-1} \mathds{1}_{\left[\frac{\lambda}{2}, 2\lambda \right]} (P)f\|_{L^q (\Sigma)} \\
        &\lesssim \sum_{\frac{\lambda}{4}\leq \epsilon(\lambda)j\leq 4\lambda} \lambda^{-1} (\epsilon(\lambda)+|\epsilon(\lambda)j-\lambda|)^{-1} \left(\lambda^{\delta(q, k)} (\log \lambda)^{\nu(q, k)} (\epsilon(\lambda))^{\rho(q, k)} \|\mathds{1}_{[\epsilon(\lambda)j, \epsilon(\lambda)(j+1))} (P) f\|_{L^2 (M)} \right) \\
        &\lesssim\lambda^{\delta(q, k)-1}(\epsilon(\lambda))^{\rho(q, k)-1} (\log \lambda)^{\nu (q, k)} \sum_{\frac{\lambda}{4}\leq \epsilon(\lambda)j\leq 4\lambda} (1+|j-\epsilon(\lambda)^{-1}\lambda|)^{-1}\|\mathds{1}_{[\epsilon(\lambda)j, \epsilon(\lambda)(j+1))}(P) f\|_{L^2 (M)}.
    \end{align*}
    By this, if we set $f_j=\mathds{1}_{[\epsilon(\lambda)j, \epsilon(\lambda)(j+1))} (P)f$ for convenience, then by the Cauchy-Schwarz inequality and orthogonality we have that
    \begin{align*}
        &\|(-\Delta_g-(\lambda+i\epsilon(\lambda))^2)^{-1} \mathds{1}_{\left[\frac{\lambda}{2}, 2\lambda \right]} (P)f\|_{L^q (\Sigma)}\\
        &\lesssim \lambda^{\delta(q, k)-1}(\epsilon(\lambda))^{\rho(q, k)-1} (\log \lambda)^{\nu (q, k)} \left(\sum_{j=1}^\infty (1+|j-\epsilon(\lambda)^{-1}\lambda|)^{-2} \right)^{\frac{1}{2}}\left(\sum_{j=1}^\infty \|f_j\|_{L^2 (M)}^2 \right)^{\frac{1}{2}} \\
        &\lesssim \lambda^{\delta(q, k)-1}(\log \lambda)^{\nu (q, k)}(\epsilon(\lambda))^{\rho(q, k)-1} \|f\|_{L^2 (M)},
    \end{align*}
    which proves \eqref{Intermediate freq opr norm}, completing the proof of this lemma.
\end{proof}

We note that the assumption $\delta(q, k)<\frac{3}{2}$ in Lemma \ref{Lemma Resolvent opr norm from L2 to Lq for univ est} holds in the statements of Theorem \ref{Thm for hypersurfaces}-\ref{Thm for codim 2}. In Theorem \ref{Thm for univ est for any curves}-\ref{Thm for univ est for curved curves}, we know $\delta(q, 1), \Tilde{\delta}(q)<\frac{3}{2}$ automatically, and thus, analogous estimates for $(-\Delta_g-(\lambda+i)^2)^{-1}$ hold.

We next consider the operator $W_\lambda$. If we consider the map, as in \cite{BlairHuangSireSogge2022UniformSobolev},
\begin{align*}
    \tau \mapsto m_\lambda (\tau)=\frac{i}{\lambda+i}\int_0^\infty (1-\mu_0 (t)) e^{i\lambda t} e^{-t} (\cos t\tau)\:dt,
\end{align*}
we have
\begin{align}\label{m lambda size est for n=2}
    |m_\lambda (\tau)|\lesssim \lambda^{-1}(1+|\lambda-\tau|)^{-N},\quad \text{if } \tau\geq 0, \lambda\geq 1, N=1, 2, 3, \cdots.
\end{align}
By this, \eqref{BGT and Hu's estimates}, and an orthogonality argument (as in the proof of Lemma \ref{Lemma Resolvent opr norm from L2 to Lq for univ est}), one can see that the operator $W_\lambda=m_\lambda (P)$ satisfies
\begin{align}\label{W lambda Lq to L2 estimate}
    \|W_\lambda \|_{L^2 (M)\to L^q (\Sigma)}\lesssim \lambda^{\delta(q, k)-1} (\log \lambda)^{\nu(q, k)},
\end{align}
and so,
\begin{align}\label{W lambda composition estimate}
    \begin{split}
        \|W_\lambda \circ (-\Delta_g-(\lambda+i)^2)\|_{L^2 (M)\to L^q (\Sigma)}&\lesssim \lambda^{\delta(q, k)-1} (\log \lambda)^{\nu (q, k)} \lambda \|u\|_{L^2 (M)} \\
        &\lesssim \lambda^{\delta(q, k)-1} (\log \lambda)^{\nu (q, k)} \|(H_V-(\lambda+i)^2 ) u\|_{L^2 (M)},
    \end{split}
\end{align}
where we used the spectral theorem in the last inequality (see also \cite{BlairHuangSireSogge2022UniformSobolev}). Since $S_\lambda=(-\Delta_g-(\lambda+i)^2)^{-1}-W_\lambda$, it follows from \eqref{Resolvent opr norm from L2 to Lq for univ est} and \eqref{W lambda Lq to L2 estimate} that
\begin{align}\label{S lambda L2 to Lq estimate}
    \|S_\lambda\|_{L^2 (M)\to L^q (\Sigma)}\lesssim \lambda^{\delta(q, k)-1} (\log \lambda)^{\nu (q, k)}.
\end{align}
We note that
\begin{align*}
    u&=(-\Delta_g-(\lambda+i)^2)^{-1}\circ (-\Delta_g-(\lambda+i)^2)u \\
    &=(S_\lambda+W_\lambda)\circ (-\Delta_g-(\lambda+i)^2)u \\
    &=S_\lambda(-\Delta_g+V-(\lambda+i)^2)u+W_\lambda (-\Delta_g-(\lambda+i)^2)u-S_\lambda(Vu).
\end{align*}
Using this, \eqref{W lambda composition estimate} and \eqref{S lambda L2 to Lq estimate}, we have
\begin{align}\label{S(Vu) appears}
    \|u\|_{L^q (\Sigma)}\lesssim \lambda^{\delta(q, k)-1} (\log \lambda)^{\nu (q, k)} \|(H_V-(\lambda+i)^2)u\|_{L^2 (M)}+\|S_\lambda (Vu)\|_{L^q (\Sigma)},
\end{align}
and thus, we shall focus on the term $\|S_\lambda (Vu)\|_{L^q (\Sigma)}$ for Theorem \ref{Thm for univ est for any curves}-\ref{Thm for codim 2}.

\subsection{Reductions for Theorem \ref{Thm for univ est for any curves}-\ref{Thm for univ est for curved curves}}

If $(n, k)=(2, 1)$, then $\nu (q, k)=0$, and so, Theorem \ref{Thm for univ est for any curves} follows from the following proposition.
\begin{proposition}\label{Prop Reduction for curves Kato case}
    Suppose $V\in \mathcal{K}(M)$, $\dim M=2$, $u\in \mathrm{Dom}(H_V)$, and $\gamma$ is any curve in $M$. Then
    \begin{align}\label{S(Vu) claim for curves of universal estimates}
        \begin{split}
            \|S_\lambda (Vu)\|_{L^q (\gamma)}\leq \begin{cases}
                C_V \lambda^{-\frac{1}{2}} \|(H_V-(\lambda+i)^2)u\|_{L^2 (M)}, & \text{if } q=\infty, \\
                C_V \lambda^{-\frac{1}{2}-\frac{1}{q}} \|(H_V-(\lambda+i)^2)u\|_{L^2 (M)}, & \text{if } 2<q<\infty, \\
                C_V \lambda^{-1} (\log \lambda)^{\frac{1}{2}} \|(H_V-(\lambda+i)^2)u\|_{L^2 (M)}, & \text{if } q=2.
            \end{cases}
        \end{split}
    \end{align}
\end{proposition}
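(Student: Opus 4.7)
The plan is to derive the bound from a pointwise kernel estimate for $S_\lambda$ combined with the Kato-class form inequality and standard resolvent-type bounds on $u$. The first ingredient is a pointwise bound on the kernel $K_\lambda(x,y)$ of $S_\lambda$ in dimension $n=2$. Via the Hadamard parametrix for $\cos(tP)$ at short times (or equivalently, the Hankel function representation of the free Helmholtz resolvent $(-\Delta-(\lambda+i)^2)^{-1}$, transferred to $M$ by the finite-propagation nature of the $\mu_0$-localized operator), I would obtain
\begin{align*}
    |K_\lambda(x,y)| \lesssim \begin{cases} 1 + |\log(\lambda d_g(x,y))|, & d_g(x,y) \leq \lambda^{-1}, \\ \lambda^{-1/2}\, d_g(x,y)^{-1/2}, & \lambda^{-1} \leq d_g(x,y) \leq \epsilon_0, \\ 0, & d_g(x,y) > \epsilon_0. \end{cases}
\end{align*}

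The second ingredient is the bound $\||V|^{1/2}u\|_{L^2(M)} \lesssim C_V \|(H_V - (\lambda+i)^2)u\|_{L^2(M)}$. This follows from the Kato form inequality $\int_M |V||w|^2\,dy \leq C_V(\|\nabla w\|_{L^2}^2 + \|w\|_{L^2}^2)$ combined with the elementary resolvent bounds $\|u\|_{L^2(M)} \leq (2\lambda)^{-1}\|f\|_{L^2(M)}$ and $\|\nabla u\|_{L^2(M)} \lesssim \|f\|_{L^2(M)}$, where $f = (H_V - (\lambda+i)^2)u$; the gradient bound is obtained by pairing the defining equation against $u$, taking real parts, and absorbing the $V$-contribution via form-boundedness. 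With this in hand, Cauchy-Schwarz applied to $V = \mathrm{sgn}(V)|V|^{1/2}\cdot|V|^{1/2}$ gives
\begin{align*}
    |S_\lambda(Vu)(x)| \leq \Bigl(\int_M |K_\lambda(x,y)|^2 |V(y)|\,dy\Bigr)^{1/2} \cdot \||V|^{1/2}u\|_{L^2(M)},
\end{align*}
reducing matters to computing the $L^{q/2}(\gamma, dx)$-norm of $\int_M |K_\lambda(x,y)|^2 |V(y)|\,dy$.

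For $q = \infty$ and for $2 < q < \infty$, I would bound this kernel integral by a dyadic decomposition in $d_g(x,y)$ together with the splitting $V = V_1 + V_2$ with $V_2 \in L^\infty$ and $V_1$ of arbitrarily small Kato norm (always possible by definition of $\mathcal{K}(M)$): the $V_2$-piece is handled by the plain operator bound \eqref{S lambda L2 to Lq estimate} applied to $S_\lambda(V_2 u)$, which gains an additional $\lambda^{-1}$ from $\|u\|_{L^2} \lesssim \lambda^{-1}\|f\|_{L^2}$, while the $V_1$-piece is absorbed using the Kato estimate $\int_{d_g(x,y) \leq r}|V_1(y)|\,dy \leq |\log r|^{-1}\|V_1\|_{\mathcal{K}}$ on each shell. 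For $q=2$ I would apply Schur's test to $S_\lambda|V|^{1/2} : L^2(M) \to L^2(\gamma)$; the $(\log\lambda)^{1/2}$-loss comes from
\begin{align*}
    \sup_{y \in M} \int_\gamma |K_\lambda(x,y)|^2\,dx \lesssim \lambda^{-1}\log\lambda,
\end{align*}
which reflects the dyadic length of $d_g(x,y)$ along $\gamma$ from scale $\lambda^{-1}$ up to scale $1$. The principal obstacle will be the sharp dyadic bookkeeping that produces the advertised $\lambda^{-1/2}$ factor in the $L^\infty(\gamma)$ case, rather than a weaker $(\log\lambda)^{-1/2}$ factor that would come from a naive application of the Cauchy-Schwarz step; this is precisely where the $V_1+V_2$ splitting must be tuned quantitatively to the $\lambda$-scales so that the residual small-Kato constant absorbs the logarithmic deficit.
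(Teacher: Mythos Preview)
Your Cauchy--Schwarz splitting $V=\mathrm{sgn}(V)|V|^{1/2}\cdot|V|^{1/2}$ is the wrong factorization here, and the deficit it produces is a full power of $\lambda^{1/2}$, not the logarithmic loss you flag as the ``principal obstacle.'' The bound $\||V|^{1/2}u\|_{L^2}\lesssim\|(H_V-(\lambda+i)^2)u\|_{L^2}$ carries no negative power of $\lambda$, and the complementary factor $\bigl(\int|K_\lambda(x,y)|^2|V(y)|\,dy\bigr)^{1/2}$ cannot supply one either: squaring the kernel turns the integrand into $|\log(\lambda d_g)|^2$ near the diagonal and $\lambda^{-1}d_g(x,y)^{-1}$ in the intermediate zone, and neither of these is controlled by the Kato condition (which only pairs $|\log d_g|$ against $|V|$). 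Your own dyadic computation yields at best $\int|K_\lambda|^2|V_1|\lesssim(\log\lambda)^{-1}\|V_1\|_{\mathcal{K}}$, so after the square root you obtain $\|S_\lambda(V_1 u)\|_{L^\infty(\gamma)}\lesssim(\log\lambda)^{-1/2}\|f\|_{L^2}$, not $\lambda^{-1/2}\|f\|_{L^2}$; the same $\lambda^{1/2}$ loss occurs for every $q\ge 2$. No tuning of the $V_1+V_2$ split can repair this, because the Kato hypothesis is purely qualitative and gives no quantitative decay rate in $\lambda$ for $\|V_1\|_{\mathcal{K}}$.

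The paper instead extracts $\|u\|_{L^\infty(M)}$ from the integral and invokes the quasimode bound $\|u\|_{L^\infty(M)}\le C_V\lambda^{-1/2}\|(H_V-(\lambda+i)^2)u\|_{L^2(M)}$ of \cite[Theorem~1.3]{BlairSireSogge2021Quasimode}; this is where the $\lambda^{-1/2}$ actually comes from. With $u$ pulled out in $L^\infty$, only the \emph{first} power of $K_\lambda$ remains: for $q=\infty$ one bounds $\sup_x\int|K_\lambda(x,y)||V(y)|\,dy$ directly by the Kato condition (since $|K_\lambda|\lesssim h_2(d_g)$), and for $2\le q<\infty$ one uses Minkowski to reach $\|u\|_{L^\infty}\|V\|_{L^1}\sup_y\|K_\lambda(\cdot,y)\|_{L^q(\gamma)}$, the last factor being computed to be $\lambda^{-1/q}$ for $q>2$ and $\lambda^{-1/2}(\log\lambda)^{1/2}$ for $q=2$. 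Your proposal omits this $L^\infty$ quasimode input, and without it the argument cannot close.
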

In fact, we can also show that Proposition \ref{Prop Reduction for curves Kato case} also implies Theorem \ref{Thm for univ est for curved curves}. Indeed, let $\gamma$ be a curve with nonvanishing geodesic curvatures in a Riemannian surface $M$ as in Theorem \ref{Thm for univ est for curved curves}. Instead of using \eqref{BGT and Hu's estimates}, if we use \eqref{BGT and Hu curved curve estimates}, then similar arguments as above give us that, for $2\leq q\leq 4$,
\begin{align*}
    & \|W_\lambda (-\Delta_g-(\lambda+i)^2)u\|_{L^q (\gamma)}\lesssim \lambda^{\Tilde{\delta}(q)},\quad 2\leq q\leq 4, \\
    & \|S_\lambda \|_{L^2 (M)\to L^q (\gamma)}\lesssim \lambda^{\Tilde{\delta}(q)-1},\quad 2\leq q\leq 4,
\end{align*}
and thus, instead of having \eqref{S(Vu) appears}, we have
\begin{align}\label{Curved curves WTS}
    \|u\|_{L^q (\gamma)}\lesssim \lambda^{\Tilde{\delta}(q)-1}\|(H_V-(\lambda+i)^2)u\|_{L^2 (M)}+\|S_\lambda (Vu)\|_{L^q (\gamma)},\quad 2\leq q\leq 4,
\end{align}
and hence, it is enough to control $\|S_\lambda (Vu)\|_{L^q (\gamma)}$. If Proposition \ref{Prop Reduction for curves Kato case} holds, then it should also holds for curves with nonvanishing geodesic curvatures. Since
\begin{align*}
    \lambda^{-\frac{1}{2}-\frac{1}{q}}\leq \lambda^{\Tilde{\delta}(q)-1}, \text{ for } 2\leq q\leq 4,\quad\text{and } \lambda^{-1}(\log \lambda)^{\frac{1}{2}}\leq \lambda^{\Tilde{\delta}(2)-1},
\end{align*}
we have
\begin{align*}
    \|S_\lambda (Vu)\|_{L^q (\gamma)}\leq C_V \lambda^{\Tilde{\delta}(q)-1} \|(H_V-(\lambda+i)^2)u\|_{L^2 (M)},\quad 2\leq q\leq 4,
\end{align*}
which proves Theorem \ref{Thm for univ est for curved curves} by \eqref{Curved curves WTS}. Thus, we would have Theorem \ref{Thm for univ est for any curves}-\ref{Thm for univ est for curved curves}, if we could prove Proposition \ref{Prop Reduction for curves Kato case}. We shall prove Proposition \ref{Prop Reduction for curves Kato case} later in \S \ref{S: Proof for curves for universal estimates}.

\subsection{Reduction for Theorem \ref{Thm for hypersurfaces}}\label{SS: Thm for hypersurfaces}
If $k=n-1$, then $\nu (q, k)=0$, and thus, by \eqref{S(Vu) appears}, we want to control the perturbation term $\|S_\lambda (Vu)\|_{L^q (\Sigma)}$ by using the following propositions.

\begin{proposition}\label{Prop Reduction for hypersurface thm}
    Suppose $\Sigma$ is a hypersurface of $M$, where $\dim M=n$. If $n\geq 3$ and $\frac{n}{p}-\frac{n-1}{q}=2$, then
    \begin{align}\label{S(Vu) WTS hypersurface}
        \begin{split}
            \|S_\lambda f\|_{L^q (\Sigma)}\lesssim \begin{cases}
                \|f\|_{L^p (M)}, & \text{if } 3\leq n\leq 5,\; \text{ and } \frac{2(n-1)^2}{n^2-3n+4}<q<\frac{2(n-1)}{n-3}, \\
                \|f\|_{L^p (M)}, & \text{if } n\geq 6,\; \text{ and } \frac{2n^2-5n+4}{n^2-4n+8}<q<\frac{2(n-1)}{n-3}, \\
                (\log \lambda)\|f\|_{L^p (M)}, & \text{if } 3\leq n\leq 5,\; \text{ and } q=\frac{2(n-1)^2}{n^2-3n+4}, \\
                (\log \lambda)\|f\|_{L^p (M)}, & \text{if } 4\leq n\leq 5,\; \text{ and } q=\frac{2(n-1)}{n-3}, \\
                (\log \lambda)\|f\|_{L^p (M)}, & \text{if } n\geq 6,\; \text{ and } q\in \left\{\frac{2n^2-5n+4}{n^2-4n+8}, \frac{2(n-1)}{n-3} \right\},
            \end{cases}
        \end{split}
    \end{align}
\end{proposition}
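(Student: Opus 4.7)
The plan is to represent $S_\lambda$ via the Hadamard parametrix as an oscillatory integral operator of order $-1$ localized at frequencies $|\xi|_g\sim\lambda$, and then apply the fold-singularity $L^2\to L^q(\Sigma)$ estimates of Greenleaf-Seeger and Hu in conjunction with a $TT^*$-style argument to upgrade to $L^p$ input at the correct scaling $\frac np-\frac{n-1}q=2$.

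First I use the Hadamard parametrix for $\cos(tP)$ valid on the support $|t|\le\epsilon_0$ of $\mu_0$ to write, modulo a smoothing error,
$$S_\lambda(x,y)\;\approx\;\lambda^{-1}\int e^{i\lambda\Psi(x,y,\omega)}\,a_\lambda(x,y,\omega)\,d\omega,$$
with $\Psi=-d_g$ as in \eqref{Phase fcn in Hu} and amplitude concentrated at $|\xi|_g\sim\lambda$. Restricting the output to $\Sigma$ places us in the setting of Section \ref{S: Review for fold singularities}: by Theorem \ref{Thm Section 4 in Hu}(2) the left projection of the associated canonical relation is either nondegenerate or a Whitney fold, and for each fiber at least $l=n-2$ principal curvatures of $H_x$ do not vanish, so Theorem \ref{Thm for hypersurfaces in Greenleaf-Seeger} furnishes $\|\mathds{1}_{[\mu,\mu+1]}(P)\|_{L^2(M)\to L^q(\Sigma)}\lesssim\mu^{\delta(q,n-1)}$ for $\mu\sim\lambda$, which together with the symbol bound $|m_\lambda(\mu)|\lesssim\lambda^{-1}(1+|\mu-\lambda|)^{-1}$ (cf.\ \eqref{m lambda size est for n=2}) and the tail handling of Lemma \ref{Lemma Resolvent opr norm from L2 to Lq for univ est} gives the $L^2(M)\to L^q(\Sigma)$ analogue of \eqref{S(Vu) WTS hypersurface} directly.

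To upgrade from $L^2$ to $L^p$ input I apply $TT^*$ to the restriction operator $RS_\lambda$: the composition $S_\lambda^*R^*RS_\lambda:L^p(M)\to L^{p'}(M)$ has Schwartz kernel
$$K_\lambda(x,y)\;=\;\int_\Sigma S_\lambda(z,x)\,\overline{S_\lambda(z,y)}\,d\sigma(z),$$
a double oscillatory integral whose phase inherits the fold-singularity structure of $-d_g$ on $\Sigma$. A stationary-phase analysis on $K_\lambda$ (using again the non-vanishing principal curvatures from Hu's theorem) produces pointwise kernel bounds of Bessel-type $|K_\lambda(x,y)|\lesssim\lambda^{\alpha}(1+\lambda d_g(x,y))^{-\beta}$; the Schur test or Young's inequality then delivers the desired $L^p\to L^{p'}$ estimate. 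Interpolating this bound with the $L^2\to L^q(\Sigma)$ estimate above via Stein complex interpolation then covers the interior of the line $\frac np-\frac{n-1}q=2$ without logarithmic loss, while at the endpoints $q\in\{\tfrac{2(n-1)^2}{n^2-3n+4},\tfrac{2(n-1)}{n-3},\tfrac{2n^2-5n+4}{n^2-4n+8}\}$ the interpolation diagram meets either the critical fold exponent $q=\tfrac{2n}{n-1}$ of Theorem \ref{Thm for hypersurfaces in Greenleaf-Seeger} or Sogge's knee $p'=q_c=\tfrac{2(n+1)}{n-1}$, producing the $\log\lambda$ loss of part (2) via an unavoidable harmonic-series summation over unit-width spectral blocks.

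The main obstacle is the bilinear stationary-phase analysis of $K_\lambda$ near the fold locus, where the two copies of the phase develop coinciding critical points and the amplitude becomes mildly singular; tracking these contributions carefully and ensuring the resulting kernel bound respects the scaling $\frac np-\frac{n-1}q=2$ is the heart of the matter. A secondary difficulty is pinpointing which $q$ genuinely force the $\log\lambda$ loss versus those where it is a mere artifact of the naive dyadic summation — this distinction accounts for the dimensional bifurcation of the valid range at $n=5$ and $n=8$, which reflects the different configurations in which the fold-singularity critical exponent and the Sogge knee sit relative to each other as the dimension grows.
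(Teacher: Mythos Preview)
Your plan has a genuine gap at the very first step: the representation of $S_\lambda$ as a clean oscillatory integral with smooth amplitude ``concentrated at $|\xi|_g\sim\lambda$'' is not correct. The kernel of $S_\lambda$ behaves like $d_g(x,y)^{2-n}$ when $d_g(x,y)\lesssim\lambda^{-1}$ and like $\lambda^{(n-3)/2}d_g(x,y)^{-(n-1)/2}e^{i\lambda d_g(x,y)}$ when $\lambda^{-1}\lesssim d_g(x,y)\lesssim 1$ (cf.\ \cite[(5.11)]{BlairHuangSireSogge2022UniformSobolev}). The near-diagonal piece is a genuine fractional-integration singularity, not an oscillatory integral with bounded amplitude, and it carries exactly the weight that pins the scaling line $\frac np-\frac{n-1}q=2$. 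Your $TT^*$ scheme then fails concretely: for $x\in\Sigma$ one has
\[
K_\lambda(x,x)=\int_\Sigma |S_\lambda(z,x)|^2\,d\sigma(z)\gtrsim\int_{\Sigma\cap B_{\lambda^{-1}}(x)} d_g(z,x)^{2(2-n)}\,d\sigma(z)=+\infty
\]
for every $n\ge3$, so no Bessel-type pointwise bound on $K_\lambda$ can hold, and Schur/Young is not available. The ``bilinear stationary phase near the fold locus'' you flag as the main obstacle is therefore not the real issue; the singularity is.

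The paper avoids this by a dyadic decomposition $S_\lambda=S_0+\sum_{1\le j\lesssim\log\lambda}S_j$ in the distance scale $d_g(x,y)\sim\lambda^{-1}2^j$. The piece $S_0$ is the near-diagonal fractional integral and is dispatched by Hardy--Littlewood--Sobolev (Proposition~\ref{Prop Sobolev trace for hypsurf}). Each $S_j$ is then rescaled by $x\mapsto\lambda^{-1}2^jX$ to an oscillatory integral $\tilde S_j$ at frequency $2^j$ with bounded amplitude; crucially, the Jacobian of this rescaling produces exactly the factor $(2^j)^{(n-3)/2}$ when $\frac np-\frac{n-1}q=2$ (see \eqref{Sj Sj tilde norm for hypersurf}), which is what makes the scaling line natural. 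One then proves several $L^p\to L^q$ estimates for $\tilde S_j$ directly---Greenleaf--Seeger/Hu at $q=\frac{2n}{n-1}$, a Carleson--Sj\"olin bound in $n-1$ variables at $p=\frac{2n}{n+2}$, and trivial bounds---and interpolates to obtain $\|S_j\|_{L^p\to L^q}\lesssim 2^{-j\alpha(p,q)}$ with $\alpha>0$ in the open range. Summing over $j$ gives the uniform bound; at the endpoints $\alpha=0$ and the sum over $O(\log\lambda)$ terms gives the logarithmic loss. The dimensional split at $n=5,6$ you noticed comes from whether the auxiliary exponent $\frac{4(n-1)}{7n-n^2-8}$ in the interpolation (Lemma~\ref{Lemma Sj critical uniform est}) is finite, not from any Sogge-knee phenomenon.
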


In this subsection, we show that Proposition \ref{Prop Reduction for hypersurface thm} implies \eqref{Hypersurface estimates} when $\Sigma$ is a hypersurface. We note that
\begin{enumerate}
    \item $\frac{2(n-1)^2}{n^2-3n+4}<\frac{2n}{n-1}$ if $n\in \{3, 4, 5\}$,
    \item $\frac{2n^2-5n+4}{n^2-4n+8}<\frac{2n}{n-1}$ if $ n\in \{6, 7\}$,
    \item $\frac{2n}{n-1}<\frac{2n^2-5n+4}{n^2-4n+8}$ if $n\geq 8$.
\end{enumerate}
With this in mind, we first consider either
\begin{align}\label{Supercritical condition}
    \frac{2n}{n-1}\leq q<\frac{2(n-1)}{n-3} \text{ and } n\in \{3, 4, 5, 6, 7\}, \quad \text{or}\quad \frac{2n^2-5n+4}{n^2-4n+8}<q<\frac{2(n-1)}{n-3} \text{ and } n\geq 8.
\end{align}
By H\"older's inequality and Proposition \ref{Prop Reduction for hypersurface thm}, if $\frac{n}{p}-\frac{n-1}{q}=2$, we have
\begin{align}\label{S(Vu) Holder's inequality, hypersurface}
    \|S_\lambda (Vu)\|_{L^q (\Sigma)}\lesssim \|Vu\|_{L^p (M)}\leq \|V\|_{L^{\frac{n}{2}}(M)}\|u\|_{L^{\frac{np}{n-2p}}(M)}.
\end{align}
By a direct computation, we have that
\begin{align*}
    \frac{np}{n-2p}>\frac{2(n+1)}{n-1}, \quad \text{if} \quad \frac{n}{p}-\frac{n-1}{q}=2 \text{ and } q\geq \frac{2n}{n-1},
\end{align*}
and thus, for $\frac{2n}{n-1}\leq q<\frac{2(n-1)}{n-3}$, by \cite[Theorem 1.1]{BlairHuangSireSogge2022UniformSobolev} and \eqref{S(Vu) Holder's inequality, hypersurface}, for $V\in L^{\frac{n}{2}} (M)$, $\sigma(q)$ as in \eqref{Sogge's exponents} and $u\in \mathrm{Dom}(H_V)$,
\begin{align}\label{S(vu) est computation hypersurface}
    \begin{split}
        \|S_\lambda (Vu)\|_{L^q (\Sigma)}& \lesssim \|V\|_{L^\frac{n}{2}(M)} \|u\|_{L^{\frac{np}{n-2p}} (M)} \\
        &\leq C_V \lambda^{\sigma\left(\frac{np}{n-2p}\right)-1} \|(H_V-\lambda^2+i\lambda)u\|_{L^2 (M)} \\
        &\lesssim C_V \lambda^{\left(\frac{n-1}{2}-\left(\frac{n}{p}-2 \right) \right)-1}\|(H_V-(\lambda+i)^2)u\|_{L^2 (M)} \\
        &=C_V \lambda^{\frac{n-1}{2}-\frac{n-1}{q}-1}\|(H_V-(\lambda+i)^2)u\|_{L^2 (M)} \\
        &=C_V \lambda^{\delta(q, n-1)-1} \|(H_V-(\lambda+i)^2)u\|_{L^2 (M)}.
    \end{split}
\end{align}
In the third inequality, we used the triangle inequality and the spectral theorem to obtain $\|(H_V-\lambda^2+i\lambda)u\|_{L^2 (M)}\lesssim \|(H_V-(\lambda+i)^2)u\|_{L^2 (M)}$. By \eqref{S(Vu) appears} and \eqref{S(vu) est computation hypersurface}, we would have \eqref{Hypersurface estimates} for $(q, n)$ as in \eqref{Supercritical condition}, if we could prove Proposition \ref{Prop Reduction for hypersurface thm}.

We need to consider the remaining cases where
\begin{align}\label{Subcritical condition for n=6, 7}
    \frac{2n^2-5n+4}{n^2-4n+8} \leq q<\frac{2n}{n-1} \quad \text{and} \quad n\in \{6, 7\},
\end{align}
\begin{align}\label{Subcritical condition for n=3, 4, 5}
    \frac{2(n-1)^2}{n^2-3n+4} \leq q<\frac{2n}{n-1} \quad \text{and} \quad n\in \{3, 4, 5\},
\end{align}
and
\begin{align}\label{Upper endpt condition}
    q=\frac{2(n-1)}{n-3} \text{ and } n\geq 4, \quad \text{or} \quad q=\frac{2n^2-5n+4}{n^2-4n+8} \text{ and } q\geq 8.
\end{align}
which are not in \eqref{Supercritical condition}.

We first suppose \eqref{Subcritical condition for n=6, 7}. By direct computations, one can see that
\begin{align}\label{Exponent comparisons for subcritical exponents}
    \begin{split}
        \begin{cases}
            \sigma\left(\frac{np}{n-2p}\right)\leq \delta(q, n-1), & \text{when } n\in \{6, 7\} \text{ and } \frac{2n^2-5n+4}{n^2-4n+8}<q\leq \frac{2n}{n-1}, \\
            \sigma\left(\frac{np}{n-2p}\right)<\delta(q, n-1), & \text{when } n\in \{6, 7\} \text{ and } q=\frac{2n^2-5n+4}{n^2-4n+8}.
        \end{cases}
    \end{split}
\end{align}
Since \eqref{Exponent comparisons for subcritical exponents} follows from routine calculations, we skip the calculations here and leave the details to the reader. By \eqref{Exponent comparisons for subcritical exponents}, if we assume $\frac{2n^2-5n+4}{n^2-4n+8}<q\leq \frac{2n}{n-1}$ for $n\in \{6, 7\}$, then by Proposition \ref{Prop Reduction for hypersurface thm}, \eqref{S(Vu) Holder's inequality, hypersurface}, and \cite[Theorem 1.1]{BlairHuangSireSogge2022UniformSobolev} again,
\begin{align}\label{Perturbation for hypsurf for subcritical exp}
    \begin{split}
        \|S_\lambda (Vu) \|_{L^q (\Sigma)}&\lesssim \|V\|_{L^{\frac{n}{2}}(M)} \|u\|_{L^{\frac{np}{n-2p}}(M)} \\
        &\leq C_V \lambda^{\sigma\left(\frac{np}{n-2p} \right)-1} \|(H_V-\lambda^2+i\lambda)u\|_{L^2 (M)} \\
        &\leq C_V \lambda^{\delta(q, n-1)-1} \|(H_V-\lambda^2+i\lambda)u\|_{L^2 (M)}.
    \end{split}
\end{align}
If $n\in \{6, 7\}$ and $q=\frac{2n^2-5n+4}{n^2-4n+8}$, then since $\sigma\left(\frac{np}{n-2p} \right)<\delta(q, n-1)$ by \eqref{Exponent comparisons for subcritical exponents}, we have, by Proposition \ref{Prop Reduction for hypersurface thm}, \eqref{S(Vu) Holder's inequality, hypersurface}, and \cite[Theorem 1.1]{BlairHuangSireSogge2022UniformSobolev} again,
\begin{align}\label{Perturbation for hypsurf for low endpt}
    \begin{split}
        \|S_\lambda (Vu) \|_{L^q (\Sigma)}&\lesssim (\log \lambda) \|V\|_{L^{\frac{n}{2}}(M)} \|u\|_{L^{\frac{np}{n-2p}}(M)} \\
        &\leq C_V \lambda^{\sigma\left(\frac{np}{n-2p} \right)-1} (\log \lambda) \|(H_V-\lambda^2+i\lambda)u\|_{L^2 (M)} \\
        &\leq C_V \lambda^{\delta(q, n-1)-1} \|(H_V-\lambda^2+i\lambda)u\|_{L^2 (M)}.
    \end{split}
\end{align}
Thus, the estimate \eqref{Hypersurface estimates} is satisfied when \eqref{Subcritical condition for n=6, 7} holds.

We next assume \eqref{Subcritical condition for n=3, 4, 5}. If $n\in \{3, 4, 5\}$ and $\frac{n}{p}-\frac{n-1}{q}=2$, then
\begin{align*}
    \frac{np}{n-2p}=\frac{nq}{n-1}<\frac{2(n+1)}{n-1},\quad \text{when } q=\frac{2(n-1)^2}{n^2-3n+4}.
\end{align*}
With this in mind, by straightforward computations, one can obtain that
\begin{align*}
    \begin{cases}
        \frac{n-1}{2}\left(\frac{1}{2}-\frac{n-1}{nq} \right)<\frac{n-1}{4}-\frac{n-2}{2q}, & \text{if } n\in \{3, 4, 5\},\; \frac{nq}{n-1}\leq \frac{2(n+1)}{n-1}, \text{ and } \frac{2(n-1)^2}{n^2-3n+4}<q\leq \frac{2n}{n-1}, \\
        n\left(\frac{1}{2}-\frac{n-1}{2q} \right)-\frac{1}{2}\leq \frac{n-1}{4}-\frac{n-2}{2q}, & \text{if } n\in \{3, 4, 5\},\; \frac{2(n+1)}{n-1}\leq \frac{nq}{n-1}, \text{ and } \frac{2(n-1)^2}{n^2-3n+4}<q\leq \frac{2n}{n-1}, \\
        \frac{n-1}{2}\left(\frac{1}{2}-\frac{n-1}{2q} \right)<\frac{n-1}{4}-\frac{n-2}{2q}, & \text{if } n\in \{3, 4, 5\}, \text{ and } q=\frac{2(n-1)^2}{n^2-3n+4}.
    \end{cases}
\end{align*}
This gives us that
\begin{align*}
    \begin{split}
        \begin{cases}
            \sigma\left(\frac{np}{n-2p}\right)\leq \delta(q, n-1), & \text{when } n\in \{3, 4, 5\} \text{ and } \frac{2(n-1)^2}{n^2-3n+4}<q\leq \frac{2n}{n-1}, \\
            \sigma\left(\frac{np}{n-2p}\right)<\delta(q, n-1), & \text{when } n\in \{3, 4, 5\} \text{ and } q=\frac{2(n-1)^2}{n^2-3n+4},
        \end{cases}
    \end{split}
\end{align*}
and thus, by this, Proposition \ref{Prop Reduction for hypersurface thm}, \eqref{S(Vu) Holder's inequality, hypersurface}, and \cite[Theorem 1.1]{BlairHuangSireSogge2022UniformSobolev} again, as in the computation in \eqref{Perturbation for hypsurf for subcritical exp}-\eqref{Perturbation for hypsurf for low endpt}, we have that
\begin{align*}
    \|S_\lambda (Vu)\|_{L^q (\Sigma)}\leq C_V \lambda^{\delta(q, n-1)} \|(H_V-\lambda^2+i\lambda)u\|_{L^2 (M)}.
\end{align*}

The remaining cases for Theorem \ref{Thm for hypersurfaces} are $q=\frac{2(n-1)}{n-3}$ and $n\geq 4$, or $q=\frac{2n^2-5n+4}{n^2-4n+8}$ and $q\geq 8$, i.e., \eqref{Upper endpt condition}. For \eqref{Upper endpt condition}, all the computations are the same as in \eqref{S(vu) est computation hypersurface} except a log loss from Proposition \ref{Prop Reduction for hypersurface thm}, and thus,
\begin{align*}
    \|S_\lambda (Vu)\|_{L^{q} (\Sigma)}\leq C_V \lambda^{\delta(q, n-1)-1} (\log \lambda) \|(H_V-(\lambda+i)^2)u\|_{L^2 (M)}.
\end{align*}
Putting these altogether with \eqref{S(Vu) appears} yields Theorem \ref{Thm for hypersurfaces}. We shall show Proposition \ref{Prop Reduction for hypersurface thm} in \S \ref{S: Proof for any hypersurfaces}.

\subsection{Reduction for Theorem \ref{Thm for codim 2}}
Considering \eqref{S(Vu) appears} again, Theorem \ref{Thm for codim 2} follows from the following proposition.

\begin{proposition}\label{Prop Reduction for codim 2 thm}
    Suppose $\Sigma$ is an $(n-2)$-dimensional submanifold of $M$, where $\dim M=n\geq 3$.
    \begin{enumerate}
        \item Let $n=3$. If
        \begin{align*}
            \frac{3}{p}-\frac{1}{q}=2,\quad 2\leq q<\infty,
        \end{align*}
        then
        \begin{align*}
            \begin{split}
                \|S_\lambda f\|_{L^q (\Sigma)}\lesssim \|f\|_{L^p (M)}.
            \end{split}
        \end{align*}
        \item Let $n\geq 4$. If
        \begin{align*}
            \frac{n}{p}-\frac{n-2}{q}=2,\quad \frac{2(n-2)^2}{n^2-5n+8}\leq q\leq \frac{2(n-2)}{n-3},
        \end{align*}
        then
        \begin{align*}
            \begin{split}
                \|S_\lambda f\|_{L^q (\Sigma)}\lesssim \begin{cases}
                    \|f\|_{L^p (M)}, & \text{if } \frac{2(n-2)^2}{n^2-5n+8}<q<\frac{2(n-2)}{n-3}, \\
                    (\log \lambda)^{\frac{3n-7}{2(n-2)}} \|f\|_{L^p (M)}, & \text{if } q=\frac{2(n-2)^2}{n^2-5n+8}, \text{ or } q=\frac{2(n-2)}{n-3}.
                \end{cases}
            \end{split}
        \end{align*}
    \end{enumerate}
\end{proposition}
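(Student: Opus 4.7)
The plan is to analyze $S_\lambda$ through a Hadamard parametrix representation of $\cos(tP)$ on the small time support of $\mu_0$. Integrating in $t$ produces a Fourier integral operator whose kernel, after passing to polar coordinates $y = \exp_x(r\omega)$ with $\omega \in \mathbb{S}^{n-1}$, is an oscillatory integral with phase $\Psi(x,\omega) = -d_g(x, r\omega)$ and amplitude essentially localized to $|\xi| \sim \lambda$. A dyadic frequency decomposition splits $S_\lambda$ into a resonant piece supported on frequencies $\sim \lambda$ and off-resonance pieces; the latter are harmless because the symbol $(\tau^2 - (\lambda+i)^2)^{-1}$ decays quadratically away from $\tau = \lambda$, so they can be controlled by straightforward Sobolev trace estimates as in Lemma \ref{Lemma Resolvent opr norm from L2 to Lq for univ est}.

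For the resonant piece, I invoke Theorem \ref{Thm Section 4 in Hu}, which shows that when $\dim \Sigma = n-2$ the left projection of the canonical relation of $\Psi$ is at worst a submersion with folds, placing us in the setting of Theorem \ref{Thm 2.1 in GS} with $d = n-2$ and $r = (n-1) - (n-2) = 1$. That theorem yields an $L^2(M) \to L^q(\Sigma)$ bound of size $\lambda^{\delta(q, n-2) - 1}$ for $q > 2$, with a $(\log \lambda)^{1/2}$ loss at $q = 2$ coming from the $r=1$ case. To upgrade this $L^2$ estimate to the desired $L^p \to L^q(\Sigma)$ with scaling $\frac{n}{p} - \frac{n-2}{q} = 2$, I will run a $TT^*$ / complex interpolation argument: one endpoint is the Hu-type bound obtained above, while the other endpoint is a Kenig--Ruiz--Sogge style $L^p \to L^2$ bound for the frequency-localized resolvent (essentially a Stein--Tomas estimate at frequency $\lambda$). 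Composition with the Sobolev trace formula and complex interpolation then push the Hu bound across to the full $L^p \to L^q(\Sigma)$ inequality on the open range $\frac{2(n-2)^2}{n^2 - 5n + 8} < q < \frac{2(n-2)}{n - 3}$. The case $n = 3$ is handled separately: the full range $2 \leq q < \infty$ is reached by interpolating the Hu $L^2 \to L^q(\Sigma)$ bound with the trivial $L^1 \to L^\infty$ estimate together with Sobolev embedding in $M$, without needing to navigate an upper endpoint since $\frac{2(n-2)}{n-3}$ formally equals $\infty$.

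The main obstacle will be the endpoint analysis at $q \in \{\frac{2(n-2)^2}{n^2-5n+8},\ \frac{2(n-2)}{n-3}\}$. At these exponents the summation over dyadic frequency blocks adjacent to $\lambda$ becomes only logarithmically convergent, and the $(\log\lambda)^{1/2}$ loss from the $r=1$ fold at the $L^2$-endpoint of the interpolation must be combined with this dyadic log loss and propagated through the interpolation. Tracking the resulting exponent carefully is what will produce the stated $(\log \lambda)^{(3n-7)/(2(n-2))}$ factor; identifying and verifying this precise exponent, rather than simply asserting an $O((\log\lambda)^{C})$ loss, is the most delicate step. A secondary technical point is ensuring that the frequency-localized $L^p \to L^2$ bound used as the second interpolation endpoint admits the same dyadic decomposition as the Hu side, so that the summations over frequency blocks match and no spurious further losses enter.
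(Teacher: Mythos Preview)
Your outline diverges from the paper's argument in a way that leaves a genuine gap at the key step. The paper does \emph{not} use a dyadic frequency decomposition of $S_\lambda$; it uses a dyadic decomposition in the \emph{time} (equivalently spatial) variable, writing $S_\lambda = S_0 + \sum_{j=1}^{\lfloor \log_2\lambda\rfloor} S_j$ with $S_j$ supported where $d_g(x,y)\sim \lambda^{-1}2^j$, and then \emph{rescales} each $S_j$ by $x=\lambda^{-1}2^j X$, $y=\lambda^{-1}2^j Y$ to an oscillatory integral $\tilde S_j$ at frequency $2^j$ in the stretched metric. The point of this rescaling is that on the line $\tfrac{n}{p}-\tfrac{n-2}{q}=2$ the change of variables contributes exactly $(2^j)^{(n-3)/2}$, \emph{independent of $\lambda$}; the Greenleaf--Seeger/Hu bounds on $\tilde S_j$ (now at frequency $2^j$) then produce factors $(2^j)^{-\alpha}$ and $j^{\beta}$ which are summable or only logarithmically divergent. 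This is what manufactures a $\lambda$-uniform $L^p\to L^q(\Sigma)$ bound.

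Your proposed route---an $L^2(M)\to L^q(\Sigma)$ Hu bound of size $\lambda^{\delta(q,n-2)-1}$ on one side and a Kenig--Ruiz--Sogge/Stein--Tomas $L^p(M)\to L^2(M)$ bound on the other---does not obviously interpolate to a bound with target $L^q(\Sigma)$ and $\lambda$-independent constant on the scaling line; the two endpoints land in different target spaces ($L^q(\Sigma)$ versus $L^2(M)$), so ordinary Riesz--Thorin does not apply, and you have not specified an analytic family that would make Stein interpolation work. Composition with a Sobolev trace would insert an extra $\lambda^{1/q}$ that throws you off the line. The paper instead obtains three concrete $L^p\to L^q(\Sigma)$ bounds for each $S_j$ (Lemmas~\ref{Lemma codim 2 universal middle est}--\ref{Lemma codim 2 universal top est}) by interpolating $\tilde S_j$ bounds between Greenleaf--Seeger and trivial $L^1\to L^\infty$ endpoints, and only then sums in $j$. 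Relatedly, your account of the endpoint loss is not the actual mechanism: the exponent $\tfrac{3n-7}{2(n-2)}$ arises because at the endpoints the $S_j$ bound is $j^{(n-3)/(2(n-2))}$ (the interpolated remnant of the $j^{1/2}$ fold loss), and $\sum_{j=1}^{\lfloor\log_2\lambda\rfloor} j^{(n-3)/(2(n-2))}\sim (\log\lambda)^{1+(n-3)/(2(n-2))}=(\log\lambda)^{(3n-7)/(2(n-2))}$; it is not a sum over frequency blocks near $\lambda$.
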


If Proposition \ref{Prop Reduction for codim 2 thm} is true, for any codimension $2$ submanifold $\Sigma$, by using H\"older's inequality, for $V\in L^{\frac{n}{2}} (M)$, $\frac{n}{p}-\frac{n-2}{q}=2$, and the arguments in \eqref{S(vu) est computation hypersurface}, we have
\begin{align}\label{S(Vu) computation for codim 2 cases}
    \begin{split}
        &\|S_\lambda (Vu)\|_{L^q (\Sigma)} \\
        &\leq \begin{cases}
            C_V \lambda^{\delta(q, n-2)-1} \|(H_V-(\lambda+i)^2) u\|_{L^2 (M)}, & \text{if } n=3 \text{ and } 2\leq q<\infty, \\
             & \text{or } n\geq 4 \text{ and } \frac{2(n-2)^2}{n^2-5n+8}<q<\frac{2(n-2)}{n-3}, \\
            C_V \lambda^{\delta(q, n-2)-1} (\log \lambda)^{\frac{3n-7}{2(n-2)}} \|(H_V-(\lambda+i)^2)u\|_{L^2 (M)}, & \text{if } n\geq 4, \text{ and } q=\frac{2(n-2)}{n^2-5n+8} \text{ or } q=\frac{2(n-2)}{n-3}.
        \end{cases}
    \end{split}
\end{align}
Using \eqref{S(Vu) computation for codim 2 cases} and \eqref{S(Vu) appears}, we have Theorem \ref{Thm for codim 2} for general codimension $2$ submanifolds.

We are left to consider \eqref{Geodesic or curved no log loss codim 2 estimates} when $\Sigma$ is either a geodesic segment or a curve segment with nonvanishing geodesic curvatures for $n=3$. By Chen and Sogge \cite[Theorem 1.1]{ChenSogge2014few} and Wang and Zhang \cite[Theorem 3]{WangZhang2021Codim2}, we have, instead of \eqref{BGT and Hu's estimates},
\begin{align*}
    \|\mathds{1}_{[\lambda, \lambda+1]} (P) f\|_{L^2 (\Sigma)}\lesssim \lambda^{\frac{1}{2}} \|f\|_{L^2 (M)},
\end{align*}
and thus, $\nu (2, 1)=0$ when $n=3$. Since there is no log loss for $(n, k, q)=(3, 1, 2)$ in \eqref{S(Vu) computation for codim 2 cases}, by the above arguments (cf. \eqref{S(Vu) computation for codim 2 cases}), we have \eqref{Geodesic or curved no log loss codim 2 estimates}, which completes the proof.

We shall prove Proposition \ref{Prop Reduction for codim 2 thm} later in \S \ref{S: proof of codim 2 thm}.

\section{Proof of Theorem \ref{Thm for univ est for any curves}-\ref{Thm for univ est for curved curves}}\label{S: Proof for curves for universal estimates}
As we discussed in \S \ref{S: Preliminaries}, we shall prove Proposition \ref{Prop Reduction for curves Kato case} here to finish the proofs of Theorem \ref{Thm for univ est for any curves}-\ref{Thm for univ est for curved curves}. By the argument in \cite[\S 5]{BlairHuangSireSogge2022UniformSobolev}, we have the kernel estimates of $S_\lambda$
\begin{align}\label{S lambda kernel est for n=2}
    \begin{split}
    |S_\lambda (x, y)|\lesssim \begin{cases}
    |\log (\lambda d_g (x, y)/2)|, & \text{if } d_g (x, y)\leq \lambda^{-1}, \\
    \lambda^{-\frac{1}{2}} (d_g (x, y))^{-\frac{1}{2}}, & \text{if } \lambda^{-1}\leq d_g (x, y)\leq 1.
    \end{cases}
    \end{split}
\end{align}
We first consider $q=\infty$. We note that
\begin{align*}
    \sup_r\left|\int S_\lambda (\gamma(r), y) V(y) u(y)\:dy \right|\lesssim \left(\sup_r \int |S_\lambda (\gamma(r), y)| |V(y)|\:dy \right) \|u\|_{L^\infty (M)}.
\end{align*}
By \eqref{S lambda kernel est for n=2}, we know that, for $0<\epsilon \ll 1$,
\begin{align*}
    |S_\lambda (\gamma(r), y)|\lesssim h_2 (d_g (\gamma(r), y)) \mathds{1}_{d_g (\gamma(r), y)<\epsilon} (\gamma(r), y).
\end{align*}
Since $V\in \mathcal{K}(M)$ (i.e., $V\in \mathcal{K}(M)\cap L^1 (M)=\mathcal{K}(M)\cap L^{\frac{n}{2}} (M)$ for $n=2$, since $\mathcal{K}(M)\subset L^1 (M)$), we then have that, for $\lambda\geq 1$ large enough,
\begin{align*}
    \sup_r \int_{B_\epsilon (\gamma(r))} h_2 (d_g (\gamma(r), y)) |V(y)|\:dy \ll 1,
\end{align*}
by taking a sufficiently small $\epsilon>0$. By \cite[Theorem 1.3]{BlairSireSogge2021Quasimode}, we also know that
\begin{align*}
    \|u\|_{L^\infty (M)}\leq C_V \lambda^{-\frac{1}{2}} \|(-\Delta_g+V-(\lambda+i)^2)u\|_{L^2 (M)}.
\end{align*}
Combining these together, we have that
\begin{align*}
    \|S_\lambda (Vu) \|_{L^\infty (\gamma)}\leq \sup_r \left|\int S_\lambda (\gamma(r), y)V(y)u(y)\:dy \right|\leq C_V \lambda^{-\frac{1}{2}} \|(-\Delta_g+V-(\lambda+i)^2)u\|_{L^2 (M)},
\end{align*}
and this proves \eqref{S(Vu) claim for curves of universal estimates} for $q=\infty$.

We next consider $2<q<\infty$. By the triangle inequality and Minkowski's inequality, we have that
\begin{align}\label{Triangle Minkowski argument}
    \begin{split}
        \|S_\lambda (Vu)\|_{L^q (\gamma)} &=\left(\int \left|\int S_\lambda (\gamma(r), y) V(y) u(y)\:dy \right|^q\:dr \right)^{\frac{1}{q}} \\
        &\leq \int \left(\int |S_\lambda (\gamma(r), y) V(y) u(y)|^q\:dr \right)^{\frac{1}{q}}\:dy \\
        &=\int \left(\int |S_\lambda (\gamma(r), y)|^q\:dr \right)^{\frac{1}{q}} |V(y)| |u(y)|\:dy \\
        &\leq \sup_y \left(\int |S_\lambda (\gamma(r), y)|^q\:dr \right)^{\frac{1}{q}} \|u\|_{L^\infty (M)} \int |V(y)|\:dy \\
        &=\|u\|_{L^\infty (M)} \|V\|_{L^1 (M)} \sup_y \left(\int |S_\lambda (\gamma(r), y)|^q\:dr \right)^{\frac{1}{q}}.
    \end{split}
\end{align}
Again, we can apply \cite[Theorem 1.3]{BlairSireSogge2021Quasimode} to $\|u\|_{L^\infty (M)}$ at the end of computations. It thus suffices to bound the last factor
\begin{align*}
    \sup_y \left(\int |S_\lambda (\gamma(r), y)|^q\:dr \right)^{\frac{1}{q}}.
\end{align*}
Using \eqref{S lambda kernel est for n=2}, one can see that
\begin{align}\label{y sup S lambda kernel integration on curves estimate}
    \begin{split}
        \sup_y \left(\int |S_\lambda (\gamma (r), y)|^q\:dr \right)^{\frac{1}{q}}\lesssim \begin{cases}
            \lambda^{-\frac{1}{q}}, & \text{if } q>2, \\
            \lambda^{-\frac{1}{2}} (\log \lambda)^{\frac{1}{2}}, & \text{if } q=2.
        \end{cases}
    \end{split}
\end{align}
Indeed, if we take a local coordinate so that $\gamma$ is $\{(r, 0): |r|\ll 1\}$ and
\begin{align*}
    d_g (\gamma(r), y)=d_g ((r, 0), (y_1, y_2)) \approx |(r, 0)-(y_1, y_2)|,
\end{align*}
then the supremum over $y$ in \eqref{y sup S lambda kernel integration on curves estimate} is essentially obtained when $|y_2|\ll \lambda^{-1}$, at which point the bounds are easily verified. By \eqref{Triangle Minkowski argument}, \eqref{y sup S lambda kernel integration on curves estimate}, and \cite[Theorem 1.3]{BlairSireSogge2021Quasimode}, we have
\begin{align*}
    \|S_\lambda (Vu) \|_{L^q (\gamma)}\leq \begin{cases}
        C_V \lambda^{-\frac{1}{2}-\frac{1}{q}}\|(-\Delta_g+V-(\lambda+i)^2) u\|_{L^2 (M)}, & \text{when } 2<q<\infty, \\
        C_V \lambda^{-1} (\log \lambda)^{\frac{1}{2}} \|(-\Delta_g +V-(\lambda+i)^2)u\|_{L^2 (M)}, & \text{when } q=2,
    \end{cases}
\end{align*}
and this satisfies \eqref{S(Vu) claim for curves of universal estimates} when $2\leq q<\infty$. This completes the proof of Proposition \ref{Prop Reduction for curves Kato case}.

\section{Proof of Theorem \ref{Thm for hypersurfaces}}\label{S: Proof for any hypersurfaces}

In this section, we prove Proposition \ref{Prop Reduction for hypersurface thm}, which proves \eqref{Hypersurface estimates} and \eqref{Hypersurface log loss estimates} for any hypersurface, as explained in \S \ref{SS: Thm for hypersurfaces}. We first consider the case where $\Sigma$ is any hypersurface of $M$. Let $P=\sqrt{-\Delta_g}$. Recall from \eqref{S lambda W lambda set up} that
\begin{align*}
    S_\lambda=\frac{i}{\lambda+i}\int_0^\infty \mu_0 (t) e^{i\lambda t} e^{-t} (\cos tP)\:dt.
\end{align*}
We want to decompose $S_\lambda$ as in \cite[(2.19)-(2.20)]{BourgainShaoSoggeYao2015Resolvent}. We first fix a Littlewood-Paley type bump function $\beta_1 \in C_0^\infty (\mathbb{R})$ such that
\begin{align*}
    \beta_1 (t)=0 \;\text{ for } t\not\in [1/2, 2],\quad |\beta_1 (t)|\leq 1,\quad \text{and } \sum_{j=-\infty}^\infty \beta_1 (2^{-j}t)=1 \;\text{ for } t>0.
\end{align*}
We then define operators
\begin{align}\label{Sj definition}
    S_j f=\frac{i}{\lambda+i}\int_0^\infty \beta_1 (\lambda 2^{-j} t) \mu_0 (t) e^{i\lambda t} e^{-t} (\cos tP) f\:dt,\quad j=1, 2, 3, \cdots,
\end{align}
and
\begin{align}\label{S0 definition}
    S_0 f=\frac{i}{\lambda+i}\int_0^\infty \beta_0 (\lambda t) \mu_0 (t) e^{i\lambda t} e^{-t} (\cos tP) f\:dt,
\end{align}
where
\begin{align*}
    \beta_0 (t)=\left(1-\sum_{j=0}^\infty \beta_1 (2^{-j}t) \right)\in C^\infty (\mathbb{R}),
\end{align*}
and hence, $\beta_0 (t)=0$ if $|t|\geq 4$. We first consider the $S_0$ piece.

\begin{lemma}\label{Lemma S0 universal estimates for hypersurface}
    Let $\Sigma$ be a hypersurface of $M$. If $n\geq 3$ and $\frac{n}{p}-\frac{n-1}{q}=2$, then
    \begin{align*}
        \|S_0 f\|_{L^q (\Sigma)}\lesssim \|f\|_{L^p (M)}, \quad 2\leq q<\infty.
    \end{align*}
\end{lemma}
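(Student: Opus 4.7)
The strategy is three-pronged: (i) obtain a pointwise Schwartz-kernel bound for $S_0$, (ii) verify two endpoint $L^p(M) \to L^q(\Sigma)$ estimates by direct kernel calculations, and (iii) interpolate along the scaling line $\tfrac{n}{p} - \tfrac{n-1}{q} = 2$, which connects these two endpoints.

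For the kernel bound, since $\beta_0(\lambda t)$ is supported in $|t| \leq 4/\lambda$, finite propagation speed forces $S_0(x,y) = 0$ whenever $d_g(x,y) > 4/\lambda$. For $d_g(x,y) \lesssim \lambda^{-1}$, I would invoke the Hadamard parametrix for $\cos(tP)$ to write its kernel as an oscillatory integral, and carry out the $t$-integral of $\beta_0(\lambda t)e^{i\lambda t}\cos(t|\xi|)$ first. After the change of variable $s = \lambda t$ this $t$-integral becomes (modulo lower-order symbol contributions) $\lambda^{-1} h(|\xi|/\lambda)$ for a Schwartz function $h$, so
\[
S_0(x,y) \approx \lambda^{n-2}\,\Phi\bigl(\lambda(x-y)\bigr),
\]
where $\Phi$ is the inverse Fourier transform of a radial Schwartz function. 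This gives $|S_0(x,y)| \lesssim \lambda^{n-2}(1 + \lambda d_g(x,y))^{-N}$ for every $N$, consistent with the resolvent analysis in \cite{BourgainShaoSoggeYao2015Resolvent} that the decomposition is modeled on.

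With this kernel in hand I next verify two endpoint estimates. At $(p,q) = (n/2, \infty)$, H\"older's inequality gives $\|S_0 f\|_{L^\infty(\Sigma)} \leq \sup_x \|S_0(x,\cdot)\|_{L^{n/(n-2)}(M)}\,\|f\|_{L^{n/2}(M)}$, and after substituting $u = \lambda r$ in geodesic polar coordinates around $x$, the kernel bound yields $\sup_x \|S_0(x,\cdot)\|_{L^{n/(n-2)}(M)} = O(1)$. At $(p,q) = (1, (n-1)/(n-2))$, Minkowski's integral inequality gives $\|S_0 f\|_{L^{(n-1)/(n-2)}(\Sigma)} \leq \sup_y \|S_0(\cdot,y)\|_{L^{(n-1)/(n-2)}(\Sigma)}\,\|f\|_{L^1(M)}$; parametrizing $\Sigma$ locally by the geodesic distance $r'$ to the foot-point of $y \in M$ on $\Sigma$ (with perpendicular separation $h$) and substituting $u = \lambda r'$, the kernel bound gives this supremum $= O(1)$ (with rapid decay in $\lambda h$). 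A direct calculation shows that the segment in the $(1/p, 1/q)$-plane joining $(1, (n-2)/(n-1))$ to $(2/n, 0)$ is exactly the scaling line $\tfrac{n}{p} - \tfrac{n-1}{q} = 2$, so Riesz--Thorin interpolation yields the claimed estimate for all $(p,q)$ on this line with $(n-1)/(n-2) \leq q \leq \infty$. Since $n \geq 3$ forces $(n-1)/(n-2) \leq 2$, the entire range $2 \leq q < \infty$ is covered.

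The main technical obstacle is establishing the kernel bound rigorously on a curved compact manifold: one must control the error terms in the Hadamard parametrix for $\cos(tP)$ on the very-short time interval $t \lesssim \lambda^{-1}$ and check that these corrections only perturb the principal $\lambda^{n-2}(1 + \lambda d_g(x,y))^{-N}$ bound at lower order. Once that is in hand, the remaining arguments are standard, though some care is needed in the polar-coordinate computation near the non-integrable-looking endpoint $q = (n-1)/(n-2)$ to confirm that no logarithmic loss appears---which it does not, thanks to the strict Schwartz decay of $\Phi$.
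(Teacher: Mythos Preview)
Your kernel bound is too strong, and this is where the argument breaks. The time integral defining $S_0$ runs only over $t \in [0,\infty)$, and the integrand does not vanish at $t=0$ (indeed $\beta_0(0)=1$). Consequently, after your substitution $s=\lambda t$, the function
\[
h(\sigma)=\int_0^\infty \beta_0(s)\,e^{is}\cos(s\sigma)\,ds
\]
is \emph{not} Schwartz: integration by parts produces a boundary term at $s=0$, and one finds $h(\sigma)\sim c\,\sigma^{-2}$ as $\sigma\to\infty$. Equivalently, the spectral symbol of $S_0$ behaves like $\tau^{-2}$ for $\tau\gg\lambda$, as it must since $S_0$ carries the diagonal singularity of the resolvent. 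The inverse Fourier transform of a radial function with $|\xi|^{-2}$ tail has a $|x|^{2-n}$ singularity at the origin, so the correct bound is
\[
|S_0(x,y)|\lesssim d_g(x,y)^{2-n}\,\mathds{1}_{d_g(x,y)\lesssim\lambda^{-1}}(x,y),
\]
not $\lambda^{n-2}(1+\lambda d_g(x,y))^{-N}$.

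With the correct kernel, both of your endpoints fail. At $(p,q)=(n/2,\infty)$ you need $\|S_0(x,\cdot)\|_{L^{n/(n-2)}(M)}<\infty$, but $\int_{d_g\lesssim\lambda^{-1}} d_g(x,y)^{(2-n)\cdot\frac{n}{n-2}}\,dy=\int_{d_g\lesssim\lambda^{-1}} d_g^{-n}\,dy$ diverges logarithmically at the diagonal; this is exactly why the lemma excludes $q=\infty$. At $(p,q)=(1,\tfrac{n-1}{n-2})$ the same divergence occurs on $\Sigma$ when $y\in\Sigma$: $\int_\Sigma d_g(x,y)^{-(n-1)}\,d\sigma(x)$ is the critical power on an $(n-1)$-dimensional set. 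So Riesz--Thorin between these endpoints cannot close.

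The paper instead uses the singular kernel bound directly and reduces to a model operator on $\mathbb{R}^{n-1}\times\mathbb{R}$ with kernel $(|x-y|+|s|)^{2-n}$, where $s$ is the coordinate transverse to $\Sigma$. One first applies H\"older in the transverse variable $s$, which converts the kernel into $|x-y|^{\frac{1}{p'}+2-n}$, and then Hardy--Littlewood--Sobolev fractional integration on $\mathbb{R}^{n-1}$ handles the remaining convolution. The exponent relation $\frac{n}{p}-\frac{n-1}{q}=2$ is precisely what makes the HLS exponents line up.
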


This is one reason why the case for $(n, q)=(3, \infty)$ does not hold in Theorem \ref{Thm for hypersurfaces}. To prove this lemma, we first recall the estimate of the kernel of the operator $S_0$. Using stationary phase as in the proof of \cite[(5.11)]{BlairHuangSireSogge2022UniformSobolev}, if $S_0 (x, y)$ denotes the kernel of the operator $S_0$, then
\begin{align*}
    |S_0 (x, y)|\lesssim d_g (x, y)^{2-n} \mathds{1}_{d_g (x, y)\lesssim \lambda^{-1}} (x, y).
\end{align*}
We choose coordinates so that $(z, 0)\in \mathbb{R}^{n-1}\times \mathbb{R}$ and $(z', s)\in \mathbb{R}^{n-1}\times \mathbb{R}$ are the coordinates of $x\in \Sigma$ and $y\in M$, respectively. Then
\begin{align*}
    (d_g (x, y))^{2-n}\lesssim |(z, 0)-(z', s)|^{2-n}=(|z-z'|+|s|)^{2-n}.
\end{align*}
With this in mind, Lemma \ref{Lemma S0 universal estimates for hypersurface} follows from the following proposition.

\begin{proposition}\label{Prop Sobolev trace for hypsurf}
    Suppose $\frac{n}{p}-\frac{n-1}{q}=2$, $1<p, q<\infty$, and $n\geq 3$. We write coordinates in $\mathbb{R}^n$ as $(y, s)\in \mathbb{R}^{n-1} \times \mathbb{R}$. Define
    \begin{align*}
        k(x, y, s)=(|x-y|+|s|)^{2-n},\quad \text{where } x\in \mathbb{R}^{n-1}.
    \end{align*}
    Then the operator
    \begin{align*}
        Tf(x):=\int k(x, y, s) f(y, s)\:ds\:dy
    \end{align*}
    defines a bounded linear map $T:L^p (\mathbb{R}^n) \to L^q (\mathbb{R}^{n-1})$.
\end{proposition}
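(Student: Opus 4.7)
The plan is to exploit the equivalence $(|x-y|+|s|)^{2-n} \approx \max(|x-y|,|s|)^{2-n}$ (valid since $2-n<0$) and split $T = T_1 + T_2$ according to the two regions $\{|s|\leq|x-y|\}$ and $\{|s|>|x-y|\}$, on which the kernel reduces to a single-variable power. On each piece I plan to apply H\"older's inequality in the ``small'' variable to reduce matters to a Riesz potential on $\mathbb{R}^{n-1}$ of order $\alpha = 2 - 1/p$; the scaling condition $n/p - (n-1)/q = 2$ is precisely what makes Hardy--Littlewood--Sobolev applicable at this order. This is a trace-type Hardy--Littlewood--Sobolev estimate, and the proposed argument is essentially the classical one going back to Stein and Weiss.

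For $T_1$, where $|s|\leq|x-y|$, the kernel is comparable to $|x-y|^{2-n}$, independent of $s$, and H\"older in $s$ gives $\int_{|s|\leq|x-y|}|f(y,s)|\,ds \leq (2|x-y|)^{1/p'}\|f(y,\cdot)\|_{L^p_s}$, so that
\begin{equation*}
T_1 f(x) \lesssim \int_{\mathbb{R}^{n-1}} |x-y|^{2-n+1/p'}\,F(y)\,dy, \qquad F(y) := \|f(y,\cdot)\|_{L^p_s(\mathbb{R})}.
\end{equation*}
For $T_2$, where $|s|>|x-y|$, H\"older in $s$ against the weight $|s|^{2-n}$ yields (using that $(2-n)p'<-1$ whenever $n\geq 3$ and $p<\infty$)
\begin{equation*}
\int_{|s|>|x-y|}|s|^{2-n}|f(y,s)|\,ds \leq \left(\int_{|s|>|x-y|}|s|^{(2-n)p'}\,ds\right)^{1/p'}\|f(y,\cdot)\|_{L^p_s} \lesssim |x-y|^{2-n+1/p'}\,F(y).
\end{equation*}
Thus both $T_1 f(x)$ and $T_2 f(x)$ are dominated pointwise by the Riesz potential $I_\alpha^{(n-1)} F(x)$ on $\mathbb{R}^{n-1}$ with $\alpha = 2 - 1/p$.

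The last step is Hardy--Littlewood--Sobolev on $\mathbb{R}^{n-1}$: $\|I_\alpha^{(n-1)} F\|_{L^q(\mathbb{R}^{n-1})} \lesssim \|F\|_{L^p(\mathbb{R}^{n-1})}$, valid for $1<p<q<\infty$, $0<\alpha<n-1$, $\frac{1}{p}-\frac{1}{q}=\frac{\alpha}{n-1}$. With $\alpha = 2 - 1/p$, this scaling identity is equivalent to the hypothesis $n/p-(n-1)/q = 2$; the constraint $\alpha \in (0,n-1)$ is automatic for $n\geq 3$ and $p>1$; and the condition $p<q$ follows from the scaling relation together with $p>1$. Combining HLS with Fubini, which yields $\|F\|_{L^p(\mathbb{R}^{n-1})} = \|f\|_{L^p(\mathbb{R}^n)}$, will complete the proof. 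I do not anticipate a genuine obstacle: the only delicate point is verifying that the H\"older exponent on the ``small'' variable, for both pieces, forces exactly the Riesz-potential order $\alpha = 2-1/p$, which is precisely consistent with the scaling hypothesis.
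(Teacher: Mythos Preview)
Your proposal is correct and follows essentially the same approach as the paper: apply H\"older's inequality in the transverse variable $s$ to reduce $Tf(x)$ to a Riesz potential of $F(y)=\|f(y,\cdot)\|_{L^p_s}$ on $\mathbb{R}^{n-1}$ with exponent $2-n+1/p'$, and then invoke Hardy--Littlewood--Sobolev. The only cosmetic difference is that the paper computes $\bigl(\int |k(x,y,s)|^{p'}\,ds\bigr)^{1/p'}$ directly via the change of variables $s=|x-y|\,t$, whereas you obtain the same bound by splitting into the regions $|s|\leq |x-y|$ and $|s|>|x-y|$; both routes yield the identical pointwise majorant $|x-y|^{2-n+1/p'}F(y)$.
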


\begin{proof}
    Without loss of generality, we assume $f\geq 0$. We begin with
    \begin{align*}
        |Tf(x)|\leq \int \left(\int |k(x, y, s)|^{p'}\:ds \right)^{\frac{1}{p'}} \|f(y, \cdot)\|_p\:dy,
    \end{align*}
    using H\"older's inequality. For convenience, set $\alpha=|x-y|$ so that
    \begin{align*}
        \left(\int |k(x, y, s)|^{p'}\:ds \right)^{\frac{1}{p'}}=\left(\int (\alpha+|s|)^{(2-n)p'}\:ds \right)^{\frac{1}{p'}}=\alpha^{2-n}\left(\int (1+\alpha^{-1}|s|)^{(2-n)p'}\:ds \right)^{\frac{1}{p'}}.
    \end{align*}
    After the change of variable $s=\alpha t$,
    \begin{align*}
        \left(\int (1+\alpha^{-1}|s|)^{(2-n)p'}\:ds \right)^{\frac{1}{p'}}=\left(\int (1+|t|)^{(2-n)p'} \alpha\:dt \right)^{\frac{1}{p'}}\leq C \alpha^{\frac{1}{p'}},
    \end{align*}
    for some $C$. Indeed, $(2-n)p'<-1$ since this is equivalent to $2-n<-\frac{1}{p'}$, which is trivial when $n\geq 3$. This shows that
    \begin{align*}
        |Tf(x)|\leq \int |x-y|^{\frac{1}{p'}+2-n} \|f(y, \cdot)\|_p\:dy.
    \end{align*}
    
    The claim now follows by the Hardy-Littlewood-Sobolev fractional integration. To see this, recall that convolution with $|y|^{-\frac{n-1}{r}}$ maps $L^p (\mathbb{R}^{n-1})\to L^q (\mathbb{R}^{n-1})$ boundedly if $\frac{1}{r}=1-\left(\frac{1}{p}-\frac{1}{q} \right)$. In our case, $r$ must satisfy
    \begin{align*}
        -\frac{n-1}{r}=\frac{1}{p'}+2-n=\left(1-\frac{1}{p}\right)+\left(\frac{n}{p}-\frac{n-1}{q} \right)-n=1-n+(n-1)\left(\frac{1}{p}-\frac{1}{q} \right).
    \end{align*}
    Dividing by $1-n$, we see that $\frac{1}{r}=1-\left(\frac{1}{p}-\frac{1}{q} \right)$, and so the claim follows.
\end{proof}

By Lemma \ref{Lemma S0 universal estimates for hypersurface}, it suffices to show that the $S_j$ for $j\geq 1$ satisfy the estimates in Proposition \ref{Prop Reduction for hypersurface thm}. By the proof of \cite[Lemma 5.1.3]{Sogge1993fourier}, modulo $O(\lambda^{-N})$ errors, we can write
\begin{align*}
    S_j f(x)=\lambda^{\frac{n-3}{2} } \int e^{i\lambda d_g (x, y) } \frac{a_\lambda (x, y) }{d_g (x, y)^{\frac{n-1}{2}} } f(y)\:dy,
\end{align*}
where the amplitude $a_\lambda$ is supported where
\begin{align*}
    \frac{1}{2}(\lambda^{-1} 2^j)\leq d_g (x, y)\leq 2(\lambda^{-1} 2^j).
\end{align*}
By this, the kernel of $S_j$ vanishes when $d_g (x, y)\not\in [\lambda^{-1} 2^{j-1}, \lambda^{-1} 2^{j+1}]$, and thus, by taking a suitable partition of unity, it can be seen that it suffices to assume
\begin{align}\label{Support of f in the dyadic operator}
    \mathrm{supp} f\subset B_{\lambda^{-1} 2^j} (0).
\end{align}
By construction, we note that $S_j=0$ if $j>\log_2 \lambda+C$, and thus, we restrict our attention to $j\leq \lfloor \log_2 \lambda \rfloor$ in what follows. If we set
\begin{align}\label{Scaling set-up for X, Y, and dj}
    x=\lambda^{-1} 2^j X,\quad y=\lambda^{-1} 2^j Y, \quad d_j (X, Y) =\lambda 2^{-j} d_g (\lambda^{-1} 2^j X, \lambda^{-1} 2^j Y),
\end{align}
then we can write
\begin{align}\label{Sj and Sj tilde set up}
    \begin{split}
        S_j f(x)&=S_j f(\lambda^{-1} 2^j X) \\
        &=\lambda^{\frac{n-3}{2} } \int e^{i 2^j d_j (X, Y) } \frac{a_\lambda (\lambda^{-1} 2^j X, \lambda^{-1} 2^j Y) }{d_j (X, Y)^{\frac{n-1}{2} } } \cdot (\lambda 2^{-j})^{\frac{n-1}{2} } f(\lambda^{-1} 2^j Y) (\lambda^{-1} 2^j)^n\:dY \\
        &=\lambda^{\frac{n-3}{2} } (\lambda^{-1} 2^j)^{\frac{n+1}{2}} \int e^{i 2^j d_j (X, Y) } \frac{a_\lambda (\lambda^{-1} 2^j X, \lambda^{-1} 2^j Y) }{d_j (X, Y)^{\frac{n-1}{2}}} f(\lambda^{-1}2^j Y)\:dY \\
        &=:\lambda^{\frac{n-3}{2} } (\lambda^{-1} 2^j)^{\frac{n+1}{2} } \Tilde{S}_j f_j(X),
    \end{split}
\end{align}
where $f_j (Y)=f(\lambda^{-1} 2^j Y)$. We note that $d_j (X, Y)$ is the Riemannian distance between $X$ and $Y$ with a ``stretched'' metric $g_{ij}(\lambda^{-1} 2^j X)$. We also note that by \eqref{Support of f in the dyadic operator}, we may assume that $f_j$ is supported in a compact set, that is, we may assume that
\begin{align}\label{Support of fj in the dyadic operator}
    \mathrm{supp} f_j \subset B_1 (0).
\end{align}
We are computing estimates for $S_j$ and $\Tilde{S}_j$ locally, and thus, in the practical computations of the estimates, abusing notations, we can write
\begin{align}\label{Sj Sj tilde notations}
    \begin{split}
        & \|S_j\|_{L^p (M)}, \|S_j\|_{L^q (\Sigma)}, \|\Tilde{S}_j\|_{L^p (M)}, \text{ and } \|\Tilde{S}_j\|_{L^q (\Sigma)}, \quad\text{as} \\
        & \|S_j\|_{L^p (\mathbb{R}^n)}, \|S_j \|_{L^q (\mathbb{R}^k)}, \|\Tilde{S}_j\|_{L^p (\mathbb{R}^n)}, \text{ and } \|\Tilde{S}_j\|_{L^q (\mathbb{R}^k)}, \text{ respectively,} \\
        & \text{where } n=\dim M \text{ and } k=\dim \Sigma.
    \end{split}
\end{align}
We can also use analogous notations for $\Tilde{S}_j$. One reason why we are abusing notations here is that the distance function $d_j$ in \eqref{Scaling set-up for X, Y, and dj} is the Riemannian distance function locally (but may not be the Riemannian distance function globally), and so, we shall use notations in \eqref{Sj Sj tilde notations} especially when we use the distance function $d_j$ directly, i.e., when we estimate $\Tilde{S}_j$.

We are also making use of the change of variables for $S_j$ and $\Tilde{S}_j$. Since $S_j$ is defined for the variable $x$ and $\Tilde{S}_j$ is defined for the variable $X$, to distinguish this difference, we write
\begin{align}\label{Sj Sj tilde Lp notations}
    \begin{split}
        & \|S_j f\|_{L_y^p (\mathbb{R}^n)}=\left(\int_{\mathbb{R}^n} |S_j f(y)|^p \:dy \right)^{\frac{1}{p}},\quad \|\Tilde{S}_j f_j\|_{L_Y^p (\mathbb{R}^n)}=\left(\int_{\mathbb{R}^n} |\Tilde{S}_j f_j(Y)|^p\:dY \right)^{\frac{1}{p}},\quad n=\dim M, \\
        & \|S_j f\|_{L_x^p (\mathbb{R}^k)}=\left(\int_{\mathbb{R}^k} |S_j f(x)|^p \:dx \right)^{\frac{1}{p}},\quad \|\Tilde{S}_j f_j\|_{L_X^p (\mathbb{R}^k)}=\left(\int_{\mathbb{R}^k} |\Tilde{S}_j f_j(X)|^p\:dX \right)^{\frac{1}{p}}, \quad k=\dim \Sigma.
    \end{split}
\end{align}
In fact, $\|\Tilde{S}_j f_j\|_{L_X^p (\mathbb{R}^{n-1})}$ (similarly $\|f_j\|_{L_X^p (\mathbb{R}^k)}$ and $\|S_j f\|_{L_x^p (\mathbb{R}^k)}$ as well) may be written as
\begin{align}\label{Sj tilde on submanifolds integration}
    \left(\int_{\mathbb{R}^k} |\Tilde{S}_j (h(X))|^p \kappa_h (X)\:dX \right)^{\frac{1}{p}},
\end{align}
where $h:\mathbb{R}^k \to \Sigma\subset M$ locally defined by
\begin{align*}
    X=(X_1, \cdots, X_k) \mapsto h(X)=(h_1 (X), \cdots, h_n (X))\in \Sigma
\end{align*}
is a smooth coordinate map, the $h_i$ are component functions, and $k_h (X)$ is a volume element from the coordinate map $h$ and the (induced) metric. For simplicity, we write \eqref{Sj tilde on submanifolds integration} as $\|\Tilde{S}_j f_j\|_{L_x^p (\mathbb{R}^k)}$ in \eqref{Sj Sj tilde Lp notations} considering that $\kappa_h$ may be absorbed to the amplitudes of oscillatory integral operators we shall think about. It then follows that
\begin{align}\label{Sj Sj tilde norm for hypersurf}
    \begin{split}
        & \|S_j \|_{L_y^p (\mathbb{R}^n)\to L_x^q (\mathbb{R}^{n-1})}=\lambda^{\frac{n-3}{2}} (\lambda^{-1} 2^j)^{\frac{n+1}{2}+\frac{n-1}{q}-\frac{n}{p}} \|\Tilde{S}_j \|_{L_Y^p (\mathbb{R}^n)\to L_X^q (\mathbb{R}^{n-1})}, \quad \text{and} \\
        & \|S_j \|_{L_y^p (\mathbb{R}^n)\to L_x^q (\mathbb{R}^{n-1})}=(2^j)^{\frac{n-3}{2}} \|\Tilde{S}_j \|_{L_Y^p (\mathbb{R}^n)\to L_X^q (\mathbb{R}^{n-1})},\quad \text{when } \frac{n}{p}-\frac{n-1}{q}=2.
    \end{split}
\end{align}

We also note that if $\Tilde{S}_j (X, Y)$ denotes the kernel of the operator $\Tilde{S}_j$, then by \eqref{Sj and Sj tilde set up} we can write
\begin{align*}
    \Tilde{S}_j (X, Y)=e^{i 2^j d_j (X, Y) } \frac{a_\lambda (\lambda^{-1} 2^j X, \lambda^{-1} 2^j Y) }{d_j (X, Y)^{\frac{n-1}{2}}},
\end{align*}
and thus, $|\Tilde{S}_j (X, Y)|\lesssim 1$. This gives us that for any $r\geq 1$,
\begin{align*}
    \sup_X \left(\int |\Tilde{S}_j (X, Y)|^r\:dX\right)^{\frac{1}{r}},\quad \sup_Y \left(\int |\Tilde{S}_j (X, Y)|^r\:dY\right)^{\frac{1}{r}} \lesssim 1,
\end{align*}
and hence, by Young's inequality, we have that
\begin{align}\label{A trivial Young's inequality consequence}
    \|\Tilde{S}_j\|_{L_Y^p (\mathbb{R}^n)\to L_X^q (\mathbb{R}^{n-1})}\lesssim 1,\quad \text{for any } 1\leq p\leq q\leq \infty.
\end{align}
With this in mind, we want to find a few nontrivial estimates of $S_j$ in the following lemmas.

\begin{lemma}\label{Lemma hypsurf p0 q0 estimate}
    If $1\leq j\leq \lfloor \log_2 \lambda \rfloor$, then
    \begin{align}\label{Dyadic critical estimate}
        \|S_j f\|_{L_x^{\frac{2n}{n-1}}(\mathbb{R}^{n-1})}\lesssim \lambda^{-\frac{2n+1}{2n}} (2^j)^{\frac{1}{2}} \|f\|_{L_y^2 (\mathbb{R}^n)},
    \end{align}
    and
    \begin{align}\label{Dyadic hypsurf p0 q0 estimate}
        \|S_j f\|_{L^{q_0} (\mathbb{R}^{n-1})}\lesssim (2^j)^{-\frac{3n-1}{2(2n^2-2n+1)}}\|f\|_{L^{p_0} (\mathbb{R}^n)},
    \end{align}
    where
    \begin{align*}
        q_0=\frac{2n^2-2n+1}{(n-1)(n-2)},\quad p_0=\frac{2n^2-2n+1}{n^2+1},\quad \frac{n}{p_0}-\frac{n-1}{q_0}=2.
    \end{align*}
\end{lemma}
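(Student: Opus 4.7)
The plan is to reduce both estimates to oscillatory-integral bounds on the rescaled operator $\tilde{S}_j$ of \eqref{Sj and Sj tilde set up} and then to invoke the fold-singularity analysis of \S\ref{S: Review for fold singularities} together with Hu's curvature observation in Theorem \ref{Thm Section 4 in Hu}. A direct change of variables, of which \eqref{Sj Sj tilde norm for hypersurf} is the special case $n/p-(n-1)/q=2$, gives
\[
\|S_j\|_{L^p(\mathbb{R}^n)\to L^q(\mathbb{R}^{n-1})}=\lambda^{(n-3)/2}(\lambda^{-1}2^j)^{(n+1)/2+(n-1)/q-n/p}\,\|\tilde{S}_j\|_{L^p\to L^q}.
\]
Specializing at $(p,q)=(2,2n/(n-1))$ and at $(p,q)=(p_0,q_0)$, the estimates \eqref{Dyadic critical estimate} and \eqref{Dyadic hypsurf p0 q0 estimate} reduce respectively to
\[
\|\tilde{S}_j\|_{L^2\to L^{2n/(n-1)}}\lesssim (2^j)^{-(n-1)^2/(2n)}\quad\text{and}\quad \|\tilde{S}_j\|_{L^{p_0}\to L^{q_0}}\lesssim (2^j)^{-(n-2)(n-1)^2/(2n^2-2n+1)};
\]
a brief arithmetic check turns the first of these into \eqref{Dyadic critical estimate} and, using $(2^j)^{(n-3)/2}$ from the second identity in \eqref{Sj Sj tilde norm for hypersurf}, the second into \eqref{Dyadic hypsurf p0 q0 estimate} after the simplification $(n-3)/2-(n-2)(n-1)^2/(2n^2-2n+1)=-(3n-1)/[2(2n^2-2n+1)]$.

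For the first claim, I would write $Y=R\Omega$ with $R\in[R_1,R_2]$ in the bounded range dictated by the amplitude support $d_j\sim 1$, yielding
\[
\tilde{S}_j f(X)=\int_{R_1}^{R_2}R^{n-1}\,T^R_{2^j}[f(R\,\cdot\,)](X)\,dR,\qquad T^R_\mu g(X)=\int_{S^{n-1}}e^{i\mu d_j(X,R\Omega)}\tilde{a}(X,R,\Omega)g(\Omega)\,d\Omega,
\]
so that $T^R_\mu$ is an oscillatory integral operator between spaces of equal dimension $n-1$. By Theorem \ref{Thm Section 4 in Hu}(2), the phase $-d_g(x,r\omega)$ has left projection that is either nondegenerate or a Whitney fold with at least $l=n-2$ nonvanishing principal curvatures on $H_X$; the smooth rescaling $y=\lambda^{-1}2^j Y$ preserves these open, qualitative conditions uniformly in $1\le j\le\lfloor\log_2\lambda\rfloor$ because the stretched metric $g(\lambda^{-1}2^j\,\cdot\,)$ is a uniformly smooth perturbation of the Euclidean metric on the compact working set. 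Theorem \ref{Thm for hypersurfaces in Greenleaf-Seeger} with $d=n-1$, $l=n-2$, applied at the endpoint $q=2n/(n-1)=(2l+4)/(l+1)$, then yields $\|T^R_\mu g\|_{L^{2n/(n-1)}}\lesssim\mu^{-(n-1)^2/(2n)}\|g\|_{L^2(S^{n-1})}$. Minkowski's inequality in $R$ together with weighted Cauchy--Schwarz (using $\int\|f(R\,\cdot\,)\|_{L^2(S^{n-1})}^2R^{n-1}\,dR=\|f\|_{L^2(\mathbb{R}^n)}^2$ and the boundedness of the $R$-interval) delivers the claimed $L^2\to L^{2n/(n-1)}$ bound on $\tilde{S}_j$.

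For the second claim, the pointwise bound $|\tilde{S}_j(X,Y)|\lesssim 1$ on compact support underlying \eqref{A trivial Young's inequality consequence} gives the trivial endpoint $\|\tilde{S}_j\|_{L^1\to L^\infty}\lesssim 1$. Riesz--Thorin interpolation between this and the $L^2\to L^{2n/(n-1)}$ bound just proved, at the weight $\theta=(2n+1)/(2n^2-2n+1)$, hits exactly the pair $(1/p_0,1/q_0)$ (a short direct computation using the definitions of $p_0,q_0$) and produces $(2^j)^{-(1-\theta)(n-1)^2/(2n)}=(2^j)^{-(n-2)(n-1)^2/(2n^2-2n+1)}$, completing the reduction. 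The main technical point throughout is the uniformity of the Greenleaf--Seeger constants under the rescaling; this should be handled by observing that the family $\{g(\lambda^{-1}2^j\,\cdot\,):1\le j\le\lfloor\log_2\lambda\rfloor\}$ sits in a compact subset of the $C^\infty$ metrics on the fixed working set, so that the open hypotheses of Theorems \ref{Thm 2.1 in GS}--\ref{Thm for hypersurfaces in Greenleaf-Seeger} hold with constants independent of $j$.
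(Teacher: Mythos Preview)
Your proof is correct and follows essentially the same route as the paper: polar coordinates plus Theorem~\ref{Thm for hypersurfaces in Greenleaf-Seeger} (via Theorem~\ref{Thm Section 4 in Hu}) for the $L^2\to L^{2n/(n-1)}$ bound, then interpolation with the trivial $L^1\to L^\infty$ endpoint for \eqref{Dyadic hypsurf p0 q0 estimate}. The only cosmetic difference is that the paper states the trivial bound directly for $S_j$ rather than for $\tilde S_j$ before interpolating, and your explicit remark about uniformity of the Greenleaf--Seeger constants under the rescaling is a point the paper leaves implicit.
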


\begin{proof}
    We first prove \eqref{Dyadic critical estimate}. As in \cite{Hu2009lp}, given the Riemannian distance $d_j (X, Y)$ (as in \eqref{Sj and Sj tilde set up}) for $x\in \Sigma$ and $y\in M$ with $x=\lambda^{-1}2^j X$ and $y=\lambda^{-1} 2^j Y$, we introduce the polar coordinates for $Y$, say, $Y=r\omega$ for $\omega\in \mathbb{S}^{n-1}$. We set the operator with $r$ fixed
    \begin{align*}
        \Tilde{S}_j^r (f_j)_r (X)=\int_{\mathbb{S}^{n-1}} e^{i2^j (d_j)_r (X, \omega)} \frac{a_r (\lambda^{-1}2^j X, \lambda^{-1} 2^j r\omega)}{d_j (X, r\omega)^{\frac{n-1}{2}}} (f_j)_r (\omega)\:d\omega,
    \end{align*}
    where $(f_j)_r (\omega)=f_j (Y), (d_j)_r (X, \omega)=d_j (X, Y)$, and $a_r (\lambda^{-1}2^j X, \lambda^{-1}2^j r\omega)=r^{n-1} a_\lambda (\lambda^{-1} 2^j X, \lambda^{-1} 2^j Y)$. By Theorem \ref{Thm Section 4 in Hu}, we know that the left projection of the canonical relation associated with $(d_j)_r (X, \omega)$ satisfies the hypothesis of \cite[Theorem 2.2]{GreenleafSeeger1994fourier}, and thus, by Theorem \ref{Thm for hypersurfaces in Greenleaf-Seeger}, we have
    \begin{align*}
        \|\Tilde{S}_j^r (f_j)_r\|_{L^{\frac{2n}{n-1}} (\mathbb{R}^{n-1})}\lesssim (2^j)^{-\frac{(n-1)^2}{2n}}\|(f_j)_r\|_{L^2 (\mathbb{S}^{n-1})}.
    \end{align*}
    By Minkowski's integral inequality,
    \begin{align*}
        \|\Tilde{S}_j f_j\|_{L^{\frac{2n}{n-1}} (\mathbb{R}^{n-1})}&\leq \int_{r\approx 1} \|\Tilde{S}_j^r (f_j)_r \|_{L^q (\mathbb{R}^{n-1})}\:dr \\
        &\lesssim (2^j)^{-\frac{(n-1)^2}{2n}}\int_{r\approx 1} \|(f_j)_r\|_{L^2 (\mathbb{S}^{n-1})}\:dr \\
        &\lesssim (2^j)^{-\frac{(n-1)^2}{2n}}\|f_j\|_{L^2 (\mathbb{R}^n)}.
    \end{align*}
    This and \eqref{Sj Sj tilde norm for hypersurf} imply \eqref{Dyadic critical estimate}.
    
    On the other hand, we have a trivial bound
    \begin{align*}
        \|S_j f\|_{L^\infty (\mathbb{R}^{n-1})}\lesssim \lambda^{n-2} (2^j)^{-\frac{n-1}{2}} \|f\|_{L^1 (\mathbb{R}^n)}.
    \end{align*}
    Interpolation between this and \eqref{Dyadic critical estimate} gives us the estimate \eqref{Dyadic hypsurf p0 q0 estimate}.
\end{proof}

\begin{lemma}\label{Lemma purple dot for any hypersurfaces}
    If $1\leq j\leq \lfloor \log_2 \lambda \rfloor$, then
    \begin{align}\label{Purple dot estimate}
        \|S_j f\|_{L^{\frac{2(n-1)}{n-3}}(\mathbb{R}^{n-1})}\lesssim \|f\|_{L^{\frac{2n}{n+1}} (\mathbb{R}^n)}.
    \end{align}
\end{lemma}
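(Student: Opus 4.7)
The plan is to reduce \eqref{Purple dot estimate} to the rescaled operator $\Tilde{S}_j$ and then interpolate. Because the exponents $(p, q) = (2n/(n+1), 2(n-1)/(n-3))$ satisfy the critical scaling $n/p - (n-1)/q = 2$, the relation \eqref{Sj Sj tilde norm for hypersurf} reduces \eqref{Purple dot estimate} to proving
\[
\|\Tilde{S}_j\|_{L^{2n/(n+1)}(\mathbb{R}^n)\to L^{2(n-1)/(n-3)}(\mathbb{R}^{n-1})} \lesssim (2^j)^{-(n-3)/2}.
\]
The first ingredient for this is the sharp $L^2$ bound $\|\Tilde{S}_j\|_{L^2\to L^{2n/(n-1)}(\mathbb{R}^{n-1})} \lesssim (2^j)^{-(n-1)^2/(2n)}$, which is exactly what emerges in the proof of \eqref{Dyadic critical estimate} after combining Hu's polar-coordinate decomposition with Theorem \ref{Thm for hypersurfaces in Greenleaf-Seeger} applied with $l=n-2$ (as justified by Theorem \ref{Thm Section 4 in Hu} for hypersurfaces). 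The second ingredient is the trivial bound $\|\Tilde{S}_j\|_{L^1\to L^\infty} \lesssim 1$, which is immediate from $|\Tilde{S}_j(X,Y)| \lesssim 1$ on the fixed compact support of its kernel.

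I would then interpolate these two bounds via complex interpolation with parameter $\theta = n(n-3)/(n-1)^2$, which is chosen to hit $1/q_\theta = \theta(n-1)/(2n) = (n-3)/(2(n-1))$, i.e. $q_\theta = 2(n-1)/(n-3)$. The interpolated constant is $(2^j)^{-\theta(n-1)^2/(2n)} = (2^j)^{-(n-3)/2}$, which is exactly the decay required. The corresponding input exponent is $p_\theta$ with $1/p_\theta = 1 - \theta/2 = (n^2 - n + 2)/(2(n-1)^2)$.

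The final step is to pass from $L^{p_\theta}$ to the target $L^{2n/(n+1)}$. The elementary algebraic identity $n(n^2 - n + 2) - (n-1)^2(n+1) = 3n - 1 > 0$ shows that $p_\theta < 2n/(n+1)$, so $L^{2n/(n+1)}$ embeds into $L^{p_\theta}$ on any fixed compact set. Because $f_j$ is supported in $B_1(0)$ by \eqref{Support of fj in the dyadic operator}, H\"older's inequality gives $\|f_j\|_{L^{p_\theta}} \lesssim \|f_j\|_{L^{2n/(n+1)}}$, and scaling back through \eqref{Sj Sj tilde norm for hypersurf} delivers \eqref{Purple dot estimate}. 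The main obstacle is really just exponent bookkeeping: one must verify that interpolation with the trivial bound lands on precisely the decay rate $(2^j)^{-(n-3)/2}$ that cancels the scaling factor, and that the compactness of $\mathrm{supp} f_j$ allows the final H\"older step; the genuine analytic content (the fold-singular oscillatory integral bound of Greenleaf--Seeger) has already been extracted in Lemma \ref{Lemma hypsurf p0 q0 estimate}.
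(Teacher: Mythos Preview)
Your argument is correct and follows essentially the same strategy as the paper: interpolate the critical Greenleaf--Seeger bound $\|\Tilde{S}_j\|_{L^2\to L^{2n/(n-1)}}\lesssim (2^j)^{-(n-1)^2/(2n)}$ against a trivial bound at $q=\infty$. The only difference is that the paper chooses the $p$-endpoint $p_1=\tfrac{2n(n+1)}{2n^2-n+1}$ so that the interpolation line passes exactly through the target $\bigl(\tfrac{2n}{n+1},\tfrac{2(n-1)}{n-3}\bigr)$, whereas you take the $L^1\to L^\infty$ endpoint, land at $p_\theta<\tfrac{2n}{n+1}$, and then use H\"older on the compact support of $f_j$ to close the gap; both are valid and the analytic content is identical.
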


\begin{proof}
    Let $p_1=\frac{2n(n+1)}{2n^2-n+1}$ so that $\left(\frac{1}{2}, \frac{n-1}{2n}\right)$, $\left(\frac{n+1}{2n}, \frac{n-3}{2(n-1)} \right)$, and $\left(\frac{1}{p_1}, 0\right)$ are all collinear in the $\left(\frac{1}{p}, \frac{1}{q} \right)$ plane. It follows from \eqref{A trivial Young's inequality consequence} that
   \begin{align*}
       \|\Tilde{S}_j f_j\|_{L_X^\infty (\mathbb{R}^{n-1})}\lesssim \|f_j \|_{L_Y^{p_1} (\mathbb{R}^n)}.
   \end{align*}
   It follows from this and \eqref{Sj Sj tilde norm for hypersurf} that
   \begin{align*}
       \|S_j f\|_{L_x^\infty (\mathbb{R}^{n-1})}\lesssim \lambda^{\frac{n-3}{2}} (\lambda^{-1} 2^j)^{\frac{n+1}{2}-\frac{n}{p_1}} \|f\|_{L_y^{p_1}(\mathbb{R}^n)}.
   \end{align*}
   Interpolating this and \eqref{Dyadic critical estimate} gives the bound \eqref{Purple dot estimate}.
\end{proof}

\begin{lemma}\label{Lemma Sj critical uniform est}
    If $1\leq j\leq \lfloor \log_2 \lambda \rfloor$, then
    \begin{align}\label{Sj upper unif for hypersurf higher for low dimensions 3, 4, 5}
        \|S_j f\|_{L^{\frac{2(n-1)^2}{n^2-3n+4}}(\mathbb{R}^{n-1})}\lesssim \|f\|_{L^{\frac{2(n-1)}{n+1}}(\mathbb{R}^n)},\quad \text{for } n=3, 4, 5,
    \end{align}
    and
    \begin{align}\label{Sj upper unif for hypsurf higher}
        \|S_j f\|_{L^{\frac{2n^2-5n+4}{n^2-4n+8}}(\mathbb{R}^{n-1})}\lesssim \|f\|_{L^{\frac{2n^2-5n+4}{n^2-n+2}} (\mathbb{R}^n)},\quad \text{for } n\geq 6.
    \end{align}
\end{lemma}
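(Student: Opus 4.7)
The plan is to deduce both inequalities by interpolation among the $L^p\to L^q$ bounds on $S_j$ (and its rescaled version $\tilde{S}_j$) assembled in the preceding lemmas. The key structural fact, coming from \eqref{Sj Sj tilde norm for hypersurf}, is that on the scaling line $\tfrac{n}{p}-\tfrac{n-1}{q}=2$ one has
\[
\|S_j\|_{L^p(\mathbb{R}^n)\to L^q(\mathbb{R}^{n-1})} = (2^j)^{(n-3)/2}\,\|\tilde{S}_j\|_{L^p\to L^q},
\]
so a uniform-in-$j$ bound at a scaling-line endpoint reduces to the operator-level decay $\|\tilde{S}_j\|_{L^p\to L^q}\lesssim (2^j)^{-(n-3)/2}$. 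The ingredients at our disposal are (i) the full $L^2\to L^q$ decay family for $\tilde{S}_j^r$ furnished by Theorem \ref{Thm for hypersurfaces in Greenleaf-Seeger} (applied with $\ell=n-2$ via Theorem \ref{Thm Section 4 in Hu} and integrated in $r\approx 1$ by Minkowski), (ii) the trivial Young bound $\|\tilde{S}_j\|_{L^p\to L^q}\lesssim 1$ for $1\le p\le q\le\infty$ coming from $|\tilde{S}_j(X,Y)|\lesssim 1$ on a compact set, and (iii) the uniform scaling-line bounds \eqref{Dyadic hypsurf p0 q0 estimate}, \eqref{Purple dot estimate}. The hypothesis $2^j\le \lambda$ additionally lets one convert a negative power of $\lambda$ appearing in \eqref{Sj Sj tilde norm for hypersurf} into a negative power of $2^j$ whenever that is advantageous.

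For $n\in\{3,4,5\}$, I would interpolate an appropriate $L^2\to L^q$ decay from (i) against the trivial bound (ii), choosing the convex combination so that the interpolated point is exactly the scaling-line target $\bigl((n+1)/(2(n-1)),(n^2-3n+4)/(2(n-1)^2)\bigr)$ and the aggregate $2^j$-decay reaches or exceeds $(n-3)/2$. When $n=3$ the threshold $(n-3)/2$ equals zero and the trivial bound alone already delivers the required estimate. For $n\ge 6$ I would instead use \eqref{Dyadic hypsurf p0 q0 estimate} (itself obtained in Lemma \ref{Lemma hypsurf p0 q0 estimate} as the scaling-line trace of the interpolation of \eqref{Dyadic critical estimate} against the trivial bound) as one of the interpolation anchors, interpolating it with a second strategically chosen endpoint so as to land at the prescribed target $\bigl((n^2-n+2)/(2n^2-5n+4),(n^2-4n+8)/(2n^2-5n+4))\bigr)$ with the required decay.

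The principal obstacle is the algebraic bookkeeping: the target pairs are intricate rational functions of $n$, and one must verify that the chosen convex combinations simultaneously (a) land at the prescribed exponents, (b) produce aggregate $2^j$-decay at least $(n-3)/2$ after absorbing $\lambda$-powers via $\lambda\ge 2^j$, and (c) use only admissible endpoints (i.e., $1\le p\le q\le\infty$ for the trivial bound, $q\ge 2$ for the Greenleaf-Seeger input). The dichotomy $n\le 5$ versus $n\ge 6$ in the statement of the lemma reflects precisely the dimension at which the trivial Young bound ceases to supply enough decay to anchor the interpolation against \eqref{Dyadic critical estimate}, forcing a switch to the sharper scaling-line endpoint \eqref{Dyadic hypsurf p0 q0 estimate}.
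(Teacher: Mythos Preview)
Your plan has a genuine gap: the ingredients you list are not sufficient for $n\ge 4$. Concretely, take $n=4$. The target on the scaling line is $\bigl(\tfrac{1}{p},\tfrac{1}{q}\bigr)=\bigl(\tfrac{5}{6},\tfrac{4}{9}\bigr)$ and you need $\|\tilde S_j\|\lesssim (2^j)^{-1/2}$ there. All of your Greenleaf--Seeger inputs live on the vertical line $\tfrac{1}{p}=\tfrac{1}{2}$, and the largest decay there is $(2n-3)/4=5/4$ at $q=2$. Any convex combination with a trivial Young endpoint (which lies in $\tfrac{1}{p}\ge\tfrac{1}{q}$, hence $\tfrac{1}{p}\le 1$) puts weight at most $\theta=\tfrac{1}{3}$ on the Greenleaf--Seeger vertex, yielding decay at most $\tfrac{1}{3}\cdot\tfrac{5}{4}=\tfrac{5}{12}<\tfrac{1}{2}$. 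The bounds \eqref{Dyadic hypsurf p0 q0 estimate} and \eqref{Purple dot estimate} do not help, since they are themselves interpolants of those same two ingredients. The $2^j\le\lambda$ conversion also cannot rescue this: off the scaling line the $\lambda$-power in \eqref{Sj Sj tilde norm for hypersurf} is $\lambda^{\alpha}$ with $\alpha=\tfrac{n}{p}-\tfrac{n-1}{q}-2$, and for Greenleaf--Seeger points $\alpha<0$, so replacing $\lambda$ by $2^j$ only recovers the scaling-line relation and gives no extra decay. The same obstruction occurs at $n=5$ (you get at most $\tfrac12\cdot\tfrac74=\tfrac78<1$), and the $n\ge 6$ case inherits the problem.

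What the paper actually does is introduce a \emph{new} input you have not listed: an $L^{2n/(n+2)}(\mathbb{R}^n)\to L^2(\mathbb{R}^{n-1})$ bound
\[
\|\tilde S_j f_j\|_{L^2(\mathbb{R}^{n-1})}\lesssim (2^j)^{-\frac{(n-1)(n-2)}{2n}}\|f_j\|_{L^{2n/(n+2)}(\mathbb{R}^n)},
\]
obtained by verifying that, after freezing one $X$-variable and one $Y$-variable, the restricted distance phase satisfies the Carleson--Sj\"olin condition in $n-1$ dimensions, and then applying the oscillatory integral theorem \cite[Theorem~2.2.1]{Sogge1993fourier} to the \emph{adjoint} $\tilde S_j^*$. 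This estimate lies at $\tfrac{1}{p}=\tfrac{n+2}{2n}>\tfrac12$, which is exactly what allows a larger interpolation weight against the trivial endpoint; for $n=4$ it sits at $\bigl(\tfrac34,\tfrac12\bigr)$ with decay $\tfrac34$, and interpolating with the trivial bound at $\bigl(1,\tfrac13\bigr)$ gives weight $\theta=\tfrac23$ and decay $\tfrac23\cdot\tfrac34=\tfrac12$, exactly on target. For $n\ge 6$ this Carleson--Sj\"olin bound is again the decisive ingredient, interpolated (after passing through the trivial $L^1\to L^\infty$ endpoint) against the Greenleaf--Seeger $L^2\to L^2$ estimate to land at \eqref{Sj upper unif for hypsurf higher}. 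Your proposal is missing precisely this adjoint/Carleson--Sj\"olin step.
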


\begin{proof}
    By Theorem \ref{Thm for hypersurfaces in Greenleaf-Seeger} and Theorem \ref{Thm Section 4 in Hu} as in the proof of Lemma \ref{Lemma hypsurf p0 q0 estimate},
    \begin{align*}
        \|\Tilde{S}_j f_j \|_{L_X^2 (\mathbb{R}^{n-1})}\lesssim (2^j)^{-\frac{n-2}{2}-\frac{1}{4}}\|f_j \|_{L_Y^2 (\mathbb{R}^n)}.
    \end{align*}
    We also have a trivial $L^1 \to L^\infty$ bound
    \begin{align*}
        \|\Tilde{S}_j f_j \|_{L_X^\infty (\mathbb{R}^{n-1})}\lesssim \|f_j \|_{L_Y^1 (\mathbb{R}^n)}.
    \end{align*}
    By interpolation,
    \begin{align*}
        \|\Tilde{S}_j f_j \|_{L_X^{\frac{2n-1}{n-2}} (\mathbb{R}^{n-1})} \lesssim (2^{-j})^{\frac{(2n-3)(n-2)}{2(2n-1)}} \|f_j \|_{L_Y^{\frac{2n-1}{n+1}}(\mathbb{R}^n)}.
    \end{align*}
    By this and \eqref{Sj Sj tilde norm for hypersurf},
    \begin{align}\label{Sj est q=(2n-1)/(n-2) for hypersurf}
        \|S_j f\|_{L_x^\frac{2n-1}{n-2} (\mathbb{R}^{n-1})}\lesssim (2^j)^{-\frac{3}{2(2n-1)}} \|f\|_{L_y^{\frac{2n-1}{n+1}}(\mathbb{R}^n)}.
    \end{align}

    Recall that $d_j(X,Y)$ as in \eqref{Sj and Sj tilde set up} is the Riemannian metric associated with the ``stretched metric" determined by the metric tensor $g(\lambda^{-1}2^jX)$. By taking a partition of unity and a careful change of coordinates, we can assume that the variables $X= (X',X_{n-1}) \in \mathbb{R}^{n-2}\times \mathbb{R}$, $Y = (Y',Y_n) \in \mathbb{R}^{n-1}\times \mathbb{R}$ are such that the mixed Hessian $ \frac{\partial^2 d_j}{\partial Y'\partial X'}$ is of full rank and the submanifold parameterized by $X'\mapsto \frac{\partial d_j}{\partial Y'} (X',X_{n-1},Y',Y_{n})$ defines a hypersurface in $\mathbb{R}^{n-1}$ with nonvanishing Gaussian curvature (i.e. the Carleson-Sj\"olin condition from \cite[\S2.2]{Sogge1993fourier} is satisfied in $n-1$ dimensions). Indeed, this can be verified in the special case where the submanifold is a subset of the $Y_n=0$ hyperplane, the restricted distance function $|(Y'-X,Y_n)|$  is Euclidean, and a partition of unity localizes to a small cone where $|Y'-X| \lesssim |X_{n-1}-Y_{n-1}|$ and $|Y_n| \ll1$. The Carleson-Sj\"olin condition is then stable under small perturbations. Hence we can apply \cite[Theorem 2.2.1]{Sogge1993fourier} in $n-1$ dimensions to the following operator which fixes $X_{n-1}, Y_n$ 
    $$
    (T_{j,X_{n-1},Y_n} G)(Y')  = \int \tilde{S}_j^*(X',X_{n-1}, Y',Y_n) G(X') dX'
    $$
    (where as usual, $\tilde{S}_j^*(X',X_{n-1}, Y',Y_n)$ denotes the integral kernel of $\tilde{S}_j^*$). Since the amplitude defining $\tilde{S}_j^*(X,Y)$ is supported in a small neighborhood of the diagonal, it then follows that 
    $$
    \|\tilde{S}_j^* f_j\|_{L_Y^\frac{2n}{n-2}(\mathbb{R}^n)} \lesssim (2^j)^{- \frac{(n-1)(n-2)}{2n}} \|f_j\|_{L_X^2(\mathbb{R}^{n-1})}.
    $$
    By duality, we have
    \begin{align}\label{Sj tilde est q=2, p=2n/(n+2) for hypersurf}
        \|\Tilde{S}_j f_j\|_{L_X^2(\mathbb{R}^{n-1})}\lesssim (2^j)^{-\frac{(n-1)(n-2)}{2n}} \|f_j\|_{L_Y^{\frac{2n}{n+2}} (\mathbb{R}^n)}.
    \end{align}
    
    We first consider $n=3, 4, 5$, i.e., $3\leq n\leq 5$. By \eqref{A trivial Young's inequality consequence}, we have
    \begin{align}\label{Sj tilde est q=4(n-1)/(7n-n^2-8), p=1 for hypersurf}
        \|\Tilde{S}_j f_j \|_{L^{\frac{4(n-1)}{7n-n^2-8}}(\mathbb{R}^{n-1})}\lesssim \|f\|_{L^1 (\mathbb{R}^n)}.
    \end{align}
    We note that
    \begin{align*}
        \frac{4(n-1)}{7n-n^2-8}=\begin{cases}
            2, & \text{if } n=3, \\
            3, & \text{if } n=4, \\
            8, & \text{if } n=5,
        \end{cases}
    \end{align*}
    but $7n-n^2-8<0$ if $n\geq 6$, and this is a reason in Lemma \ref{Lemma Sj critical uniform est} why we split the cases into two cases where $n\leq 5$ and $n\geq 6$. Interpolating \eqref{Sj tilde est q=2, p=2n/(n+2) for hypersurf} and \eqref{Sj tilde est q=4(n-1)/(7n-n^2-8), p=1 for hypersurf} yields
    \begin{align*}
        \|\Tilde{S}_j f_j\|_{L^{\frac{2(n-1)^2}{n^2-3n+4}}(\mathbb{R}^{n-1})}\lesssim (2^j)^{-\frac{n-3}{2}} \|f_j\|_{L^{\frac{2(n-1)}{n+1}}(\mathbb{R}^n)},
    \end{align*}
    and thus, by \eqref{Sj Sj tilde norm for hypersurf}, we have
    \begin{align*}
        \|S_j f\|_{L^{\frac{2(n-1)^2}{n^2-3n+4}}(\mathbb{R}^{n-1})}\lesssim \|f\|_{L^{\frac{2(n-1)}{n+1}}(\mathbb{R}^n)}.
    \end{align*}
    This proves \eqref{Sj upper unif for hypersurf higher for low dimensions 3, 4, 5}.
    
    We next consider $n\geq 6$. Interpolating \eqref{Sj tilde est q=2, p=2n/(n+2) for hypersurf} with a trivial $L^1 (\mathbb{R}^n)\to L^\infty (\mathbb{R}^{n-1})$ bound
    \begin{align*}
        \|\Tilde{S}_j f_j \|_{L_X^\infty (\mathbb{R}^{n-1})}\lesssim \|f_j\|_{L_Y^1 (\mathbb{R}^n)}
    \end{align*}
    yields
    \begin{align*}
        \|\Tilde{S}_j f_j\|_{L_X^{\frac{2n-3}{n-2}}(\mathbb{R}^{n-1})}\lesssim (2^j)^{-\frac{(n-1)(n-2)^2}{n(2n-3)}}\|f_j\|_{L_Y^{\frac{2n^2-3n}{n^2+n-4}} (\mathbb{R}^n)}.
    \end{align*}
    It follows from \eqref{Sj Sj tilde norm for hypersurf} that
    \begin{align}\label{Sj est q=(2n-3)/(n-2) for hypersurf}
        \|S_j f\|_{L^{\frac{2n-3}{n-2}} (\mathbb{R}^{n-1})}\lesssim (2^j)^{\frac{n^2-7n+8}{2n(2n-3)}}\|f\|_{L^{\frac{2n^2-3n}{n^2+n-4}}(\mathbb{R}^n)}.
    \end{align}
    Interpolation between \eqref{Sj est q=(2n-1)/(n-2) for hypersurf} and \eqref{Sj est q=(2n-3)/(n-2) for hypersurf} yields \eqref{Sj upper unif for hypsurf higher}. For $n=\dim M\geq 3$, we note that $\frac{n^2-7n+8}{n^2+n-4}<0$ if and only if $n=3, 4, 5$, and this is another reason why we need to consider the cases $n\leq 5$ and $n\geq 6$ separately.
\end{proof} 

We are now ready to prove \eqref{S(Vu) WTS hypersurface}. Interpolation between \eqref{Dyadic hypsurf p0 q0 estimate}, \eqref{Purple dot estimate}, \eqref{Sj upper unif for hypersurf higher for low dimensions 3, 4, 5}, and \eqref{Sj upper unif for hypsurf higher} gives us that, for some $\alpha_n (p, q)>0$,
\begin{align*}
    \|S_j f\|_{L^q (\mathbb{R}^{n-1})}\lesssim \begin{cases}
        (2^j)^{-\alpha_n (p, q)} \|f\|_{L^p (\mathbb{R}^n)}, & \text{if } \frac{n}{p}-\frac{n-1}{q}=2,\; 3\leq n\leq 5,\; \text{ and } \frac{2(n-1)^2}{n^2-3n+4}<q<\frac{2(n-1)}{n-3}, \\
        (2^j)^{-\alpha_n (p, q)} \|f\|_{L^p (\mathbb{R}^n)}, & \text{if } \frac{n}{p}-\frac{n-1}{q}=2,\; n\geq 6,\; \text{ and } \frac{2n^2-5n+4}{n^2-4n+8}<q<\frac{2(n-1)}{n-3}, \\
        \|f\|_{L^p (\mathbb{R}^n)}, & \text{if } \frac{n}{p}-\frac{n-1}{q}=2,\; 3\leq n\leq 6,\; \text{ and } q\in \left\{\frac{2(n-1)^2}{n^2-3n+4}, \frac{2(n-1)}{n-3} \right\}, \\
        \|f\|_{L^p (\mathbb{R}^n)}, & \text{if } \frac{n}{p}-\frac{n-1}{q}=2,\; n\geq 6,\; \text{ and } q\in \left\{\frac{2n^2-5n+4}{n^2-4n+8}, \frac{2(n-1)}{n-3} \right\}.
    \end{cases}
\end{align*}
Summing these over all $1\leq j\leq \lfloor \log_2 \lambda \rfloor$, we obtain
\begin{align*}
    \|S_\lambda f\|_{L^q (\mathbb{R}^{n-1})}\lesssim \begin{cases}
        \|f\|_{L^p (\mathbb{R}^n)}, & \text{if } \frac{n}{p}-\frac{n-1}{q}=2,\; 3\leq n\leq 5,\; \text{ and } \frac{2(n-1)^2}{n^2-3n+4}<q<\frac{2(n-1)}{n-3}, \\
        \|f\|_{L^p (\mathbb{R}^n)}, & \text{if } \frac{n}{p}-\frac{n-1}{q}=2,\; n\geq 6,\; \text{ and } \frac{2n^2-5n+4}{n^2-4n+8}<q<\frac{2(n-1)}{n-3}, \\
        (\log \lambda)\|f\|_{L^p (\mathbb{R}^n)}, & \text{if } \frac{n}{p}-\frac{n-1}{q}=2,\; 3\leq n\leq 5,\; \text{ and } q=\frac{2(n-1)^2}{n^2-3n+4}, \\
        (\log \lambda)\|f\|_{L^p (\mathbb{R}^n)}, & \text{if } \frac{n}{p}-\frac{n-1}{q}=2,\; 4\leq n\leq 5,\; \text{ and } q=\frac{2(n-1)}{n-3}, \\
        (\log \lambda)\|f\|_{L^p (\mathbb{R}^n)}, & \text{if } \frac{n}{p}-\frac{n-1}{q}=2,\; n\geq 6,\; \text{ and } q\in \left\{\frac{2n^2-5n+4}{n^2-4n+8}, \frac{2(n-1)}{n-3} \right\},
    \end{cases}
\end{align*}
Here, we used the fact that the case of $(n, q)=(3, \infty)=\left(3, \frac{2(n-1)}{n-3}\right)$ does not hold by Lemma \ref{Lemma S0 universal estimates for hypersurface}. This completes the proof of \eqref{S(Vu) WTS hypersurface}, the proof of Proposition \ref{Prop Reduction for hypersurface thm}, and hence, Theorem \ref{Thm for hypersurfaces}.

\section{Proof of Theorem \ref{Thm for codim 2}}\label{S: proof of codim 2 thm}
By the discussion in \S \ref{S: Preliminaries}, we prove Proposition \ref{Prop Reduction for codim 2 thm} to complete the proof of Theorem \ref{Thm for codim 2}. We define $S_j$ and $S_0$ as in \eqref{Sj definition} and \eqref{S0 definition}, respectively. Let $\Sigma$ be an $(n-2)$-dimensional submanifold of $M$. We need an analogue of Lemma \ref{Lemma S0 universal estimates for hypersurface} first.

\begin{lemma}\label{Lemma S0 universal est for codim 2}
    If $\Sigma$ is an $(n-2)$-dimensional submanifold of $M$ and $\frac{n}{p}-\frac{n-2}{q}=2$, then
    \begin{align*}
        \|S_0 f\|_{L^q (\Sigma)}\lesssim \|f\|_{L^p (M)},\quad 2\leq q<\infty.
    \end{align*}
\end{lemma}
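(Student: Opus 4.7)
\smallskip

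The plan is to mirror the proof of Lemma \ref{Lemma S0 universal estimates for hypersurface} almost verbatim, replacing the one-dimensional transverse direction by a two-dimensional one. As before, the stationary-phase argument of \cite[\S 5]{BlairHuangSireSogge2022UniformSobolev} gives the kernel bound $|S_0(x,y)|\lesssim d_g(x,y)^{2-n}\mathds{1}_{d_g(x,y)\lesssim \lambda^{-1}}(x,y)$. After a partition of unity we choose Fermi-type coordinates adapted to $\Sigma$, writing $x\in\Sigma$ as $(z,0,0)\in \mathbb{R}^{n-2}\times\mathbb{R}\times\mathbb{R}$ and $y\in M$ as $(z',s_1,s_2)\in \mathbb{R}^{n-2}\times\mathbb{R}\times\mathbb{R}$, so that
\begin{equation*}
    d_g(x,y)^{2-n}\lesssim (|z-z'|+|s_1|+|s_2|)^{2-n}.
\end{equation*}
Thus it suffices to establish the codimension-two analogue of Proposition \ref{Prop Sobolev trace for hypsurf}: the operator
\begin{equation*}
    Tf(x):=\int_{\mathbb{R}^{n-2}}\int_{\mathbb{R}^2} (|x-y|+|s_1|+|s_2|)^{2-n}\,f(y,s_1,s_2)\,ds_1\,ds_2\,dy
\end{equation*}
is bounded from $L^p(\mathbb{R}^n)$ to $L^q(\mathbb{R}^{n-2})$ whenever $\frac{n}{p}-\frac{n-2}{q}=2$ with $1<p$ and $q<\infty$.

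To prove this I would apply H\"older's inequality in the two transverse variables $(s_1,s_2)$, obtaining
\begin{equation*}
    |Tf(x)|\leq \int \bigl\|(|x-y|+|\cdot|+|\cdot|)^{2-n}\bigr\|_{L^{p'}(\mathbb{R}^2)}\,\|f(y,\cdot,\cdot)\|_{L^p(\mathbb{R}^2)}\,dy.
\end{equation*}
Setting $\alpha=|x-y|$ and rescaling $s_i=\alpha t_i$ produces the bound $C\alpha^{2-n+2/p'}$, provided the integral $\int_{\mathbb{R}^2}(1+|t_1|+|t_2|)^{(2-n)p'}\,dt_1\,dt_2$ converges, i.e.\ $p'>\frac{2}{n-2}$. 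After this reduction the claim follows from Hardy--Littlewood--Sobolev fractional integration on $\mathbb{R}^{n-2}$ with the Riesz kernel $|y|^{-(n-4+2/p)}$: one verifies from $\frac{n}{p}-\frac{n-2}{q}=2$ that the scaling condition $\frac{n-4+2/p}{n-2}=1-\bigl(\frac{1}{p}-\frac{1}{q}\bigr)$ is satisfied, and that $q\geq p$ (the inequality $q\geq p$ follows from $q\geq 1$ after substituting the scaling relation).

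The main obstacle, such as it is, lies in the low-dimensional convergence check: for $n=3$ the integrability condition $p'>2$ forces $p<2$, and for $n=4$ one needs $p'>1$. Both are consistent with the hypothesis $2\leq q<\infty$ and $\frac{n}{p}-\frac{n-2}{q}=2$, as a direct computation shows $p=\tfrac{nq}{2q+n-2}$, which always satisfies $1<p<\tfrac{n}{2}$ in the indicated range; in particular $p<2$ when $n=3$. The remaining endpoint to verify is that the Riesz exponent $n-4+2/p$ lies strictly between $0$ and $n-2$, which again reduces to $1<p<\frac{2}{\max(0,4-n)}$ and is guaranteed by the same computation. Assembling these pieces completes the proof.
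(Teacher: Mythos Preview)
Your proposal is correct and follows essentially the same route as the paper: the paper reduces Lemma~\ref{Lemma S0 universal est for codim 2} to Proposition~\ref{Prop Sobolev trace for codim 2} (the codimension-two analogue of Proposition~\ref{Prop Sobolev trace for hypsurf}) and remarks that its proof is identical to that of Proposition~\ref{Prop Sobolev trace for hypsurf}, which is exactly the H\"older-in-transverse-variables plus Hardy--Littlewood--Sobolev argument you carry out. Your verification of the convergence and exponent conditions is accurate.
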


As in \S \ref{S: Proof for any hypersurfaces}, this lemma follows from the following proposition.

\begin{proposition}\label{Prop Sobolev trace for codim 2}
    Suppose $\frac{n}{p}-\frac{n-2}{q}=2$, $1<p, q<\infty$, and $n\geq 3$. We write coordinates in $\mathbb{R}^n$ as $(y, s)\in \mathbb{R}^{n-2} \times \mathbb{R}^2$, where $s=(s_1, s_2)\in \mathbb{R}^2$. Define
    \begin{align*}
        k(x, y, s)=(|x-y|+|s|)^{2-n},\quad \text{where } x\in \mathbb{R}^{n-2}.
    \end{align*}
    Then the operator
    \begin{align*}
        Tf(x):=\int k(x, y, s) f(y, s)\:ds\:dy
    \end{align*}
    defines a bounded linear map $T:L^p (\mathbb{R}^n) \to L^q (\mathbb{R}^{n-2})$.
\end{proposition}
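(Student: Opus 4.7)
The plan is to mirror the proof of Proposition \ref{Prop Sobolev trace for hypsurf}, making the routine modifications needed to integrate over a two-dimensional $s$-variable instead of one, and then to apply Hardy--Littlewood--Sobolev fractional integration in the $x$-variable, now over $\mathbb{R}^{n-2}$ rather than $\mathbb{R}^{n-1}$.

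First I would assume $f \geq 0$ and apply H\"older's inequality in the $s$-variable to obtain
$$
|Tf(x)| \leq \int_{\mathbb{R}^{n-2}} \Big(\int_{\mathbb{R}^2} (|x-y|+|s|)^{(2-n)p'}\, ds\Big)^{1/p'} \|f(y,\cdot)\|_{L^p(\mathbb{R}^2)}\, dy.
$$
Setting $\alpha = |x-y|$ and making the change of variables $s = \alpha t$ (with $ds = \alpha^2\, dt$) gives
$$
\int_{\mathbb{R}^2} (\alpha+|s|)^{(2-n)p'}\, ds = \alpha^{(2-n)p' + 2} \int_{\mathbb{R}^2}(1+|t|)^{(2-n)p'}\, dt,
$$
and the $t$-integral converges precisely when $(n-2)p' > 2$. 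I need to check this holds in the range of the proposition: for $n \geq 4$ it is automatic since $p' > 1$ gives $(n-2)p' \geq 2p' > 2$, while for $n=3$ the relation $\tfrac{3}{p} - \tfrac{1}{q} = 2$ with $q > 1$ forces $p < 3/2 < 2$, i.e.\ $p' > 2 = (n-2)p'$ when $n=3$. Taking the $1/p'$-th power then yields the pointwise estimate
$$
|Tf(x)| \leq C \int_{\mathbb{R}^{n-2}} |x-y|^{(2-n) + 2/p'}\, \|f(y,\cdot)\|_{L^p(\mathbb{R}^2)}\, dy.
$$

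Finally I would apply the Hardy--Littlewood--Sobolev theorem in $\mathbb{R}^{n-2}$ to the kernel $|x|^{-\alpha}$ with $\alpha = n - 2 - 2/p'$, which lies in $(0, n-2)$ by the same condition $(n-2)p' > 2$. Writing $g(y) = \|f(y,\cdot)\|_{L^p(\mathbb{R}^2)}$ and using Fubini to identify $\|g\|_{L^p(\mathbb{R}^{n-2})} = \|f\|_{L^p(\mathbb{R}^n)}$, HLS produces an $L^p(\mathbb{R}^{n-2}) \to L^q(\mathbb{R}^{n-2})$ bound when $\tfrac{1}{p} - \tfrac{1}{q} = 1 - \tfrac{\alpha}{n-2} = \tfrac{2}{(n-2)p'}$, and a direct algebraic check shows this is equivalent to $\tfrac{n}{p} - \tfrac{n-2}{q} = 2$. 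The main obstacle is really only bookkeeping: confirming the convergence threshold $(n-2)p' > 2$ on the ``small $n$'' end and verifying that the HLS scaling identity reproduces the claimed scaling relation; no new analytic difficulty appears relative to the hypersurface case.
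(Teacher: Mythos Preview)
Your proposal is correct and is exactly the approach the paper intends: the paper's own ``proof'' simply states that Proposition~\ref{Prop Sobolev trace for codim 2} follows by repeating the proof of Proposition~\ref{Prop Sobolev trace for hypsurf}, and you have carried out precisely those modifications (H\"older in the two-dimensional $s$-variable, the rescaling $s=\alpha t$ picking up $\alpha^2$, then HLS in $\mathbb{R}^{n-2}$). The only cosmetic issue is the phrase ``$p' > 2 = (n-2)p'$ when $n=3$,'' which is garbled; you mean that for $n=3$ one has $(n-2)p' = p'$, and $p<3/2$ forces $p'>3>2$, so the convergence condition $(n-2)p'>2$ holds.
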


As in Theorem \ref{Thm for hypersurfaces}, this proposition is a reason why the case of $(n, q)=(3, \infty)$ is not covered in Theorem \ref{Thm for codim 2}. Since one can prove Proposition \ref{Prop Sobolev trace for codim 2} by using the proof of Proposition \ref{Prop Sobolev trace for hypsurf}, we skip its proof here. By Lemma \ref{Lemma S0 universal est for codim 2}, it is enough to consider the estimates of the $S_j$ for $1\leq j\leq \lfloor \log_2 \lambda \rfloor$. If $\Tilde{S}_j$ is as in \eqref{Sj and Sj tilde set up}, then using notations in \eqref{Sj Sj tilde notations} and \eqref{Sj Sj tilde Lp notations}, we have analogues of \eqref{Sj Sj tilde norm for hypersurf} as follows.
\begin{align}\label{Sj Sj tilde norm for codim 2}
    \begin{split}
        & \|S_j \|_{L_y^p (\mathbb{R}^n) \to L_x^q (\mathbb{R}^{n-2})}=\lambda^{\frac{n-3}{2}} (\lambda^{-1} 2^j)^{\frac{n+1}{2}+\frac{n-2}{q}-\frac{n}{p}} \|\Tilde{S}_j \|_{L_Y^p (\mathbb{R}^n) \to L_X^q (\mathbb{R}^{n-2})},\quad \text{and} \\
        & \|S_j \|_{L_y^p (\mathbb{R}^n) \to L_x^q (\mathbb{R}^{n-2})}=(2^j)^{\frac{n-3}{2}} \|\Tilde{S}_j \|_{L_Y^p (\mathbb{R}^n) \to L_X^q (\mathbb{R}^{n-2})},\quad \text{if } \frac{n}{p}-\frac{n-2}{q}=2.
    \end{split}
\end{align}
We also note that $f_j$ is compactly supported by \eqref{Support of f in the dyadic operator} and \eqref{Support of fj in the dyadic operator}. Again, as in the compuation of \eqref{A trivial Young's inequality consequence}, we have that
\begin{align}\label{A trivial Young's inequality consequence for codim 2 submflds}
    \|\Tilde{S}_j\|_{L_Y^p (\mathbb{R}^n)\to L_X^q (\mathbb{R}^{n-2})}\lesssim 1,\quad \text{for any } 1\leq p\leq q\leq \infty.
\end{align}
We have the following lemmas analogous to Lemma \ref{Lemma hypsurf p0 q0 estimate}-\ref{Lemma Sj critical uniform est}.

\begin{lemma}\label{Lemma codim 2 universal middle est}
    If $1\leq j\leq \lfloor \log_2 \lambda \rfloor$, then
    \begin{align}\label{Sj intersection point est for codim 2}
        \|S_j f\|_{L^{\frac{2n-2}{n-2}}(\mathbb{R}^{n-2})}\lesssim j^{\frac{n-2}{2n-2}} (2^j)^{-\frac{1}{2n-2}} \|f\|_{L^{\frac{2n-2}{n}}(\mathbb{R}^n)}.
    \end{align}
\end{lemma}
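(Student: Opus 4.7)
The plan is to mimic the hypersurface argument in Lemma \ref{Lemma hypsurf p0 q0 estimate}, but with the submersion-with-folds case $r=1$ of Theorem \ref{Thm 2.1 in GS} in place of the Whitney-fold input from Theorem \ref{Thm for hypersurfaces in Greenleaf-Seeger}. Concretely, I would prove an $L^2(\mathbb{R}^n) \to L^2(\mathbb{R}^{n-2})$ estimate for $S_j$ with a logarithmic $j^{1/2}$ loss, combine it with the trivial $L^1 \to L^\infty$ kernel bound, and interpolate to the required endpoint $(p,q) = ((2n-2)/n,\,(2n-2)/(n-2))$.

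To obtain the $L^2 \to L^2$ bound, I would pass to the scaled operator $\tilde{S}_j$ from \eqref{Sj and Sj tilde set up} and introduce polar coordinates $Y = r\omega$ with $r \approx 1$ and $\omega \in \mathbb{S}^{n-1}$, defining $\tilde{S}_j^r (f_j)_r(X)$ as in the proof of Lemma \ref{Lemma hypsurf p0 q0 estimate}. Since $\dim \Sigma = n-2$, part (1) of Theorem \ref{Thm Section 4 in Hu} guarantees that the left projection of the canonical relation of $(d_j)_r$ is a submersion with folds with $d = n-2$ and $r = 1$ in the notation of Theorem \ref{Thm 2.1 in GS}. Applying that theorem at the endpoint $q = 2$ (where the log factor $(\log 2^j)^{1/2} \approx j^{1/2}$ appears) and then Minkowski's integral inequality in $r$ produces
\begin{equation*}
\|\tilde{S}_j f_j\|_{L^2(\mathbb{R}^{n-2})} \lesssim (2^j)^{-(n-2)/2}\, j^{1/2}\, \|f_j\|_{L^2(\mathbb{R}^n)}.
\end{equation*}
Undoing the scaling via \eqref{Sj Sj tilde norm for codim 2} with $p = q = 2$ then yields $\|S_j f\|_{L^2(\mathbb{R}^{n-2})} \lesssim \lambda^{-1} (2^j)^{1/2} j^{1/2} \|f\|_{L^2(\mathbb{R}^n)}$.

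For the other endpoint, the kernel of $S_j$ is supported where $d_g(x,y) \approx \lambda^{-1} 2^j$ and has size $\lesssim \lambda^{(n-3)/2}(\lambda^{-1} 2^j)^{-(n-1)/2} = \lambda^{n-2}(2^j)^{-(n-1)/2}$, yielding the trivial bound $\|S_j\|_{L^1 \to L^\infty} \lesssim \lambda^{n-2} (2^j)^{-(n-1)/2}$. Interpolating with parameter $\theta = (n-2)/(n-1)$ lands precisely at $(1/p,\,1/q) = (n/(2n-2),\,(n-2)/(2n-2))$, which sits on the scaling line $n/p - (n-2)/q = 2$. A short verification shows that the $\lambda$-exponents cancel exactly (a built-in consistency check), while the $2^j$ and $j$ exponents combine to $(2^j)^{-1/(2n-2)} j^{(n-2)/(2n-2)}$, giving the stated bound.

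The main obstacle is the careful bookkeeping of the $q=2$ logarithmic loss in Theorem \ref{Thm 2.1 in GS}: this loss, which reflects the fold singularities in the codimension-two setting, is exactly what produces the $j^{(n-2)/(2n-2)}$ factor in the conclusion, and it must be tracked through both the polar-coordinate reduction and the interpolation to arrive at the correct exponent.
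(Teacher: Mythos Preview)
Your proposal is correct and follows essentially the same route as the paper's proof. The only cosmetic difference is that the paper interpolates at the level of the rescaled operator $\tilde S_j$ (using the trivial bound \eqref{A trivial Young's inequality consequence for codim 2 submflds} for the $L^1\to L^\infty$ endpoint) and then applies the scaling relation \eqref{Sj Sj tilde norm for codim 2}, whereas you first pass both endpoints to $S_j$ and then interpolate; these are equivalent, and your exponent checks are all accurate.
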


\begin{proof}
    Let $\Tilde{S}_j$ be as in \eqref{Sj and Sj tilde set up}. As in the proof of Lemma \ref{Lemma hypsurf p0 q0 estimate}, by Theorem \ref{Thm 2.1 in GS} and Theorem \ref{Thm Section 4 in Hu}, we have that
    \begin{align}\label{Sj tilde L2 to L2 codim 2 est}
        \begin{split}
            \|\Tilde{S}_j f_j \|_{L_X^2 (\mathbb{R}^{n-2})}\lesssim (2^j)^{-\frac{n-2}{2}} (\log 2^j)^{\frac{1}{2}} \|f_j\|_{L_Y^2 (\mathbb{R}^n)}\lesssim (2^j)^{-\frac{n-2}{2}} j^{\frac{1}{2}} \|f_j \|_{L_Y^2 (\mathbb{R}^n)}.
        \end{split}
    \end{align}
    We also know the trivial $L^1 \to L^\infty$ bound
    \begin{align}\label{Sj tilde L1 to Linfty bound}
        \|\Tilde{S}_j f_j \|_{L_X^\infty (\mathbb{R}^{n-2})}\lesssim \|f_j \|_{L_Y^1 (\mathbb{R}^n)}.
    \end{align}
    By interpolation between this and \eqref{Sj tilde L2 to L2 codim 2 est},
    \begin{align*}
        \|\Tilde{S}_j f_j \|_{L_X^{\frac{2n-2}{n-2}} (\mathbb{R}^{n-2})}\lesssim j^{\frac{n-2}{2n-2}} (2^j)^{-\frac{(n-2)^2}{2n-2}} \|f_j \|_{L_Y^{\frac{2n-2}{n}}(\mathbb{R}^n)}.
    \end{align*}
    Then \eqref{Sj intersection point est for codim 2} follows from this and \eqref{Sj Sj tilde norm for codim 2}.
\end{proof}

\begin{lemma}\label{Lemma codim 2 universal bottom est}
    If $1\leq j\leq \lfloor \log_2 \lambda \rfloor$, then
    \begin{align}\label{Sj lower opr norm for codim 2}
        \|S_j f\|_{L^{\frac{2(n-2)}{n-3}}(\mathbb{R}^{n-2})}\lesssim j^{\frac{n-3}{2(n-2)}} \|f\|_{L^{\frac{2n}{n+1}}(\mathbb{R}^n)}.
    \end{align}
\end{lemma}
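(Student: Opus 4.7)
The plan is to parallel the proof of Lemma~\ref{Lemma purple dot for any hypersurfaces}, with Lemma~\ref{Lemma codim 2 universal middle est} playing the role previously played by the Carleson--Sj\"olin type endpoint \eqref{Dyadic critical estimate}. The target exponents $(1/p,1/q)=((n+1)/(2n),(n-3)/(2(n-2)))$ sit on the critical scaling line $\tfrac{n}{p}-\tfrac{n-2}{q}=2$, and I will reach the target by real interpolation between Lemma~\ref{Lemma codim 2 universal middle est} and a trivial $L^{n/2}\to L^\infty$ bound for $S_j$.

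First I would produce the auxiliary $L^{n/2}\to L^\infty$ estimate. Applying \eqref{A trivial Young's inequality consequence for codim 2 submflds} with $(p,q)=(n/2,\infty)$ gives
$$\|\Tilde{S}_j f_j\|_{L_X^\infty(\mathbb{R}^{n-2})}\lesssim \|f_j\|_{L_Y^{n/2}(\mathbb{R}^n)}.$$
Translating back to $S_j$ via the scaling identity \eqref{Sj Sj tilde norm for codim 2}, the exponent of $\lambda$ reads $(n-3)/2-((n+1)/2-2)=0$ (this cancellation is the reason for choosing $p=n/2$), while the exponent of $2^j$ reads $(n+1)/2-2=(n-3)/2$. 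Hence
$$\|S_j f\|_{L_x^\infty(\mathbb{R}^{n-2})}\lesssim (2^j)^{(n-3)/2}\|f\|_{L_y^{n/2}(\mathbb{R}^n)}.$$

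Next, with $A=(n/(2(n-1)),(n-2)/(2(n-1)))$ denoting the exponent pair from Lemma~\ref{Lemma codim 2 universal middle est} and $C=(2/n,0)$ that from the bound just derived, I would check that the convex combination $B=\theta A+(1-\theta)C$ with $\theta=(n-1)(n-3)/(n-2)^2$ and $1-\theta=1/(n-2)^2$ hits the target $B=((n+1)/(2n),(n-3)/(2(n-2)))$; the first-coordinate identity here reduces to the polynomial factorization $n^3-3n^2+4=(n-2)^2(n+1)$. The $j$-exponent at $B$ is $\theta\cdot(n-2)/(2(n-1))=(n-3)/(2(n-2))$, the $2^j$-exponent is $-\theta/(2(n-1))+(1-\theta)(n-3)/2=-(n-3)/[2(n-2)^2]+(n-3)/[2(n-2)^2]=0$, and the $\lambda$-exponent is $0$ at both endpoints. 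Real interpolation then yields \eqref{Sj lower opr norm for codim 2}.

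The only real obstacle is the arithmetic bookkeeping just sketched; no new oscillatory-integral or geometric input is required beyond Lemma~\ref{Lemma codim 2 universal middle est}, the Young bound \eqref{A trivial Young's inequality consequence for codim 2 submflds}, and the scaling identity \eqref{Sj Sj tilde norm for codim 2}. When $n=3$ one has $B=C$, so the interpolation step degenerates and the estimate reduces to the $L^{3/2}\to L^\infty$ bound derived in the first step.
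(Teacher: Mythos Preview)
Your proof is correct and uses the same ingredients as the paper's (the fold-singularity $L^2$ estimate underlying Lemma~\ref{Lemma codim 2 universal middle est}, a trivial Young bound from \eqref{A trivial Young's inequality consequence for codim 2 submflds}, the scaling identity \eqref{Sj Sj tilde norm for codim 2}, and interpolation). The only difference is the choice of interpolation endpoints: the paper interpolates the scaled $L^2(\mathbb{R}^n)\to L^2(\mathbb{R}^{n-2})$ bound $\|S_j\|\lesssim \lambda^{-1}(2^j)^{1/2}j^{1/2}$ against the trivial $L^{n/(n-1)}\to L^\infty$ bound (two off-line endpoints whose $\lambda$-powers cancel at the target), whereas you stay on the critical line $\tfrac{n}{p}-\tfrac{n-2}{q}=2$ by interpolating Lemma~\ref{Lemma codim 2 universal middle est} with the $L^{n/2}\to L^\infty$ bound, which makes the $\lambda$-independence automatic.
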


\begin{proof}
    By \eqref{Sj Sj tilde norm for codim 2} and \eqref{Sj tilde L2 to L2 codim 2 est},
    \begin{align}\label{Sj L2 to L2 estimate on codim 2}
        \|S_j f \|_{L^2 (\mathbb{R}^{n-2})} \lesssim \lambda^{-1} (2^j)^{\frac{1}{2}} j^{\frac{1}{2}} \|f\|_{L^2 (\mathbb{R}^n)}.
    \end{align}
    On the other hand, if $p_1=\frac{n}{n-1}$ so that $\left(\frac{1}{2}, \frac{1}{2} \right)$, $\left(\frac{n+1}{2n}, \frac{n-3}{2(n-2)}\right)$, and $\left(\frac{1}{p_1}, 0 \right)$ are collinear in the $\left(\frac{1}{p}, \frac{1}{q} \right)$ plane, then by \eqref{A trivial Young's inequality consequence for codim 2 submflds},
    \begin{align*}
        \|\Tilde{S}_j f_j \|_{L_X^\infty (\mathbb{R}^{n-2})}\lesssim \|f_j \|_{L_Y^{p_1} (\mathbb{R}^n)},
    \end{align*}
    and so, by \eqref{Sj Sj tilde norm for codim 2},
    \begin{align*}
        \|S_j f\|_{L^\infty (\mathbb{R}^{n-2})}\lesssim \lambda^{\frac{n-3}{2}} (\lambda^{-1} 2^j)^{\frac{n+1}{2}-\frac{n}{p_1}} \|f\|_{L^{p_1} (\mathbb{R}^n)}=\lambda^{n-3}(2^j)^{-\frac{n-3}{2}}\|f\|_{L^{p_1} (\mathbb{R}^n)}.
    \end{align*}
    Interpolating this and \eqref{Sj L2 to L2 estimate on codim 2} yields \eqref{Sj lower opr norm for codim 2}.
\end{proof}

\begin{lemma}\label{Lemma codim 2 universal top est}
    If $1\leq j\leq \lfloor \log_2 \lambda \rfloor$, then
    \begin{align}\label{Sj upper estimate for codim 2}
        \|S_j f\|_{L^2 (\mathbb{R}^{n-2})}\lesssim (2^j)^{\frac{n-4}{2n}} j^{\frac{n-2}{2n}} \|f\|_{L^{\frac{2n}{n+2}} (\mathbb{R}^n)}.
    \end{align}
\end{lemma}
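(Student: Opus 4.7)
The plan is to prove \eqref{Sj upper estimate for codim 2} by establishing an $L^{2n/(n+2)}_Y \to L^2_X$ bound for the rescaled operator $\tilde S_j$ from \eqref{Sj and Sj tilde set up} and then converting to $S_j$ via \eqref{Sj Sj tilde norm for codim 2}. The $L^2 \to L^2$ bound \eqref{Sj tilde L2 to L2 codim 2 est} already lies on the line $1/q = 1/2$ in the $(1/p, 1/q)$ plane, but it cannot be improved along that line by interpolating only with the trivial $L^1 \to L^\infty$ estimate \eqref{Sj tilde L1 to Linfty bound}, since that endpoint has $1/q = 0$. I will therefore bring in an $L^1_Y \to L^1_X$ bound as a third endpoint and use two applications of Riesz-Thorin.

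First I would verify the uniform bound
\[
\|\tilde S_j f_j\|_{L^1_X(\mathbb{R}^{n-2})} \lesssim \|f_j\|_{L^1_Y(\mathbb{R}^n)},
\]
which by Young's inequality reduces to checking $\sup_Y \int_{\mathbb{R}^{n-2}} |\tilde S_j(X,Y)|\,dX \lesssim 1$. The kernel satisfies $|\tilde S_j(X,Y)| \lesssim d_j(X,Y)^{-(n-1)/2}$ with $d_j(X,Y) \in [1/2, 2]$ on its support, so $|\tilde S_j| \lesssim 1$, and the sliced support $\{X \in \mathbb{R}^{n-2}: d_j(X,Y) \in [1/2,2]\}$ has $(n-2)$-dimensional measure bounded uniformly in $Y$ by the local coordinate description from \S \ref{S: Review for fold singularities}.

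Combining this with \eqref{Sj tilde L1 to Linfty bound} through Riesz-Thorin at $\theta = 1/2$ gives the intermediate bound $\|\tilde S_j f_j\|_{L^2_X} \lesssim \|f_j\|_{L^1_Y}$ with constant $\lesssim 1$. A second Riesz-Thorin step against \eqref{Sj tilde L2 to L2 codim 2 est}, now between two endpoints both on the line $1/q = 1/2$, is taken at $\theta = (n-2)/n$, which yields $1/p = (1-\theta) + \theta/2 = (n+2)/(2n)$ and interpolated norm
\[
\bigl((2^j)^{-(n-2)/2} j^{1/2}\bigr)^{(n-2)/n} = (2^j)^{-(n-2)^2/(2n)} j^{(n-2)/(2n)}.
\]

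Finally, \eqref{Sj Sj tilde norm for codim 2} with $p = 2n/(n+2)$, $q = 2$ (so that $n/p - (n-2)/q = 2$) contributes an extra factor $(2^j)^{(n-3)/2}$, and the identity $(n-3)/2 - (n-2)^2/(2n) = (n-4)/(2n)$ produces exactly the prefactor $(2^j)^{(n-4)/(2n)} j^{(n-2)/(2n)}$ asserted in \eqref{Sj upper estimate for codim 2}. The only ingredient beyond what appeared in Lemma \ref{Lemma codim 2 universal middle est} is the $L^1 \to L^1$ endpoint above, and I do not expect it to be an obstacle since it is an elementary kernel computation using $\dim \Sigma = n-2$; the oscillatory part of the argument is already packaged in \eqref{Sj tilde L2 to L2 codim 2 est}.
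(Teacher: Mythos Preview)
Your proof is correct and uses the same three ingredients as the paper—the trivial $L^1\to L^1$ and $L^1\to L^\infty$ bounds for $\tilde S_j$ together with the Greenleaf--Seeger $L^2\to L^2$ bound \eqref{Sj tilde L2 to L2 codim 2 est}—only combined in a slightly different order. The paper converts the $L^1\to L^1$ bound to the $S_j$ level as \eqref{Sj L1 to L1 estimate for codim 2} and then interpolates directly against the already-established estimate \eqref{Sj intersection point est for codim 2} from Lemma~\ref{Lemma codim 2 universal middle est} (which itself came from interpolating $L^2\to L^2$ with $L^1\to L^\infty$); you instead stay at the $\tilde S_j$ level throughout, first interpolating the two trivial bounds to produce $L^1\to L^2$ and then interpolating that against \eqref{Sj tilde L2 to L2 codim 2 est}. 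The arithmetic and the final constants coincide, so the two arguments are equivalent reorderings of the same Riesz--Thorin triangle; your route is marginally more self-contained since it does not need to quote Lemma~\ref{Lemma codim 2 universal middle est}.
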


\begin{proof}
    By \eqref{A trivial Young's inequality consequence for codim 2 submflds},
    \begin{align*}
        \|\Tilde{S}_j f_j \|_{L_X^1 (\mathbb{R}^{n-2})}\lesssim \|f_j\|_{L^1 (\mathbb{R}^n)}.
    \end{align*}
    By this and \eqref{Sj Sj tilde norm for codim 2},
    \begin{align}\label{Sj L1 to L1 estimate for codim 2}
        \|S_j f\|_{L^1 (\mathbb{R}^{n-2})} \lesssim (2^j)^{\frac{n-3}{2}} \|f\|_{L^1 (\mathbb{R}^n)}.
    \end{align}
    The estimate \eqref{Sj upper estimate for codim 2} then follows from interpolation between \eqref{Sj L1 to L1 estimate for codim 2} and \eqref{Sj intersection point est for codim 2}.
\end{proof}

We now come back to the proof of Proposition \ref{Prop Reduction for codim 2 thm}. Suppose $n\geq 4$. Interpolating \eqref{Sj intersection point est for codim 2} and \eqref{Sj upper estimate for codim 2} gives us that
\begin{align*}
    \|S_j f\|_{L^q (\mathbb{R}^{n-2})}\lesssim (2^j)^{-\frac{n^2-5n+8}{2n}+\frac{(n-2)^2}{nq}} j^{\frac{n-2}{n}-\frac{n-2}{q}} \|f\|_{L^p (\mathbb{R}^n)},\quad \text{for } \frac{n}{p}-\frac{n-2}{q}=2 \text{ and } 2\leq q\leq \frac{2n-2}{n-2},
\end{align*}
and thus, for some $\alpha(p, q)>0$,
\begin{align}\label{Sj codim 2 Lp to Lq lower}
    \|S_j \|_{L^p (\mathbb{R}^n)\to L^q (\mathbb{R}^{n-2})}\lesssim \begin{cases}
        (2^j)^{-\alpha(p, q)}, & \text{if } \frac{2(n-2)^2}{n^2-5n+8}<q\leq \frac{2n-2}{n-2}, \\
        j^{\frac{n-3}{2(n-2)}}, & \text{if }q=\frac{2(n-2)^2}{n^2-5n+8}.
    \end{cases}
\end{align}
Similarly, interpolating \eqref{Sj lower opr norm for codim 2} and \eqref{Sj upper estimate for codim 2}, we have, for some $\alpha(p, q)>0$,
\begin{align*}
    \|S_j \|_{L^p (\mathbb{R}^n)\to L^q (\mathbb{R}^{n-2})}\lesssim \begin{cases}
        (2^j)^{-\alpha(p, q)}, & \text{if } \frac{2n-2}{n-2}<q< \frac{2(n-2)}{n-3}, \\
        j^{\frac{n-3}{2(n-2)}}, & \text{if }q=\frac{2(n-2)}{n-3}.
    \end{cases}
\end{align*}
Using this with \eqref{Sj codim 2 Lp to Lq lower}, for some $\alpha(p, q)>0$, if $\frac{n}{p}-\frac{n-2}{q}=2$, then
\begin{align}\label{Sj Lp to Lq codim 2 higher dim}
    \begin{split}
        \|S_j\|_{L^p (\mathbb{R}^n)\to L^q (\mathbb{R}^{n-2})}\lesssim \begin{cases}
        (2^j)^{-\alpha (p, q)}, & \text{if } n\geq 4 \text{ and } \frac{2(n-2)^2}{n^2-5n+8}<q<\frac{2(n-2)}{n-3}, \\
        j^{\frac{n-3}{2(n-2)}}, & \text{if } n\geq 4 \text{ and } q\in \left\{\frac{2(n-2)}{n-3}, \frac{2(n-2)^2}{n^2-5n+8}\right\}.
        \end{cases}
    \end{split}
\end{align}
We also note that from Lemma \ref{Lemma codim 2 universal top est}, if $n=3$, then
\begin{align*}
    \|S_j f\|_{L^2 (\mathbb{R}^{n-2})}\lesssim (2^j)^{-\frac{1}{6}} j^{\frac{1}{6}} \|f\|_{L^{\frac{6}{5}}(\mathbb{R}^n)}.
\end{align*}
Interpolating this with \eqref{Sj lower opr norm for codim 2}, we have, for some $\alpha(p, q)>0$ with $\frac{n}{p}-\frac{n-2}{q}=2$,
\begin{align}\label{Sj Lp to Lq codim 2 3D}
    \begin{split}
        \|S_j \|_{L^p (\mathbb{R}^n)\to L^q (\mathbb{R}^{n-2})}\lesssim 
        (2^j)^{-\alpha (p, q)}, \quad\text{if } n=3 \text{ and } 2\leq q<\infty,        
    \end{split}
\end{align}

Since we already considered $S_0$, summing all $1\leq j\leq \lfloor \log_2 \lambda \rfloor$ in \eqref{Sj Lp to Lq codim 2 higher dim} and \eqref{Sj Lp to Lq codim 2 3D}, we have Proposition \ref{Prop Reduction for codim 2 thm}. This completes the proof of Theorem \ref{Thm for codim 2}.

\section{Proof of Theorem \ref{Thm for log improved}}\label{S: proof for log imp}

\subsection{Curves in surfaces}
In this subsection, we show \eqref{Estimate for log imp}. Let $P=\sqrt{-\Delta_g}$. To prove \eqref{Estimate for log imp}, we will use the estimates when $V\equiv 0$ from \cite{Chen2015improvement}, \cite{BlairSogge2018concerning}, \cite{Blair2018logarithmic}, \cite{XiZhang2017improved}, and \cite{Park2023Restriction}, for $P=\sqrt{-\Delta_g}$ and $\epsilon(\lambda)=(\log (2+\lambda))^{-1}$,
\begin{align}\label{Log imp V=0 spectral proj est}
    \|\mathds{1}_{[\lambda, \lambda+\epsilon(\lambda)]} (P)\|_{L^2 (M)\to L^q (\gamma_i)} \lesssim \lambda^{\delta(q, \gamma_i)} (\epsilon(\lambda))^{\kappa(q, \gamma_i)},\quad i=1, 2, 3.
\end{align}
By this and Lemma \ref{Lemma Resolvent opr norm from L2 to Lq for univ est}, we have
\begin{align}\label{Log imp V=0 est}
    \|(-\Delta_g-(\lambda+i\epsilon(\lambda))^2)^{-1}\|_{L^2 (M)\to L^q (\gamma_i)}\lesssim \lambda^{\delta(q, \gamma_i)-1} (\epsilon(\lambda))^{-1+\kappa(q, \gamma_i)}, \quad i=1, 2, 3,
\end{align}
where $(q, i)$, $\delta(q, \gamma_i)$, and $\kappa(q, \gamma_i)$ are as in \eqref{gamma i classifications}, \eqref{n=2 lambda exponent classifications}, and \eqref{n=2 log lambda exponent classifications}, respectively. By \cite[Theorem 5.1]{BlairHuangSireSogge2022UniformSobolev}, we have, for $u\in \mathrm{Dom}(H_V) \cap C(M)$
\begin{align}\label{Log imp est for infty}
    \|u\|_{L^\infty (\gamma_i)} \leq \|u\|_{L^\infty (M)}\lesssim \lambda^{-\frac{1}{2}} (\epsilon(\lambda))^{-\frac{1}{2}} \|(H_V-(\lambda+i\epsilon(\lambda))^2)u\|_{L^2 (M)},\quad i=1, 2, 3.
\end{align}
This proves the $q=\infty$ case of \eqref{Estimate for log imp} for any curve $\gamma_1$, and thus, we are left to prove the cases where $q<\infty$ for the other curves $\gamma_i$.

We will follow the argument in \cite{BlairHuangSireSogge2022UniformSobolev} to prove \eqref{Estimate for log imp}. Let $\eta\in C_0^\infty (\mathbb{R})$ be such that
\begin{align*}
    \eta(t)=1 \text{ for } t\in (-1/2, 1/2),\quad \text{and} \quad \mathrm{supp}(\eta)\subset (-1, 1).
\end{align*}
Let
\begin{align}\label{Definition of T and c0}
    T=c_0 (\epsilon(\lambda))^{-1},
\end{align}
where $c_0>0$ is a small real number which will be specified later. We shall write
\begin{align*}
    (-\Delta_g-(\lambda+i\epsilon(\lambda))^2)^{-1}=T_\lambda+R_\lambda,
\end{align*}
where
\begin{align}\label{T lambda R lambda set up}
    \begin{split}
        & T_\lambda=T_\lambda^0+T_\lambda^1, \\
        & T_\lambda^0=\frac{i}{\lambda+i\epsilon(\lambda)} \int_0^\infty \eta(t) \eta(t/T) e^{i\lambda t} e^{-\epsilon(\lambda)t} \cos (tP)\:dt, \\
        & T_\lambda^1=\frac{i}{\lambda+i\epsilon(\lambda)} \int_0^\infty (1-\eta(t))\eta(t/T) e^{i\lambda t} e^{-\epsilon(\lambda)t} \cos (tP)\:dt, \\
        & R_\lambda=\frac{i}{\lambda+i\epsilon(\lambda)}\int_0^\infty (1-\eta(t/T)) e^{i\lambda t} e^{-\epsilon(\lambda)t} \cos (tP)\:dt,
    \end{split}
\end{align}

To consider $R_\lambda$ first, we set
\begin{align}\label{m lambda setup}
    \tau \mapsto m_\lambda (\tau):=\frac{i}{\lambda+i\epsilon(\lambda)}\int_0^\infty (1-\eta(t/T)) e^{i\lambda t} e^{-\epsilon(\lambda)t} \cos (\tau P)\:dt,
\end{align}
which satisfies
\begin{align}\label{Size est for m lambda log}
    |m_\lambda (\tau)|\lesssim (\lambda\epsilon (\lambda))^{-1} (1+(\epsilon (\lambda))^{-1}|\lambda-\tau|)^{-N},\quad \text{for } N=1, 2, 3, \cdots,\quad \text{if } \tau\geq 0,\; \lambda\geq 1.
\end{align}
Since $R_\lambda=m_\lambda (P)$, by \eqref{Size est for m lambda log} and an orthogonality argument (cf. the proof of Lemma \ref{Lemma Resolvent opr norm from L2 to Lq for univ est}), we have
\begin{align}\label{R lambda log imp}
    \|R_\lambda\|_{L^2 (M)\to L^q (\gamma_i)}\lesssim \lambda^{\delta(q, \gamma_i)-1} (\epsilon(\lambda))^{-1+\kappa(q, \gamma_i)},
\end{align}
and
\begin{align}\label{R lambda with laplacian log imp}
    \|R_\lambda \circ (-\Delta_g-(\lambda+i\epsilon(\lambda))^2)\|_{L^2 (M)\to L^q (\gamma_i)}\lesssim \lambda^{\delta(q, \gamma_i)-1} (\epsilon(\lambda))^{-1+\kappa(q, i)}\cdot (\lambda \epsilon(\lambda)).
\end{align}
Since $T_\lambda=(-\Delta_g-(\lambda+\epsilon(\lambda))^2)^{-1}-R_\lambda$, it follows from \eqref{Log imp V=0 est} and \eqref{R lambda log imp} that
\begin{align}\label{T lambda log imp est}
    \|T_\lambda \|_{L^2 (M)\to L^q (\gamma_i)}\lesssim \lambda^{\delta(q, \gamma_i)-1} (\epsilon (\lambda))^{-1+\kappa (q, \gamma_i)}.
\end{align}
For any given $\epsilon_0>0$, if $c_0>0$ as in \eqref{Definition of T and c0} is small enough, then by the arguments in \cite[(5.10)]{BlairHuangSireSogge2022UniformSobolev} (see also \cite{Berard1977onthewaveequation}), the kernel of $T_\lambda^1$ is continuous, and so, we have that
\begin{align*}
    \|T_\lambda^1 \|_{L^1 (M)\to L^\infty (M)}=O(\lambda^{-\frac{1}{2}}\lambda^{Cc_0})=O(\lambda^{-\frac{1}{2}+\epsilon_0}),\quad \text{for all } 0<\epsilon_0\ll 1,
\end{align*}
which implies that
\begin{align}\label{T lambda L 1 to L infty}
    \|T_\lambda^1 f\|_{L^\infty (\gamma_i)}\leq \|T_\lambda^1 f\|_{L^\infty (M)}\lesssim \lambda^{-\frac{1}{2}+\epsilon_0} \|f\|_{L^1 (M)},\quad \text{for all } 0<\epsilon_0\ll 1.
\end{align}
It follows from \cite[(5.11)]{BlairHuangSireSogge2022UniformSobolev} that
\begin{align}\label{T lambda 0 kernel est}
    \begin{split}
        |T_\lambda^0 (x, y)|\leq \begin{cases}
        C|\log (\lambda d_g (x, y)/2)|,& \text{if } d_g (x, y)\leq \lambda^{-1}, \\
        C \lambda^{-\frac{1}{2}}(d_g (x, y))^{-\frac{1}{2}}, & \text{if } \lambda^{-1}\leq d_g (x, y)\ll 1.
        \end{cases}
    \end{split}
\end{align}

We now write
\begin{align}\label{u decomposition log imp}
    \begin{split}
        u&=(-\Delta_g-(\lambda+i\epsilon(\lambda))^2)^{-1}\circ (-\Delta_g-(\lambda+i\epsilon(\lambda))^2)u \\
        &=T_\lambda (-\Delta_g+V-(\lambda+i\epsilon(\lambda))^2)u+R_\lambda(-\Delta_g -(\lambda+i\epsilon(\lambda))^2)u-T_\lambda (Vu).
    \end{split}
\end{align}
We compute each of the three terms separately as above. By \eqref{T lambda log imp est}, we have
\begin{align*}
    \|T_\lambda \circ (-\Delta_g+V-(\lambda+i\epsilon (\lambda))^2)u\|_{L^q (\gamma_i)}\lesssim \lambda^{\delta(q, \gamma_i)-1} (\epsilon(\lambda))^{-1+\kappa(q, \gamma_i)} \|(H_V-(\lambda+i\epsilon(\lambda))^2)u\|_{L^2 (M)}.
\end{align*}
By \eqref{R lambda with laplacian log imp},
\begin{align*}
    \begin{split}
        \|R_\lambda \circ (-\Delta_g-(\lambda+i\epsilon(\lambda))^2)u\|_{L^q (\gamma_i)}&\lesssim \lambda^{\delta(q, \gamma_i)-1} (\epsilon (\lambda))^{-1+\kappa(q, \gamma_i)} (\lambda \epsilon(\lambda)) \|u\|_{L^2 (M)} \\
        &\lesssim \lambda^{\delta(q, \gamma_i)-1} (\epsilon(\lambda))^{-1+\kappa(q, \gamma_i)} \|(H_V-(\lambda+i\epsilon(\lambda))^2)u\|_{L^2 (M)}.
    \end{split}
\end{align*}
Here, we used the spectral theorem in the last inequality. By these two estimates and \eqref{u decomposition log imp}, it suffices to show that
\begin{align}\label{T lambda perturb est for log imp n=2}
    \|T_\lambda (Vu)\|_{L^q (\gamma_i)}\leq C_V \lambda^{\delta(q, \gamma_i)-1}(\epsilon(\lambda))^{-1+\kappa(q, \gamma_i)} \|(H_V-(\lambda+i\epsilon(\lambda))^2)u\|_{L^2 (M)}.
\end{align}
Since
\begin{align*}
    T_\lambda (Vu)=T_\lambda^0 (Vu)+T_\lambda^1 (Vu),
\end{align*}
we will compute the $T_\lambda^0$ part and $T_\lambda^1$ part separately, and combine them at the end. By the triangle inequality and Minkowski's integral inequality as in \eqref{Triangle Minkowski argument}, we have
\begin{align}\label{T lambda 0 (Vu) initial inequality}
    \|T_\lambda^0 (Vu)\|_{L^q (\gamma_i)}\leq \sup_y \left(\int|T_\lambda(\gamma(r), y)|^q\:dy \right)^{\frac{1}{q}} \|u\|_{L^\infty (M)} \|V\|_{L^1 (M)}.
\end{align}
Using \eqref{T lambda 0 kernel est}, by the proof of Proposition \ref{Prop Reduction for curves Kato case}, we have that
\begin{align*}
    \sup_y \left(\int |T_\lambda^0 (\gamma(r), y)|^q\:dr \right)^{\frac{1}{q}} \lesssim \begin{cases}
    \lambda^{-\frac{1}{q}}, & \text{if } 2<q<\infty, \\
    \lambda^{-1} (\log \lambda)^{\frac{1}{2}}, & \text{if } q=2,
    \end{cases}
\end{align*}
and thus, by \eqref{Log imp est for infty} and \eqref{T lambda 0 (Vu) initial inequality},
\begin{align*}
    \|T_\lambda^0 (Vu)\|\leq \begin{cases}
        C_V \lambda^{-\frac{1}{2}-\frac{1}{q}} (\epsilon(\lambda))^{-\frac{1}{2}} \|(H_V-(\lambda+i\epsilon(\lambda))^2)u\|_{L^2 (M)}, & \text{if } 2<q<\infty, \\
        C_V \lambda^{-\frac{3}{2}} (\epsilon(\lambda))^{-1} \|(H_V-(\lambda+i\epsilon(\lambda))^2)u\|_{L^2 (M)}, & \text{if } q=2,
    \end{cases}
\end{align*}
which satisfies a better (or the same) estimate than the bound posited in \eqref{T lambda perturb est for log imp n=2}.

For $T_\lambda^1 (Vu)$, by \eqref{Log imp est for infty} and \eqref{T lambda L 1 to L infty}, we have that
\begin{align*}
    \|T_\lambda^1 (Vu)\|_{L^q(\gamma_i)} &=\left(\int_{\gamma_i} |T_\lambda^1 (Vu)|^q\:dr \right)^{\frac{1}{q}} \\
    &\lesssim \left(\int_{\gamma_i} (\lambda^{-\frac{1}{2}+\epsilon_0})^q \|Vu\|_{L^1 (M)}^q\:dr \right)^{\frac{1}{q}} \\
    &\lesssim \lambda^{-\frac{1}{2}+\epsilon_0} \|V\|_{L^1 (M)} \|u\|_{L^\infty (M)}\leq C_V \lambda^{-\frac{1}{2}+\epsilon_0} \|u\|_{L^\infty (M)} \\
    &\leq C_V \lambda^{-1+\epsilon_0} (\epsilon(\lambda))^{-\frac{1}{2}} \|(H_V-(\lambda+i\epsilon(\lambda))^2)u\|_{L^2 (M)}.
\end{align*}

Putting these together yields
\begin{align*}
    \|T_\lambda (Vu)\|_{L^q (\gamma_i)}\leq \begin{cases}
    C_V \lambda^{-\frac{1}{2}-\frac{1}{q}} (\epsilon(\lambda))^{-\frac{1}{2}} \|(H_V-(\lambda+i\epsilon(\lambda))^2)u\|_{L^2 (M)},& \text{if } 2<q<\infty, \\
    C_V \lambda^{-1+\epsilon_0} (\epsilon(\lambda))^{-\frac{1}{2}} \|(H_V-(\lambda+i\epsilon(\lambda))^2)u\|_{L^2 (M)}, & \text{if } q=2,
    \end{cases}
\end{align*}
when $\epsilon_0>0$ is sufficiently small. These estimates satisfy \eqref{T lambda perturb est for log imp n=2} for $q<\infty$ as in \eqref{n=2 lambda exponent classifications} and \eqref{n=2 log lambda exponent classifications}, completing the proof of \eqref{Estimate for log imp}.

\subsection{Hypersurfaces and codimension 2 submanifolds}\label{SS: general hypsurf and codim 2 cases}
In this subsection, we show \eqref{Estimate for log imp submflds for 4/3} and \eqref{Estimate for log imp submflds}. Let $P=\sqrt{-\Delta_g}$, $\dim M=3$ or $4$, and $\Sigma$ be a hypersurface or codimension $2$ submanifold. As before, for interested $(q, k)$ in this subsection, by \cite{Chen2015improvement}, if $\epsilon(\lambda)=(\log (2+\lambda))^{-1}$, then
\begin{align*}
    \|\mathds{1}_{[\lambda, \lambda+\epsilon(\lambda)]} (P)\|_{L^2 (M)\to L^q (\Sigma)}\lesssim \lambda^{\delta(q, k)} (\epsilon(\lambda))^{\frac{1}{2}},
\end{align*}
which in turn implies that, by Lemma \ref{Lemma Resolvent opr norm from L2 to Lq for univ est},
\begin{align*}
    \|(-\Delta_g-(\lambda+i \epsilon(\lambda))^2)^{-1}\|_{L^2 (M) \to L^q (\Sigma)}\lesssim \lambda^{\delta(q, k)-1}(\epsilon(\lambda))^{-\frac{1}{2}}.
\end{align*}
If we set $T_\lambda, T_\lambda^0, T_\lambda^1$, and $R_\lambda$ as in \eqref{T lambda R lambda set up}, then by the same arguments as in \eqref{R lambda log imp}-\eqref{T lambda log imp est}, we have
\begin{align*}
    \|R_\lambda \|_{L^2 (M)\to L^q (\Sigma)},\; \|T_\lambda \|_{L^2 (M)\to L^q (\Sigma)} \lesssim \lambda^{\delta(q, k)-1}(\epsilon(\lambda))^{-\frac{1}{2}},
\end{align*}
and
\begin{align*}
    \|R_\lambda \circ (-\Delta_g-(\lambda+i\epsilon(\lambda))^2) u\|_{L^q (\Sigma)} &\lesssim \lambda^{\delta(q, k)-1} (\epsilon(\lambda))^{-\frac{1}{2}} (\lambda \epsilon(\lambda)) \|u\|_{L^2 (M)} \\
    &\lesssim \lambda^{\delta(q, k)-1} (\epsilon(\lambda))^{-\frac{1}{2}} \|(H_V-(\lambda+i\epsilon(\lambda))^2)u\|_{L^2 (M)}.
\end{align*}
In the last inequality, we used the spectral theorem. With this in mind, since
\begin{align*}
    u&=(-\Delta_g-(\lambda+i\epsilon(\lambda))^2)^{-1}\circ (-\Delta_g -(\lambda+i\epsilon(\lambda))^2)u \\
    &=T_\lambda (H_V-(\lambda+i\epsilon(\lambda))^2)u+R_\lambda (-\Delta_g-(\lambda+i\epsilon(\lambda))^2)u-T_\lambda (Vu),
\end{align*}
we would have \eqref{Estimate for log imp submflds for 4/3} and \eqref{Estimate for log imp submflds}, if we could show that
\begin{align}\label{log imp submflds WTS}
    \begin{split}
        \|T_\lambda (Vu)\|_{L^q (\Sigma)}\leq \begin{cases}
            C_V \lambda^{\delta(q, k)-1}(\epsilon(\lambda))^{-\frac{1}{2}}\|(H_V-(\lambda+i\epsilon(\lambda))^2)u\|_{L^2 (M)}, & \text{if } (n, k, q, V) \text{ is as in \eqref{(n, k, q, V) for n=3 with 4/3}}, \\
            C_V \lambda^{\delta(q, k)-1}(\epsilon(\lambda))^{-\frac{1}{2}}\|(H_V-(\lambda+i\epsilon(\lambda))^2)u\|_{L^2 (M)}, & \text{if } (n, k, q, V) \text{ is as in \eqref{(n, k, q, V) except 4/3}},
        \end{cases}
    \end{split}
\end{align}
Since $T_\lambda (Vu)=T_\lambda^0 (Vu)+T_\lambda^1 (Vu)$ as in \eqref{T lambda R lambda set up}, we compute $T_\lambda^0 (Vu)$ and $T_\lambda^1 (Vu)$, separately. We note that $T_\lambda^0$ is a ``local'' operator as in the local operator $S_\lambda$ in \eqref{S lambda W lambda set up}. By the proof of Proposition \ref{Prop Reduction for hypersurface thm} and Proposition \ref{Prop Reduction for codim 2 thm}, if $n\in \{3, 4\}$, and
\begin{align*}
    \frac{n}{p}-\frac{n-1}{q}=2,\quad k=n-1,\quad \frac{2n}{n-1}\leq q< \frac{2(n-1)}{n-3},
\end{align*}
or
\begin{align*}
    \frac{n}{p}-\frac{n-2}{q}=2,\quad k=n-2,\quad \frac{2(n-2)^2}{n^2-5n+8}< q< \frac{2(n-2)}{n-3},\quad q\geq 2,
\end{align*}
we have
\begin{align*}
    \|T_\lambda^0 (Vu)\|_{L^q (\Sigma)}\lesssim \|Vu\|_{L^p (M)}.
\end{align*}
By the argument in \eqref{S(vu) est computation hypersurface}, it follows from \cite[Theorem 1.3]{BlairHuangSireSogge2022UniformSobolev} that
\begin{align*}
    \|T_\lambda^0 (Vu)\|_{L^q (\Sigma)}&\lesssim \|V\|_{L^{\frac{n}{2}} (M)}\|u\|_{L^{\frac{np}{n-2p}}(M)} \\
    &\leq C_V \lambda^{\sigma\left(\frac{np}{n-2p} \right)-1} (\epsilon(\lambda))^{-\frac{1}{2}} \|(H_V-(\lambda+i\epsilon(\lambda))^2)u\|_{L^2 (M)} \\
    &=C_V \lambda^{\frac{n-1}{2}-\frac{k}{q}-1} (\epsilon(\lambda))^{-\frac{1}{2}} \|(H_V-(\lambda+i\epsilon(\lambda))^2)u\|_{L^2 (M)} \\
    &=C_V \lambda^{\delta(q, k)-1} (\epsilon(\lambda))^{-\frac{1}{2}} \|(H_V-(\lambda+i\epsilon(\lambda))^2)u\|_{L^2 (M)},
\end{align*}
as desired. For $T_\lambda^1 (Vu)$, recall from \cite[(3.25)]{BlairHuangSireSogge2022UniformSobolev} (cf. \cite{Berard1977onthewaveequation}) that
\begin{align*}
    \|T_\lambda^1 f\|_{L^\infty (\Sigma)} \leq \|T_\lambda^1 f\|_{L^\infty (M)}\lesssim \lambda^{\frac{n-3}{2}} \lambda^{C c_0} \|f\|_{L^1 (M)},
\end{align*}
for a sufficiently small $0<c_0 \ll 1$. This gives us that
\begin{align}\label{T lambda 1 n case}
    \begin{split}
        \|T_\lambda^1 (Vu)\|_{L^q (\Sigma)}&=\left(\int_\Sigma |(T_\lambda^1 (Vu))(z)|^q\:dz \right)^{\frac{1}{q}} \\
        &\lesssim \lambda^{\frac{n-3}{2}+C c_0} \left(\int_\Sigma \|Vu\|_{L^1 (M)}^q\:dz\right)^{\frac{1}{q}} \\
        &\lesssim \lambda^{\frac{n-3}{2}+C c_0} \|Vu\|_{L^1 (M)}.
    \end{split}
\end{align}

Suppose the condition \eqref{(n, k, q, V) for n=3 with 4/3} holds. Note that $V\in L^{\frac{4}{3}} (M)$ since $V\in L^{\frac{3}{2}} (M)$ and $M$ is compact. By \eqref{T lambda 1 n case}, H\"older's inequality, and \cite[Theorem 1.3]{BlairHuangSireSogge2022UniformSobolev}, taking $0<c_0 \ll 1$, if $\alpha(q, n)$ is as in \eqref{log lambda exponent from L2(M) to Lq(M)}, we have
\begin{align*}
    \|T_\lambda^1 (Vu)\|_{L^q (\Sigma)}&\lesssim \lambda^{Cc_0} \|V\|_{L^{\frac{4}{3}} (M)} \|u\|_{L^4 (M)} \\
    &\leq C_V \lambda^{\sigma(4)-1+Cc_0} (\epsilon(\lambda))^{-1+\alpha(q, n)} \|(H_V-(\lambda+i\epsilon(\lambda))^2)u\|_{L^2 (M)} \\
    &\leq C_V \lambda^{-\frac{3}{4}+Cc_0} (\epsilon(\lambda))^{-1+\alpha(q, n)} \|(H_V-(\lambda+i\epsilon (\lambda))^2)u\|_{L^2 (M)} \\
    &\leq C_V \lambda^{\delta(q, k)-1} (\epsilon(\lambda))^{-\frac{1}{2}} \|(H_V-(\lambda+i\epsilon(\lambda))^2)u\|_{L^2 (M)},
\end{align*}
which satisfies the first estimate in \eqref{log imp submflds WTS}. Similarly, if \eqref{(n, k, q, V) except 4/3} holds for $n=3$, i.e.,
\begin{align*}
    \begin{cases}
        (n, k)=(3, 2), \; 4<q<\infty, \text{ or} \\
        (n, k)=(3, 1), \; 4<q<\infty,
    \end{cases}
\end{align*}
then $V\in L^{q'}(M)$ by $V\in L^{\frac{3}{2}} (M)$ and compactness of $M$. By \eqref{T lambda 1 n case}, H\"older's inequality, and \cite[Theorem 1.3]{BlairHuangSireSogge2022UniformSobolev} (note that $\alpha(q, n)=\frac{1}{2}$ when \eqref{(n, k, q, V) except 4/3} holds for $n=3$),
\begin{align*}
    \|T_\lambda^1 (Vu)\|_{L^q (\Sigma)}&\lesssim \lambda^{Cc_0} \|V\|_{L^{q'} (M)}\|u\|_{L^q (M)} \\
    &\leq C_V \lambda^{Cc_0 +\sigma(q)-1}(\epsilon(\lambda))^{-\frac{1}{2}} \|(H_V-(\lambda+i\epsilon(\lambda))^2)u\|_{L^2 (M)} \\
    &\leq C_V \lambda^{\delta(q, k)-1}(\epsilon(\lambda))^{-\frac{1}{2}}\|(H_V-(\lambda+i\epsilon(\lambda))^2)u\|_{L^2 (M)},
\end{align*}
which satisfies the second estimate in \eqref{log imp submflds WTS}. If \eqref{(n, k, q, V) except 4/3} holds for $n=4$, i.e.,
\begin{align*}
    \begin{cases}
        (n, k)=(4, 3),\; 3<q<6, \text{ or} \\
        (n, k)=(4, 2),\; 2<q<4,
    \end{cases}
\end{align*}
then by \eqref{T lambda 1 n case} and H\"older's inequality,
\begin{align*}
    \|T_\lambda^1 (Vu)\|_{L^q (\Sigma)}&\lesssim \lambda^{\frac{1}{2}+Cc_0}\|V\|_{L^2 (M)} \|u\|_{L^2 (M)} \\
    &\leq C_V \lambda^{\delta(q, k)-\epsilon_0} \|u\|_{L^2 (M)} \\
    &\leq C_V \lambda^{\delta(q, k)-\epsilon_0} (\lambda \epsilon(\lambda))^{-1} (\lambda \epsilon(\lambda))\|u\|_{L^2 (M)} \\
    &\lesssim C_V \lambda^{\delta(q, k)-1-\epsilon_0} (\epsilon(\lambda))^{-1} \|(H_V-(\lambda+i\epsilon(\lambda))^2)u\|_{L^2 (M)},
\end{align*}
where we choose $0<c_0, \epsilon_0\ll 1$ sufficiently small. In the last inequality, we used the spectral theorem. This is better than the bound posited in \eqref{log imp submflds WTS}. This proves \eqref{log imp submflds WTS} when \eqref{(n, k, q, V) except 4/3} holds, which completes the proof of \eqref{Estimate for log imp submflds for 4/3} and \eqref{Estimate for log imp submflds}.

\section{Proof of Theorem \ref{Thm for tori}}\label{S: Thm for tori}
\subsection{General curve segments}\label{SS: general curve in 2D torus}

In this subsection, we show \eqref{2D torus est for general curve}. Let $P=\sqrt{-\Delta_{\mathbb{T}^2}}$ and $\gamma$ be any curve segment in $\mathbb{T}^2$. Recall that, for all previous results, we needed a spectral projection bounds for $\sqrt{-\Delta_g}$. To use our previous arguments, we then need a spectral projection bound for $\mathbb{T}^2$ first.

\begin{lemma}\label{Lemma 2D torus projection for general curve}
    If $\delta(q, 1)=\frac{1}{2}-\frac{1}{q}$ for $q>4$, then
    \begin{align*}
        \|\mathds{1}_{[\lambda, \lambda+\frac{1}{T}]} (P) f\|_{L^q (\gamma)}\lesssim (T^{-\frac{1}{2}} \lambda^{\delta(q, 1)}+T^{\frac{1}{4}}\lambda^{\frac{1}{4}})\|f\|_{L^2 (\mathbb{T}^2)},\quad 1\leq T\leq \lambda.
    \end{align*}
\end{lemma}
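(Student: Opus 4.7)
The plan is to establish the lemma through a $TT^*$ argument combined with Poisson summation on the lattice $\mathbb{Z}^2$, which cleanly separates the local Euclidean behavior of the spectral projector (giving the Burq--G\'erard--Tzvetkov/Hu term $T^{-1/2}\lambda^{\delta(q,1)}$) from the torus-specific contributions coming from nontrivial lattice translates (producing the term $T^{1/4}\lambda^{1/4}$).

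First, I would reduce to a smoothed spectral cutoff: pick an even, real, nonnegative Schwartz $\chi$ with $\chi(0)=1$, $\chi\geq c>0$ on $[-1,1]$, and $\widehat{\chi}$ compactly supported. Since $\chi(T(\lambda-\tau))\gtrsim 1$ for $\tau\in[\lambda,\lambda+1/T]$, it suffices to bound $\|\chi(T(\lambda-P))\|_{L^2(\mathbb{T}^2)\to L^q(\gamma)}^2$, which by $TT^*$ equals $\|\chi^2(T(\lambda-P))\|_{L^{q'}(\gamma)\to L^q(\gamma)}$. The operator $B:=\chi^2(T(\lambda-P))$ is a Fourier multiplier with symbol $\phi(\xi):=\chi^2(T(\lambda-|\xi|))$, $\xi\in\mathbb{R}^2$. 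Applying Poisson summation in $\mathbb{Z}^2$,
\[
B(x,y) \;=\; \sum_{k\in\mathbb{Z}^2} \phi(k) e^{ik\cdot(x-y)} \;=\; \sum_{\alpha\in\mathbb{Z}^2}\widehat{\phi}(x-y-2\pi\alpha).
\]
A polar-coordinate computation, followed by the asymptotic $J_0(\mu)=\sqrt{2/(\pi\mu)}\cos(\mu-\pi/4)+O(\mu^{-3/2})$, produces for $\lambda|z|\gtrsim 1$ the representation
\[
\widehat{\phi}(z) \;=\; \frac{c}{T}\sqrt{\lambda/|z|}\,\cos(\lambda|z|-\pi/4)\,\widehat{\chi^2}(|z|/T) \;+\; (\text{lower order}),
\]
with the second Schwartz factor $\widehat{\chi^2}(|z|/T)$ forcing $|z|\lesssim T$.

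Next I would split $B=B_0+B_\infty$ according to whether $\alpha=0$ or $\alpha\neq 0$. For the local piece $B_0$, restricted to $\gamma\times\gamma$ we have $|z|=|\gamma(t)-\gamma(t')|\lesssim 1$, so no torus effect enters and $B_0$ is the Euclidean analogue of the $O(1/T)$-smoothed spectral projector. Invoking the fold-singularity machinery of Greenleaf--Seeger and Hu from Section \ref{S: Review for fold singularities} (specifically Theorem \ref{Thm 2.1 in GS} and Theorem \ref{Thm Section 4 in Hu}) together with the standard $1/T$-width normalization, one obtains
\[
\|B_0\|_{L^{q'}(\gamma)\to L^q(\gamma)} \;\lesssim\; T^{-1}\lambda^{2\delta(q,1)},
\]
which after the $TT^*$ square root produces the first term $T^{-1/2}\lambda^{\delta(q,1)}$.

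For the long-range piece $B_\infty$, each $\alpha\neq 0$ satisfies $|\gamma(t)-\gamma(t')-2\pi\alpha|\approx |\alpha|$, and the Schwartz decay of $\widehat{\chi^2}$ truncates the sum to $1\leq|\alpha|\lesssim T$. Taylor expanding $|z-2\pi\alpha| = 2\pi|\alpha|-\widehat{\alpha}\cdot z+O(|z|^2/|\alpha|)$ gives, for each $\alpha$,
\[
\widehat{\phi}(\gamma(t)-\gamma(t')-2\pi\alpha) \;\approx\; \frac{c\lambda^{1/2}}{T|\alpha|^{1/2}}\,e^{2\pi i\lambda|\alpha|}\,e^{-i\lambda\widehat{\alpha}\cdot\gamma(t)}\,e^{i\lambda\widehat{\alpha}\cdot\gamma(t')},
\]
so each $\alpha$-term is essentially a rank-one operator whose $L^{q'}(\gamma)\to L^q(\gamma)$ norm (controlled by the product of modulated exponentials on the bounded curve $\gamma$) is $\lesssim T^{-1}\lambda^{1/2}|\alpha|^{-1/2}$. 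Summing by the triangle inequality, using $\#\{\alpha\in\mathbb{Z}^2:|\alpha|\in[R,2R]\}\sim R^2$,
\[
\|B_\infty\|_{L^{q'}(\gamma)\to L^q(\gamma)} \;\lesssim\; \frac{\lambda^{1/2}}{T}\sum_{1\leq|\alpha|\lesssim T}|\alpha|^{-1/2} \;\lesssim\; \frac{\lambda^{1/2}}{T}\cdot T^{3/2} \;=\; T^{1/2}\lambda^{1/2},
\]
whose $TT^*$ square root is exactly $T^{1/4}\lambda^{1/4}$, completing the bound.

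The main obstacle will be rendering the Taylor/stationary-phase expansion of $\widehat{\phi}(z-2\pi\alpha)$ rigorous uniformly across the whole range $1\leq|\alpha|\lesssim T$ — in particular handling the transition zone where $|\alpha|$ approaches $T$ (where $\widehat{\chi^2}$ begins its rapid decay) and verifying that the remainder in the Taylor expansion of the phase contributes only acceptable errors. The triangle-inequality sum over $\alpha$ is rather crude: it ignores the oscillatory cancellation coming from the phases $e^{2\pi i\lambda|\alpha|}$ (a Gauss-circle-type Weyl sum), but such cancellation is precisely what one does \emph{not} need here because the exponent $T^{1/4}\lambda^{1/4}$ in the claimed bound is already the trivial $\ell^1$-summed estimate. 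Any refined analysis of that exponential sum would only improve the second term, so the straightforward rank-one sum is sufficient for this lemma.
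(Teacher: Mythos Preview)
Your proposal is correct and matches the paper's proof: both reduce to the smoothed projector $\chi(T(\lambda-P))$, pass to $TT^*$, split into a local piece (giving $T^{-1}\lambda^{2\delta(q,1)}$ via the Burq--G\'erard--Tzvetkov/Hu analysis) and a non-local piece handled by Poisson summation on $\mathbb{Z}^2$, where the pointwise kernel bound $T^{-1}\lambda^{1/2}|\alpha|^{-1/2}$ summed over $1\leq|\alpha|\lesssim T$ yields $(T\lambda)^{1/2}$ and Young's inequality finishes. The only cosmetic difference is that the paper splits on the time side (via a cutoff $\eta(t)$ versus $1-\eta(t)$) before Poisson-summing, while you Poisson-sum first and split by $\alpha=0$ versus $\alpha\neq 0$; your rank-one/Taylor-expansion treatment of $B_\infty$ is an unnecessary detour and is exactly where your stated ``main obstacle'' sits --- the phase error $\lambda\cdot O(|z|^2/|\alpha|)$ can indeed be large for small $|\alpha|$, but you never need the rank-one structure, since the pointwise bound $|\widehat{\phi}(\gamma(t)-\gamma(t')-2\pi\alpha)|\lesssim T^{-1}\lambda^{1/2}|\alpha|^{-1/2}$ together with the boundedness of $\gamma$ already gives the $L^{q'}(\gamma)\to L^q(\gamma)$ bound directly, exactly as the paper does.
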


\begin{proof}
    Let $\chi\in \mathcal{S}(\mathbb{R})$ be even, nonnegative, and
    \begin{align*}
        \chi(0)=1,\quad \mathrm{supp} (\widehat{\chi})\subset (-\epsilon_0, \epsilon_0)\; \text{ for }\; 0<\epsilon_0 \ll 1.
    \end{align*}
    Since the operator $\chi (T(\lambda-P))$ is invertible on the range of the spectral projector $\mathds{1}_{[\lambda, \lambda+\frac{1}{T}]} (P)$ and
    \begin{align*}
        \| \chi(T(\lambda-P))^{-1} \circ \mathds{1}_{[\lambda, \lambda+\frac{1}{T}]} (P) \|_{L^2 (M)\to L^2 (M)} \lesssim 1,
    \end{align*}
    and so, it suffices to show that
    \begin{align*}
        \|\chi(T(\lambda-P)) f\|_{L^q(\gamma)}\lesssim \left(\frac{\lambda^{\delta(q, 1)}}{T^{\frac{1}{2}}}+(T\lambda)^{\frac{1}{4}} \right)\|f\|_{L^2 (\mathbb{T}^2)},\quad \lambda^{-1}\leq T^{-1}\leq 1.
    \end{align*}
    By a $TT^*$ argument, this is equivalent to saying that
    \begin{align}\label{2D torus proj bound TT* WTS}
        \|\chi^2 (T(\lambda-P))f\|_{L^q (\gamma)}\lesssim \left(\frac{\lambda^{2\delta(q, 1)}}{T}+(T\lambda)^{\frac{1}{2}} \right)\|f\|_{L^{q'}(\gamma)}.
    \end{align}
    By Euler's formula,
    \begin{align*}
        \chi^2 (T(\lambda-P))f&=\frac{1}{2\pi}\int_{-\infty}^\infty e^{itT(\lambda-P)} \widehat{\chi^2} (t) f\:dt \\
        &=\frac{1}{2\pi T}\int_{-\infty}^\infty e^{it\lambda} (e^{-itP} f) \widehat{\chi^2}(t/T)\:dt \\
        &=\frac{1}{\pi T} \int_{-\infty}^\infty e^{it\lambda} \widehat{\chi^2}(t/T) (\cos tP)f\:dt-\chi^2 (T(\lambda+P))f.
    \end{align*}
    If $\eta\in C_0^\infty (\mathbb{R})$ is a cutoff function supported near the origin, since the contribution of $\chi^2 (T(\lambda+P))$ is negligible, modulo $O(\lambda^{-N})$ errors, it suffices to consider $S_0 f+S_1 f$, where
    \begin{align*}
        & S_0 f=\frac{1}{\pi T} \int_{-\infty}^\infty e^{it\lambda} \eta(t) \widehat{\chi^2}(t/T) (\cos tP)f \:dt, \\
        & S_1 f=\frac{1}{\pi T}\int_{-\infty}^\infty e^{it\lambda} (1-\eta(t)) \widehat{\chi^2}(t/T) (\cos tP)f\:dt.
    \end{align*}
    By (the proof of) \cite[\S 3]{BurqGerardTzvetkov2007restrictions} (see also \cite[\S 3.1]{Hu2009lp}), we have that
    \begin{align*}
        \|S_0 f\|_{L^q (\gamma)}\lesssim \frac{\lambda^{2\delta(q, 1)}}{T} \|f\|_{L^{q'}(\gamma)},
    \end{align*}
    which satisfies \eqref{2D torus proj bound TT* WTS}, and hence, it suffices to show that
    \begin{align}\label{2D torus S1 est for general curve}
        \|S_1 f\|_{L^q (\gamma)}\lesssim (T\lambda)^{\frac{1}{2}} \|f\|_{L^{q'}(\gamma)}.
    \end{align}
    By the choice of $\eta\in C_0^\infty (\mathbb{R})$, we have
    \begin{align*}
        & \left|\frac{1}{\pi T}\int_{-\infty}^\infty e^{it\lambda} (1-\eta(t)) \widehat{\chi^2}(t/T) (\cos tP)(x, y)\:dt \right| \\
        &\lesssim \left|\frac{1}{\pi T} \int_{-\infty}^\infty e^{it\lambda} \widehat{\chi^2}(t/T) \sum_{l\in \mathbb{Z}^2} (\cos t\sqrt{-\Delta_{\mathbb{R}^2}})(x-(y+l))\:dt \right| \\
        &\lesssim \frac{1}{T}\cdot \lambda^{\frac{1}{2}} \sum_{1\leq |x-(y+l)|\leq T,\; l\in \mathbb{Z}^2} |x-(y+l)|^{-\frac{1}{2}} \\
        &\lesssim (T\lambda)^{\frac{1}{2}}.
    \end{align*}
    Here, we lifted our computation to the universal cover $\mathbb{R}^2$ by the usual lifting argument or the classical Poisson summation formula, and we used \cite[(3.5.15)]{Sogge2014hangzhou} to get the second inequality (see also \cite[(6.7)]{BlairHuangSogge2022Improved}). The desired inequality \eqref{2D torus S1 est for general curve} then follows from Young's inequality.
\end{proof}
We first note that the $q=\infty$ case was already studied in \cite{BlairHuangSireSogge2022UniformSobolev}. Indeed, by \cite[Theorem 5.2]{BlairHuangSireSogge2022UniformSobolev}, we have, for $u\in \mathrm{Dom}(H_V)\cap C(M)$,
\begin{align*}
    \|u\|_{L^\infty (\gamma)}\leq \|u\|_{L^\infty (\mathbb{T}^2)}\leq C_V \lambda^{-\frac{1}{3}} \|(H_V-(\lambda+i\lambda^{-\frac{1}{3}})^2)u\|_{L^2 (\mathbb{T}^2)},
\end{align*}
which proves \eqref{2D torus est for general curve} for $q=\infty$.

Thus, we may assume that $4<q<\infty$. To do so, we take $\frac{1}{T}=\lambda^{-\frac{1}{3}+\frac{4}{3q}}$ so that $T^{-\frac{1}{2}} \lambda^{\frac{1}{2}-\frac{1}{q}}=T^{\frac{1}{4}}\lambda^{\frac{1}{4}}$ in Lemma \ref{Lemma 2D torus projection for general curve}, and thus,
\begin{align*}
    \| \mathds{1}_{[\lambda, \lambda+\frac{1}{T}]} (P)f\|_{L^q (\gamma)}\lesssim \lambda^{\delta(q, 1)} T^{-\frac{1}{2}} \|f\|_{L^2 (\mathbb{T}^2)},\quad \frac{1}{T}=\lambda^{-\frac{1}{3}+\frac{4}{3q}}.
\end{align*}
Let $\epsilon(\lambda)=\lambda^{-\frac{1}{3}+\frac{4}{3q}}$ as in \eqref{Epsilon setup for curves on 2D tori}. When $V\equiv 0$, by Lemma \ref{Lemma 2D torus projection for general curve} and Lemma \ref{Lemma Resolvent opr norm from L2 to Lq for univ est},
\begin{align*}
    \|(-\Delta_{\mathbb{T}^2}-(\lambda+i\epsilon(\lambda))^2)^{-1}\|_{L^2 (\mathbb{T}^2) \to L^q (\gamma)} \lesssim \lambda^{\delta(q, 1)-1}(\epsilon(\lambda))^{-\frac{1}{2}}.
\end{align*}
Set $T_\lambda, T_\lambda^0, T_\lambda^1$, and $R_\lambda$ as in \eqref{T lambda R lambda set up} so that we can write
\begin{align*}
    (-\Delta_{\mathbb{T}^2}-(\lambda+i\epsilon(\lambda))^2)^{-1}=T_\lambda+R_\lambda,\quad \text{where } T_\lambda=T_\lambda^0+T_\lambda^1.
\end{align*}
As in \cite{BlairHuangSireSogge2022UniformSobolev}, an orthogonality argument gives
\begin{align*}
    & \|R_\lambda \|_{L^2 (\mathbb{T}^2)\to L^q (\gamma)}\lesssim \lambda^{\delta(q, 1)-1}(\epsilon(\lambda))^{-\frac{1}{2}}, \\
    & \|R_\lambda \circ (-\Delta_{\mathbb{T}^2}-(\lambda+i\epsilon(\lambda))^2)\|_{L^2 (\mathbb{T}^2)\to L^q (\gamma)}\lesssim \lambda^{\delta(q, 1)}(\epsilon(\lambda))^{\frac{1}{2}}.
\end{align*}
Since $T_\lambda=(-\Delta_{\mathbb{T}^2}-(\lambda+i\epsilon(\lambda))^2)^{-1}-R_\lambda$, we have
\begin{align*}
    \|T_\lambda \|_{L^2 (\mathbb{T}^2)\to L^q (\gamma)}\lesssim \lambda^{\delta(q, 1)-1} (\lambda \epsilon(\lambda))^{-\frac{1}{2}}.
\end{align*}
Putting these all together, we would have \eqref{2D torus est for general curve} if we could show that
\begin{align}\label{T(Vu) est for 2D tori general curve}
    \begin{split}
        \|T_\lambda (Vu)\|_{L^q (\gamma)}\leq C_V \lambda^{\delta(q, 1)-\frac{5}{6}}\|(H_V-(\lambda+i\lambda^{-\frac{1}{3}})^2)u\|_{L^2 (\mathbb{T}^2)},\quad 4<q<\infty.
    \end{split}
\end{align}
Let $T_\lambda^0$ and $T_\lambda^1$ as in \eqref{T lambda R lambda set up}. Recall that, by an argument in \cite[\S 3, \S 5]{BlairHuangSireSogge2022UniformSobolev}, we know that
\begin{align}\label{T lambda 0 pointwise bound for 2D tori general curve}
    |T_\lambda^0 (x, y)|\lesssim\begin{cases}
        |\log (\lambda d_g (x, y)/2)|, & \text{if } d_g (x, y)\leq \lambda^{-1}, \\
        \lambda^{-\frac{1}{2}} (d_g (x, y))^{-\frac{1}{2}}, & \text{if } \lambda^{-1}\leq d_g (x, y)\leq 1,
    \end{cases}
\end{align}
which gives the same bound as in \eqref{S lambda kernel est for n=2} up to some uniform constant, and so, we can use the arguments in \S \ref{S: Proof for curves for universal estimates} here. By the argument in \eqref{Triangle Minkowski argument} and \eqref{y sup S lambda kernel integration on curves estimate} and \cite[Theorem 5.2]{BlairHuangSireSogge2022UniformSobolev}, we have
\begin{align}\label{T lambda 0 (Vu) est for 2D tori general curve}
    \begin{split}
        \|T_\lambda^0 (Vu)\|_{L^q (\gamma)} &\lesssim\|u\|_{L^\infty (\mathbb{T}^2)} \|V\|_{L^1 (\mathbb{T}^2)} \sup_y \left(\int |T_\lambda^0 (\gamma(r), y)|^q\:dr \right)^{\frac{1}{q}} \\
        &\leq C_V \lambda^{-\frac{1}{3}-\frac{1}{q}} \|(H_V-(\lambda+i\lambda^{-\frac{1}{3}})^2)u\|_{L^2 (\mathbb{T}^2)} \\
        &=C_V \lambda^{\delta(q, 1)-\frac{5}{6}} \|(H_V-(\lambda+i\lambda^{-\frac{1}{3}})^2)u\|_{L^2 (\mathbb{T}^2)},
    \end{split}
\end{align}
which satisfies the bound in \eqref{T(Vu) est for 2D tori general curve}. For $T_\lambda^1$, we will further decompose $T_\lambda^1$ as in \cite{BlairHuangSireSogge2022UniformSobolev}. Let $\beta\in C_0^\infty ((1/2, 2))$ be a Littlewood-Paley bumpfunction that satisfies
\begin{align}\label{Littlewodd-Paley bumpfcn setup}
    \begin{split}
        & \sum_{j=-\infty}^\infty \beta(2^{-j} t)=1,\quad \text{for } t>0, \\
        & \beta_0 (t)=1-\sum_{j=1}^\infty \beta(2^{-j} |t|)\in C_0^\infty,
    \end{split}
\end{align}
and thus, $\beta_0 (t)\equiv 1$ for $t>0$ near the origin. We then consider a dyadic decomposition
\begin{align*}
    T_\lambda^1=T_\lambda^{1, 0}+\sum_{j=1}^\infty T_\lambda^{1, j},
\end{align*}
where
\begin{align}\label{T lambda (1, 0), (1, j) set up}
    \begin{split}
        & T_\lambda^{1, 0}=\frac{i}{\lambda+i\epsilon(\lambda)}\int_0^\infty \beta_0 (t) (1-\eta(t)) \eta(t/T) e^{i\lambda t} e^{-\epsilon(\lambda)t} \cos tP\:dt, \\
        & T_\lambda^{1, j}=\frac{i}{\lambda+i\epsilon(\lambda)}\int_0^\infty \beta(2^{-j}t) (1-\eta(t))\eta(t/T) e^{i\lambda t} e^{-\epsilon(\lambda) t} \cos tP\:dt,\quad 1\leq 2^j \lesssim (\epsilon(\lambda))^{-1}.
    \end{split}
\end{align}
Since $T_\lambda^{1, 0}$ plays the same role as the ``local'' operator $T_\lambda^0$, by the same argument in \eqref{T lambda 0 (Vu) est for 2D tori general curve}, we have that
\begin{align*}
    \|T_\lambda^{1, 0} (Vu)\|_{L^q (\gamma)}\leq C_V \lambda^{\delta(q, 1)-\frac{5}{6}} \|(H_V-(\lambda+i\lambda^{-\frac{1}{3}})^2)u\|_{L^2 (\mathbb{T}^2)},
\end{align*}
which satisfies \eqref{T(Vu) est for 2D tori general curve}. For $T_\lambda^{1, j}$, we recall from \cite[(5.33)]{BlairHuangSireSogge2022UniformSobolev} that
\begin{align*}
    \|T_\lambda^{1, j} f\|_{L^\infty (\mathbb{T}^2)}\lesssim 2^{3j/2} \lambda^{-\frac{1}{2}} \|f\|_{L^1 (\mathbb{T}^2)},\quad 2\leq 2^j\lesssim (\epsilon(\lambda))^{-1}=\lambda^{\frac{1}{3}-\frac{4}{3q}}.
\end{align*}
Using this and \cite[Theorem 5.2]{BlairHuangSireSogge2022UniformSobolev} for $\|u\|_{L^\infty (\mathbb{T}^2)}$, we have
\begin{align*}
    \sum_{j=1}^\infty \|T_\lambda^{1, j} (Vu)\|_{L^q (\gamma)}&\leq (\epsilon(\lambda))^{-\frac{3}{2}} \lambda^{-\frac{1}{2}} \|Vu\|_{L^1 (\mathbb{T}^2)} \\
    &\leq\lambda^{-\frac{4}{3q}} \|V\|_{L^1 (\mathbb{T}^2)} \|u\|_{L^\infty (\mathbb{T}^2)} \\
    &\leq C_V \lambda^{-\frac{1}{3}-\frac{4}{3q}} \|(H_V-(\lambda+i\lambda^{-\frac{1}{3}})^2)u\|_{L^2 (\mathbb{T}^2)},
\end{align*}
which is better than the bound posited in \eqref{T(Vu) est for 2D tori general curve}, completing the proof.

\subsection{Geodesic segments}
In this subsection, we show \eqref{2D torus est for geodesic}. As above, we may need a spectral projection bound for the case where $\gamma$ is a geodesic in $\mathbb{T}^2$.

\begin{lemma}\label{Lemma 2D tori geodesic spectral proj bound}
    Let $\gamma$ be a geodesic in $\mathbb{T}^2$. Then
    \begin{align}\label{Spec bounds for geod in tori}
       \|\mathds{1}_{[\lambda, \lambda+\frac{1}{T}]}(\sqrt{-\Delta_{\mathbb{T}^2}}) f\|_{L^q (\gamma)}\lesssim \lambda^{\frac{1}{4}} T^{-\frac{1}{4}}\|f\|_{L^2 (\mathbb{T}^2)},
    \end{align}
    where for all $0<\delta_0\ll 1$
    \begin{align}\label{2D tori q and T conditions}
        1\leq T\leq \lambda^{\frac{1}{2}-\delta_0}\; \text{ and }\; 2\leq q<\frac{8}{3}.
    \end{align}
    The estimate \eqref{Spec bounds for geod in tori} is sharp in the sense that there exist a function $\Psi_\lambda$ and a geodesic $\gamma$ in $\mathbb{S}^1 \times \mathbb{S}^1$ equipped with the product metric $g$ such that
    \begin{align}\label{Sharp estimate for geod in tori}
        \|\mathds{1}_{[\lambda, \lambda+\frac{1}{T}]} (\sqrt{-\Delta_{\mathbb{T}^2}}) \Psi_\lambda\|_{L^q (\gamma)}\gtrsim \lambda^{\frac{1}{4}}T^{-\frac{1}{4}}\|\Psi_\lambda \|_{L^2 (\mathbb{S}^1\times \mathbb{S}^1)},\quad 1\leq T\leq \lambda^{\frac{1}{2}-\delta_0},\quad 2\leq q\leq 4.
    \end{align}
\end{lemma}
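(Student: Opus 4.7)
The plan is to prove the upper bound by a duality-plus-arithmetic argument at the $q=2$ endpoint, obtaining the full range $q\in[2,8/3)$ by real interpolation with a companion Carleson--Sj\"olin bound at $q=4$, and to prove the sharpness claim by an explicit product construction. For the $q=2$ endpoint, I would exploit that the eigenfunctions of $-\Delta_{\mathbb{T}^2}$ are exactly the exponentials $e^{ik\cdot x}$ with $k\in\mathbb{Z}^2$. By duality it suffices to show that for $g\in L^2(\gamma)$,
\[
\sum_{k\in A_{\lambda,T}} \bigl|\widehat{g\,d\sigma}(k)\bigr|^2 \lesssim (\lambda/T)^{1/2}\|g\|_{L^2(\gamma)}^2,\qquad A_{\lambda,T}:=\{k\in\mathbb{Z}^2:|k|\in[\lambda,\lambda+T^{-1}]\}.
\]
Parameterizing $\gamma(s)=x_0+s\omega$ for a unit vector $\omega$ and $s$ in a finite interval, one has $\widehat{g\,d\sigma}(k)=e^{-ik\cdot x_0}\widetilde g(k\cdot\omega)$, where $\widetilde g$ is the $1$D Fourier transform of $g$ extended by zero. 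Grouping lattice points by $\xi:=k\cdot\omega$, the sum becomes $\sum_\xi N(\xi)|\widetilde g(\xi)|^2$, with $N(\xi):=\#\{k\in A_{\lambda,T}:k\cdot\omega=\xi\}$.

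The crucial arithmetic estimate is
\[
\max_\xi N(\xi) \lesssim \sqrt{\lambda/T},
\]
which I would prove by observing that the fibers $\{k\in\mathbb{Z}^2: k\cdot\omega=\xi\}$ are (translates of) $1$-dimensional affine sublattices of $\mathbb{Z}^2$, and that their intersection with the annulus $|k|\in[\lambda,\lambda+T^{-1}]$ is an arc of length $\sim\lambda/(T\sqrt{\lambda^2-\xi^2})$; the worst case occurs at the tangency values $\xi^2\sim\lambda^2-\lambda/T$, producing the $\sqrt{\lambda/T}$ count. Paired with the Paley--Wiener/sampling bound $\sum_\xi|\widetilde g(\xi)|^2\lesssim\|g\|_{L^2(\gamma)}^2$, this gives the desired $q=2$ estimate. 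The hypothesis $T\leq\lambda^{1/2-\delta_0}$ guarantees $\sqrt{\lambda/T}\gtrsim\lambda^{\delta_0/2}\gg 1$, so that the lattice count is the genuine obstruction rather than trivial.

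For the sharpness in \eqref{Sharp estimate for geod in tori}, I would work on $\mathbb{S}^1\times\mathbb{S}^1$ with $\gamma=\mathbb{S}^1\times\{0\}$; pick a large $k\in\mathbb{Z}$, set $\lambda:=k+\tfrac{1}{2T}$ so that $\lambda^2-k^2\sim\lambda/T$, and define
\[
\Psi_\lambda(x_1,x_2):=e^{ikx_1}\sum_{\substack{n\in\mathbb{Z}\\ k^2+n^2\in[\lambda^2,(\lambda+T^{-1})^2]}}e^{inx_2}.
\]
The admissible $n$ form an arc of length $\sim\sqrt{\lambda/T}$, hence contain $N\sim\sqrt{\lambda/T}$ integer terms (nontrivial precisely because $T\leq\lambda^{1/2-\delta_0}$). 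By orthogonality $\|\Psi_\lambda\|_{L^2(\mathbb{T}^2)}\sim N^{1/2}\sim(\lambda/T)^{1/4}$, whereas $\Psi_\lambda(\cdot,0)=N\,e^{ik\,\cdot}$ has constant modulus $N$ on $\gamma$, so $\|\Psi_\lambda\|_{L^q(\gamma)}\sim N\sim(\lambda/T)^{1/2}$ for every $q\in[2,\infty]$, giving the ratio $(\lambda/T)^{1/4}=\lambda^{1/4}T^{-1/4}$.

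The main obstacle is the uniform arithmetic bound $N(\xi)\lesssim\sqrt{\lambda/T}$: one must handle rational and irrational directions $\omega$ on a common footing (in the rational case each fiber is a genuine $1$D sublattice; in the irrational case each fiber meets $\mathbb{Z}^2$ in at most one point but the spacings between the values $k\cdot\omega$ can be delicate), and the sampling inequality $\sum_\xi|\widetilde g(\xi)|^2\lesssim\|g\|_{L^2(\gamma)}^2$ must be justified uniformly in $\omega$ via a Paley--Wiener argument adapted to the length of $\gamma$. Nonetheless, the conceptual heart of both the upper bound and the sharpness example is the same tangency-point count, which produces the matching $\sqrt{\lambda/T}$ factor on both sides.
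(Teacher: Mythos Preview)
Your sharpness construction is correct and essentially equivalent to the paper's (which uses a Littlewood--Paley bump on the transverse $\mathbb{S}^1$ factor in place of your sharp cutoff, but the mechanism---one frequency along $\gamma$, about $\sqrt{\lambda/T}$ transverse frequencies evaluated where their phases align---is the same). The upper bound, however, has a decisive gap in the interpolation step. The only Carleson--Sj\"olin bound available at $q=4$ for a geodesic is the unit-window estimate $\|\mathds{1}_{[\lambda,\lambda+1]}(P)\|_{L^2\to L^4(\gamma)}\lesssim\lambda^{1/4}$, with no $T$-improvement; interpolating this against your $q=2$ endpoint $(\lambda/T)^{1/4}$ yields only $\lambda^{1/4}T^{-(1/q-1/4)}$, which matches the claimed $\lambda^{1/4}T^{-1/4}$ at $q=2$ alone and is strictly weaker for every $q\in(2,8/3)$. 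There is no way to recover the full $T^{-1/4}$ saving by interpolation without first proving a $T$-improved bound at some $q>2$, which is precisely the content of the lemma. Your $q=2$ argument also has a real (if secondary) hole: the Plancherel--P\'olya sampling inequality you invoke needs the sample points $\{k\cdot\omega:k\in A_{\lambda,T}\}$ to have bounded multiplicity in each unit interval, and bounding that multiplicity by $O(\sqrt{\lambda/T})$ uniformly in the direction $\omega$ is a genuine lattice-point problem (the crude area-plus-perimeter bound on the relevant crescent gives only $O(\sqrt{\lambda})$) that you flag but do not resolve.

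The paper's route is entirely different and bypasses both difficulties. It applies a microlocal cutoff $Q_{\theta,\lambda}$ (with $\theta=\lambda^{-\delta}$, $\delta$ small) separating frequencies nearly tangent to $\gamma$ from the rest. The non-tangential piece $(I-Q_{\theta,\lambda})\chi(T(\lambda-P))$ is controlled by the escape-time kernel estimates of Blair--Sogge, producing a bound $\lambda^{1/2-1/q+\epsilon'}$ that beats $(\lambda/T)^{1/4}$ exactly when $q<8/3$ and $T\le\lambda^{1/2-\delta_0}$---this is where both hypotheses originate. For the tangential piece, the paper runs $TT^*$, lifts $\cos(tP)$ to the universal cover $\mathbb{R}^2$ via Poisson summation, and invokes the kernel bounds of Blair--Huang--Sogge to obtain a uniform pointwise bound $|(Q_{\theta,\lambda}G_\lambda Q_{\theta,\lambda}^*)(x,y)|\lesssim\lambda^{1/2}T^{-1/2}$ on the $TT^*$ kernel; Young's inequality on the finite-length curve then gives the $(\lambda/T)^{1/4}$ bound for \emph{every} $q\ge2$ at once, with no interpolation required.
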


\begin{remark}
    We note that \eqref{2D tori q and T conditions} does not contain the endpoint $q=4$, whereas the estimate \eqref{Sharp estimate for geod in tori} contains the endpoint $q=4$, and thus, it would be interesting to extend the estimate \eqref{Spec bounds for geod in tori} to $q=4$.

    Also note that if we choose $T=\log \lambda$ in \eqref{Sharp estimate for geod in tori}, then we have
    \begin{align}\label{log lambda sharp for subcritical exponents}
        \|\mathds{1}_{[\lambda, \lambda+(\log \lambda)^{-1}]} (\sqrt{-\Delta_{\mathbb{T}^2}}) \Psi_\lambda\|_{L^q (\gamma)}\gtrsim \lambda^{\frac{1}{4}} (\log \lambda)^{-\frac{1}{4}}\|\Psi_\lambda \|_{L^2 (\mathbb{S}^1 \times \mathbb{S}^1)},\quad 2\leq q\leq 4.
    \end{align}
    If we identify $\mathbb{S}^1 \times \mathbb{S}^1$ as $\mathbb{T}^2$, then $\mathbb{T}^2$ has zero curvatures, that is, nonpositive sectional curvatures, and thus, the above estimate \eqref{log lambda sharp for subcritical exponents} means the sharpness of the following estimate
    \begin{align}\label{log lambda est for subcritical exp on 2D tori}
         \|\mathds{1}_{[\lambda, \lambda+(\log \lambda)^{-1}]} (\sqrt{-\Delta_{\mathbb{T}^2}}) f\|_{L^q (\gamma)}\lesssim \lambda^{\frac{1}{4}} (\log \lambda)^{-\frac{1}{4}}\|f \|_{L^2 (\mathbb{T}^2)},\quad 2\leq q\leq 4,
    \end{align}
    which is obtained by the interpolation of \cite[Theorem 1.1]{BlairSogge2018concerning} and \cite[Theorem 1.1]{Blair2018logarithmic}. We remark that the estimate \eqref{log lambda est for subcritical exp on 2D tori} for $2\leq q<4$ can also be obtained by the proof of Lemma \ref{Lemma 2D tori geodesic spectral proj bound} below. We also note that \cite[Theorem 2]{XiZhang2017improved} also showed the same bound for negatively curved manifolds, so it would also be interesting if we could find a sharp example for the bound in negatively curved manifolds, since \eqref{Sharp estimate for geod in tori} holds on $\mathbb{S}^1 \times \mathbb{S}^2$, which can be thought of as a manifold with zero sectional curvatures.
\end{remark}

\begin{proof}[Proof of Lemma \ref{Lemma 2D tori geodesic spectral proj bound}]
    Let $\chi\in \mathcal{S}(\mathbb{R})$ be an even function such that $\chi(0)=1$ and $\mathrm{supp}(\widehat{\chi})\subset (-\epsilon_0, \epsilon_0)$ for $0<\epsilon_0 \ll 1$, as usual. Let $P=\sqrt{-\Delta_{\mathbb{T}^2}}$ and $\gamma$ be a geodesic in $\mathbb{T}^2$. We choose the same pseudo-differential cutoff $Q_{\theta, \lambda}$ as in \cite{BlairSogge2018concerning}. That is, when we consider the same local coordinates as in \cite{BlairSogge2018concerning} so that $\gamma$ can be identified as
    \begin{align*}
        \{(t, 0): 0\leq t\leq \epsilon_0\},\quad 0<\epsilon_0\ll 1,
    \end{align*}
    if $\theta=\lambda^{-\delta}$ with $0<\delta<\frac{1}{2}$ and $\chi_1\in C_0^\infty (\mathbb{R})$ satisfies $\chi_1 (s)=1$ for $|s|\leq 1$ and $\chi_1 (s)=0$ for $|s|\geq 2$, we can define the compound symbols
    \begin{align*}
        q_{\theta, \lambda} (x, y, \xi)=\chi_1 (\theta^{-1} d_g (x, \gamma)) \chi_1 (\theta^{-1} d_g (y, \gamma)) \chi_1 (\theta^{-1}|\xi_2|/|\xi|)\Upsilon (|\xi|/\lambda),
    \end{align*}
    where $\chi_1\in C_0^\infty (\mathbb{R})$ is a smooth bump function supported near the origin and $\Upsilon \in C^\infty (\mathbb{R})$ satisfies
    \begin{align*}
        \Upsilon(s)=\begin{cases}
            1, & \text{if } s\in [c_0, c_0^{-1}], \\
            0, & \text{if } s\not\in [c_0/2, 2c_0^{-1}],
        \end{cases}
    \end{align*}
    where $c_0>0$ is sufficiently small. Then the pseudo-differential cutoff $Q_{\theta, \lambda}$ is the operator whose integral kernel $Q_{\theta, \lambda} (x, y)$ is of the form
    \begin{align*}
        Q_{\theta, \lambda} (x, y)=(2\pi)^{-2}\int_{\mathbb{R}^2} e^{i\langle x-y, \xi \rangle} q_{\theta, \lambda}(x, y, \xi)\:d\xi.
    \end{align*}
    Using \cite[Proposition 2.2]{BlairSogge2018concerning} and the argument in \cite[\S 3]{BlairSogge2018concerning}, one can find that
    \begin{align*}
        \|(I-Q_{\theta, \lambda})\circ \chi (T(\lambda-P)) f\|_{L^2 (\gamma)}\lesssim_\epsilon (\lambda^{-\delta})^{-\frac{1}{2}-\epsilon}\|f\|_{L^2 (\mathbb{T}^2)},
    \end{align*}
    where $0<\epsilon<\frac{1}{2}-\delta<\frac{1}{2}$. If we choose $0<\delta\ll 1$ sufficiently small, the bound here is better than or equal to what we need. In fact, we could say more than this. Since $e^{ikP}$ for $k\in \mathbb{N}$ maps $L^2 (M)$ to $L^2 (M)$ with norm $1$, as in \cite{BlairSogge2018concerning} (cf. \cite[p.198]{BlairSogge2018concerning}), we can focus on the operator $S_{\lambda, \theta}$ defined by
    \begin{align*}
        f\mapsto \int a(t) e^{it\lambda} (I-Q_{\theta, \lambda}) e^{-itP}f\:dt,\quad a\in C_0^\infty ((-1, 1)).
    \end{align*}
    If we denote by $K_{\lambda, \theta}$ the integral kernel of the $S_{\lambda, \theta}S_{\lambda, \theta}^*$ operator, then by the same argument as in \cite[\S 5]{BlairSogge2018concerning}, we can show that, modulo $O(\lambda^{-1})$ errors,
    \begin{align*}
        & |K_{\lambda, \theta} (\gamma(s), \gamma(s'))|\lesssim \lambda^{\frac{1}{2}} |s-s'|^{-\frac{1}{2}},\quad \text{if } s, s'\in [0, 1], \\
        & |K_{\lambda, \theta } (\gamma(s), \gamma(s'))|\leq C_N \lambda^{-N} \text{ for all } N,\quad \text{if } s, s'\in [0, 1] \text{ and } |s-s'|\geq \lambda^{-1}\theta^{-2-2\epsilon}
    \end{align*}
    (cf. \cite[(5.10)-(5.11)]{BlairSogge2018concerning}). With this in mind, if $2\leq q<4$ and
    \begin{align*}
        \frac{1}{r}=1-\left(\frac{1}{q'}-\frac{1}{q} \right)=\frac{2}{q}, \quad \text{that is, } r=\frac{q}{2},
    \end{align*}
    then, for $\theta=\lambda^{-\delta}$ with $0<\delta<\frac{1}{2}$,
    \begin{align*}
        &\left(\int |K_{\lambda, \theta} (\gamma(s), \gamma(s'))|^r\:ds \right)^{\frac{1}{r}},\; \left(\int |K_{\lambda, \theta} (\gamma(s), \gamma(s'))|^r\:ds' \right)^{\frac{1}{r}} \\
        &\lesssim \lambda^{\frac{1}{2}} \left(\int_{|s-s'|\leq \lambda^{-1}\theta^{-2-2\epsilon}} |s-s'|^{-\frac{1}{2}\cdot \frac{q}{2}}\:ds \right)^{\frac{2}{q}} \\
        &=\frac{4}{(4-q)^{\frac{2}{q}}}\lambda^{1-\frac{2}{q}} (\theta^{-1-\epsilon})^{\frac{4-q}{q}} \\
        &=\frac{4}{(4-q)^{\frac{2}{q}}} \lambda^{1-\frac{2}{q}+\delta\cdot \frac{(1+\epsilon)(4-q)}{q}},\quad 0<\delta<\frac{1}{2}.
    \end{align*}
    Taking $0<\delta\ll \frac{1}{2}$ sufficiently small, by Young's inequality, when \eqref{2D tori q and T conditions} holds,
    \begin{align}\label{Tori geod escape time result}
        \|S_{\lambda, \theta}S_{\lambda, \theta}^* f\|_{L^q (\gamma)}\leq C_q \lambda^{1-\frac{2}{q}+\epsilon'} \|f\|_{L^{q'}(\gamma)},\quad \text{for all } 0<\epsilon'\ll 1.
    \end{align}
    By a $TT^*$ argument, we have
    \begin{align*}
        \|S_{\lambda, \theta}\|_{L^q (\gamma)}\leq C_q \lambda^{\frac{1}{2}-\frac{1}{q}+\epsilon'}\|f\|_{L^2 (\mathbb{T}^2)},
    \end{align*}
    Using this and the argument in \cite[\S 3]{BlairSogge2018concerning}, we have
    \begin{align*}
        \|(I-Q_{\theta, \lambda})\circ \chi(T(\lambda-P)) f\|_{L^q (\gamma)}\leq C_q \lambda^{\frac{1}{2}-\frac{1}{q}+\epsilon'}\|f\|_{L^2 (\mathbb{T}^2)},\quad 0<\epsilon'\ll 1,
    \end{align*}
    which is better than \eqref{Spec bounds for geod in tori} when \eqref{2D tori q and T conditions} holds, since $\lambda^{\frac{1}{2}-\frac{1}{q}+\epsilon'}<\frac{\lambda^{\frac{1}{4}}}{T^{\frac{1}{4}}}$ for $1\leq T\leq \lambda^{\frac{1}{2}-\delta_0}$. This is a reason why we focus on \eqref{2D tori q and T conditions} for the range of $q$'s and do not focus on high $q$'s.
    
    We would then have \eqref{Spec bounds for geod in tori}, if we could show that
    \begin{align*}
        \|Q_{\theta, \lambda}\circ \chi (T(\lambda-P)) f\|_{L^q (\gamma)}\lesssim \left(\frac{\lambda}{T} \right)^{\frac{1}{4}} \|f\|_{L^2 (\mathbb{T}^2)},
    \end{align*}
    when \eqref{2D tori q and T conditions} holds. By a $TT^*$ argument, it suffices to show that
    \begin{align}\label{TT* WTS for 2D tori geodesic}
        \|Q_{\theta, \lambda}\circ \chi^2 (T(\lambda-P)) \circ Q_{\theta, \lambda}^* f\|_{L^q (\gamma)}\lesssim \left(\frac{\lambda}{T} \right)^{\frac{1}{2}} \|f\|_{L^{q'} (\mathbb{T}^2)}.
    \end{align}
    Recall from \cite[(2.11)]{BlairSogge2018concerning} that
    \begin{align}\label{PDO cutoff uniform L1}
        \sup_x \int |Q_{\theta, \lambda} (x, y)|\:dy\lesssim 1,\quad \sup_y \int |Q_{\theta, \lambda} (x, y)|\:dx\lesssim 1.
    \end{align}
    Let $\beta\in C_0^\infty ((1/2, 2))$ be a Littlewood-Paley bumpfunction  as in \eqref{Littlewodd-Paley bumpfcn setup}. We set the ``local'' operator $L_\lambda$ defined by
    \begin{align*}
        L_\lambda=\frac{1}{2\pi T} \int e^{i\lambda t} e^{-itP} \beta_0 (|t|) \widehat{\chi^2} (t/T)\:dt,
    \end{align*}
    and the ``global'' operator $G_\lambda$ defined by
    \begin{align*}
        G_\lambda=\frac{1}{2\pi T} \int e^{i\lambda t} e^{-itP} (1-\beta_0 (|t|)) \widehat{\chi^2} (t/T)\:dt
    \end{align*}
    so that we have $\chi^2 (T(\lambda-P))=L_\lambda+G_\lambda$.

    We first consider the local operator $L_\lambda$. By the method of stationary phase (and Egorov's theorem), we can write (cf. the proof of \cite[Lemma 3.1]{Park2023Restriction})
    \begin{align}\label{Stat phase for L lambda for geod in 2D tori}
        (Q_{\theta, \lambda}\circ e^{-itP} \circ Q_{\theta, \lambda}^*) (x, y)=\lambda^2 \int e^{i\lambda (\varphi(t, x, \xi)-y\cdot \xi)} a_{\theta, \lambda} (t, x, y, \xi)\:d\xi+O(\lambda^{-N}),\quad \text{for any } N\geq 1,
    \end{align}
    where $a_{\theta, \lambda}\in C_0^\infty$ with the size estimate $|\partial_{t, x, y, \xi}^\alpha a_{\theta, \lambda}|\leq C_\alpha$ and the phase function $\varphi$ satisfies, for small $|t|$,
    \begin{align*}
        & \kappa_t: \mathbb{R}^4\to \mathbb{R}^4 \text{ is the Hamiltonian flow of } p(x, \xi)=|\xi|_{g(x)}, \text{ and homogeneous in } \xi, \\
        & \kappa_t (d_\xi \varphi(t, x, \xi), \xi)=(x, d_x \varphi(t, x, \xi)),\quad \text{with } \kappa_t (y, \xi(0))=(x, \xi(t)), \\
        & \partial_t \varphi+p(x, d_x \varphi)=0,\quad \varphi(0, x, \xi)=\langle x, \xi \rangle.
    \end{align*}
    Here of course, the metric $g$ is the Euclidean metric. Taking $N\geq 1$ large enough, we can ignore the contribution of $O(\lambda^{-N})$ in \eqref{Stat phase for L lambda for geod in 2D tori}, and so, by the proof of \cite[Lemma 5.1.3]{Sogge1993fourier}, we have that, modulo $O(\lambda^{-N})$ errors,
    \begin{align*}
        (Q_{\theta, \lambda}\circ L_\lambda \circ Q_{\theta, \lambda}) (x, y)=\frac{\lambda^{\frac{1}{2}}}{T} e^{i\lambda d_g (x, y)} a_\lambda (x, y),
    \end{align*}
    where $a_\lambda\in C_0^\infty$ satisfies $|\partial_{x, y}^\alpha a_\lambda (x, y)|\leq C_\alpha$. By Young's inequality, we have
    \begin{align*}
        \|(Q_{\theta, \lambda}\circ L_\lambda\circ Q_{\theta, \lambda}^*)f\|_{L^q (\gamma)}\lesssim \frac{\lambda^{\frac{1}{2}}}{T}\|f\|_{L^{q'}(\gamma)},\quad 2\leq q<\frac{8}{3}.
    \end{align*}
    This is better than \eqref{TT* WTS for 2D tori geodesic}, and so, we can focus on the contribution of the global operator $G_\lambda$. 
    
    To show\eqref{TT* WTS for 2D tori geodesic}, we now want to show that
    \begin{align}\label{TT^* G lambda WTS for 2D tori geodesic}
        \|(Q_{\theta, \lambda}\circ G_\lambda\circ Q_{\theta, \lambda}^*)f\|_{L^q (\gamma)}\lesssim \frac{\lambda^{\frac{1}{2}}}{T^{\frac{1}{2}}}\|f\|_{L^{q'}(\gamma)},\quad 2\leq q<\frac{8}{3}.
    \end{align}
    By direct computation with \eqref{PDO cutoff uniform L1},
    \begin{align}\label{TT^* kernel bound reduction for 2D tori geodesic}
        \begin{split}
            |(Q_{\theta, \lambda}\circ G_\lambda \circ Q_{\theta, \lambda}^*)(x, y)| &=\left|\int (Q_{\theta, \lambda}\circ G_\lambda\circ Q_{\theta, \lambda}^*) (x, y)\:dz \right| \\
            &\leq \int |(Q_{\theta, \lambda}\circ G_\lambda) (x, z)| |Q_{\theta, \lambda}^* (z, y)|\:dz \\
            &\leq \sup_{x, z} |(Q_{\theta, \lambda}\circ G_\lambda) (x, z)| \int |Q_{\theta, \lambda}^*(z, y)|\:dz \\
            &\lesssim \sup_{x, z} |(Q_{\theta, \lambda}\circ G_\lambda) (x, z)|.
        \end{split}
    \end{align}
    Recall that the last quantity $|(Q_{\theta, \lambda}\circ G_\lambda) (x, z)|$ is already studied in \cite{BlairHuangSogge2022Improved}. In fact, we shall follow the argument in \cite{BlairHuangSogge2022Improved} in the rest of the computation for \eqref{TT^* G lambda WTS for 2D tori geodesic}. By Young's inequality, we would have \eqref{TT^* G lambda WTS for 2D tori geodesic} if we could show that
    \begin{align}\label{Q G kernel bound for 2D tori geodesic}
        |(Q_{\theta, \lambda}\circ G_\lambda) (x, z)|\lesssim \frac{\lambda^{\frac{1}{2}}}{T^{\frac{1}{2}}},\quad x, z\in \mathbb{T}^2.
    \end{align}
    By Euler's formula, when we set
    \begin{align*}
        \Tilde{G}_\lambda=\frac{1}{\pi T} \int e^{i\lambda t} \cos (t\sqrt{-\Delta_{\mathbb{T}^2}})(1-\beta_0 (|t|))\widehat{\chi^2}(t/T)\:dt,
    \end{align*}
    we would have \eqref{Q G kernel bound for 2D tori geodesic}, if we could show that
    \begin{align}\label{Q G tilde kernel bound for 2D tori geodesic}
        |(Q_{\theta, \lambda}\circ \Tilde{G}_\lambda) (x, z)|\lesssim \frac{\lambda^{\frac{1}{2}}}{T^{\frac{1}{2}}},\quad x, z\in \mathbb{T}^2.
    \end{align}
    Since we set $1-\beta_0 (|t|)=\sum_{j=0}^\infty \beta(2^{-j}|t|)$, if we write
    \begin{align*}
        \Tilde{G}_{\lambda, j}=\frac{1}{\pi T}\int e^{i\lambda t} \cos (t\sqrt{-\Delta_{\mathbb{T}^2}}) \beta(2^{-j}|t|) \widehat{\chi^2}(t/T)\:dt,
    \end{align*}
    then by the finite speed of propagation, we can write
    \begin{align*}
        \Tilde{G}_\lambda=\sum_{1\leq 2^j\lesssim T} \Tilde{G}_{\lambda, j}.
    \end{align*}
    If we lift our computation to the universal cover as usual (or the classical Poisson summation formula), then since the universal cover is $\mathbb{R}^2$ with the usual Euclidean metric, we can write
    \begin{align*}
        (\cos t\sqrt{-\Delta_{\mathbb{T}^2}}) (x, z)=\sum_{l\in \mathbb{Z}^2} (\cos t\sqrt{\Delta_{-\mathbb{R}^2}}) (x-(z+l)),
    \end{align*}
    where the torus $\mathbb{T}^2$ is identified as the cube $Q=(-\pi, \pi]\times (-\pi, \pi]$, and so, we may abuse notation a bit identifying $x\in \mathbb{T}^2$ as $x\in \mathbb{R}^2$.
    
    Before going further, we note that we can restrict our attention to $|x-(z+l)|\approx 2^j$. Indeed, if $\Tilde{G}_{\lambda, j} (x, z)$ denotes the kernel of $\Tilde{G}_{\lambda, j}$, then we can write
    \begin{align*}
        \Tilde{G}_{\lambda, j}(x, z)=\sum_{l\in \mathbb{Z}^2} \frac{1}{\pi T}\int e^{i\lambda t} (\cos t\sqrt{-\Delta_{\mathbb{R}^2}})(x-(z+l)) \beta(2^{-j}|t|) \widehat{\chi^2}(t/T)\:dt.
    \end{align*}
    By finite speed of propagation, this kernel vanishes if $|x-(z+l)|\geq 2\cdot 2^j$ since $\beta(|t|/2^j)=0$ for $|t|\geq 2\cdot 2^j$. If $|t|\leq \frac{2^j}{2}$, then by the singular support properties of $\cos t\sqrt{-\Delta_{\mathbb{R}^2}}$, we could think of this as a smooth function (cf. \cite[(5.14)]{BlairSogge2015OnKakeyaNikodym}), and thus, integrating by parts as many as we want, we obtain $\Tilde{G}_{\lambda, j}(x, z)=O(\lambda^{-N})$ for every $N$ when $|x-(z+l)|\leq \frac{2^j}{4}$. We thus may assume that $|x-(z+l)|\approx 2^j$ for each fixed $l\in \mathbb{Z}^2$.
    
    If we denote by $K_{\lambda, j, \theta} (x, z)$ the kernel of $Q_{\theta, \lambda}\circ \Tilde{G}_{\lambda, j}$, then we can write
    \begin{align*}
        \sum_{l\in \mathbb{Z}^2} K_{\lambda, j, \theta} (x, (z+l))=\sum_{l\in \mathbb{Z}^2} \frac{1}{\pi T}\int e^{i\lambda t} (Q_{\theta, \lambda}\circ \cos t\sqrt{-\Delta_{\mathbb{R}^2}})(x, (z+l)) \beta(2^{-j}|t|) \widehat{\chi^2}(t/T)\:dt.
    \end{align*}
    If we let $\mathds{1}=(1, 0)$, for fixed $x, z$, and $2^j$, we define the following as in \cite[(6.31)-(6.32)]{BlairHuangSogge2022Improved}.
    \begin{align*}
        & D_{\text{main}}=\left\{l\in \mathbb{Z}^2: \left|\pm \frac{x-(z+l)}{|x-(z+l)|}-\mathds{1} \right|\leq C 2^{-j},\quad |x-(z+l)|\approx 2^j \right\}, \\
        & D_{\text{error}}=\left\{l\in \mathbb{Z}^2: \left|\pm \frac{x-(z+l)}{|x-(z+l)|}-\mathds{1} \right|\geq C 2^{-j},\quad |x-(z+l)|\approx 2^j \right\}.
    \end{align*}
    Using integration by parts, stationary phase argument, and the arguments in \cite{BlairSogge2018concerning}, the first author, Huang, and Sogge \cite[(6.37) and (6.39)]{BlairHuangSogge2022Improved} showed the following:
    \begin{proposition}[\cite{BlairHuangSogge2022Improved}]\label{Proposition BHS22 kernel bounds}
        We have
        \begin{align*}
            & \sum_{l\in D_{\text{error}}} |K_{\lambda, j, \theta} (x, z+l)|\leq C_N (\lambda 2^{-j})^{-N} \lambda^{\frac{5}{2}} T^{-1} (2^j)^{\frac{3}{2}}, \quad \text{for all } N\geq 1,\\
            & \sum_{l\in D_{\text{main}}} |K_{\lambda, j, \theta} (x, z+l)|\lesssim \lambda^{\frac{1}{2}} T^{-1} 2^{\frac{j}{2}}.
        \end{align*}
    \end{proposition}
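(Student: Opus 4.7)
The plan is to realize $K_{\lambda,j,\theta}(x,z+l)$ as an oscillatory integral in $t$ and $\xi$, then to separate the effect of the pseudodifferential cutoff $Q_{\theta,\lambda}$ from the half-wave propagator. First I would use the Euclidean half-wave representation
\[
(\cos t\sqrt{-\Delta_{\mathbb{R}^2}})(w) \;=\; \tfrac{1}{2(2\pi)^{2}}\sum_{\pm}\int e^{i(w\cdot\xi \pm t|\xi|)}\,d\xi,
\]
together with the identity \eqref{Stat phase for L lambda for geod in 2D tori}-style expansion for the composed kernel $(Q_{\theta,\lambda}\circ \cos t\sqrt{-\Delta_{\mathbb{R}^2}})(x, z+l)$, so that, after inserting the cutoffs $\beta(2^{-j}|t|)\,\widehat{\chi^2}(t/T)$, the quantity $K_{\lambda,j,\theta}(x,z+l)$ is expressed as an integral with phase $\Phi_l(t,\xi) = \lambda t + (x-(z+l))\cdot\xi \pm t|\xi|$ and amplitude incorporating the symbol $q_{\theta,\lambda}$. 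The $t$-stationary condition forces $|\xi|=\lambda$, and on $\mathrm{supp}\,\beta(2^{-j}|\cdot|)$ the method of stationary phase in $t$ produces the factor $T^{-1}\cdot\lambda^{1/2}\cdot 2^{-j/2}$ (the $\lambda^{1/2}$ being the Jacobian and the $2^{-j/2}$ coming from the amplitude of the wave propagator at distance $\approx 2^j$), reducing matters to an oscillatory integral in $\xi$ alone with phase $\lambda\bigl[(x-(z+l))\cdot\hat\xi \pm |x-(z+l)|\bigr]$-type structure localized to $|\xi|\approx\lambda$ and $|\xi_2|/|\xi|\lesssim \theta$.

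Next I would analyze this residual $\xi$-integral by comparing the direction $\hat\xi$ (pinned near $\pm\mathds{1}$ by $q_{\theta,\lambda}$) with the critical direction $\pm(x-(z+l))/|x-(z+l)|$. On $D_{\mathrm{main}}$ these two directions agree to within $O(2^{-j})$, so the stationary-phase analysis in $\xi$ goes through and the pointwise bound $|K_{\lambda,j,\theta}(x, z+l)|\lesssim \lambda^{1/2}T^{-1}2^{-j/2}$ is preserved. The cardinality $|D_{\mathrm{main}}|\lesssim 2^{j}$ follows from a Euclidean area count: the set $\{w\in\mathbb{R}^2 : |x-w|\approx 2^j,\ \mathrm{angle}(x-w,\mathds{1})\lesssim 2^{-j}\}$ is a thin sector of area $\approx 2^j$, and the shifted lattice $\{z+l: l\in\mathbb Z^2\}$ has bounded density, giving the claimed $\lambda^{1/2}T^{-1}2^{j/2}$.

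For $D_{\mathrm{error}}$ the angular separation forces $|\nabla_\xi \Phi_l|\gtrsim \lambda\cdot 2^{-j}$ on $\mathrm{supp}\,q_{\theta,\lambda}$, so nonstationary-phase integration by parts in $\xi$ produces the gain $(\lambda 2^{-j})^{-N}$ for any $N$. A crude pointwise bound before integration by parts, combined with the annular count $|D_{\mathrm{error}}|\lesssim 2^{2j}$ (lattice points $w=z+l$ with $|x-w|\approx 2^j$) and the worst-case uniform symbol bounds on $q_{\theta,\lambda}$ (which contribute polynomial powers of $\lambda$), yields the required summed estimate $C_N(\lambda 2^{-j})^{-N}\lambda^{5/2}T^{-1}2^{3j/2}$; once $N$ is taken large this will be absorbed harmlessly when we finally sum over $1\le 2^j\lesssim T$.

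The main obstacle is the bookkeeping at the interface between the pseudodifferential cutoff and the half-wave kernel: one must justify that the principal term of the composition $Q_{\theta,\lambda}\circ \cos t\sqrt{-\Delta_{\mathbb{R}^2}}$ obeys the Lax-type parametrix expansion uniformly in the lattice shift $l$, with remainders that are either truly $O(\lambda^{-N})$ or controllable after integration by parts. In particular, the amplitude must be shown to be a classical symbol in $\xi$ of order $0$ with bounds uniform in $\theta=\lambda^{-\delta}$, so that each $\xi$-derivative in the nonstationary phase argument costs only $\theta^{-1}=\lambda^{\delta}$ (negligible for our purposes) while each direction-derivative of $\Phi_l$ yields the crucial $\lambda 2^{-j}$ factor. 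Once these symbol and phase estimates are in place, the two bounds in the proposition follow by the stationary/nonstationary dichotomy sketched above.
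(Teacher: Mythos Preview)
The paper does not give its own proof here; it quotes the two bounds as \cite[(6.37) and (6.39)]{BlairHuangSogge2022Improved} and indicates only that the method is ``integration by parts, stationary phase argument, and the arguments in \cite{BlairSogge2018concerning}.'' Your outline is in that spirit, and the $D_{\mathrm{main}}$ half is essentially correct: stationary phase produces the per--lattice-point size $T^{-1}\lambda^{1/2}2^{-j/2}$, and the sector $\{w:|x-w|\approx 2^j,\ \angle(\widehat{x-w},\pm\mathds{1})\le C2^{-j}\}$ has area $O(2^j)$, which controls the lattice count.

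Your $D_{\mathrm{error}}$ argument has a gap. First, the stated lower bound $|\nabla_\xi\Phi_l|\gtrsim \lambda\,2^{-j}$ is not what one computes: $\nabla_\xi\Phi_l=(x-(z+l))\pm t\hat\xi$, and after minimizing over $t\approx 2^j$ one obtains $|\nabla_\xi\Phi_l|\approx 2^j\cdot\angle(\hat\xi,\hat v)$ with $\hat v=(x-(z+l))/|x-(z+l)|$, which has nothing to do with $\lambda 2^{-j}$. More seriously, the symbol $q_{\theta,\lambda}$ confines $\hat\xi$ only to a $\theta$-cap about $\pm\mathds{1}$ with $\theta=\lambda^{-\delta}$; in the present setting $\delta$ is taken small while $2^{-j}\ge T^{-1}\ge\lambda^{-1/2+\delta_0}$, so $\theta\gg C\,2^{-j}$ once $2^j>\lambda^{\delta}$. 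For those $l\in D_{\mathrm{error}}$ with angular deviation $\alpha\in[C2^{-j},\theta]$ the critical direction $\hat\xi=\hat v$ lies \emph{inside} the support of $q_{\theta,\lambda}$: the $\xi$-gradient vanishes on the support, integration by parts yields no gain, and the per-point contribution is the full stationary-phase value $T^{-1}\lambda^{1/2}2^{-j/2}$. A direct lattice count over this angular band ($\approx 2^{2j}\theta$ points) already gives $T^{-1}\lambda^{1/2-\delta}2^{3j/2}$, which is \emph{not} dominated by $C_N(\lambda 2^{-j})^{-N}\lambda^{5/2}T^{-1}2^{3j/2}$ once $N$ is large. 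The actual argument in \cite{BlairHuangSogge2022Improved} requires more than a bare nonstationary-phase step on the frequency side; consult the detailed analysis around (6.33)--(6.39) there, together with the propagation input from \cite{BlairSogge2018concerning}, for the mechanism that handles this intermediate band.
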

    By Proposition \ref{Proposition BHS22 kernel bounds}, the contribution of $D_{\text{error}}$ is better than \eqref{Q G tilde kernel bound for 2D tori geodesic} since $2^j\lesssim T\leq \lambda^{\frac{1}{2}-\delta_0}$ for $0<\delta_0 \ll 1$. It also follows from Proposition \ref{Proposition BHS22 kernel bounds} that the contribution of $D_{\text{main}}$ is
    \begin{align*}
        \sum_{1\leq 2^j\lesssim T} \lambda^{\frac{1}{2}} T^{-1}2^{\frac{j}{2}}\lesssim \frac{\lambda^{\frac{1}{2}}}{T^{\frac{1}{2}}},
    \end{align*}
    which satisfies \eqref{Q G tilde kernel bound for 2D tori geodesic}. This completes the proof of \eqref{Spec bounds for geod in tori}.

    For sharpness, we follow the argument in \cite[\S 7]{BlairHuangSogge2022Improved}. Since $\mathbb{T}^2 \cong \mathbb{S}^1 \times \mathbb{S}^1$, let $0\leq \beta\in C_0^\infty ((1/2, 2))$ be a Littlewood-Paley bump function, and $P_{\mathbb{S}^1}=\sqrt{-\Delta_{\mathbb{S}^1}}$. If the $\mu_k$ are eigenvalues of $P_{\mathbb{S}^1}$ and $\{e_k\}$ is the associated orthonormal basis, fix $x_0 \in \mathbb{S}^1$ so that $|e_j (x_0)|\approx 1$. If $\beta(P_{\mathbb{S}^1}/(\lambda^{1/2}T^{-1/2}))(x, y)$ denotes the kernel of the operator $\beta(P_{\mathbb{S}^1}/(\lambda^{1/2}T^{-1/2}))$, then we define
    \begin{align*}
        \Psi_\lambda (\theta, x)=(\lambda^{1/2}T^{-\frac{1}{2}})^{-\frac{1}{2}} e^{i\lambda \theta} \beta(P_{\mathbb{S}^1}/(\lambda^{1/2}T^{-1/2})) (x_0, x).
    \end{align*}
    We want to show that this $\Psi_\lambda$ satisfies the bound \eqref{Sharp estimate for geod in tori} where the geodesic $\gamma$ is chosen as $\gamma=\mathbb{S}^1 \times \{x_0\}$. Since the $\mu_k$ are eigenvalues of $P_{\mathbb{S}^1}$ and $\{e_k\}$ is the associated orthonormal basis,
    \begin{align*}
        \beta(P_{\mathbb{S}^1}/(\lambda T^{-1})^{\frac{1}{2}})(x, y)=\sum_{j=1}^\infty \beta(\mu_j/(\lambda T^{-1})^{\frac{1}{2}}) e_j (x) \overline{e_j (y)}.
    \end{align*}
    This implies that, for $2\leq q\leq 4$,
    \begin{align*}
        \|\Psi_\lambda\|_{L^q (\gamma)}^q&=\int_0^{2\pi} |\Psi_\lambda (\theta, x_0)|^q\:d\theta \\
        &=(\lambda^{1/2}T^{-1/2})^{-\frac{q}{2}} \int_0^{2\pi} |\beta(P_{\mathbb{S}_1}/(\lambda^{1/2} T^{-1/2}))(x_0, x_0)|^q\:d\theta \\
        &\approx (\lambda^{1/2}T^{-1/2})^{-\frac{q}{2}} \left(\sum_j \beta(\mu_j/(\lambda^{1/2}T^{-1/2})) |e_j (x_0)|^2  \right)^q \\
        &\approx (\lambda^{1/2}T^{-1/2})^{-\frac{q}{2}} \left(\#\{\mu_j: \mu_j \approx (\lambda^{1/2}T^{-1/2})\} \right)^q \\
        &\approx (\lambda^{1/2}T^{-1/2})^{-\frac{q}{2}} \left((\lambda^{1/2}T^{-1/2})^{2-1} \right)^q \\
        &=(\lambda^{1/2}T^{-1/2})^{\frac{q}{2}}.
    \end{align*}
    Since it is known that $\|\Psi_\lambda \|_{L^2 (\mathbb{S}^1 \times \mathbb{S}^1)}\approx 1$ (cf. \cite[(7.10)]{BlairHuangSogge2022Improved}, \cite[\S 4.3]{Sogge1993fourier}), we have that
    \begin{align*}
        \frac{\|\Psi_\lambda \|_{L^q (\gamma)}}{\|\Psi_\lambda\|_{L^2 (\mathbb{S}^1 \times \mathbb{S}^1)}} \approx \lambda^{\frac{1}{4}} T^{-\frac{1}{4}}.
    \end{align*}
    This proves \eqref{Sharp estimate for geod in tori}.
\end{proof}

We now come back to the proof of \eqref{2D torus est for geodesic}. By Lemma \ref{Lemma 2D tori geodesic spectral proj bound} and Lemma \ref{Lemma Resolvent opr norm from L2 to Lq for univ est}, when $V\equiv 0$ and $\epsilon(\lambda)$ is as in \eqref{Window for tori geodesic cases},
\begin{align*}
    \|(-\Delta_{\mathbb{T}^2}-(\lambda+i\epsilon(\lambda))^2)^{-1}\|_{L^2 (\mathbb{T}^2)\to L^q (\gamma)}\lesssim \lambda^{-\frac{3}{4}} (\epsilon(\lambda))^{-\frac{3}{4}}, \quad \text{if }\; 2\leq q<\frac{8}{3}.
\end{align*}
With this in mind, if we set $T_\lambda, T_\lambda^0, T_\lambda^1$, and $R_\lambda$ as in \eqref{T lambda R lambda set up} so that
\begin{align*}
    (-\Delta_{\mathbb{T}^2}-(\lambda+i\epsilon(\lambda))^2)^{-1}=T_\lambda+R_\lambda=T_\lambda^0+T_\lambda^1+R_\lambda,
\end{align*}
we have, as in the previous subsection,
\begin{align*}
    & \|R_\lambda \|_{L^2 (\mathbb{T}^2)\to L^q (\gamma)},\; \|T_\lambda \|_{L^2 (\mathbb{T}^2)\to L^q (\gamma)}\lesssim \lambda^{-\frac{3}{4}} (\epsilon(\lambda))^{-\frac{3}{4}}, \quad \text{if }\; 2\leq q<\frac{8}{3}, \\
    & \|R_\lambda \circ (-\Delta_{\mathbb{T}^2}-(\lambda+i\epsilon(\lambda))^2)\|_{L^2 (\mathbb{T}^2)\to L^q (\gamma)}\lesssim (\lambda \epsilon(\lambda)) \|R_\lambda\|_{L^2 (\mathbb{T}^2)\to L^q (\gamma)},
\end{align*}
and thus, we would have \eqref{2D torus est for geodesic}, if we could show that
\begin{align}\label{T lambda (Vu) est for 2D tori geodesic}
    \|T_\lambda (Vu)\|_{L^q (\gamma)}\leq C_V \lambda^{-\frac{5}{6}} (\epsilon(\lambda))^{-\frac{3}{2}} \|(H_V-(\lambda+i\lambda^{-\frac{1}{3}})^2)\|_{L^2 (\mathbb{T}^2)}.
\end{align}
Since the argument in \eqref{T lambda 0 (Vu) est for 2D tori general curve} holds for any curve segment, it should also hold for geodesics, and thus, we have
\begin{align*}
    \begin{split}
        \|T_\lambda^0 (Vu)\|_{L^q (\gamma)} &\lesssim\|u\|_{L^\infty (\mathbb{T}^2)} \|V\|_{L^1 (\mathbb{T}^2)} \sup_y \left(\int |T_\lambda^0 (\gamma(r), y)|^q\:dr \right)^{\frac{1}{q}} \\
        &\leq C_V \lambda^{-\frac{1}{3}-\frac{1}{q}} \|(H_V-(\lambda+i\lambda^{-\frac{1}{3}})^2)u\|_{L^2 (\mathbb{T}^2)},
    \end{split}
\end{align*}
and this is better than the bound posited in \eqref{T lambda (Vu) est for 2D tori geodesic}. It then suffices to find the desired estimate for $T_\lambda^1 (Vu)$. As before, we set $T_\lambda^{1, 0}$ and $T_\lambda^{1, j}$ as in \eqref{T lambda (1, 0), (1, j) set up}. Since the operator $T_\lambda^{1, 0}$ is also a ``local'' operator as $T_\lambda^0$, by the same argument, we have
\begin{align*}
    \|T_\lambda^{1, 0} (Vu)\|_{L^q (\gamma)} \leq C_V \lambda^{-\frac{1}{3}-\frac{1}{q}} \|(H_V-(\lambda+i\lambda^{-\frac{1}{3}})^2)u\|_{L^2 (\mathbb{T}^2)},
\end{align*}
which is better than we need, and thus, we can focus on the operators $T_\lambda^{1, j}$. By the proof of \cite[(4.21)]{BlairHuangSireSogge2022UniformSobolev}, we have
\begin{align*}
    |T_\lambda^{1, j} (x, y)|\lesssim \lambda^{-1}\cdot \lambda^{\frac{1}{2}} 2^{\frac{3}{2}j}=\lambda^{-\frac{1}{2}} 2^{\frac{3}{2}j},\quad x, y\in \mathbb{T}^2,\quad 2\leq 2^j\lesssim (\epsilon(\lambda))^{-1},
\end{align*}
where $T_\lambda^{1, j} (x, y)$ is the kernel of the operator $T_\lambda^{1, j}$. Using the argument as above (cf. \eqref{T lambda 0 (Vu) est for 2D tori general curve}), it then follows from \cite[Theorem 5.2]{BlairHuangSireSogge2022UniformSobolev} that
\begin{align*}
    \sum_{j=1}^\infty \|T_\lambda^{1, j} (Vu)\|_{L^q (\gamma)}&\leq \|u\|_{L^\infty (\mathbb{T}^2)} \|V\|_{L^1 (\mathbb{T}^2)} \sum_{2\leq 2^j\lesssim (\epsilon(\lambda))^{-1} } \sup_y \left(\int |T_\lambda^{1, j} (\gamma(r), y)|^2\:dr \right)^{\frac{1}{2}} \\
    &\leq \|u\|_{L^\infty (\mathbb{T}^2)} \|V\|_{L^1 (\mathbb{T}^2)} \sum_{2\leq 2^j\lesssim (\epsilon(\lambda))^{-1} } \lambda^{-\frac{1}{2}} 2^{\frac{3}{2}j} \\
    &\leq \left(\lambda^{-\frac{1}{3}} \|(H_V-(\lambda+i\lambda^{-\frac{1}{3}})^2)u\|_{L^2 (\mathbb{T}^2)} \right)\|V\|_{L^1 (\mathbb{T}^2)} \left(\lambda^{-\frac{1}{2}}(\epsilon(\lambda))^{-\frac{3}{2}}\right)  \\
    &=C_V \lambda^{-\frac{5}{6}} (\epsilon(\lambda))^{-\frac{3}{2}} \|(H_V-(\lambda+i\lambda^{-\frac{1}{3}})^2)u\|_{L^2 (\mathbb{T}^2)},
\end{align*}
which yields \eqref{T lambda (Vu) est for 2D tori geodesic}, and this completes the proof.

\subsection{Hypersurfaces and codimension 2 submanifolds in Tori}
In this subsection, we prove \eqref{Estimate n=3 k q for tori 4/3 potential} and \eqref{Estimate n=4 k q for tori L2 potential} to finish proving Theorem \ref{Thm for tori}. We let $P=\sqrt{-\Delta_{\mathbb{T}^n}}$, and let $\epsilon_1 (\lambda)$, $\epsilon_2 (\lambda)$, and $\epsilon_3 (\lambda)$ be as in \eqref{Window set up for nD tori for submfld}. By the same argument as in Lemma \ref{Lemma 2D torus projection for general curve}, we have a similar spectral projection bound for $\mathbb{T}^3$ or $\mathbb{T}^4$.
\begin{lemma}\label{Lemma nD torus projection for submfld}
    If $n\in \{3, 4\}$ and $k\in \{n-2, n-1\}$, then
    \begin{align*}
        \|\mathds{1}_{[\lambda, \lambda+\frac{1}{T}]} (P) f\|_{L^q (\Sigma)}\lesssim (T^{-\frac{1}{2}}\lambda^{\delta(q, k)}+(T\lambda)^{\frac{n-1}{4}})\|f\|_{L^2 (\mathbb{T}^2)},\quad 1\leq T\leq \lambda,\quad q\geq 2.
    \end{align*}
\end{lemma}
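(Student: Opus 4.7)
The proof extends the argument of Lemma \ref{Lemma 2D torus projection for general curve} from curves in $\mathbb{T}^2$ to submanifolds $\Sigma$ of codimension $1$ or $2$ in $\mathbb{T}^n$, $n\in\{3,4\}$. First, choose $\chi\in\mathcal{S}(\mathbb{R})$ even, nonnegative, with $\chi(0)=1$ and $\mathrm{supp}(\widehat{\chi})\subset(-\epsilon_0,\epsilon_0)$ for small $\epsilon_0>0$. Since $\chi(T(\lambda-P))$ is invertible on the range of $\mathds{1}_{[\lambda,\lambda+T^{-1}]}(P)$ with uniformly bounded inverse, it suffices to bound $\chi(T(\lambda-P)):L^2(\mathbb{T}^n)\to L^q(\Sigma)$. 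A $TT^*$ argument reduces this to showing
\begin{equation*}
\|\chi^2(T(\lambda-P))f\|_{L^q(\Sigma)} \lesssim \left(T^{-1}\lambda^{2\delta(q,k)} + (T\lambda)^{\frac{n-1}{2}}\right)\|f\|_{L^{q'}(\Sigma)},
\end{equation*}
with $f$ supported on $\Sigma$ and the operator kernel restricted to $\Sigma\times\Sigma$.

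Next, apply Euler's formula and Fourier inversion to write
\begin{equation*}
\chi^2(T(\lambda-P)) = \frac{1}{\pi T}\int_{-\infty}^\infty e^{it\lambda}\widehat{\chi^2}(t/T)\cos(tP)\,dt \ - \ \chi^2(T(\lambda+P)),
\end{equation*}
where the second term is negligible. Split the remaining integral by a smooth cutoff $\eta$ supported near $t=0$ into a local piece $S_0$ (with $\eta$) and a global piece $S_1$. The local piece is handled by the short-time parametrix for $\cos(tP)$ together with the fold and Whitney fold analysis of Theorem \ref{Thm 2.1 in GS}--\ref{Thm for hypersurfaces in Greenleaf-Seeger} applied via Theorem \ref{Thm Section 4 in Hu}; this is essentially the proof of the BGT/Hu bounds \eqref{BGT and Hu's estimates} transcribed at the rescaled window of width $T^{-1}$, and it delivers the first term $T^{-1}\lambda^{2\delta(q,k)}\|f\|_{L^{q'}(\Sigma)}$.

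The main work is the global piece $S_1$. Here I would lift to the universal cover $\mathbb{R}^n$ via the classical Poisson summation formula,
\begin{equation*}
(\cos tP)(x,y) = \sum_{l\in\mathbb{Z}^n}(\cos t\sqrt{-\Delta_{\mathbb{R}^n}})(x-(y+l)).
\end{equation*}
Finite speed of propagation restricts the sum to $|x-(y+l)|\lesssim T$, and the Euclidean wave kernel bound \cite[(3.5.15)]{Sogge2014hangzhou} combined with stationary phase in $t$ yields
\begin{equation*}
\left|\frac{1}{T}\int e^{it\lambda}(1-\eta(t))\widehat{\chi^2}(t/T)(\cos t\sqrt{-\Delta_{\mathbb{R}^n}})(z)\,dt\right| \lesssim \frac{\lambda^{(n-1)/2}}{T}|z|^{-(n-1)/2},\quad 1\lesssim|z|\lesssim T.
\end{equation*}
Summing over $l\in\mathbb{Z}^n$ using that the number of lattice points at distance $\approx R$ is $\approx R^{n-1}$ gives
\begin{equation*}
\sup_{x,y\in\Sigma}|S_1(x,y)| \lesssim \frac{\lambda^{(n-1)/2}}{T}\int_1^T R^{n-1}\,R^{-(n-1)/2}\,dR \ \approx \ (T\lambda)^{\frac{n-1}{2}}.
\end{equation*}
Young's inequality (using that $\Sigma$ has bounded volume) converts this pointwise kernel bound into $\|S_1f\|_{L^q(\Sigma)}\lesssim(T\lambda)^{(n-1)/2}\|f\|_{L^{q'}(\Sigma)}$. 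Combining the local and global estimates and taking square roots completes the proof. The main obstacle is the Euclidean-to-torus lift in higher dimensions and the attendant lattice-point sum; since the pointwise wave kernel bound depends only on the Euclidean distance in the universal cover and the lattice sum is insensitive to $\dim\Sigma=k$, the argument is uniform across hypersurfaces and codimension-$2$ submanifolds in $n=3,4$, with the balance between the two terms producing exactly the $T^{-1/2}\lambda^{\delta(q,k)}+(T\lambda)^{(n-1)/4}$ bound stated.
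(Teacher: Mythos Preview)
Your proposal is correct and follows essentially the same argument as the paper, which in fact gives no separate proof and simply writes ``By the same argument as in Lemma \ref{Lemma 2D torus projection for general curve}.'' Your write-up is precisely that argument transplanted to $\mathbb{T}^n$ with $n\in\{3,4\}$: the reduction via $\chi(T(\lambda-P))$ and $TT^*$, the local/global split $S_0+S_1$, the BGT/Hu input for $S_0$, the Poisson-summation lift for $S_1$, the Euclidean wave kernel bound $\lambda^{(n-1)/2}|z|^{-(n-1)/2}$, the lattice-point sum yielding $(T\lambda)^{(n-1)/2}$, and Young's inequality to close.
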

Taking $T$ so that $\frac{\lambda^{\delta(q, k)}}{T^{\frac{1}{2}}}=(T\lambda)^{\frac{n-1}{4}}$, it follows from Lemma \ref{Lemma nD torus projection for submfld} that
\begin{align}\label{Result of spec proj bound for tori submflds}
    \|\mathds{1}_{[\lambda, \lambda+\frac{1}{T}]} (P) f\|_{L^q (\Sigma)}\lesssim \lambda^{\delta(q, k)} T^{-\frac{1}{2}},\quad T^{-1}=\lambda^{-\frac{2}{n+1}\left(\frac{n-1}{2}-\frac{2k}{q} \right)}.
\end{align}
As before, by this and Lemma \ref{Lemma Resolvent opr norm from L2 to Lq for univ est}, since $\epsilon_1(\lambda)=\lambda^{-\frac{2}{n+1}\left(\frac{n-1}{2}-\frac{2k}{q}\right)}$, it follows that
\begin{align*}
    \|(-\Delta_{\mathbb{T}^2}-(\lambda+i\epsilon_1 (\lambda))^2)^{-1}\|_{L^2 (\mathbb{T}^n)\to L^q (\Sigma)}\lesssim \lambda^{\delta(q, k)-1} (\epsilon_1 (\lambda))^{-\frac{1}{2}}.
\end{align*}
We set $T_\lambda, T_\lambda^0, T_\lambda^1, T_\lambda^{1, 0}, T_\lambda^{1, j}$, and $R_\lambda$ as in \eqref{T lambda R lambda set up} and \eqref{T lambda (1, 0), (1, j) set up}, i.e.,
\begin{align*}
    \begin{split}
        & T=c_0 (\epsilon_1 (\lambda))^{-1},\quad 0<c_0\ll 1, \\
        & T_\lambda=T_\lambda^0+T_\lambda^1, \\
        & T_\lambda^0=\frac{i}{\lambda+i\epsilon_1(\lambda)} \int_0^\infty \eta(t) \eta(t/T) e^{i\lambda_1 t} e^{-\epsilon_1(\lambda)t} \cos (tP)\:dt, \\
        & T_\lambda^1=\frac{i}{\lambda+i\epsilon_1(\lambda)} \int_0^\infty (1-\eta(t))\eta(t/T) e^{i\lambda t} e^{-\epsilon_1(\lambda)t} \cos (tP)\:dt, \\
        & R_\lambda=\frac{i}{\lambda+i\epsilon_1(\lambda)}\int_0^\infty (1-\eta(t/T)) e^{i\lambda t} e^{-\epsilon_1(\lambda)t} \cos (tP)\:dt, \\
        & T_\lambda^{1, 0}=\frac{i}{\lambda+i\epsilon_1(\lambda)}\int_0^\infty \beta_0 (t) (1-\eta(t)) \eta(t/T) e^{i\lambda t} e^{-\epsilon_1(\lambda)t} \cos tP\:dt, \\
        & T_\lambda^{1, j}=\frac{i}{\lambda+i\epsilon_1(\lambda)}\int_0^\infty \beta(2^{-j}t) (1-\eta(t))\eta(t/T) e^{i\lambda t} e^{-\epsilon_1(\lambda) t} \cos tP\:dt,\quad 1\leq 2^j \lesssim (\epsilon_1(\lambda))^{-1}.
    \end{split}    
\end{align*}
By the argument in \S \ref{SS: general curve in 2D torus}, we have that
\begin{align}\label{Tori submfld T lambda R lambda estimates}
    \begin{split}
        & \|R_\lambda \|_{L^2 (\mathbb{T}^2)\to L^q (\Sigma)}\lesssim \lambda^{\delta(q, k)-1}(\epsilon_1 (\lambda))^{\frac{1}{2}}, \\
        & \|R_\lambda \circ (-\Delta_{\mathbb{T}^2}-(\lambda+i\epsilon_1 (\lambda))^2)\|_{L^2 (\mathbb{T}^2)\to L^q (\Sigma)}\lesssim \lambda^{\delta(q, k)}(\epsilon_1 (\lambda))^{-\frac{1}{2}}, \\
        & \|T_\lambda \|_{L^2 (\mathbb{T}^2)\to L^q (\Sigma)}\lesssim \lambda^{\delta(q, k)-1} (\epsilon_1 (\lambda))^{-\frac{1}{2}}.
    \end{split}
\end{align}
As before, we also note that
\begin{align}\label{I T lambda R lambda perturbation term}
    \begin{split}
        u&=(-\Delta_{\mathbb{T}^n}-(\lambda+i\epsilon_1 (\lambda))^2)^{-1}\circ (-\Delta_{\mathbb{T}^n}-(\lambda+i\epsilon_1 (\lambda))^2)u \\
        &=T_\lambda (H_V-(\lambda+i\epsilon_1 (\lambda))^2)u+R_\lambda (-\Delta_{\mathbb{T}^n}-(\lambda+i\epsilon_1 (\lambda))^2)u-T_\lambda (Vu).
    \end{split}
\end{align}
As above, using \eqref{Tori submfld T lambda R lambda estimates} and \eqref{I T lambda R lambda perturbation term}, it is enough to bound $T_\lambda (Vu)$. Since $T_\lambda=T_\lambda^0+T_\lambda^1$, as above, we compute $T_\lambda^0 (Vu)$ and $T_\lambda^1 (Vu)$, separately. The rest of the proof will be similar to the argument in \S \ref{SS: general hypsurf and codim 2 cases}. We note that $T_\lambda^0$ is a ``local'' operator as in the local operator $S_\lambda$ in \eqref{S lambda W lambda set up}. Since $n, k$, and $q$ satisfying \eqref{n=3 k q for tori 4/3 potential} and \eqref{n=4 k q for tori L2 potential} also satisfy the hypothesis of Proposition \ref{Prop Reduction for hypersurface thm} and Proposition \ref{Prop Reduction for codim 2 thm}, the proof of the two propositions gives us that
\begin{align}\label{T lambda 0 (Vu) uniform est on the Sobolev trace line}
    \|T_\lambda^0 (Vu)\|_{L^q (\Sigma)}\lesssim \|Vu\|_{L^p (M)},\quad \frac{n}{p}-\frac{k}{q}=2.
\end{align}
We recall from \cite[Theorem 1.4]{BlairHuangSireSogge2022UniformSobolev} that
\begin{align}\label{Epsilon 3 window est from BHSS on tori}
    \begin{split}
        \|u\|_{L^q (\mathbb{T}^n)}\leq C_V (\epsilon_3 (\lambda))^{-\frac{1}{2}}\lambda^{\sigma(q)-1}\|(H_V-(\lambda+i\epsilon_3 (\lambda))^2)u\|_{L^2 (\mathbb{T}^2)}, \\
        \text{where } n=3, 4, \frac{2n}{n-2}\leq q<\infty, \text{ and } \epsilon_3 (\lambda)^{-\frac{1}{3}+\delta_0},
    \end{split}
\end{align}
where $\delta_0>0$ is any fixed positive real number, and
\begin{align}\label{Epsilon 2 window est from BHSS on tori}
    \begin{split}
        \|u\|_{L^q (\mathbb{T}^n)}\leq C_V \lambda^{\epsilon_0} (\epsilon_2 (\lambda))^{-\frac{n+3}{2(n+1)}} \lambda^{-\frac{n+3}{2(n+1)}} \|(H_V-(\lambda+i\epsilon_2 (\lambda))^2)u\|_{L^2 (\mathbb{T}^2)}, \\
        \text{where } n=3, 4, q=\frac{2(n+1)}{n-1}, \text{ and } \epsilon_2 (\lambda)=\begin{cases}
            \lambda^{-\frac{3}{16}+c_0}, & \text{if } n=3, \\
            \lambda^{-\frac{1}{6}}, & \text{if } n=4,
        \end{cases}
    \end{split}
\end{align}
where $\epsilon_0, c_0>0$ are arbitrarily fixed. To make use of \eqref{Epsilon 3 window est from BHSS on tori}, we note that if $\frac{n}{p}-\frac{k}{q}=2$, then $\frac{np}{n-2p}=\frac{nq}{k}$. This gives us that
\begin{align*}
    \frac{np}{n-2p}=\frac{nq}{k}\geq \frac{2n}{n-2},\quad \text{when \eqref{n=3 k q for tori 4/3 potential} and \eqref{n=4 k q for tori L2 potential} hold.} 
\end{align*}
By this, \eqref{T lambda 0 (Vu) uniform est on the Sobolev trace line}, and the argument in \eqref{S(vu) est computation hypersurface}, it follows from \eqref{Epsilon 3 window est from BHSS on tori} and H\"older's inequality that
\begin{align}\label{T lambda 0 (Vu) bound for tori submflds}
    \begin{split}
         \|T_\lambda^0 (Vu)\|_{L^q (\Sigma)}&\lesssim \|V\|_{L^{\frac{n}{2}} (\mathbb{T}^n)}\|u\|_{L^{\frac{np}{n-2p}}(\mathbb{T}^n)} \\
        &\leq C_V \lambda^{\sigma\left(\frac{np}{n-2p} \right)-1} (\epsilon_3 (\lambda))^{-\frac{1}{2}} \|(H_V-(\lambda+i\epsilon_3 (\lambda))^2)u\|_{L^2 (\mathbb{T}^n)} \\
        &=C_V \lambda^{\frac{n-1}{2}-\frac{k}{q}-1} (\epsilon_3 (\lambda))^{-\frac{1}{2}} \|(H_V-(\lambda+i\epsilon_3 (\lambda))^2)u\|_{L^2 (\mathbb{T}^n)} \\
        &=C_V \lambda^{\delta(q, k)-1} (\epsilon_3 (\lambda))^{-\frac{1}{2}} \|(H_V-(\lambda+i\epsilon_3 (\lambda))^2)u\|_{L^2 (\mathbb{T}^n)}.
    \end{split}
\end{align}
We note that, if we choose $\delta_0>0$ appropriately in \eqref{Epsilon 3 window est from BHSS on tori}, then we have
\begin{align*}
    \epsilon_3 (\lambda)=\lambda^{-\frac{1}{3}+\delta_0}=\epsilon_1 (\lambda)<1,\quad \text{when \eqref{n=3 k q for tori 4/3 potential} and \eqref{n=4 k q for tori L2 potential} hold,}
\end{align*}
and thus, \eqref{T lambda 0 (Vu) bound for tori submflds} gives us the bounds in \eqref{Estimate n=3 k q for tori 4/3 potential} and \eqref{Estimate n=4 k q for tori L2 potential}. For the operator $T_\lambda^1$, note that $T_\lambda^{1, 0}$ is also a ``local'' operator, and so, it satisfies the same bound as \eqref{T lambda 0 (Vu) bound for tori submflds} with $\epsilon_3 (\lambda)=\epsilon_1 (\lambda)$, i.e.,
\begin{align}\label{T lambda 1 0 (Vu) bound for tori submflds}
    \|T_\lambda^{1, 0} (Vu)\|_{L^q (\Sigma)}\leq C_V \lambda^{\delta(q, k)-1} (\epsilon_1 (\lambda))^{-\frac{1}{2}} \|(H_V-(\lambda+i\epsilon_1 (\lambda))^2)u\|_{L^2 (\mathbb{T}^n)}.
\end{align}

With this in mind, we first show that \eqref{Estimate n=3 k q for tori 4/3 potential}. Suppose \eqref{n=3 k q for tori 4/3 potential} holds. By \eqref{Tori submfld T lambda R lambda estimates}-\eqref{T lambda 1 0 (Vu) bound for tori submflds}, we would have \eqref{Estimate n=3 k q for tori 4/3 potential}, if we could show that
\begin{align}\label{WTS tori estimate 4/3 potential}
    \sum_{j=1}^\infty \|T_\lambda^{1, j} (Vu)\|_{L^q (\Sigma)} \leq C_V \lambda^{\delta(q, k)-1} (\epsilon_2 (\lambda))^{-\frac{3}{4}} \|(H_V-(\lambda+i\epsilon_2 (\lambda))^2)u\|_{L^2 (\mathbb{T}^3)}.
\end{align}
To see this, recall from \cite[(4.25)]{BlairHuangSireSogge2022UniformSobolev} that, for $n\geq 3$,
\begin{align*}
    \sum_{j=1}^\infty \|T_\lambda^{1, j} f\|_{L^\infty (\Sigma)} &\leq \sum_{j=1}^\infty \|T_\lambda^{1, j} f\|_{L^\infty (\mathbb{T}^n)} \\
    &\lesssim \sum_{2\leq 2^j\lesssim (\epsilon_1 (\lambda))^{-1} } \lambda^{\frac{n-3}{2}} 2^{\frac{n+1}{2}j} \|f\|_{L^1 (\mathbb{T}^n)}\lesssim \lambda^{\frac{n-3}{2}} (\epsilon_1 (\lambda))^{-\frac{n+1}{2}} \|f\|_{L^1 (\mathbb{T}^n)}.
\end{align*}
Using this, we have, for $n\geq 3$,
\begin{align}\label{T lambda 1 j compuation for tori submflds}
    \begin{split}
        \sum_{2\leq 2^j\lesssim (\epsilon_1 (\lambda))^{-1}}\|T_\lambda^{1, j} (Vu)\|_{L^q (\Sigma)} &=\sum_{2\leq 2^j\lesssim (\epsilon_1 (\lambda))^{-1}}\left(\int_\Sigma |(T_\lambda^{1, j} (Vu))(z)|^q\:dz \right)^{\frac{1}{q}} \\
        &\lesssim \lambda^{\frac{n-3}{2}} (\epsilon_1 (\lambda))^{-\frac{n+1}{2}} \left(\int_\Sigma \|Vu\|_{L^1 (\mathbb{T}^n)}^q \right)^{\frac{1}{q}} \\
        &\lesssim \lambda^{\frac{n-3}{2}} (\epsilon_1 (\lambda))^{-\frac{n+1}{2}} \|Vu\|_{L^1 (\mathbb{T}^n)}.
    \end{split}
\end{align}
We note that $V\in L^{\frac{4}{3}} (\mathbb{T}^3)$ since $V\in L^{\frac{3}{2}} (\mathbb{T}^3)$ and $M$ is compact. By \eqref{n=3 k q for tori 4/3 potential}, \eqref{T lambda 1 j compuation for tori submflds}, H\"older's inequality, and \eqref{Epsilon 2 window est from BHSS on tori}, we have, for $0<\epsilon_0 \ll 1$,
\begin{align*}
    \sum_{2\leq 2^j\lesssim (\epsilon_1 (\lambda))^{-1}}\|T_\lambda^{1, j} (Vu)\|_{L^q (\Sigma)}&\lesssim (\epsilon_1 (\lambda))^{-2}\|V\|_{L^{\frac{4}{3}} (\mathbb{T}^3)} \|u\|_{L^4 (\mathbb{T}^3)} \\
    &\leq C_V \lambda^{\frac{1}{4}-\frac{2k}{q}+\epsilon_0}(\epsilon_2 (\lambda))^{-\frac{3}{4}}\|(H_V-(\lambda+i \epsilon_2 (\lambda))^2)u\|_{L^2 (\mathbb{T}^3)} \\
    &\leq C_V \lambda^{\delta(q, k)-1} (\epsilon_2 (\lambda))^{-\frac{3}{4}} \|(H_V-(\lambda+i\epsilon_2 (\lambda))^2)u\|_{L^2 (\mathbb{T}^3)},
\end{align*}
which satisfies \eqref{WTS tori estimate 4/3 potential}, completing the proof of \eqref{Estimate n=3 k q for tori 4/3 potential}.

We next show \eqref{Estimate n=4 k q for tori L2 potential} when \eqref{n=4 k q for tori L2 potential} holds. By \eqref{Tori submfld T lambda R lambda estimates}-\eqref{T lambda 1 0 (Vu) bound for tori submflds}, we would have \eqref{Estimate n=4 k q for tori L2 potential}, if we could show that
\begin{align}\label{WTS tori estimate L2 potential}
    \sum_{j=1}^\infty \|T_\lambda^{1, j} (Vu)\|_{L^q (\Sigma)} \leq C_V \lambda^{2-\frac{2k}{q}} \|u\|_{L^2 (\mathbb{T}^4)}. 
\end{align}
Similarly, by \eqref{n=4 k q for tori L2 potential}, \eqref{T lambda 1 j compuation for tori submflds}, and H\"older's inequality, we have
\begin{align*}
    \sum_{2\leq 2^j\lesssim (\epsilon_1 (\lambda))^{-1}}\|T_\lambda^{1, j} (Vu)\|_{L^q (\Sigma)}\lesssim \lambda^{\frac{1}{2}} (\epsilon_1 (\lambda))^{-2}\|V\|_{L^2 (\mathbb{T}^4)} \|u\|_{L^2 (\mathbb{T}^4)} \leq C_V \lambda^{2-\frac{2k}{q}} \|u\|_{L^2 (\mathbb{T}^4)},
\end{align*}
which proves \eqref{WTS tori estimate L2 potential}, completing the proof of \eqref{Estimate n=4 k q for tori L2 potential}. This completes the proof of Theorem \ref{Thm for tori}.

\section{Some partial results and related future work}\label{S: Partial result and future work}
\subsection{Higher codimension analogues}
In this paper, hypersurfaces and codimension $2$ cases are not fully resolved. Indeed, we do not have estimates in Theorem \ref{Thm for hypersurfaces} for $2\leq q<\frac{2(n-1)^2}{n^2-3n+4}$ when $n\in \{4, 5\}$, $2\leq q<\frac{2n^2-5n+4}{n^2-4n+8}$ when $n\in \{6, 7\}$, or $2\leq q<\frac{2n^2-5n+4}{n^2-4n+8}$ when $n\geq 8$, and in Theorem \ref{Thm for codim 2} for $2\leq q\leq \frac{2(n-2)^2}{n^2-5n+8}$ when $n\geq 5$. Finding higher codimension analogues of Theorem \ref{Thm for hypersurfaces}-\ref{Thm for codim 2} may also be interesting. For example, if we use the arguments in the proof of Theorem \ref{Thm for codim 2}, one may obtain that if $n\geq 4$ and $k=n-3$, then
\begin{align*}
    \|u\|_{L^{\frac{2n-3}{n-2}} (\Sigma)}\leq C_V \lambda^{\delta\left(\frac{2n-3}{n-2}, n-3\right)-1}\|(H_V-(\lambda+i)^2)u\|_{L^2 (M)},\quad \text{when } V\in L^{\frac{2n(2n-3)}{7n-9} } (M),
\end{align*}
and if $n\geq 5$ and $k=n-4$, then
\begin{align*}
    \|u\|_{L^2 (\Sigma)}\leq C_V \lambda^{\delta(2, n-4)-1}\|(H_V-(\lambda+i)^2)u\|_{L^2 (M)}, \quad \text{when } V\in L^{\frac{2n}{3} } (M),
\end{align*}
but the potential either $V\in L^{\frac{2n(2n-3)}{7n-9}} (M)$ or $V\in L^{\frac{2n}{3}} (M)$ is not critically singular anymore. Getting the estimates with $V\in L^{\frac{n}{2}} (M)$ for higher codimension cases may be difficult if we follow the arguments in this paper, since the arguments in Theorem \ref{Thm for hypersurfaces}-\ref{Thm for codim 2} would imply conditions $q<\frac{2k}{n-3}$ (or $q\leq \frac{2k}{n-3}$) when $k=\dim \Sigma$, and if $k<n-3$, this gives $q<2$. This is not usual, since we usually consider $q\geq 2$ for spectral projection estimates.

\subsection{Analogues of Theorem \ref{Thm for hypersurfaces}- \ref{Thm for log improved} with Kato potentials}
In Theorem \ref{Thm for hypersurfaces}-\ref{Thm for codim 2}, we considered $V\in L^{\frac{n}{2}} (M)$ to show estimates in the theorems. If $\dim M=n=3$, we can say that there is a special Kato class potential so that we have similar estimates in Theorem \ref{Thm for hypersurfaces}, \ref{Thm for codim 2}, and \ref{Thm for log improved}. To see this, we recall the following definition.

\begin{definition}[Schechter \cite{Schechter1971Spectra}, Simon \cite{Simon1982Semigroup}]\label{Definition of M beta p}
We say $V\in M_{\beta, p}$ if
\begin{align*}
    \sup_x \int_{d_g (x, y)\leq \frac{1}{2}\mathrm{Inj}(M), y\in M} (d_g (x, y))^{\beta-n} |V(y)|^p\:dV_g (y)=:\|V\|_{\beta, p}^p<\infty,
\end{align*}
where $\mathrm{Inj} (M)$ is the injectivity radius of a compact Riemannian manifold $M$ without boundary, and $dV_g$ is the Riemannian volume form.
\end{definition}

We focus on $\beta>0$ when we consider $M_{\beta, p}$ here. Some $M_{\beta, p}$ classes are sub-classes of $\mathcal{K}(M)$. For example, if $n=3$, then $M_{1, 2}\subset \mathcal{K}(M)$. Indeed, note that, if $V\in M_{1, 2}$, then, for $0<\epsilon<1$, by the Cauchy-Schwarz inequality, in local coordinates
\begin{align*}
    \int_{|x-y|\leq \epsilon} 1\cdot |x-y|^{-1} |V(y)|\:dy \leq \left(\int_{|x-y|\leq \epsilon} 1^2\:dy \right)^{\frac{1}{2}} \left(\int_{|x-y|\leq \epsilon} |x-y|^{-2} |V(y)|^2\:dy \right)^{\frac{1}{2}} \leq \epsilon^{\frac{3}{2}} \|V\|_{1, 2},
\end{align*}
and thus, after taking supremum over $x$, when $\epsilon \to 0$, we have $V\in \mathcal{K}(M)$. It then follows that $H_V$ is self-adjoint and positive by \cite[\S 2]{BlairSireSogge2021Quasimode}, and hence, \eqref{Hv eigfcn decomp} makes sense.

If we allow a log loss, we have the following higher dimensional analogues of Theorem \ref{Thm for univ est for any curves}, which are the partial analogues of \cite[Theorem 3]{BurqGerardTzvetkov2007restrictions} and \cite[Theorem 1.3]{Hu2009lp}. We also want to remove a log loss when $(n, k)=(3, 1)$, where the submanifold is either a geodesic segment, or curve with nonvanishing curvatures, which were proved in \cite{ChenSogge2014few} and \cite{WangZhang2021Codim2} for $V\equiv 0$.

\begin{theorem}\label{Thm for n=3 general}
Suppose $M$ is a compact Riemannian manifold of dimension $n=3$ and $\Sigma$ is a submanifold of $M$ with dimension $k$. Let $\delta (q, k)$ and $\nu (q, k)$ be as in \eqref{BGT and Hu lambda exponent} and \eqref{BGT and Hu log lambda exponent}, respectively. Also assume $u\in \mathrm{Dom}(H_V)$ and $\lambda\geq 1$.
\begin{enumerate}
    \item Let $n=3$, $1\leq k\leq 2$, and $V\in M_{1, 2}\subset \mathcal{K}(M)$. Then we have
    \begin{align}\label{Univ Est for n=3 submfld}
        \|u\|_{L^q (\Sigma)}\leq C_V\lambda^{\delta(q, k)-1} (\log \lambda)^{\nu (q, k)} \|(H_V-(\lambda+i)^2)u\|_{L^2 (M)},\quad \text{for all } q\geq 2.
    \end{align}
    \item Let $n=3$ and $V\in M_{1, 2}\subset \mathcal{K}(M)$. Suppose the curve $\gamma$ is either a geodesic segment or a curve segment with nonvanishing curvatures. We then have that
    \begin{align}\label{Univ Est for codim 2 with sing pot}
        \|u\|_{L^2 (\gamma)}\leq C_V \lambda^{-\frac{1}{2}}\|(H_V-(\lambda+i)^2)u \|_{L^2(M)}.
    \end{align}
\end{enumerate}
\end{theorem}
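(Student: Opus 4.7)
The plan is to assemble Theorem~\ref{Thm for n=3 general} from the restriction theorems already proved in the paper (Theorems~\ref{Thm for hypersurfaces}--\ref{Thm for codim 2}, valid for $V\in L^{n/2}(M)$) together with the $L^\infty$ quasimode bound of \cite{BlairSireSogge2021Quasimode} (valid for $V\in \mathcal{K}(M)$), after verifying the chain of inclusions $M_{1,2}\subset \mathcal{K}(M)\cap L^{3/2}(M)$ at $n=3$. The first inclusion is given in the paragraph preceding the theorem. For the second, I would pick any $x_0\in M$ and use the diameter bound $D:=\sup_{x,y\in M}d_g(x,y)<\infty$ to conclude
\begin{align*}
\int_M |V(y)|^2\,dV_g(y) \;\leq\; D^2\int_M d_g(x_0,y)^{-2}|V(y)|^2\,dV_g(y) \;\leq\; D^2\|V\|_{1,2}^2,
\end{align*}
so $V\in L^2(M)\subset L^{3/2}(M)$ by compactness, supplying the hypothesis of Theorems~\ref{Thm for hypersurfaces}--\ref{Thm for codim 2}.

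For Part~(1) with $q\in[2,\infty)$, I would invoke the finite-exponent theorems directly. In the hypersurface case $k=2$, Theorem~\ref{Thm for hypersurfaces}(1) with $n=3$ covers the entire range $q\in[2,\infty)$, since $\frac{2(n-1)^2}{n^2-3n+4}=2$ and $\frac{2(n-1)}{n-3}=\infty$; noting $\nu(q,2)=0$, this delivers \eqref{Univ Est for n=3 submfld}. In the curve case $k=1$, Theorem~\ref{Thm for codim 2}(1) covers $q\in(2,\infty)$, and the $q=2$ endpoint is Theorem~\ref{Thm for codim 2}(2), whose $(\log\lambda)^{1/2}$ loss coincides with $\nu(2,1)=\frac{1}{2}$.

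For Part~(1) with $q=\infty$, I would exploit the fact that the Sogge exponent satisfies $\sigma(\infty)-1=\frac{n-1}{2}-1=0$ in dimension three. Since $V\in\mathcal{K}(M)$, \cite[Theorem~1.3]{BlairSireSogge2021Quasimode} (and the standard comparison $\|(H_V-\lambda^2+i\lambda)u\|_{L^2}\lesssim \|(H_V-(\lambda+i)^2)u\|_{L^2}$ from the spectral theorem) yields
\begin{align*}
\|u\|_{L^\infty(\Sigma)}\leq \|u\|_{L^\infty(M)} \leq C_V\lambda^{\sigma(\infty)-1}\|(H_V-(\lambda+i)^2)u\|_{L^2(M)} = C_V\|(H_V-(\lambda+i)^2)u\|_{L^2(M)},
\end{align*}
which matches $\lambda^{\delta(\infty,k)-1}(\log\lambda)^{\nu(\infty,k)}=\lambda^0$ for both $k=1,2$ and completes Part~(1).

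Part~(2) is already Theorem~\ref{Thm for codim 2}(4), whose proof feeds the improved restriction spectral projection bound $\|\mathds{1}_{[\lambda,\lambda+1]}(\sqrt{-\Delta_g})\|_{L^2(M)\to L^2(\gamma)}\lesssim \lambda^{1/2}$ of \cite{ChenSogge2014few,WangZhang2021Codim2} (valid for geodesics and curves of nonvanishing geodesic curvature in a three-manifold) into the perturbation scheme of \S\ref{S: proof of codim 2 thm}, erasing the log loss. The overall plan has no substantive obstacle: the genuinely new content is the identification of $M_{1,2}$ as a Kato-type class that in $n=3$ is small enough to lie simultaneously in $\mathcal{K}(M)$ and $L^{3/2}(M)$, and the observation that the $L^\infty$ endpoint falls out for free because $\sigma(\infty)=\delta(\infty,k)=1$ in this dimension.
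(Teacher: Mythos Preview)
Your proposal is correct but takes a genuinely different route from the paper's. You reduce Theorem~\ref{Thm for n=3 general} to the already-proved Theorems~\ref{Thm for hypersurfaces}--\ref{Thm for codim 2} by first observing that $M_{1,2}\subset L^{3/2}(M)$. (A small patch: your diameter argument needs a finite covering of $M$ by balls of radius $\tfrac12\mathrm{Inj}(M)$, since the $M_{1,2}$ norm in Definition~\ref{Definition of M beta p} only controls the integral over one such ball; this is routine, and the inclusion is correct---the paper itself uses $\|V\|_{L^2(M)}\lesssim\|V\|_{1,2}$ in the proof of Theorem~\ref{Thm for future work log improved n>=3}.) The paper, by contrast, argues \emph{from scratch}: it redoes the resolvent decomposition $(-\Delta_g-(\lambda+i)^2)^{-1}=S_\lambda+W_\lambda$, exploits the $n=3$ coincidence that the kernel bound $|S_\lambda(x,y)|\lesssim d_g(x,y)^{-1}$ is uniform in $\lambda$, and then controls the perturbation $S_\lambda(Vu)$ by a direct Cauchy--Schwarz argument against the $M_{1,2}$ norm (Lemma~\ref{Lemma perturbation est for n=3 or more}), never invoking $V\in L^{3/2}$. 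Your approach is more economical and makes transparent that Theorem~\ref{Thm for n=3 general} is really a corollary of the main theorems once $M_{1,2}\subset L^{3/2}\cap\mathcal{K}$ is noted; the paper's approach is self-contained and is written as a template for the open problem stated just after the theorem---extending from $M_{1,2}$ to all of $\mathcal{K}(M)$, where your $L^{3/2}$ reduction would no longer be available.
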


Since $M_{1, 2}$ is a sub-class of $\mathcal{K}(M)$, it would be interesting if we could extend the sub-class $M_{1, 2}$ to whole $\mathcal{K}(M)$ when $n=3$. It would also be interesting to consider $n\geq 4$.

\begin{proof}[Proof of Theorem \ref{Thm for n=3 general}]
    We first show \eqref{Univ Est for n=3 submfld}. Let us first consider the case where $n=3$, $\Sigma$ is any submanifold, and $V\in M_{1, 2}$. As before, we write
    \begin{align*}
        (-\Delta_g-(\lambda+i)^2)^{-1}=S_\lambda +W_\lambda,
    \end{align*}
    where $S_\lambda$ and $W_\lambda$ are as in \S \ref{S: Preliminaries}, and $P=\sqrt{-\Delta_g}$. By \cite[Theorem 3]{BurqGerardTzvetkov2007restrictions} and \cite[Theorem 1.3]{Hu2009lp}, we have
    \begin{align*}
        & \|\mathds{1}_{[\lambda, \lambda+1]} (P) \|_{L^2 (M)\to L^p (\Sigma)}\lesssim \lambda^{\delta(p, k)} (\log \lambda)^{\nu (p, k)}, \\
        & \|u\|_{L^p (\Sigma)}\lesssim \lambda^{\delta(p)-1} (\log \lambda)^{\nu (p, k)} \|(-\Delta_g-(\lambda+i)^2)u\|_{L^2 (M)},
    \end{align*}
    and
    \begin{align*}
        \| (-\Delta_g -(\lambda+i)^2)^{-1} \|_{L^2 (M)\to L^p (\Sigma)}\lesssim \lambda^{\delta(p)-1} (\log \lambda)^{\nu (p, k)}.
    \end{align*}
    Using these and the arguments above in \S \ref{S: Proof for curves for universal estimates}, we obtain the following estimates for $S_\lambda$ and $W_\lambda$.
    \begin{align}\label{S W lambda est for n=3 general}
        \begin{split}
            & \|S_\lambda \|_{L^2 (M)\to L^p (\Sigma)}\lesssim \lambda^{\delta(p, k)-1} (\log \lambda)^{\nu (p, k)}, \\
            & \|W_\lambda (-\Delta_g -(\lambda+i)^2)\|_{L^2 (M)\to L^p (\Sigma)}\lesssim \lambda^{\delta(p, k)-1} (\log \lambda)^{\nu (p, k)}.
        \end{split}
    \end{align}
    We then want to bound $S_\lambda (Vu)$ as above. When $n=3$, by using an argument in \cite[\S 5]{BlairHuangSireSogge2022UniformSobolev}, we have the kernel estimates of $S_\lambda$ as follows.
    \begin{align*}
        \begin{split}
            |S_\lambda (x, y)|\lesssim \begin{cases}
            (d_g (x, y))^{2-n}=(d_g (x, y))^{-1}, & \text{if } d_g (x, y)\leq \lambda^{-1},\\
            \lambda^{\frac{n-3}{2}} (d_g (x, y))^{-\frac{n-1}{2}}=(d_g (x, y))^{-1}, & \text{if } \lambda^{-1}\leq d_g (x, y)\leq 1.
            \end{cases}
        \end{split}
    \end{align*}
    We thus have that
    \begin{align}\label{S lambda kernel bound for n=3}
        |S_\lambda (x, y)|\lesssim (d_g (x, y))^{-1},\quad \text{for }\quad d_g (x, y)\leq 1.
    \end{align}
    We want to find the following estimates of $S_\lambda (Vu)$, using Definition \ref{Definition of M beta p}.

    \begin{lemma}\label{Lemma perturbation est for n=3 or more}
    Let $n=3$. If $V\in M_{1, 2}$, then
    \begin{align*}
        \|S_\lambda (Vu)\|_{L^p (\Sigma)} \leq C_V \|u\|_{L^2 (M)},
    \end{align*}
    for all $1\leq k\leq 2$ and $p\geq 2$.
    \end{lemma}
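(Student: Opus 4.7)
The plan is to show the stronger uniform bound $\|S_\lambda(Vu)\|_{L^\infty(M)} \le C_V \|u\|_{L^2(M)}$, which immediately yields the lemma for every $p \ge 2$ by compactness of the submanifold $\Sigma$. The two ingredients needed are exactly matched to one another: the pointwise kernel estimate \eqref{S lambda kernel bound for n=3} for $S_\lambda$ in dimension $n=3$, and the defining weighted bound for the class $M_{1,2}$.

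First, I would fix an $\epsilon_0 \le \tfrac12 \mathrm{Inj}(M)$ so that the kernel of $S_\lambda$ is supported in $\{d_g(x,y)\le \epsilon_0\}$ (this follows from the cutoff $\mu_0$ in \eqref{S lambda W lambda set up}) and so that the defining supremum of $\|V\|_{1,2}$ controls integrals over the relevant geodesic balls. Then I would apply the Cauchy--Schwarz inequality in $y$, pairing the full kernel with $|V|$ on one side and $u$ on the other, to obtain
\begin{align*}
    |S_\lambda(Vu)(x)|
    &= \left| \int S_\lambda(x,y)\, V(y)\, u(y)\, dy \right| \\
    &\le \left( \int |S_\lambda(x,y)|^2 \, |V(y)|^2\, dy \right)^{\!\!\tfrac12} \|u\|_{L^2(M)}.
\end{align*}

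The key step is to recognize that the kernel bound \eqref{S lambda kernel bound for n=3}, namely $|S_\lambda(x,y)|\lesssim d_g(x,y)^{-1}$ on the support, matches the weight defining $M_{1,2}$ in dimension $n=3$: squaring gives $|S_\lambda(x,y)|^2 \lesssim d_g(x,y)^{-2} = d_g(x,y)^{\,\beta - n}$ with $(\beta,n)=(1,3)$. Thus by Definition \ref{Definition of M beta p},
\begin{align*}
    \sup_{x\in M}\int |S_\lambda(x,y)|^2\, |V(y)|^2\, dy
    \;\lesssim\; \sup_{x\in M}\!\int_{d_g(x,y)\le \epsilon_0}\!\! d_g(x,y)^{-2}\,|V(y)|^2\, dy
    \;\le\; \|V\|_{1,2}^2.
\end{align*}
Combining the two displays yields $\|S_\lambda(Vu)\|_{L^\infty(M)} \le C_V \|u\|_{L^2(M)}$ uniformly in $\lambda$. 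Since $\Sigma$ is a compact submanifold of $M$, we have $\|S_\lambda(Vu)\|_{L^p(\Sigma)} \lesssim \|S_\lambda(Vu)\|_{L^\infty(\Sigma)}\le \|S_\lambda(Vu)\|_{L^\infty(M)}$ for every $p\ge 2$, which completes the proof.

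There is no serious obstacle here; the argument is essentially a Schur-type test tailored to the $M_{1,2}$ norm. The only subtle point is that this proof is dimension-specific: the kernel exponent $-(n-1)/2$ at the scale $d_g(x,y)\ge \lambda^{-1}$ and the on-diagonal exponent $2-n$ only collapse to the single bound $d_g^{-1}$ when $n=3$, which is precisely why the $M_{1,2}$ class (with $\beta-n=-2$) is compatible with squaring $S_\lambda$. For higher $n$ one would need to replace $M_{1,2}$ by a different class or to insert an $L^\infty$ bound on $u$ via \cite[Theorem 1.3]{BlairSireSogge2021Quasimode}, and this is the point at which a direct analogue breaks down.
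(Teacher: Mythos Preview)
Your proof is correct and takes essentially the same approach as the paper: Cauchy--Schwarz in $y$, the kernel bound $|S_\lambda(x,y)|\lesssim d_g(x,y)^{-1}$, and the defining inequality of $M_{1,2}$. The only cosmetic difference is that you establish the $L^\infty(M)$ bound first and then restrict to $\Sigma$, whereas the paper parametrizes $\Sigma$ and carries out the identical pointwise estimate directly at points of $\Sigma$.
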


    Since
    \begin{align*}
        u&=(-\Delta_g -(\lambda+i)^2)^{-1} \circ (-\Delta_g -(\lambda+i)^2)u =(S_\lambda+W_\lambda)\circ (-\Delta_g -(\lambda+i)^2) u \\
        &=S_\lambda \circ (H_V-(\lambda+i)^2)u +W_\lambda \circ (-\Delta_g -(\lambda+i)^2)u-S_\lambda (Vu),
    \end{align*}
    if the lemma is true, then, by \eqref{S W lambda est for n=3 general},
    \begin{align*}
        \|u\|_{L^p (\Sigma)} &\leq \|S_\lambda \circ (H_V -(\lambda+i)^2) u\|_{L^p (\Sigma)}+\|W_\lambda \circ (-\Delta_g-(\lambda+i)^2)u\|_{L^p (\Sigma)}+\|S_\lambda (Vu)\|_{L^p (\Sigma)}\\
        &\leq C_V \lambda^{\delta(p, k)-1} (\log \lambda)^{\nu (p, k)} \|(H_V-(\lambda+i)^2)u\|_{L^2 (M)}+ C_V \lambda^{-1} \cdot \lambda\|u\|_{L^2 (M)} \\
        &\leq C_V \lambda^{\delta(p, k)-1} (\log \lambda)^{\nu (p, k)} \|(H_V-(\lambda+i)^2)u\|_{L^2 (M)}+ C_V\lambda^{-1} \|(H_V-(\lambda+i)^2)u\|_{L^2 (M)} \\
        &\lesssim C_V \lambda^{\delta(p, k)-1} (\log \lambda)^{\nu (p, k)} \|(H_V-(\lambda+i)^2)u\|_{L^2 (M)},
    \end{align*}
    which completes the proof of \eqref{Univ Est for n=3 submfld}. As above, we used the spectral theorem in the second to last inequality. We now want to show Lemma \ref{Lemma perturbation est for n=3 or more}.

    \begin{proof}[Proof of Lemma \ref{Lemma perturbation est for n=3 or more}]
    Let $r\mapsto \sigma(r)$ be a coordinate map of the submanifold $\Sigma$. By the triangle inequality, we have
    \begin{align}\label{S lambda triangle ineq}
        \begin{split}
            \|S_\lambda (Vu)\|_{L^p (\Sigma)} &=\left(\int \left|\int S_\lambda (\sigma(r), y) V(y) u(y)\:dy \right|^p\:dr \right)^{\frac{1}{p}} \\
            &\leq \left(\int \left(\int |S_\lambda (\sigma(r), y)| |V(y)| |u(y)|\:dy \right)^p\:dr \right)^{\frac{1}{p}}.
        \end{split}
    \end{align}
    To bound this, we want to bound the integral in the second parentheses, i.e.,
    \begin{align*}
        \int |S_\lambda (\sigma(r), y)| |V(y)| |u(y)|\:dy.
    \end{align*}
    Using a partition of unity if necessary, we may assume $|x-y|\leq 1$ for $x, y\in M$, especially when $x=\sigma(r)$, and so, $|\sigma(r)-y|\leq 1$. By \eqref{S lambda kernel bound for n=3} and the Cauchy-Schwarz inequality, we have, for $n\geq 3$,
    \begin{align*}
        \int |S_\lambda (\sigma(r), y)| |V(y)| |u(y)|\:dy &\leq \left(\int |S_\lambda (\sigma(r), y)|^2 |V(y)|^2 \:dy \right)^{\frac{1}{2}} \left(\int |u(y)|^2\:dy \right)^{\frac{1}{2}} \\
        &\lesssim \left(\int_{|\sigma(r)-y|\leq 1} |\sigma(r)-y|^{-2} |V(y)|^2\:dy \right)^{\frac{1}{2}}\|u\|_{L^2 (M)} \leq C_V \|u\|_{L^2 (M)}.
    \end{align*}
    In the last inequality, we used the assumption $V\in M_{1, 2}$. Using this with \eqref{S lambda triangle ineq}, we have
    \begin{align*}
        \| S_\lambda (Vu) \|_{L^p (\Sigma)}\leq C_V \|u\|_{L^2 (M)},
    \end{align*}
    as desired.
\end{proof}

We next show \eqref{Univ Est for codim 2 with sing pot}. Suppose $n=3$ and the curve $\gamma$ is either a geodesic segment or a curve with nonvanishing geodesic curvatures. As before, we write
\begin{align*}
    (-\Delta_g-(\lambda+i)^2)^{-1}=S_\lambda +W_\lambda,
\end{align*}
where $S_\lambda$ and $W_\lambda$ are as in as above, and $P=\sqrt{-\Delta_g}$. By \cite[Theorem 1]{ChenSogge2014few} and \cite[Theorem 3]{WangZhang2021Codim2}, we have
\begin{align*}
    & \|\mathds{1}_{[\lambda, \lambda+1]} (P) \|_{L^2 (M)\to L^2 (\Sigma)}\lesssim \lambda^{\frac{1}{2}}, \\
    & \|u\|_{L^2 (\gamma)}\lesssim \lambda^{-\frac{1}{2}} \|(H_V-(\lambda+i)^2) u\|_{L^2 (M)},
\end{align*}
and thus,
\begin{align*}
    \| (\Delta_g -(\lambda+i)^2)^{-1} \|_{L^2 (M)\to L^2 (\gamma)}\lesssim \lambda^{-\frac{1}{2}}.
\end{align*}
As before, by this and \eqref{m lambda size est for n=2}, an orthogonality argument gives us that $S_\lambda$ and $W_\lambda=m_\lambda (P)$ satisfy
\begin{align}\label{S W lambda est for codim 2}
    \begin{split}
        & \|S_\lambda\|_{L^2 (M)\to L^2 (\gamma)}\lesssim \lambda^{-\frac{1}{2}}, \\
        & \|W_\lambda \circ (-\Delta_g -(\lambda+i)^2)\|_{L^2 (M)\to L^2 (\Sigma)}\lesssim \lambda^{\frac{1}{2}}.
    \end{split}
\end{align}
The bounds for $S_\lambda(Vu)$ follow from Lemma \ref{Lemma perturbation est for n=3 or more}, since the lemma holds for any curve. Since
\begin{align*}
    u&=(-\Delta_g-(\lambda+i)^2)^{-1}\circ (-\Delta_g-(\lambda+i)^2)u \\
    &=S_\lambda \circ (-\Delta_g+V-(\lambda+i)^2)u+W_\lambda\circ (-\Delta_g-(\lambda+i)^2)u-S_\lambda (Vu),
\end{align*}
by \eqref{S W lambda est for codim 2}, and Lemma \ref{Lemma perturbation est for n=3 or more}, we have that
\begin{align*}
    \|u\|_{L^2 (\Sigma)}&\leq \|S_\lambda\circ (H_V-(\lambda+i)^2)u\|_{L^2 (\Sigma)}+\|W_\lambda \circ (-\Delta_g-(\lambda+i)^2)u\|_{L^2 (\Sigma)}+\|S_\lambda (Vu)\|_{L^2 (\Sigma)} \\
    &\leq C_V \lambda^{-\frac{1}{2}}\|(H_V-(\lambda+i)^2)u\|_{L^2 (M)},
\end{align*}
which proves \eqref{Univ Est for codim 2 with sing pot}.
\end{proof}

We can also see that we have a log improved restriction estimate for a $3$-dimensional manifolds with nonpostive sectional curvatures as follow.
\begin{theorem}\label{Thm for future work log improved n>=3}
    Assume that $(M, g)$ is a $3$-dimensional compact Riemannian manifold of constant negative sectional curvature, $\epsilon(\lambda)=(\log (2+\lambda))^{-1}$, and that $V\in M_{1, 2}\subset \mathcal{K}(M)$. If $\gamma$ is a unit-length geodesic segment, then
    \begin{align}\label{Est for log imp for n=3}
        \|u\|_{L^2 (\gamma)} \leq C_{V, \gamma} \lambda^{-\frac{1}{2}}(\epsilon(\lambda))^{-\frac{1}{2}} \|(H_V-(\lambda+i\epsilon(\lambda))^2)u\|_{L^2 (M)}.
    \end{align}
\end{theorem}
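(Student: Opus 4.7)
The plan is to mimic the log-improved argument of Theorem \ref{Thm for log improved}(1) for curves in surfaces, with the Kato condition replaced by the $M_{1,2}$ structure from Theorem \ref{Thm for n=3 general}. First I would invoke the log-improved $L^2$ restriction bound of the first author \cite{Blair2018logarithmic} and Zhang \cite{Zhang2017improved} for unit-length geodesic segments in a $3$-dimensional manifold of constant negative curvature, namely
\[
\|\mathds{1}_{[\lambda,\lambda+\epsilon(\lambda)]}(\sqrt{-\Delta_g})\|_{L^2(M)\to L^2(\gamma)} \lesssim \lambda^{1/2}(\epsilon(\lambda))^{1/2},
\]
which combined with Lemma \ref{Lemma Resolvent opr norm from L2 to Lq for univ est} (using $\delta(2,1)=1/2$, $\nu(2,1)=0$ by Chen-Sogge/Wang-Zhang, and $\rho=1/2$) gives $\|(-\Delta_g-(\lambda+i\epsilon(\lambda))^2)^{-1}\|_{L^2(M)\to L^2(\gamma)} \lesssim \lambda^{-1/2}(\epsilon(\lambda))^{-1/2}$. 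I then decompose the resolvent as $T_\lambda + R_\lambda$ with $T_\lambda = T_\lambda^0 + T_\lambda^1$ as in \eqref{T lambda R lambda set up}, taking $T = c_0(\epsilon(\lambda))^{-1}$. The usual orthogonality argument yields $\|R_\lambda\|_{L^2(M)\to L^2(\gamma)} \lesssim \lambda^{-1/2}(\epsilon(\lambda))^{-1/2}$, $\|R_\lambda \circ (-\Delta_g-(\lambda+i\epsilon(\lambda))^2)\|_{L^2(M)\to L^2(\gamma)} \lesssim \lambda^{1/2}(\epsilon(\lambda))^{1/2}$, and hence $\|T_\lambda\|_{L^2(M)\to L^2(\gamma)} \lesssim \lambda^{-1/2}(\epsilon(\lambda))^{-1/2}$. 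Writing $u = T_\lambda(H_V-(\lambda+i\epsilon(\lambda))^2)u + R_\lambda(-\Delta_g-(\lambda+i\epsilon(\lambda))^2)u - T_\lambda(Vu)$ and using the spectral theorem on the middle term, the proof reduces to establishing the perturbation bound
\[
\|T_\lambda(Vu)\|_{L^2(\gamma)} \le C_{V,\gamma}\,\lambda^{-1/2}(\epsilon(\lambda))^{-1/2}\,\|(H_V-(\lambda+i\epsilon(\lambda))^2)u\|_{L^2(M)}.
\]

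For the local piece, an argument of \cite[\S 5]{BlairHuangSireSogge2022UniformSobolev} analogous to the one yielding \eqref{T lambda 0 kernel est} (but in dimension $3$) produces the kernel bound $|T_\lambda^0(x,y)| \lesssim d_g(x,y)^{-1}$ for $d_g(x,y) \ll 1$, matching the $S_\lambda$ bound \eqref{S lambda kernel bound for n=3} used in Theorem \ref{Thm for n=3 general}. Writing $\sigma$ for a coordinate map of $\gamma$, Cauchy-Schwarz in $y$ and the defining bound $\sup_x \int d_g(x,y)^{-2}|V(y)|^2\,dV_g(y) \le \|V\|_{1,2}^2$ of $M_{1,2}$ with $\beta=1$, $n=3$ give
\[
\left|\int T_\lambda^0(\sigma(r),y)V(y)u(y)\,dy\right| \le C_V\,\|u\|_{L^2(M)}
\]
uniformly in $r$, so $\|T_\lambda^0(Vu)\|_{L^2(\gamma)} \le C_{V,\gamma}\|u\|_{L^2(M)}$. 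The spectral theorem then bounds this by $C_{V,\gamma}\lambda^{-1}(\epsilon(\lambda))^{-1}\|(H_V-(\lambda+i\epsilon(\lambda))^2)u\|_{L^2(M)}$, which is majorized by the right-hand side of the target inequality since $\lambda^{-1}(\epsilon(\lambda))^{-1} \le \lambda^{-1/2}(\epsilon(\lambda))^{-1/2}$ for $\lambda \ge 1$.

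For the global piece $T_\lambda^1$, the B\'erard-type bound from \cite[(3.25)]{BlairHuangSireSogge2022UniformSobolev} (cf. \cite{Berard1977onthewaveequation}), valid on a nonpositively curved manifold up to time $c_0(\epsilon(\lambda))^{-1}$, gives $\|T_\lambda^1 f\|_{L^\infty(M)} \lesssim \lambda^{(n-3)/2}\lambda^{Cc_0}\|f\|_{L^1(M)} = \lambda^{Cc_0}\|f\|_{L^1(M)}$ for $n=3$. Since $V\in M_{1,2}$ and $M$ is compact, $V \in L^2(M)$ (integrate the $M_{1,2}$ bound against $d_g(x_0,y)^{n-1}$ over a covering ball), so by H\"older
\[
\|T_\lambda^1(Vu)\|_{L^2(\gamma)} \le |\gamma|^{1/2}\|T_\lambda^1(Vu)\|_{L^\infty(M)} \le C_\gamma\,\lambda^{Cc_0}\|V\|_{L^2(M)}\|u\|_{L^2(M)} \le C_{V,\gamma}\lambda^{Cc_0-1}(\epsilon(\lambda))^{-1}\|(H_V-(\lambda+i\epsilon(\lambda))^2)u\|_{L^2(M)},
\]
where the last step again uses the spectral theorem. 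Choosing $c_0>0$ small enough that $Cc_0 < 1/2$, one has $\lambda^{Cc_0-1/2}(\log(2+\lambda))^{1/2}\to 0$, so this is better than the bound posited above, completing the reduction. The main obstacle is verifying that the $M_{1,2}$ norm suffices to absorb the $d_g^{-1}$ kernel singularity of $T_\lambda^0$ via Cauchy-Schwarz, which is precisely the motivation for the exponent choice $\beta=1$, $p=2$ in dimension $n=3$; a secondary point is the persistence of the B\'erard bound at the logarithmic time scale in constant negative curvature, which is standard via the Hadamard parametrix lifted to the universal cover.
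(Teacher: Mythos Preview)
Your proposal is correct and follows essentially the same approach as the paper's proof: the same log-improved spectral projection input from \cite{Blair2018logarithmic} and \cite{Zhang2017improved}, the same $T_\lambda = T_\lambda^0 + T_\lambda^1$ and $R_\lambda$ decomposition, the same $M_{1,2}$ Cauchy--Schwarz argument for $T_\lambda^0(Vu)$ via the kernel bound $|T_\lambda^0(x,y)|\lesssim d_g(x,y)^{-1}$, and the same B\'erard-type $L^1\to L^\infty$ bound for $T_\lambda^1$ combined with $V\in L^2(M)$ and the spectral theorem. The paper's write-up is virtually identical step for step.
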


\begin{proof}
    We can prove \eqref{Est for log imp for n=3} similarly. To see this, we first consider the estimates when $V\equiv 0$ from \cite{Blair2018logarithmic} and \cite{Zhang2017improved} for $P=\sqrt{-\Delta_g}$
    \begin{align*}
        \|\mathds{1}_{[\lambda, \lambda+1]} (P)\|_{L^2 (M)\to L^2 (\gamma)}\lesssim \lambda^{\frac{1}{2}} (\log \lambda)^{-\frac{1}{2}},
    \end{align*}
    where $\gamma$ is a geodesic segment. This in turn implies that
    \begin{align}\label{Est of inverse opr for log imp n=3}
        \|(-\Delta_g-(\lambda+i \epsilon(\lambda))^2)^{-1}\|_{L^2 (M)\to L^2 (\gamma)} \lesssim \lambda^{-\frac{1}{2}} (\log \lambda)^{\frac{1}{2}}.
    \end{align}
    Let $\eta\in C_0^\infty(\mathbb{R})$, $T_\lambda$, $T_\lambda^0$, $T_\lambda^1$, and $R_\lambda$ be as in \eqref{T lambda R lambda set up}, so that we can write
    \begin{align*}
        (-\Delta_g-(\lambda+i \epsilon (\lambda))^2)^{-1}=T_\lambda+R_\lambda.
    \end{align*}
    Setting $m_\lambda$ as in \eqref{m lambda setup}, by the arguments above, the operator $R_\lambda=m_\lambda (P)$ satisfies
    \begin{align}\label{R lambda est for log imp n=3}
        \|R_\lambda \|_{L^2 (M)\to L^2 (\gamma)}\lesssim \lambda^{-\frac{1}{2}}(\epsilon(\lambda))^{-\frac{1}{2}},
    \end{align}
    and
    \begin{align}\label{R lambda comp est for log imp n=3}
        \|R_\lambda \circ (-\Delta_g-(\lambda+i \epsilon(\lambda))^2)\|_{L^2 (M)\to L^2 (\gamma)}\lesssim \lambda^{-\frac{1}{2}} (\epsilon (\lambda))^{-\frac{1}{2}}\cdot (\lambda \epsilon(\lambda))=\lambda^{\frac{1}{2}}(\epsilon(\lambda))^{\frac{1}{2}}.
    \end{align}
    Since $T_\lambda=(-\Delta_g-(\lambda+i\epsilon(\lambda))^2)^{-1}-R_\lambda$, by \eqref{Est of inverse opr for log imp n=3} and \eqref{R lambda est for log imp n=3},
    \begin{align}\label{T lambda est for log imp n=3}
        \|T_\lambda\|_{L^2 (M)\to L^2 (\gamma)}\lesssim \lambda^{-\frac{1}{2}} (\epsilon(\lambda))^{-\frac{1}{2}}.
    \end{align}
    For any given small $\epsilon_0>0$, if $c_0>0$ is small enough, then by \cite[(3.25)]{BlairHuangSireSogge2022UniformSobolev} (see also \cite{Berard1977onthewaveequation})
    \begin{align*}
        \|T_\lambda^1\|_{L^1 (M)\to L^\infty (M)}=O(\lambda^{Cc_0})=O(\lambda^{\epsilon_0}),
    \end{align*}
    and so,
    \begin{align}\label{T lambda 1 est for log imp n=3}
        \|T_\lambda^1 f\|_{L^\infty (M)}\leq \|T_\lambda^1 f\|_{L^\infty (M)}\lesssim \lambda^{\epsilon_0} \|f\|_{L^1 (M)}.
    \end{align}
    The proof of \eqref{T lambda 0 kernel est} in \cite[(5.11)]{BlairHuangSireSogge2022UniformSobolev} also applies to all $\dim M=n\geq 3$, and so, if $n=3$, then we have the following bounds for the kernel $T_\lambda^0$
    \begin{align*}
        \begin{split}
            |T_\lambda^0 (x, y)|\lesssim \begin{cases}
            (d_g (x, y))^{2-n}=(d_g (x, y))^{-1}, & \text{if } d_g (x, y)\leq \lambda^{-1}, \\
            \lambda^{\frac{n-3}{2}}(d_g (x, y))^{-\frac{n-1}{2}}=(d_g (x, y))^{-1}, & \text{if } \lambda^{-1}\leq d_g (x, y)\ll 1,
        \end{cases}
        \end{split}
    \end{align*}
    and thus
    \begin{align}\label{T lambda 0 kernel est for log imp n=3}
        |T_\lambda^0 (x, y)|\lesssim (d_g (x, y))^{-1},\quad \text{when } |x-y|\ll 1.
    \end{align}

    As above, we can write
    \begin{align*}
        u=T_\lambda (-\Delta_g+V-(\lambda+i\epsilon(\lambda))^2)u+R_\lambda (-\Delta_g -(\lambda+i\epsilon(\lambda))^2)u-T_\lambda (Vu).
    \end{align*}
    By \eqref{T lambda est for log imp n=3}, we have
    \begin{align*}
        \|T_\lambda \circ (H_V-(\lambda+i\epsilon(\lambda))^2)u\|_{L^2 (\gamma)}\lesssim \lambda^{-\frac{1}{2}} (\epsilon(\lambda))^{-\frac{1}{2}} \|(H_V-(\lambda+i\epsilon(\lambda))^2)u\|_{L^2 (M)}.
    \end{align*}
    By \eqref{R lambda comp est for log imp n=3}, we have
    \begin{align*}
        \|R_\lambda \circ (-\Delta_g -(\lambda+i\epsilon(\lambda))^2)u\|_{L^2 (\gamma)}&\lesssim \lambda^{\frac{1}{2}} (\epsilon(\lambda))^{\frac{1}{2}} \|u\|_{L^2 (M)} \\
        &\lesssim \lambda^{-\frac{1}{2}} (\epsilon (\lambda))^{-\frac{1}{2}}\|(H_V -(\lambda+i\epsilon(\lambda))^2)u\|_{L^2 (M)}.
    \end{align*}
    To show \eqref{Est for log imp for n=3}, it thus suffices to show that
    \begin{align}\label{Perturb claim for log imp n=3}
        \|T_\lambda (Vu)\|_{L^2 (\gamma)}\leq C_V \lambda^{-\frac{1}{2}} (\epsilon(\lambda))^{-\frac{1}{2}} \|(H_V-(\lambda+i\epsilon(\lambda))^2)u\|_{L^2 (M)},
    \end{align}
    where $0<\epsilon_0\ll 1$. Since $T_\lambda=T_\lambda^0+T_\lambda^1$, we consider $T_\lambda^0 (Vu)$ and $T_\lambda^1 (Vu)$ separately. For $T_\lambda^0 (Vu)$, we repeat the proof of Lemma \ref{Lemma perturbation est for n=3 or more}. By \eqref{T lambda 0 kernel est for log imp n=3}, $V\in M_{1, 2}$, and H\"older's inequality, we have that
    \begin{align*}
        \int |T_\lambda^0 (\gamma(r), y)| |V(y)| |u(y)|\:dy \lesssim \left(\int_{|\gamma(r)-y|\leq 1} (d_g (\gamma(r), y))^{-2} |V(y)|^2\:dy \right)^{\frac{1}{2}} \|u\|_{L^2 (M)}\leq C_V \|u\|_{L^2 (M)},
    \end{align*}
    which in turn implies that
    \begin{align*}
        \|T_\lambda^0 (Vu)\|_{L^2 (\gamma)} \leq \left(\int \left(\int |T_\lambda^0 (\gamma(r), y)| |V(y)| |u(y)|\:dy \right)^2\:dr \right)^{\frac{1}{2}} \leq C_V \|u\|_{L^2 (M)}.
    \end{align*}
    For $T_\lambda^1 (Vu)$, by \eqref{T lambda 1 est for log imp n=3} and the Cauchy-Schwarz inequality, we have that
    \begin{align*}
        \|T_\lambda^1 (Vu)\|_{L^2 (\gamma)}& \left(\int |T_\lambda^1 (Vu)|^2\:dr \right)^{\frac{1}{2}} \\
        &\lesssim \left(\int (\lambda^{\epsilon_0})^2 \|Vu\|_{L^1 (M)}^2\:dr \right)^{\frac{1}{2}} \\
        &\lesssim \lambda^{\epsilon_0} \|V\|_{L^2(M)}\|u\|_{L^2 (M)} \\
        &\leq \lambda^{\epsilon_0} \|V\|_{1, 2} \|u\|_{L^2 (M)}\leq C_V \lambda^{\epsilon_0} \|u\|_{L^2 (M)}.
    \end{align*}
    Putting these together, by the spectral theorem, we have
    \begin{align*}
        \|T_\lambda (Vu)\|_{L^2 (\gamma)} \leq C_V \lambda^{\epsilon_0} \|u\|_{L^2 (M)}\lesssim C_V \lambda^{-1+\epsilon_0} \|(H_V-(\lambda+i\epsilon(\lambda))^2)u\|_{L^2 (M)}.
    \end{align*}
    Taking $\epsilon_0>0$ sufficiently small, we have \eqref{Perturb claim for log imp n=3}, completing the proof of \eqref{Est for log imp for n=3}.
\end{proof}

\bibliographystyle{amsalpha}
\bibliography{references}

\end{document}